\documentclass[12pt, a4paper]{amsart}
\usepackage{amsfonts, amssymb, amsmath}
\usepackage{amsthm}
\usepackage{tabularx}
\usepackage[margin=2.5cm]{geometry}
\usepackage{enumerate}
\usepackage[all]{xy}

\usepackage{xcolor}

\newcolumntype{C}[1]{>{\centering\arraybackslash}p{#1}}
\newcommand{\N}{{\mathbb N}}

\newcommand{\Q}{{\mathbb Q}}
\newcommand{\R}{{\mathbb R}}
\newcommand{\Z}{{\mathbb Z}}
\newcommand{\Pp}{{\mathbb P}}
\newcommand{\A}{{\mathbb A}}
\newcommand{\Cc}{{\mathbb C}}
\newcommand{\Oo}{\mathcal{O}}
\newcommand{\Ss}{\mathcal{S}}
\newcommand{\X}{\mathcal{X}}
\newcommand{\T}{\mathcal{T}}
\newcommand{\cen}{\mbox{Center}}
\newcommand{\se}[2]{\left\lbrace #1 \mbox{ }\vline\mbox{ } #2 \right\rbrace}

\newcommand{\tl}[1]{\tilde{#1}}

\newcommand{\st}{^{\ast}}
\newcommand{\ts}{_{\ast}}
\newcommand{\aast}{(\mbox{\Large{$\ast$}})}
\newcommand{\vc}[1]{\vcenter{\hbox{#1}}}
\newcommand{\rd}[1]{\lfloor #1\rfloor}
\newcommand{\ru}[1]{\lceil #1\rceil}

\newcommand{\gq}[1]{_{\geq #1}}
\newcommand{\sep}[2]{\left\lbrace\begin{matrix} #1 \\ #2 \end{matrix}\right.}
\newcommand{\tc}[2]{\begin{tabular}{c}#1\\\hline #2\end{tabular}}
\newcommand{\ttc}[3]{\begin{tabular}{c}#1\\ #2\\#3\end{tabular}}

\newcommand{\tcc}[2]{\begin{tabular}{c}#1\\ #2\end{tabular}}

\newcommand{\ang}[1]{\langle #1\rangle}
\newcommand{\du}{^{\vee}}
\newcommand{\spc}{\mbox{Spec }}
\newcommand{\gd}{dep_{Gor}}
\newcommand{\ua}[1]{\underset{#1}{\Rightarrow}}
\newcommand{\ual}[1]{\underset{#1}{\Leftarrow}}
\newcommand{\uan}[1]{\stackrel{-}{\underset{#1}{\Rightarrow}}}

\newtheorem{thm}{Theorem}[section]
\newtheorem{pro}[thm]{Proposition}
\newtheorem{cor}[thm]{Corollary}
\newtheorem{lem}[thm]{Lemma}
\theoremstyle{definition}
\newtheorem{rk}[thm]{Remark}
\newtheorem{eg}[thm]{Example}
\newtheorem{cov}[thm]{Convention}
\newtheorem{defn}[thm]{Definition}
\newtheorem{nota}[thm]{Notation}

\newtheorem*{conj}{Conjecture} 

\begin{document}
\title{Minimal resolutions of threefolds}
\author{Hsin-Ku Chen}
\address{School of Mathematics, Korea Institute for Advanced Study, 85 Hoegiro, Dongdaemun-gu, Seoul 02455, Republic of Korea} 
\email{hkchen@kias.re.kr}
\begin{abstract}
	We describe the resolution of singularities of a threefold which has minimal Picard number. We describe the relation between
	this minimal resolution and an arbitrary resolution of singularities.
\end{abstract}
\maketitle

\section{Introduction}

Smooth varieties has certain nice properties, and both algebraic and analytic method can be applied to them. However, when studying problems in birational geometry, particularly those related to the minimal model program, it becomes necessary to investigate varieties with singularities. Fortunately, as Hironaka's famous theorem states, every varieties in characteristic zero has a
resolution of singularity. The existence of resolution of singularities provides a way
to study singular varieties. For instance, by comparing a variety to its resolution of singularities, one can
measure the complexity of a singularity. This is a fundamental technique in higher dimensional birational geometry.\par
Since we want to understand a singularity through its resolution, it is natural to inquire about the difference between two distinct resolutions of singularities. For an algebraic surface $S$, there exists a smooth surface known as the minimal resolution of $S$. This is a resolution of singularities $\bar{S}\rightarrow S$ such that $\rho(\bar{S}/S)$ is minimal. The minimal resolution $\bar{S}$ is unique, and any birational morphism $S'\rightarrow S$ from a smooth surface $S'$ to $S$ factors through $\bar{S}\rightarrow S$.\par
In this paper we want to find a high dimensional analog of the minimal resolution for surfaces.  It is not reasonable to assume the existence of a unique minimal resolution for higher-dimensional singularities. For instance, if $X\dashrightarrow X'$ is a smooth flop over $W$, then $X$ and $X'$ are two different resolutions of singularities for $W$. Since flops are symmetric (at least in dimension three), it appears that $X$ and $X'$ are both minimal. Thus, we need to consider the following issues:
\begin{enumerate}[(1)]
\item To define "minimal resolutions", which ideally should be resolutions of singularities with some minimal geometric invariants.
\item To compare two different minimal resolutions. We need some symmetry between them, so that even if minimal resolutions are not unique,
	it is not necessary to distinguish them.
\item To compare a minimal resolution with an arbitrary resolution of singularities.
\end{enumerate} 
Inspired by the two-dimensional case, it is natural to consider resolutions of singularities with minimal Picard number
(we will call them P-minimal resolutions, see Section \ref{spmr} for more precise definition). We know that a fixed singularity may
have more than one P-minimal resolutions, and two different P-minimal resolutions can differ by a smooth flop. It is also possible that two
different P-minimal resolutions are ``differ by a singular flop'': consider $X\dashrightarrow X'$ be a possibly singular flop over $W$. Let
$\tl{X}\rightarrow X$ and $\tl{X'}\rightarrow X'$ be P-minimal resolutions of $X$ and $X'$ respectively. Then because of the symmetry
between flops, one may expect that $\tl{X}$ and $\tl{X'}$ are two different P-minimal resolutions of $W$. We call the birational map
$\tl{X}\dashrightarrow \tl{X'}$ a P-desingularization of the flop $X\dashrightarrow X'$ (a precise definition can be found in Section
\ref{spmr}). If we consider P-desingularizations of flops as elementary birational maps, then in dimension three, P-minimal resolutions
have nice properties.

\begin{thm}\label{thm1}
	Assume that $X$ is a projective threefold over the complex number and $\tl{X}_1$, $\tl{X}_2$ are two different P-minimal
	resolutions of $X$. Then $\tl{X}_1$ and $\tl{X}_2$ are connected by P-desingularizations of terminal and $\Q$-factorial flops.\par
	Moreover, if $X$ has terminal and $\Q$-factorial singularities, then the birational map $\tl{X}_1\dashrightarrow \tl{X}_2$ has an
	$\Omega$-type factorization.
\end{thm}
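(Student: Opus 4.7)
The plan is to realize the birational map $\tl{X}_1\dashrightarrow \tl{X}_2$ as a composition of P-desingularizations of flops by running a relative minimal model program (MMP) with scaling. First I would take a common smooth resolution $W$ dominating both $\tl{X}_1$ and $\tl{X}_2$ through birational morphisms $p_i:W\to\tl{X}_i$ over $X$. Fixing an ample divisor $H$ on $\tl{X}_2$ relative to $X$, pull it back to $W$, take the strict transform $\tl{H}$ on $\tl{X}_1$, and run the $(K_{\tl{X}_1}+\epsilon \tl{H})$-MMP relative to $X$ for small $\epsilon>0$. Termination in dimension three together with a basepoint-freeness argument should identify the endpoint as $\tl{X}_2$, the $\tl{H}$-ample model over $X$.

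The central claim is that no step of this MMP is a divisorial contraction: each step must be a flip, and since the intermediate models inherit $K$-triviality along the flipping locus over $X$ and remain smooth (once no divisor has been removed), each flip is in fact a flop. To rule out a divisorial contraction $Y\to Y'$ arising in the sequence, I would exploit the minimality of $\rho(\tl{X}_i/X)$: composing $Y'\to X$ with a resolution $Y''\to Y'$ would produce a smooth resolution of $X$ whose relative Picard number is strictly smaller than $\rho(\tl{X}_i/X)$, contradicting P-minimality, provided one controls the Picard number increase under the auxiliary resolution $Y''\to Y'$. Making this control rigorous is the step I expect to be the main technical obstacle, as a naive resolution may inflate $\rho$, and one likely needs a terminalization-type argument to extract a sufficiently economical smooth model of $Y'$.

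Granted that the MMP consists of flops on smooth threefolds, each elementary step has the form $Y\to Z\leftarrow Y^+$ with $Y, Y^+$ smooth; standard 3-fold theory identifies $Z$ as having terminal singularities, and extremality of the flopping contraction gives $\Q$-factoriality of $Z$. Applying the P-minimality argument again on both sides, $Y$ and $Y^+$ are themselves P-minimal resolutions of $Z$, so the elementary step is literally a P-desingularization of a terminal $\Q$-factorial flop. Assembling these steps yields the first assertion.

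For the second assertion, the assumption that $X$ is terminal and $\Q$-factorial forces each $\tl{X}_i\to X$ to be crepant, so the entire relative MMP is a sequence of crepant birational modifications between $\Q$-factorial terminal models lying over $X$. In this setting I would expect the $\Omega$-type factorization (in the paper's sense) to arise by reorganizing the sequence of flops of $\tl{X}$'s so that each flopping contraction factors through a common $\Q$-factorial terminal model over $X$, producing a structured diagram rather than a mere composition. Once the no-divisor-extraction step of the first part is settled, I anticipate the finer factorization to follow from the standard machinery of crepant flops between terminal $\Q$-factorial threefolds.
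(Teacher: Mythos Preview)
Your approach has a genuine gap at its core. You propose to run a $(K_{\tl{X}_1}+\epsilon\tl{H})$-MMP over $X$ and argue that every step is a flop, so that all intermediate models stay smooth. But two P-minimal resolutions of $X$ need not be isomorphic in codimension one over $X$: when $X$ is terminal and $\Q$-factorial, each $\tl{X}_i$ is a \emph{feasible} resolution, built as a tower of strict $w$-morphisms, and different towers may extract genuinely different exceptional divisors (see e.g.\ Proposition~\ref{ca}(2), where distinct $w$-morphisms over a $cA/r$ point are described). Consequently any MMP from $\tl{X}_1$ to $\tl{X}_2$ must at some point contract a divisor and later extract a different one; there is no way to connect them by small modifications alone. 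Moreover, by Mori's classification of extremal rays on smooth threefolds \cite{mo} there are no $K$-negative small contractions from a smooth threefold, so the very first $K$-negative step of your MMP is necessarily divisorial, and the next model is singular. Your acknowledged ``main technical obstacle'' is therefore not a side issue but the entire content of the theorem: controlling the Picard number under a resolution of the singular intermediate model is exactly what the generalized-depth machinery (Section~\ref{sdep}) and the linking of divisorial contractions (Sections~\ref{sfac}--\ref{slk}) are built to do.

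A second, independent error: you assert that for terminal $\Q$-factorial $X$ the maps $\tl{X}_i\to X$ are crepant. They are not. Each $w$-morphism in the tower has strictly positive discrepancy (indeed $a(E,X)=1/r_P>0$), so $K_{\tl{X}_i}$ is never numerically trivial over $X$ and your ``crepant flop'' picture does not apply. The paper's route is entirely different: it first reduces to comparing two feasible resolutions of a common terminalization (Proposition~\ref{mcf} and \cite{ka3}), and then uses the explicit classification of divisorial contractions to points, the links $Y\Rightarrow_X Y_1$ constructed in Section~\ref{slk}, and the depth estimates of Section~\ref{sdep} to manufacture the P-desingularizations of flops by hand (Lemma~\ref{lfp}, Lemma~\ref{flp}, Corollary~\ref{fpdi}). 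The $\Omega$-type factorization comes from this explicit analysis (Proposition~\ref{twof}), not from any general crepant-MMP argument.
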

Please see Section \ref{sflp} for the definition of $\Omega$-type factorizations.
\begin{thm}\label{thm2}
	Assume that $X$ is a projective threefold over the complex number and $W\rightarrow X$ is a birational morphism from a smooth threefold
	$W$ to $X$. Then for any P-minimal resolution $\tl{X}$ of $X$ one has a factorization
	\[W=\tl{X}_k\dashrightarrow ... \dashrightarrow \tl{X}_1\dashrightarrow \tl{X}_0=\tl{X}\]
	such that $\tl{X}_{i+1}\dashrightarrow \tl{X}_i$ is either a smooth blow-down, or a P-desingularization of a terminal $\Q$-factorial flop.
\end{thm}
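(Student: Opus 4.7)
The plan is to transform the smooth resolution $W\to X$ into a smooth P-minimal resolution through a chain of the two allowed elementary steps, and then invoke Theorem \ref{thm1} to pass from that P-minimal resolution to $\tl{X}$. The key observation making this reasonable is that smooth blow-downs preserve smoothness, while a P-desingularization of a flop is by definition a birational map between two P-minimal resolutions and hence (since P-minimal resolutions are taken among smooth resolutions of minimal Picard number, cf. Section \ref{spmr}) between smooth threefolds; consequently the whole procedure stays inside the class of smooth resolutions of $X$ until the final invocation of Theorem \ref{thm1}.

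The heart of the argument is a single reduction step, which I would formulate by induction on $\rho(W/X)$: starting from a smooth resolution $W\to X$ with $\rho(W/X)>\rho(\tl{X}/X)$, produce a smooth resolution $W'\to X$ with $\rho(W'/X)<\rho(W/X)$ together with a finite chain from $W$ to $W'$ made of smooth blow-downs and P-desingularizations of terminal $\Q$-factorial flops. To build such a step I would run a relative MMP for $W$ over $X$, extract a $K_W$-negative divisorial extremal ray $R\subset\ol{NE}(W/X)$ (whose existence is guaranteed by the assumption $\rho(W/X)>\rho(\tl X/X)$), and consider its contraction $\varphi_R:W\to W^\sharp$. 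Mori's classification of divisorial extremal contractions on smooth threefolds produces a short list of cases. When $\varphi_R$ is a smooth blow-down, simply set $W'=W^\sharp$. Otherwise $W^\sharp$ is singular along a restricted list of terminal singularities; viewing $W$ as a P-minimal resolution of $W^\sharp$, one applies Theorem \ref{thm1} to $W^\sharp$ to obtain another P-minimal resolution $W''\to W^\sharp$ connected to $W$ by a sequence of P-desingularizations of terminal $\Q$-factorial flops, chosen so that the resulting contraction $W''\to W^\sharp$ is a genuine smooth blow-down. Iterating the reduction eventually reaches a smooth P-minimal resolution $W_0$ of $X$, and Theorem \ref{thm1} then connects $W_0$ to $\tl{X}$ through further P-desingularizations of flops; concatenating the two chains yields the claimed factorization.

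\textbf{Main obstacle.} The principal difficulty is the non-smooth-blow-down case of the reduction: to produce the auxiliary $W''$, one must show that every divisorial extremal contraction of a smooth threefold can, after a sequence of P-desingularized flops, be replaced by a smooth blow-down. This will rest on a detailed local analysis of the (finite list of) terminal singularities permitted on $W^\sharp$ by Mori's classification, combined with the symmetry of P-desingularizations supplied by Theorem \ref{thm1}. A secondary subtlety is termination: since P-desingularizations of flops preserve $\rho(W/X)$, the overall induction terminates only because each block of flops is sandwiched between smooth blow-downs (which strictly decrease $\rho$), or, alternatively, controlled by a secondary invariant such as the number of exceptional divisors of $W$ over $X$ whose strict transforms fail to appear on $\tl{X}$. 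Arranging the flop-blow-down alternation in an explicit, terminating order is where I expect the bulk of the technical work to lie.
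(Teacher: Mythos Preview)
Your reduction step contains a genuine gap. In the non-smooth-blow-down case of Mori's classification (types E3, E4, E5), the target $W^\sharp$ is \emph{singular}; consequently no morphism $W''\to W^\sharp$ from a smooth $W''$ can ever be a smooth blow-down, so the sentence ``chosen so that the resulting contraction $W''\to W^\sharp$ is a genuine smooth blow-down'' asks for something impossible. Moreover, Theorem \ref{thm1} does not help here in the way you hope: in types E4 and E5 the point on $W^\sharp$ is a $\Q$-factorial terminal singularity with $gdep=1$ (a $cA_1$ point $x^2+y^2+z^2+w^3$ or a $\tfrac{1}{2}(1,1,1)$ point), and $W$ is its \emph{unique} feasible resolution, hence the unique P-minimal resolution---Theorem \ref{thm1} produces no alternative $W''$ at all. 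In type E3 the ordinary double point on $W^\sharp$ is not even $\Q$-factorial, and $W$ is \emph{not} a P-minimal resolution of $W^\sharp$ (the P-minimal resolutions are the two small resolutions, which have strictly smaller relative Picard number), so Theorem \ref{thm1} again does not apply to $W$. Your induction therefore stalls the first time a Mori contraction of type E3/E4/E5 appears.

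The paper proceeds quite differently. It does not try to stay in the smooth category step by step; instead it runs the full relative MMP $W=X_0\dashrightarrow\cdots\dashrightarrow X_k$ over $X$, letting the intermediate $X_i$ become singular terminal threefolds, and then invokes Corollary \ref{bfwm} (whose proof rests on the Chen--Hacon factorization, the links of Section \ref{slk}, and the generalized-depth estimates of Proposition \ref{ied}) to factor the entire composite into smooth blow-downs, P-desingularizations of flops, and strict $w$-morphisms. Remark \ref{fea} then pushes all the strict $w$-morphisms to the end, yielding a feasible resolution $\tl{X}_W$ of the terminalization $X_W$, which by Proposition \ref{mcf} is a P-minimal resolution of $X$; finally Theorem \ref{thm1} connects $\tl{X}_W$ to the given $\tl{X}$. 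The essential content you are missing is precisely this ``bfw-factorization'' of a single singular MMP step---that is where the real work of Sections \ref{sfac}--\ref{sdep} is spent, and it cannot be replaced by Mori's smooth classification alone.
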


Since three-dimensional terminal flops are topologically symmetric, some topological invariants like Betti numbers won't change after
P-desingularizations of terminal flops. Hence it is easy to see that P-minimal resolutions are the resolution of singularities with
minimal Betti numbers.
\begin{cor}\label{cor}
	Assume that $X$ is a projective threefold over the complex number and $W\rightarrow X$ is a birational morphism from a smooth threefold
	$W$ to $X$. Then for any P-minimal resolution $\tl{X}$ of $X$ one has that $b_i(\tl{X})\leq b_i(W)$ for all $i=0$, ..., $6$. 
\end{cor}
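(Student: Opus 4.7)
The plan is to deduce the corollary directly from Theorem \ref{thm2} by tracking Betti numbers along the factorization it provides. Apply Theorem \ref{thm2} to obtain a chain
\[W=\tl{X}_k\dashrightarrow \cdots \dashrightarrow \tl{X}_1\dashrightarrow \tl{X}_0=\tl{X}\]
in which every step is either a smooth blow-down or a P-desingularization of a terminal $\Q$-factorial flop. The strategy is to argue by induction on $i$ (from $k$ down to $0$) that $b_j(\tl{X}_i)\leq b_j(\tl{X}_{i+1})$ for each $j=0,\dots,6$; chaining these inequalities gives $b_j(\tl{X})\leq b_j(W)$.

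For a step that is a smooth blow-down $\tl{X}_{i+1}\to \tl{X}_i$, both threefolds are smooth and one has the classical blow-up formula for Betti numbers: if $C\subset \tl{X}_i$ is the smooth center of codimension $c$, then
\[b_j(\tl{X}_{i+1})=b_j(\tl{X}_i)+\sum_{\ell=1}^{c-1} b_{j-2\ell}(C),\]
so in particular $b_j(\tl{X}_{i+1})\geq b_j(\tl{X}_i)$ for every $j$. For a step that is a P-desingularization of a terminal $\Q$-factorial flop, I would invoke the topological symmetry of three-dimensional terminal flops already flagged in the paragraph before the corollary: such birational maps preserve the individual Betti numbers of the two smooth models linked by the P-desingularization, so $b_j(\tl{X}_{i+1})=b_j(\tl{X}_i)$ for all $j$.

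Combining both cases, each transition in the chain is non-decreasing on each Betti number, so $b_j(\tl{X})=b_j(\tl{X}_0)\leq b_j(\tl{X}_k)=b_j(W)$, as claimed. The only potential obstacle is the invariance of Betti numbers under a P-desingularization of a terminal $\Q$-factorial flop; this follows from the classical fact that three-dimensional terminal flops are topologically equivalent (so the relevant pieces of cohomology match up), together with the functoriality built into the definition of a P-desingularization from Section \ref{spmr}, which forces the smooth models above the two sides of the flop to have the same Betti numbers as the corresponding terminal models. Everything else is routine bookkeeping on top of Theorem \ref{thm2}.
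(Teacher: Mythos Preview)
Your overall strategy matches the paper's: apply Theorem \ref{thm2}, then show each step is non-increasing on Betti numbers. The smooth blow-down case is fine. The gap is in your treatment of P-desingularizations of flops.

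Your appeal to ``topological symmetry of three-dimensional terminal flops'' gives $b_j(X_i)=b_j(X_{i+1})$ for the \emph{singular} terminal models underlying the flop (this is \cite[Lemma 2.12]{me3}); it does not immediately say anything about the chosen P-minimal resolutions $\tl{X}_i$ and $\tl{X}_{i+1}$ sitting above them. Your claim that ``functoriality built into the definition of a P-desingularization'' forces the smooth models to have matching Betti numbers is not correct: a P-desingularization is simply a choice of P-minimal resolution on each side, and different P-minimal resolutions of the same variety are not a priori known to have equal Betti numbers---that is part of what is being proved.

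The paper closes this gap as follows. Since terminal flops preserve analytic singularity types \cite[Theorem 2.4]{ko2}, one can transport the feasible resolution $\tl{X}_i\to X_i$ across the flop to obtain some feasible resolution $\tl{X}'_{i+1}\to X_{i+1}$ with $b_j(\tl{X}'_{i+1})=b_j(\tl{X}_i)$. But $\tl{X}'_{i+1}$ need not equal the given $\tl{X}_{i+1}$; they are two P-minimal resolutions of $X_{i+1}$. By Theorem \ref{thm1} they are connected by further P-desingularizations of flops, and the key point (Remark \ref{fgdp}) is that these occur over varieties of \emph{strictly smaller} generalized depth. An induction on generalized depth then yields $b_j(\tl{X}'_{i+1})=b_j(\tl{X}_{i+1})$, completing the argument. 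You need to insert this inductive step; without it the proof is circular.
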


Although in dimension three P-minimal resolutions behaves well, for singularities of dimension greater than three, P-minimal resolutions may not be truly "minimal". An simple example is a smooth flip. If $X\dashrightarrow X'$ is a smooth flip over $W$, then both
$X$ and $X'$ are P-minimal resolutions of $W$, but $X'$ is better than $X$. Now the only known smooth flips are standard flips
\cite[Section 11.3]{hu}, and if $X\dashrightarrow X'$ is a standard flip, then it is easy to see that $b_i(X)\geq b_i(X')$ for all $i$
and the inequality is strict for some $i$. Thus the resolution of singularities with minimal Betti numbers may be the right minimal resolution
for higher dimensional singularities. Because of Corollary \ref{cor}, in dimension three P-minimal resolutions are exactly those smooth
resolutions which have minimal Betti numbers. Therefore, this new definition of minimal resolutions is compatible with our three-dimensional theorems.\par
We now return to the proof of our main theorems. Let $X$ be a threefold and $W\rightarrow X$ be a resolution of singularities. One can
run $K_W$-MMP over $X$ \[ W=X_0\dashrightarrow X_1\dashrightarrow...\dashrightarrow X_k=X.\]
Let $\tl{X}_i$ be a P-minimal resolution of $X_i$, then $\tl{X}_0=W$ and it is easy to see that $\tl{X}_k$ is also a P-minimal resolution
of $X$. Thus our main theorems can be easily proved if we know the relation between $\tl{X}_i$ and $\tl{X}_{i+1}$. Since $X_i$ has only terminal
and $\Q$-factorial singularities, studying P-minimal resolutions of $X_i$ becomes simpler.\par
In \cite{c} J. A. Chen introduces feasible resolutions for terminal threefolds, which is a resolution of singularities consisting of a sequence of divisorial contractions to points with minimal discrepancies (see Section \ref{stt} for more detail). Given a terminal threefold $X$ and
a feasible resolution $\bar{X}$ of $X$, one can define the generalized depth of $X$ to be the integer $\rho(\bar{X}/X)$.
The generalized depth is a very useful geometric invariant of a terminal threefold. In our application, the crucial factor is that
one can test whether a resolution of singularities $W\rightarrow X$ is a feasible resolution or not by comparing $\rho(W/X)$ and
the generalized depth of $X$. We need to understand that how do generalized depths change after steps of the minimal model program. After that,
we can prove that for terminal and $\Q$-factorial threefolds, P-minimal resolutions and feasible resolutions coincide.\par 
Now we only need to figure out the following two things: how do generalized depths change after a step of MMP, and
how do P-minimal resolutions change after a step of MMP. To answer those questions we have to factorize a step of MMP into more simpler
birational maps. In \cite{ch} J. A. Chen and Hacon proved that three-dimensional terminal flips and divisorial contractions to curves can
be factorized into a composition of (inverses of) divisorial contractions and flops. In this paper we construct a similar factorization for
divisorial contractions to points. After knowing the factorization, we are able to answer the two questions above and prove our main theorems.\par
In addition to the above, we introduce the notion of Gorenstein depth for terminal threefolds. The basic idea is as follows: given a sequence of steps of MMP of terminal threefolds
\[ X_0\dashrightarrow X_1\dashrightarrow ...\dashrightarrow X_k,\]
one can show that the generalized depth of $X_k$ is bounded above by the integer $k$ and the generalized depth of $X_0$. That is to say, the number of steps of MMP bounds the singularities on the minimal model. One may ask that, is there an opposite
bound? Namely if we know the singularities of the minimal model $X_k$, can we bound singularities of $X_0$? In this paper we define the
Gorenstein depth of terminal threefolds, which roughly speaking measures only Gorenstein singularities. One can show that the Gorenstein
depth is always non-decreasing when running three-dimensional terminal MMP. Our result on Gorenstein depth will have important applications in \cite{me}.\par
This paper is structured as follows: Section \ref{sper} is a preliminary section. In section \ref{sfac} we develop some useful tools to construct
relations between divisorial contractions to points. Those tools, as well as the explicit classification of divisorial contractions,
will be used in Section \ref{slk} to construct links of different divisorial contractions to points. In Section \ref{sdep} we prove the property
of the generalized depth. The construction of diagrams in Theorem \ref{thm1} will be given in Section \ref{sflp}. All our main theorems will be
proved in Section \ref{spmr}. In the last section we discuss possible higher dimensional generalization of the notion of minimal resolutions,
and possible application of our main theorems.\par 
I thanks Jungkai Alfred Chen for his helpful comments. The author is support by KIAS individual Grant MG088901.

\section{Preliminaries}\label{sper}
\subsection{Notations and conventions}\label{snc}
In this paper we only consider projective varieties over the complex number.\par
For a divisorial contraction, we mean a birational morphism $Y\rightarrow X$ which contracts an irreducible divisor $E$ to a locus
of codimension at least two, such that $K_Y$ is anti-ample over $X$. We will denote $v_E$ the valuation corresponds to $E$.\par
Let $G$ be a cyclic group of order $r$ generated by $\tau$. For any $\Z$-valued $n$-tuple $(a_1,...,a_n)$
one can define a $G$-action on $\A^n_{(x_1,...,x_n)}$ by $\tau(x_i)=\xi^{a_i}x_i$, where $\xi=e^{\frac{2\pi i}{r}}$.
We will denote the quotient space $\A^n/G$ by $\A^n_{(x_1,...,x_n)}/\frac{1}{r}(a_1,...,a_n)$.\par
We say that $w$ is a weight on $W/G=\A^n_{(x_1,...,x_n)}/G$ defined by $w(x_1,...,x_n)=\frac{1}{r}(b_1,...,b_n)$ if $w$ is a map
$\Oo_W\rightarrow \frac{1}{r}\Z_{\geq 0}$ such that
\[w\left(\sum_{(i_1,...,i_n)\in\Z^n_{\geq0}}c_{(i_1,...,i_n)}x_1^{i_1}...x_n^{i_n}\right)=
	\min\se{\frac{1}{r}(b_1i_1+...+b_ni_n)}{c_{(i_1,...,i_n)}\neq0}.\]\par
Assume that $\phi:X\dashrightarrow Y$ is a birational map. Let $U\subset X$ be the largest open set such that $\phi|_U$ is an identity map
and $Z\subset X$ be an irreducible subset such that $Z$ intersects $U$ non-trivially. We will denote $Z_Y$ the closure
of $\phi|_U(Z|_U)$.

\subsection{Weighted blow-ups}\label{swu}
Let $W\cong \A^n_{(x_1,...,x_n)}/\frac{1}{r}(a_1,...,a_n)$ be a cyclic-quotient singularity.
There is an elementary way to construct a birational morphism $W'\rightarrow W$, so called the weighted blow-up, defined as follows.\par
We write everything in the language of toric varieties. Let $N$ be the lattice $\ang{e_1,...,e_n,v}_{\Z}$,
where $e_1$, ..., $e_n$ is the standard basic of $\R^n$ and $v=\frac{1}{r}(a_1,...,a_n)$. Let $\sigma=\ang{e_1,...,e_n}_{\R\gq0}$.
We have $W\cong \spc\Cc [N\du\cap \sigma\du]$.\par
Let $w=\frac{1}{r}(b_1,...,b_n)$ be a vector such that $b_i=\lambda a_i+k_ir$ for $\lambda\in \N$ and $k_i\in\Z$, with $b_i\neq 0$.
We define a weighted blow-up of $W$ with weight $w$ to be the toric variety defined by the fan consists of those cones
\[\sigma_i=\ang{e_1,...,e_{i-1},w,e_{i+1},...,e_n}.\]
Let $U_i$ be the toric variety defined by the cone $\sigma_i$ and lattice $N$, namely \[U_i=\spc\Cc [N\du\cap \sigma_i\du].\]
\begin{lem}\label{wbup}
	One has that \[ U_i\cong \A^n/\ang{\tau,\tau'}\]	
	where $\tau$ is the action given by \[ x_i\mapsto \xi_{b_i}^{-r}x_i,\quad x_j\mapsto \xi_{b_i}^{b_j}x_j,\mbox{ }j\neq i\]
	and $\tau'$ is the action given by \[ x_i\mapsto \xi_{b_i}^{a_i}x_i,\quad x_j\mapsto \xi_{rb_i}^{a_jb_i-a_ib_j}x_j,\mbox{ }j\neq i.\]
	Here $\xi_k$ denotes a $k$-th roots of unity for any positive integer $k$.\par
	In particular, the exceptional divisor of $\bar{W}\rightarrow W$ is $\Pp(b_1,...,b_n)/G$ where
	$G$ is a cyclic group of order $m$ where $m$ is an integer divides $\lambda$.
\end{lem}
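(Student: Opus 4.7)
The plan is to handle both claims via toric geometry. For the chart $U_i$, I would introduce the sublattice $N' \subset N$ spanned by the ray generators $\{e_j : j \neq i\} \cup \{w\}$ of $\sigma_i$; since $b_i \neq 0$ these are linearly independent and form a $\Z$-basis of $N'$, so $\sigma_i$ is a nonsingular cone with respect to $N'$. The standard overlattice principle then gives $U_{\sigma_i}(N) = U_{\sigma_i}(N')/(N/N') = \A^n/(N/N')$, where the coordinates $x_1, \ldots, x_n$ on $\A^n$ are the characters of the dual basis $\{f_j^*\}$ in $(N')^\vee$. A direct computation yields $f_i^* = (r/b_i)\,e_i^*$ and $f_j^* = e_j^* - (b_j/b_i)\,e_i^*$ for $j \neq i$, so a lift $g \in N$ of an element of $N/N'$ acts on $x_j$ by multiplication by $e^{2\pi i \langle g, f_j^*\rangle}$. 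Substituting $g = -e_i$ reproduces $\tau$ and $g = v$ reproduces $\tau'$. Using the presentation of $N$ modulo $rv = \sum a_j e_j$ together with the identity $w = \lambda v + (k_1, \ldots, k_n)$, one sees $N/N'$ is generated by $\bar e_i$ and $\bar v$, hence by $\tau$ and $\tau'$; a determinant computation on the resulting $2 \times 2$ matrix of relations confirms $|N/N'| = b_i$.

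For the exceptional divisor, I would factor $\bar W \to W$ through the sublattice $\Z^n \subset N$: let $Y'$ be the toric variety for the refined fan with respect to $\Z^n$, so $Y' \to \A^n$ is the ordinary weighted blow-up of $\A^n$ with weights $(b_1, \ldots, b_n)$, whose exceptional divisor is $\Pp(b_1, \ldots, b_n)$. The overlattice principle applied to $\Z^n \subset N$ gives $\bar W = Y'/G$ with $G = N/\Z^n = \langle \xi\rangle$, $\xi = e^{2\pi i/r}$, acting on $\A^n$ by $x_j \mapsto \xi^{a_j} x_j$; this lifts to $Y'$ because the weighted ideal is $G$-invariant. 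Hence the exceptional divisor of $\bar W \to W$ is $\Pp(b_1, \ldots, b_n)$ modulo the effective $G$-action.

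Finally, I would observe that $\xi^\lambda$ acts trivially on $\Pp(b_1, \ldots, b_n)$: it sends $[y_1 : \ldots : y_n]$ to $[\xi^{\lambda a_1} y_1 : \ldots : \xi^{\lambda a_n} y_n]$, and from $\lambda a_j = b_j - k_j r$ and $\xi^r = 1$ this equals $[\xi^{b_1} y_1 : \ldots : \xi^{b_n} y_n]$, which is the weighted-projective scaling by $\xi$ and hence trivial. Consequently the effective action factors through $G/\langle \xi^\lambda\rangle$, a cyclic group of order $\gcd(r, \lambda)$, which divides $\lambda$ as required. The main bookkeeping obstacle is keeping the two lattices $N$ and $N'$ and their dual bases straight in the first part; the second part is essentially formal once that factorization is set up.
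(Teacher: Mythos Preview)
Your proposal is correct and is essentially the same toric computation as the paper's proof, only packaged differently. Where the paper applies an explicit linear transformation $T_i$ sending $\sigma_i$ to the standard cone and then reads off $\tau,\tau'$ from $T_ie_i$ and $T_iv$, you instead write down the dual basis $\{f_j^*\}$ directly and pair it against $-e_i$ and $v$; these are the same calculation viewed from the two sides of a change of basis, and your observation that $N/N'$ has order $b_i$ via the $2\times 2$ relation matrix $\begin{pmatrix} -a_i & r\\ k_i & \lambda\end{pmatrix}$ corresponds to the paper's verification that $e_i\in T_iN$. For the exceptional divisor, the paper argues locally from the relation $\tau^{k_i}={\tau'}^{\lambda}$, whereas you factor globally through the sublattice $\Z^n\subset N$ to realize $\bar W$ as a quotient of the ordinary weighted blow-up of $\A^n$; your route is arguably cleaner, and your check that $\xi^\lambda$ acts by the weighted-projective scaling (hence trivially) is exactly the global form of the paper's local relation.
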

\begin{proof}
	Let $T_i$ be a linear transformation such that $T_ie_j=e_j$ if $j\neq i$ and $T_iw=e_i$. One can see that
	\[T_ie_i=\frac{r}{b_i}(e_i-\sum_{j\neq i}\frac{b_j}{r}e_j)\]
	and \[ T_iv=\sum_{j\neq i}\frac{a_j}{r}e_j+\frac{a_i}{r}\frac{r}{b_i}(e_i-\sum_{j\neq i}\frac{b_j}{r}e_j)
		=\frac{a_i}{b_i}e_i+\sum_{j\neq i} \frac{a_jb_i-a_ib_j}{rb_j}e_j.\]
	Under this linear transformation $\sigma_i$ becomes the standard cone $\ang{e_1,...,e_n}_{\R\gq0}$. 
	Note that
	\begin{align*}
		k_iT_ie_i+\lambda T_iv&=\frac{k_ir+\lambda a_i}{b_i}e_i+\sum_{j\neq i}\frac{\lambda(a_jb_i-a_ib_j)-k_ib_jr}{rb_i}e_j \\
		&=e_i+\sum_{j\neq i}\frac{\lambda a_jb_i-b_ib_j}{rb_i}e_j=e_i-\sum_{j\neq i}k_je_j.
	\end{align*}
	Hence $e_i\in T_i N$ and $T_iN=\ang{e_1,...,e_n,T_ie_i,T_iv}_{\Z}$. Now $T_ie_i$ corresponds to the action $\tau$ and $T_iv$
	corresponds to the action $\tau'$. This means that $U_i\cong \A^n/\ang{\tau,\tau'}$.\par
	The computation above shows that $\tau^{k_i}={\tau'}^{\lambda}$. If we glue $(x_i=0)\subset\A^n/\ang{\tau}$ together then we get a weight
	projective space $\Pp(b_1,...,b_n)$. The relation $\tau^{k_i}={\tau'}^{\lambda}$ implies that $(x_i=0)\subset U_i$ can be viewed as
	$\Pp(b_1,...,b_n)$ quotient by a cyclic group of order $m$ for some factor $m$ of $\lambda$.
\end{proof}
\begin{cor}\label{wbcc}
	Let $x_1$, ..., $x_n$ be the local coordinates of $W$ and $y_1$, ..., $y_n$ be the local coordinates of $U_i$.
	The change of coordinates of $U_i\rightarrow W$ are given by $x_j=y_jy_i^{\frac{b_j}{r}}$ and $x_i=y_i^{\frac{b_i}{r}}$.
\end{cor}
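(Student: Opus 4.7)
The plan is to read the change of coordinates directly from the toric dictionary between characters and dual-cone generators, building on the linear transformation $T_i$ used in the proof of Lemma \ref{wbup}. First I would identify the local coordinates with characters on the orbifold covers: the coordinates $x_1,\ldots,x_n$ on $W=\spc\Cc[N\du\cap\sigma\du]$ correspond to the characters $\chi^{e_1\du},\ldots,\chi^{e_n\du}$ for the standard dual basis in $\mathbb{Q}^n$, while the coordinates $y_1,\ldots,y_n$ on $U_i$ correspond to the characters $\chi^{f_k}$, where $f_1,\ldots,f_n$ is the basis of $N\du\otimes\Q$ dual to the rays $e_1,\ldots,\widehat{e_i},\ldots,e_n,w$ of $\sigma_i$.

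Next I would compute this dual basis explicitly. For $f_i$ (dual to $w$), the conditions $\langle f_i,e_k\rangle=0$ for $k\neq i$ and $\langle f_i,w\rangle=1$ force $f_i=\frac{r}{b_i}e_i\du$. For $j\neq i$, writing $f_j=e_j\du+\alpha e_i\du$ and imposing $\langle f_j,w\rangle=0$ gives $\alpha=-\frac{b_j}{b_i}$, so $f_j=e_j\du-\frac{b_j}{b_i}e_i\du$.

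Finally, translating back to multiplicative notation, $y_i^{b_i/r}=\chi^{(b_i/r)f_i}=\chi^{e_i\du}=x_i$, and for $j\neq i$,
\[
y_j\,y_i^{b_j/r}=\chi^{f_j+(b_j/r)f_i}=\chi^{e_j\du-\frac{b_j}{b_i}e_i\du+\frac{b_j}{b_i}e_i\du}=\chi^{e_j\du}=x_j,
\]
which are exactly the claimed formulas. I anticipate no substantial obstacle: the calculation is routine toric bookkeeping once the dictionary above is set up. The only subtlety worth noting is that the fractional powers $y_i^{b_i/r}$ and $y_i^{b_j/r}$ live naturally on the orbifold covers (equivalently, they record elements of $N\du\otimes\Q$), and descend to genuine coordinate identities on $W$ and $U_i$ after passing to invariants under the cyclic groups $\ang{\tau,\tau'}$ identified in Lemma \ref{wbup}.
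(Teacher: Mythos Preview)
Your proposal is correct and is essentially the same approach as the paper's, just spelled out in more detail: the paper's proof is the one-line remark that the change of coordinates is given by $T_i^t$ (the transpose of the linear map $T_i$ from Lemma~\ref{wbup}), and your computation of the dual basis $f_1,\ldots,f_n$ to the ray generators of $\sigma_i$ is precisely the explicit calculation of that transpose.
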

\begin{proof}
	The change of coordinate is defined by $T_i^t$, where $T_i$ is defined as in Lemma \ref{wbup}.
\end{proof}
\begin{cor}
	Assume that \[S=(f_1(x_1,...,x_n)=...=f_k(x_1,...,x_n)=0)\subset W\] is a complete intersection
	and $S'$ is the proper transform of $S$ on $W'$. Assume that the exceptional locus $E$ of $S'\rightarrow S$ is irreducible and reduced.
	Then \[a(E,S)=\frac{b_1+...+b_n}{r}-\sum_{i=1}^kw(f_k)-1.\]
\end{cor}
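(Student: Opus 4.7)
The plan is to compute the discrepancy by pulling back $K_W$ via the weighted blow-up $\pi:W'\to W$ and then stripping off the hypersurfaces $H_j=(f_j=0)$ one at a time via adjunction.

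First I would determine the discrepancy of the exceptional divisor $E_W\subset W'$ over $W$. In the chart $U_i$, the change of coordinates of Corollary \ref{wbcc} reads $x_i=y_i^{b_i/r}$ and $x_j=y_jy_i^{b_j/r}$ for $j\neq i$. A direct Jacobian computation yields
\[
\pi^{\ast}(dx_1\wedge\cdots\wedge dx_n)=\frac{b_i}{r}\,y_i^{(b_1+\cdots+b_n)/r-1}\,dy_1\wedge\cdots\wedge dy_n,
\]
so that $K_{W'}=\pi^{\ast}K_W+\left(\frac{b_1+\cdots+b_n}{r}-1\right)E_W$.

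Next I would compute the vanishing order of $\pi^{\ast}f_j$ along $E_W$. The same substitution sends a monomial $x_1^{i_1}\cdots x_n^{i_n}$ to $\bigl(\prod_{\ell\neq i}y_\ell^{i_\ell}\bigr)\,y_i^{(b_1i_1+\cdots+b_ni_n)/r}$, so the minimum $y_i$-exponent over all monomials of $f_j$ equals $w(f_j)$. Writing $H_j=(f_j=0)\subset W$ and letting $H_j'$ be its strict transform, this gives the Cartier identity $\pi^{\ast}H_j=H_j'+w(f_j)E_W$ on $W'$.

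Finally I would iterate adjunction along the flag $W\supset H_1\supset H_1\cap H_2\supset\cdots\supset S=H_1\cap\cdots\cap H_k$ and its strict transform on $W'$. By the complete-intersection hypothesis each $S_j:=H_1\cap\cdots\cap H_j$ is Cartier in $S_{j-1}$, and the same holds for $S_j'\subset S_{j-1}'$ at the generic point of $E$. Adjunction then propagates the discrepancies via
\[
K_{S_j'}-\pi|_{S_j'}^{\ast}K_{S_j}=\bigl(K_{S_{j-1}'}-\pi|_{S_{j-1}'}^{\ast}K_{S_{j-1}}\bigr)\big|_{S_j'}-w(f_j)\cdot E_W|_{S_j'}.
\]
Starting from $S_0=W$ and iterating to $j=k$, the coefficient of $E$ becomes $\frac{b_1+\cdots+b_n}{r}-\sum_{j=1}^{k}w(f_j)-1$, which is the claimed formula.

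The main obstacle is justifying the iterated adjunction in the presence of the cyclic quotient singularities of $W'$ (and possibly of $S'$), and checking that the Cartier divisor $E_W|_{S'}$ coincides, with multiplicity one, with the reduced prime divisor $E$. The irreducibility-and-reducedness assumption on the exceptional locus $E\subset S'$ is exactly what is needed to localize at the generic point of $E$: there the successive $S_j'$ are honest Cartier divisors inside a smooth open subset of $W'$, the usual adjunction formula applies literally, and the Weil-divisor coefficient of $E$ on $S'$ is read off directly from the computation above.
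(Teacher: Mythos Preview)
Your proof is correct and follows essentially the same approach as the paper: compute $K_{W'}=\pi^{\ast}K_W+\bigl(\frac{b_1+\cdots+b_n}{r}-1\bigr)E_W$ via the Jacobian of the chart change, then apply adjunction. The paper's own proof is much terser---it does the Jacobian calculation and then simply writes ``the statement follows from the adjunction formula''---so your version, which spells out the iterated adjunction and addresses the role of the irreducibility/reducedness hypothesis at the generic point of $E$, is a more complete account of the same argument.
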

\begin{proof}
	Assume first that $k=0$. Denote $\phi:W'\rightarrow W$. Then on $U_i$ we have
	\[\phi\st dx_1\wedge...\wedge dx_n=\frac{b_i}{r}y_i^{\frac{b_i}{r}-1}
		\left(\prod_{j\neq i} y_i^{\frac{b_j}{r}}\right) dy_1\wedge...\wedge dy_n,\]
	hence $K_{W'}=\phi\st K_W+(\frac{b_1+...+b_n}{r}-1)E$.\par
	Now the statement follows from the adjunction formula.
\end{proof}
\begin{cor}
	Let $F=exc(W'\rightarrow W)$. Then \[F^n=\frac{(-1)^{n-1}r^{n-1}}{b_1...b_nm}.\] Here $m$ is the integer in Lemma \ref{wbup}.
\end{cor}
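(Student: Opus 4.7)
The plan is to compute $F^{n}$ as a toric intersection number on the simplicial toric variety $W'$, using the fan description established above together with (i) the standard formula expressing a top intersection of distinct toric prime divisors in terms of the multiplicity of the corresponding maximal cone, and (ii) rational linear equivalences among toric divisors coming from characters.

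First I would let $v_0=w/m$ denote the primitive lattice generator of the ray $\R_{\geq 0}w$ in $N$ and identify the reduced exceptional divisor $F$ with the toric prime $D_{v_0}$. The identification of this $m$ with the integer appearing in Lemma~\ref{wbup} comes from the relation $\tau^{k_i}=\tau'^{\lambda}$ proved there: it identifies the cyclic group $G$ that quotients $\Pp(b_1,\ldots,b_n)$ with the image of $\Z\langle w\rangle$ in $N/\Z^n$, so that its order coincides with the primitive index of $w$ in its ray.

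Next, for each maximal cone $\sigma_i=\langle e_1,\ldots,\widehat{e_i},\ldots,e_n,v_0\rangle$ of $W'$, a determinant computation gives $\mathrm{mult}(\sigma_i)=b_i/m$ (the sublattice spanned by the primitive generators has covolume $b_i/(rm)$ while $N$ has covolume $1/r$), so that
\[ D_{e_1}\cdots\widehat{D_{e_i}}\cdots D_{e_n}\cdot D_{v_0}=\frac{m}{b_i}. \]
Then, working in $A^1(W')\otimes\Q$, applying the rational character $u=e_j^{\ast}$ to the toric relation $\sum_{\rho}\langle u,v_\rho\rangle D_\rho=0$ yields the linear equivalence
\[ D_{e_j}\equiv -\frac{b_j}{rm}\,D_{v_0}. \]
Substituting this equivalence into $n-1$ of the factors in the mixed intersection of the previous step and solving for $D_{v_0}^{n}$ produces the stated value of $F^{n}$.

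The main obstacle is the bookkeeping between the two incarnations of the integer $m$: the lattice-theoretic primitive index used in the toric computation, and the order of the cyclic quotient group $G$ appearing in Lemma~\ref{wbup}. Once that identification is verified via the relation $\tau^{k_i}=\tau'^{\lambda}$, the remainder of the argument is a determinant computation together with a manipulation of rational linear equivalences, both of which are routine.
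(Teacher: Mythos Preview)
Your toric approach is a genuinely different route from the paper's. The paper computes the normal bundle $F|_F\cong\Oo_{\Pp(b_1,\dots,b_n)}(-r)$ directly from the change-of-coordinates formula in Corollary~\ref{wbcc} and then reads off $F^n=(F|_F)^{n-1}$ as a degree on the (quotient of the) weighted projective space; you instead work intrinsically on $W'$ via cone multiplicities and character relations. When $m=1$ (in particular for all the $w$-morphisms used later in the paper) the two approaches agree and your argument is fine.

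For general $m$, however, your argument breaks. First, the final substitution does not produce the stated value: carrying it through,
\[
\prod_{j\neq i}\Bigl(-\tfrac{b_j}{rm}\Bigr)\,D_{v_0}^{\,n}\;=\;\frac{m}{b_i}
\quad\Longrightarrow\quad
D_{v_0}^{\,n}\;=\;(-1)^{n-1}\,\frac{r^{\,n-1}m^{\,n}}{b_1\cdots b_n},
\]
which differs from $(-1)^{n-1}r^{n-1}/(b_1\cdots b_n m)$ by a factor of $m^{n+1}$. Second, your identification of the integer $m$ from Lemma~\ref{wbup} with the primitive index of $w$ along its ray is not justified by the relation $\tau^{k_i}=\tau'^{\lambda}$ and is in fact false: take $r=3$, $(a_1,a_2)=(1,1)$, $\lambda=3$, $(b_1,b_2)=(3,6)$. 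Then $w=\frac{1}{3}(3,6)=(1,2)$ is already primitive in $N=\Z^2+\Z\cdot\frac{1}{3}(1,1)$, so your primitive index is $1$; yet on $U_1$ the element $\tau$ is trivial while $\tau'$ acts on the exceptional line by $x_2\mapsto\xi_3^{-1}x_2$, so the paper's $m$ equals $3$. Thus neither the bookkeeping of $m$ nor the final arithmetic goes through, and the toric computation of $D_{v_0}^n$ (which is correct as a computation of the reduced exceptional divisor's top self-intersection) does not recover the formula as stated.
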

\begin{proof}
	From the change of coordinate formula in Corollary \ref{wbcc} one can see that $F|_F=\Oo_{\Pp(b_1,...,b_n)}(-r)$. It follows that
	\[F^n=(F|_F)^{n-1}=\frac{(-1)^{n-1}r^{n-1}}{b_1...b_nm}.\]
\end{proof}

\begin{defn}
	Let $\phi_i:U_i\rightarrow W$ be the morphism in Corollary \ref{wbcc}. For any $G$-semi-invariant function $u\in\Oo_W$
	we can define the strict transform of $u$ on $U_i$ by $(\phi_i^{-1})\ts(u)=\phi\st(u)/y_i^{w(u)}$.
\end{defn}

In this paper we are going to consider terminal threefolds which are embedding into cyclic quotient of $\A^4$ or $\A^5$
\[ X\hookrightarrow \A^4_{(x,y,z,u)}/\frac{1}{r}(a,b,c,d)\quad\mbox{or}\quad X\hookrightarrow \A^5_{(x,y,z,u,t)}/\frac{1}{r}(a,b,c,d,e).\]
We say that $Y\rightarrow X$ is a weighted blow-up with weight $w$ if $Y$ is the proper transform of $X$ inside the weighted blow-up
of $\A^4_{(x,y,z,u)}/\frac{1}{r}(a,b,c,d)$ or $\A^5_{(x,y,z,u,t)}/\frac{1}{r}(a,b,c,d,e)$ with weight $w$.
\begin{nota}\label{cov}
	Assume that $X$ is of the above form and let $Y\rightarrow X$ be a weighted blow-up.
	The notation $U_x$, $U_y$, $U_z$, $U_u$ and $U_t$ will stand for $U_1$, ..., $U_5$ in Lemma \ref{wbup}.
\end{nota}
\begin{nota}\label{wcov}
	Assume that $w$ is a weight on $\A^n_{(x_1,...,x_n)}$ determined by $w(x_1,...,x_n)=(a_1,...,a_n)$ and
	\[f(x_1,...,x_n)=\sum_{(i_1,...,i_n)\in\Z_{\geq 0}^n}\lambda_{i_1,...,i_n}x_1^{i_1}...x_n^{i_n}\] is a
	regular function on $\A^n$. We denote \[f_w=\sum_{a_1i_1+...+a_ni_n=w(f)}\lambda_{i_1,...,i_n}x_1^{i_1}...x_n^{i_n}.\]
\end{nota}

\subsection{Terminal threefolds}
\subsubsection{Local classification}
The local classification of terminal threefolds were done by Reid \cite{re2} for Gorenstein cases and Mori \cite{mo3} for non-Gorenstein cases.
\begin{defn}
	A compound Du Val point $P\in X$ is a hypersurface singularity locally analytically defined by $f(x,y,z)+tg(x,y,z,t)=0$, where
	$f(x,y,z)$ defines a Du Val singularity.
\end{defn}
\begin{thm}[\cite{re2}, Theorem 1.1]
	Let $P\in X$ be a point of threefold. Then $P\in X$ is an isolated compound Du Val point if and only if $P\in X$ is terminal 
	and $K_X$ is Cartier near $P$.
\end{thm}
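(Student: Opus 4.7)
The statement is Reid's characterisation, and I would prove the two implications separately, both via a comparison between $X$ at $P$ and a hyperplane section $H$ through $P$.

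For the forward direction, a cDV singularity is by definition a hypersurface in smooth $\A^4$, hence Gorenstein, so $K_X$ is Cartier near $P$. For terminality, take a log resolution $\pi\colon Y\to X$ that simultaneously resolves the Du Val slice $H=(t=0)$, and let $\tilde H$ denote its strict transform. Adjunction $K_{\tilde H}=(K_Y+\tilde H)|_{\tilde H}$ gives, for each exceptional divisor $E\subset Y$ centred over $P$,
\[ a(E\cap\tilde H,H) \;=\; a(E,X) \;-\; \mathrm{mult}_E(\pi^{*}H). \]
Since $H$ is Du Val, hence canonical, the left side is $\geq 0$; since $H$ is Cartier through $P$, $\mathrm{mult}_E(\pi^{*}H)\geq 1$. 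Hence $a(E,X)\geq 1$ for every such $E$, giving terminality at $P$.

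For the converse, assume $X$ is terminal with $K_X$ Cartier near $P$. The key input is the general elephant theorem for Gorenstein terminal threefolds (due to Reid): a suitably general hyperplane section $H\ni P$ has only Du Val singularities. Granted this, $H$ embeds locally as a hypersurface in $\A^3$, so $\dim T_P H=3$; as $H$ is cut from $X$ by a single element $t\in\mathfrak{m}_P\setminus\mathfrak{m}_P^2$, one has $\dim T_P X\leq 4$, forcing $X$ to embed locally as a hypersurface $F=0$ in smooth $\A^4$. Choosing coordinates with $H=(t=0)$ and decomposing $F(x,y,z,t)=f(x,y,z)+t\,g(x,y,z,t)$ with $f:=F|_{t=0}$, the identification $H=(f=0)$ exhibits $P\in X$ in cDV form. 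Isolatedness is a standard fact about three-dimensional terminal singularities (non-Gorenstein or Gorenstein, they are always isolated points).

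The main obstacle is the general elephant step itself. The same adjunction formula $a(E\cap\tilde H,H)=a(E,X)-\mathrm{mult}_E(\pi^{*}H)$ is what one wants to reverse, but while $a(E,X)\geq 1$ holds by integrality of Gorenstein discrepancies, the multiplicity $\mathrm{mult}_E(\pi^{*}H)$ equals $v_E(\mathfrak{m}_P)$ for a generic $H$ and can a priori exceed $1$ for ``deep'' exceptional valuations over $P$. Reid navigates this by choosing $H$ inside a carefully adapted pencil (rather than a generic member of the full hyperplane system) and verifying the multiplicity bound through a partial analysis of the terminal Gorenstein valuations; an alternative route is to invoke a Brieskorn-type simultaneous resolution of the family of plane sections. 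Once this input is accepted, the tangent-space bookkeeping and the cDV presentation of the converse are essentially formal.
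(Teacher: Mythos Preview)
The paper does not supply a proof of this theorem; it is quoted from Reid's 1983 paper as a preliminary fact, with no argument given here. So there is nothing in the present paper to compare your proposal against.

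That said, your sketch tracks Reid's original argument closely. The forward direction via adjunction with the Du Val slice is standard; one small point of care is that the identity $a(E\cap\tilde H,H)=a(E,X)-\mathrm{mult}_E(\pi^*H)$ only makes sense once you know $E$ meets $\tilde H$ in a divisor, so one either arranges the resolution carefully or phrases the comparison as ``$(X,H)$ canonical $\Leftrightarrow$ $H$ canonical'' and then reads off $a(E,X)\geq \mathrm{mult}_E(\pi^*H)\geq 1$.

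For the converse, there is a presentational circularity you should fix: you invoke ``the general elephant theorem for Gorenstein terminal threefolds (due to Reid)'' as an input, but that statement \emph{is} the converse you are proving, not a separate lemma available beforehand. You clearly recognise this in your final paragraph, where you correctly identify the elephant step as the real obstacle and sketch Reid's approach (choosing $H$ in an adapted pencil rather than generically, and controlling $v_E(\mathfrak m_P)$ against the discrepancies). So the substance is right; just restructure so that the elephant argument is presented as the heart of the proof rather than as a citation.
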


\begin{thm}[\cite{mo3}, cf. \cite{re} Theorem 6.1]
	Let $P\in X$ be a germ of three-dimensional terminal singularity such that $K_X$ has Cartier index $r>1$. Then
	\[ X\cong (f(x,y,z,u)=0)\subset \A^4_{(x,y,z,u)}/\frac{1}{r}(a_1,...,a_4)\]
	such that $f$, $r$ and $a_i$ are given by Table \ref{ta3}.
\begin{table}
\begin{tabular}{|c|c|c|c|c|}\hline
Type & $f(x,y,z,u)$ & $r$ & $a_i$ & condition \\\hline
$cA/r$ & $xy+g(z^r,u)$ & any & $(\alpha,-\alpha,1,r)$ & \tcc{$g\subset m_P^2$}{$\alpha$ and $r$ are coprime}\\\hline
$cAx/4$ & \tcc{$xy+z^2+g(u)$}{$x^2+z^2+g(y,u)$} & $4$ & $(1,1,3,2)$ & $g\in m_P^3$ \\\hline
$cAx/2$ & $xy+g(z,u)$ & $2$ & $(0,1,1,1)$ & $g\in m_P^4$ \\\hline
$cD/3$ & \ttc{$x^2+y^3+z^3+u^3$}{$x^2+y^3+z^2u+yg(z,u)+h(z,u)$}{$x^2+y^3+z^3+yg(z,u)+h(z,u)$} & $3$ & $(0,2,1,1)$ &
	\tcc{$g\in m_P^4$}{$h\in m_P^6$} \\\hline
$cD/2$ & \ttc{$x^2+y^3+yzu+g(z,u)$}{$x^2+yzu+y^n+g(z,u)$}{$x^2+yz^2+y^n+g(z,u)$} & $2$ & $(1,0,1,1)$ &
	$g\in m_P^4$, \ttc{}{$n\geq4$}{$n\geq3$} \\\hline
$cE/2$ & $x^2+y^3+yg(z,u)+h(z,u)$ & $2$ & $(1,0,1,1)$ & \tcc{$g$, $h\in m_P^4$}{$h_4\neq 0$}\\\hline
\end{tabular}
\caption{Classification of terminal threefolds} \label{ta3}
\end{table}
\end{thm}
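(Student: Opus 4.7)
The plan is to reduce to the Gorenstein case via the index-one cover. Given $P \in X$ terminal of index $r > 1$, form the cyclic cover $\pi \colon \tl{X} \to X$ associated to the canonical class, defined by $\tl{X} = \mathrm{Spec}_X \bigoplus_{i=0}^{r-1} \Oo_X(-iK_X)$, so that $K_{\tl{X}}$ is Cartier and $\mu_r$ acts freely on $\tl{X} \setminus \pi^{-1}(P)$. Since terminal singularities are preserved under \'etale-in-codimension-one covers, $\tl{X}$ is also terminal; by Reid's theorem already invoked above, $\tl{X}$ is therefore an isolated compound Du Val singularity and we may write it as a hypersurface $(\tl{f} = 0) \subset \A^4$.

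Next, I would linearize the $\mu_r$-action on the ambient $\A^4$ by choosing semi-invariant generators $x, y, z, u$ of the maximal ideal at $P$. This produces weights $\frac{1}{r}(a_1, a_2, a_3, a_4)$ on the coordinates, and $\tl{f}$ is then itself a semi-invariant of some weight $a_f \in \Z/r$. The first stratification is by the singularity type of a generic cDV hyperplane section of $\tl{X}$, combined with the fact that the action must restrict to such a section; this is what produces the gross types $cA/r$, $cAx/4$, $cAx/2$, $cD/3$, $cD/2$, and $cE/2$ in Table \ref{ta3}.

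For each type, the work is to pin down both the weight tuple and the equation. Two constraints drive the classification. First, $\mu_r$ acts freely away from $P$, so for any coordinate axis not contained in $\tl{X}$ through $P$ the coprimality $\gcd(r, a_i) = 1$ must hold; similar coprimality conditions hold on fixed subspaces. Second, the quotient $\tl{X} / \mu_r = X$ must itself be terminal, and the Reid--Tai criterion applied to the weight $(a_1, a_2, a_3, a_4; -a_f)$ of the action on the conormal sequence gives a combinatorial positivity condition at every power of the generator. Enumerating all weight tuples in each ambient Du Val type that satisfy these conditions yields finitely many candidates, which after equivariant coordinate changes produce the normal forms recorded in Table \ref{ta3}.

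The hardest part will be the final normal form reduction within each type, especially the $cD/2$ and $cE/2$ cases, where several distinct normal forms coexist and one must carefully track how a $\mu_r$-equivariant change of coordinates interacts with the Du Val singularity at the origin. This requires a semi-invariant analogue of Weierstrass preparation and of Arnold's finite-determinacy arguments, so that $\tl{f}$ can be truncated at finite order while respecting the grading. Cataloguing the resulting subcases and verifying that the distinct entries in the table are genuinely inequivalent under further equivariant coordinate changes is the main bookkeeping obstacle; this is where Mori's original argument invests the bulk of its technical weight.
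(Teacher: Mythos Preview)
The paper does not prove this theorem at all: it is stated as a citation of Mori's classification \cite{mo3} (cf.\ Reid \cite{re}, Theorem 6.1), and no proof is given or attempted in the paper. So there is no ``paper's own proof'' to compare your proposal against.

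That said, your sketch is an accurate outline of the standard argument due to Mori. The reduction via the index-one cover to a $\mu_r$-equivariant cDV hypersurface, the linearization of the action to obtain weights, the free-in-codimension-one constraint, and the equivariant normal-form reduction are exactly the ingredients of the original proof. Your identification of the $cD/2$ and $cE/2$ cases as the most delicate is also correct: this is where Mori's paper spends most of its effort, and where the later refinements (e.g.\ the ``general elephant'' analysis in \cite{re}) add clarity. If you were asked to supply a proof, this would be the right roadmap; but for the purposes of this paper the theorem is simply quoted as background.
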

Assume that $P\in X$ is a three-dimensional terminal singularity. Then there exists a section $H\in|-K_X|$ which has Du Val singularities
(so-called a general elephant). Please see  \cite[(6.4)]{re} for details.
\subsubsection{Classification of divisorial contractions to points}
Divisorial contractions to points between terminal threefolds are well-classified by Kawamata, Hayakawa, Kawakita and Yamamoto.
\begin{thm}
	Assume that $Y\rightarrow X$ is a divisorial contraction to a point between terminal threefolds, then there exists an embedding
	$X\hookrightarrow W$ with $W=\A^4_{(x,y,z,u)}$ or $\A^5_{(x,y,z,u,t)}$ and a weight $w(x,y,z,u)=\frac{1}{r}(a_1,...,a_4)$
	or $w(x,y,z,u,t)=\frac{1}{r}(a_1,...,a_5)$, such that $Y\rightarrow X$ is a weighted blow-up with respect to $w$.\par
	The defining equation of $X\subset W$ and the weight is given in Table \ref{taA} to Table \ref{taE4}.
\end{thm}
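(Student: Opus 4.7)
The plan is to proceed by a case-by-case analysis, following the Mori--Reid local classification already recorded in Table \ref{ta3} (plus the Gorenstein $cA$, $cD$, $cE$ types covered by the compound Du Val theorem). For each germ $P\in X$, I would enumerate all divisorial contractions $Y\to X$ centered at $P$ for which $Y$ is still terminal, and for each one exhibit a weight $w$ and an embedding $X\hookrightarrow W$ realizing $Y\to X$ as the proper transform of the weighted blow-up of $W$ described in Section \ref{swu}.

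The organizing numerical invariant is the discrepancy $a(E,X)=a/r$, which must be a positive multiple of $1/r$ by terminality of $X$ and $Y$. For cyclic-quotient terminal germs, Reid's singular Riemann--Roch and the existence of a general elephant $H\in |-K_X|$ with Du Val singularities (cf.\ \cite[(6.4)]{re}) together bound $a$ and force $v_E$ to be determined by its values on a small set of analytic coordinates generating $\mathfrak m_P$ inside the ambient $\mathbb A^4/\tfrac1r(a_1,\dots,a_4)$ or $\mathbb A^5/\tfrac1r(a_1,\dots,a_5)$. The strategy for each row of the tables is then: (i) list the admissible tuples $(v_E(x),v_E(y),v_E(z),v_E(u))$ (and $v_E(t)$ in the $\mathbb A^5$ case); (ii) for each tuple, exhibit an analytic change of coordinates in which $v_E$ coincides with the monomial valuation $w$ of weight that tuple; (iii) verify on the affine charts $U_x,U_y,U_z,U_u,U_t$ of Notation \ref{cov} that the resulting weighted blow-up is in fact a divisorial contraction to a terminal singularity, using the discrepancy and self-intersection formulas in Section \ref{swu}.

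The main obstacle is step (ii), the ``distinguishing divisor'' problem solved by Kawakita: starting from the initial form $f_w$ of the defining equation $f$ of $X\subset W$ in Notation \ref{wcov}, one must successively modify the embedding to kill higher-order terms that would prevent $v_E$ from being the pure monomial weight $w$. The Gorenstein $cA$, $cD$, $cE$ classes (Kawamata, Hayakawa) and the $cA/r$ class are comparatively clean because the equation is close to a binomial and the weighted blow-up can be read off from the Newton polyhedron. The genuinely hard cases are $cD/2$, $cD/3$, $cAx/4$, and $cE/2$: the Cartier index is small, $f$ has several competing monomial terms of the same weight, and there are several distinct families of divisorial contractions with the same discrepancy. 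Here I would appeal to the detailed weight-diagram arguments and deformation-to-normal-form constructions of Kawakita and Yamamoto, which reduce the problem to a finite number of combinatorial configurations that can be matched directly against the rows of Tables \ref{taA}--\ref{taE4}.

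Having done (i)--(iii) for every germ type, the existence half of the theorem is immediate from Section \ref{swu}: each row of the tables defines an explicit weighted blow-up, and the formulas for discrepancy, $F^n$, and the local charts show the proper transform of $X$ is an irreducible divisorial contraction to a terminal point. Since this theorem is a compilation of the classifications of Kawamata, Hayakawa, Kawakita and Yamamoto, at the expository level I would present the proof as a verification that the tables reproduce their lists, with references for the non-trivial distinguishing arguments, rather than repeating the intricate case analyses in full.
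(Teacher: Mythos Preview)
Your proposal is correct and ultimately lands on the same approach as the paper: this theorem is not proved in the paper at all, but is stated as a compilation of the classification results of Kawamata, Hayakawa, Kawakita and Yamamoto, with the paper simply listing, immediately after the statement, the precise reference in the literature for each row of Tables~\ref{taA}--\ref{taE4}. Your final paragraph recognizes exactly this, so the substantive content matches; the preceding outline of how one would carry out the classification from scratch is accurate in spirit but goes well beyond what the paper does, which is purely to cite.
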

For reader's convenience we put those tables in Section \ref{slk}. In those tables we use the following notation: for an non-negative
integer $m$ the notation $g\gq m$ means a function $g\in\Oo_W$ such that $w(g)=m$. The notation $p_m$ means a function $p\in\Oo_W$
which is homogeneous of weight $m$ with respect to the weight $w$.\par
The reference of each cases in  Table \ref{taA}, ..., Table \ref{taE4} is the follows:
\begin{itemize}
\item Case A1 is \cite[Theorem 1.2 (i)]{k2}. Case A2 is \cite[Theorem 2.6]{y}.
\item Case Ax1-Ax4 are \cite{h1} Theorem 7.4, Theorem 7.9, Theorem 8.4 and Theorem 8.8 respectively.
\item Case D1-D5 are Theorem 2.1-2.5 in \cite{h4} respectively.
\item Case D6 and Case D7 are \cite[Theorem 1.2 (ii)]{k2}. Case D8-D11 is \cite{y} Theorem 2.1-2.4. Case D12 is \cite[Theorem 2.7]{y}.
\item Case D13 is \cite[Theorem 9.9, Theorem 9.14, Theorem 9.20]{h1}. Case D14 is \cite[Theorem 9.25]{h1}.
\item Case D16 is \cite[Proposition 4.4]{h2}. Case D17 is \cite[Proposition 4.7, 4.12]{h2} Case D18 is \cite[Proposition 4.9]{h2}.
	Case D18 is \cite[Proposition 5.4]{h2}. Case D19 is 
	\cite[Proposition 5.8, Proposition 5.13, Proposition 5.22, Proposition 5.28, Proposition 5.35]{h2}.
	Case D20 is \cite[Proposition 5.18, Proposition 5.25]{h2}.
	Case D21 is \cite[Proposition 5.16, Proposition 5.32]{h2}. Case D22 is \cite[Proposition 5.9, Proposition 5.36]{h2}.
\item Case D23 and D24 is \cite[Theorem 1.2(ii)]{k2} and \cite[Theorem 1.1 (iii)]{h3}. Case D25-D28 is \cite{h3}
	Theorem 1.1 (i), (i'), (ii'), (iii), (ii) respectively. Case D29 is \cite[Theorem 2]{k3}.
\item Case E1-E18 is \cite[Theorem 1]{h5}.
\item Case E19-E21 is \cite{y} Theorem 2.5, Theorem 2.9, Theorem 2.10 respectively.
\item Case E22 is \cite[Theorem 10.11, Theorem 10.17, Theorem 10.22, Theorem 10.28, Theorem 10.33, Theorem 10.41]{h1}.
	Case E23 is \cite[Theorem 10.33, Theorem 10.47]{h1}. Case E24 is \cite[Theorem 10.54, Theorem 10.61]{h1}.
	Case E25 is \cite[Theorem 10.67]{h1}. Case E26 is \cite[Theorem 1.2]{h3}.
\end{itemize}
\subsubsection{The depth}\label{stt}

\begin{defn}
	Let $Y\rightarrow X$ be a divisorial contraction contracts a divisor $E$ to a point $P$. We say that $Y\rightarrow X$ is a 
	\emph{$w$-morphism} if $a(X,E)=\frac{1}{r_P}$, where $r_P$ is the Cartier index of $K_X$ near $P$.
\end{defn}
\begin{defn}
	The depth of a terminal singularity $P\in X$, $dep(P\in X)$, is the minimal length of the sequence
	\[ X_m\rightarrow X_{m-1}\rightarrow \cdots\rightarrow X_1\rightarrow X_0=X,\]
	such that $X_m$ is Gorenstein and $X_i\rightarrow X_{i-1}$ is a $w$-morphism for all $1\leq i\leq m$.\par
	The generalize depth of a terminal singularity $P\in X$, $gdep(P\in X)$, is the minimal length of the sequence
	\[ X_n\rightarrow X_{n-1}\rightarrow \cdots\rightarrow X_1\rightarrow X_0=X,\]
	such that $X_n$ is smooth and $X_i\rightarrow X_{i-1}$ is a $w$-morphism for all $1\leq i\leq n$. The variety $X_n$
	is called a \emph{feasible resolution} of $P\in X$.\par
	The Gorenstein depth of a terminal singularity $P\in X$, $\gd(P\in X)$, is defined by $gdep(P\in X)-dep(P\in X)$.\par
	For a terminal threefold we can define \[dep(X)=\sum_P dep(P\in X),\]
	\[gdep(X)=\sum_P gdep(P\in X)\]
	and \[ \gd(X)=\sum_P \gd(P\in X).\]
\end{defn}
\begin{rk}
	In the above definition, the existence of a sequence \[ X_m\rightarrow X_{m-1}\rightarrow \cdots\rightarrow X_1\rightarrow X_0=X,\]
	such that $X_m$ is Gorenstein follows from \cite[Theorem 1.2]{h2}. The existence of a sequence
	\[ X_n\rightarrow X_{n-1}\rightarrow \cdots\rightarrow X_1\rightarrow X_0=X,\] such that $X_n$ is smooth follows from \cite[Theorem 2]{c}.
\end{rk}
\begin{defn}
	Assume that $Y\rightarrow X$ is a $w$-morphism such that $gdep(Y)=gdep(X)-1$. Then we say that $Y\rightarrow X$ is a
	\emph{strict $w$-morphism}.
\end{defn}

\begin{lem}\label{val}
	Assume that $Y\rightarrow X$ is a divisorial contraction which is obtained by weighted blowing-up a weight
	$w(x_1,...,x_n)=\frac{1}{r}(a_1,...,a_n)$ with respect to an embedding $X\hookrightarrow \A^n_{(x_1,...,x_n)}/G$,
	where $G$ is a cyclic group of index $r$. Assume that $E$ is an exceptional divisor over $X$ and
	$v_E(x_1,...,x_n)=\frac{1}{r}(b_1,...,b_n)$, then $\cen_YE\cap U_i$ non-trivially if and only if $\frac{b_i}{a_i}\leq\frac{b_j}{a_j}$
	for all $1\leq j\leq n$. Here $U_1$, ..., $U_n$ denotes the canonical affine chart of the weighted blow-up on $Y$.
\end{lem}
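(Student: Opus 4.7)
The plan is to work inside the ambient toric variety and use the explicit change-of-coordinate formulas from Corollary \ref{wbcc}. The center $\cen_YE$ intersects $U_i$ non-trivially precisely when $v_E$ is non-negative on the local coordinate ring of $Y\cap U_i$, so the whole question reduces to evaluating $v_E$ on the generators $y_1,\ldots,y_n$ of $U_i$. Since the embedding $X\hookrightarrow\A^n/G$ is compatible with the weighted blow-up (we take proper transforms), the valuation $v_E$ extends to the ambient space and the computation can be done there and then restricted to $Y$; the inclusion $X\subset \A^n/G$ plays no essential role beyond letting us use the coordinates $x_1,\ldots,x_n$.

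Next I would carry out the coordinate computation. By Corollary \ref{wbcc}, on the chart $U_i$ one has $x_i=y_i^{a_i/r}$ and $x_j=y_j y_i^{a_j/r}$ for $j\neq i$. Combined with the hypothesis $v_E(x_k)=b_k/r$, this gives
\[
v_E(y_i)=\frac{r}{a_i}\cdot v_E(x_i)=\frac{b_i}{a_i},\qquad
v_E(y_j)=v_E(x_j)-\frac{a_j}{r}\,v_E(y_i)=\frac{a_i b_j-a_j b_i}{r a_i}\quad(j\neq i).
\]
The value on $y_i$ is automatically non-negative, and the value on $y_j$ is non-negative if and only if $a_i b_j\geq a_j b_i$, i.e. $b_i/a_i\leq b_j/a_j$.

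Finally I would conclude: the center of $E$ meets $U_i\cap Y$ exactly when every generator $y_k$ of $\Oo(Y\cap U_i)$ satisfies $v_E(y_k)\geq 0$, which by the previous paragraph is equivalent to $b_i/a_i\leq b_j/a_j$ for every $1\leq j\leq n$. When this condition holds, the ideal $\{f\in\Oo(Y\cap U_i):v_E(f)>0\}$ is a nontrivial prime ideal (since $v_E$ is a discrete valuation of rank one, so in particular nontrivial), and its vanishing locus is $\cen_YE\cap U_i$.

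The computation is essentially mechanical and the only minor obstacle is bookkeeping: one must check that the inequalities derived from positivity of $v_E$ on the $y_j$ coordinates really correspond to the toric condition that the primitive lattice vector $\tfrac{1}{r}(b_1,\ldots,b_n)$ lies in the cone $\sigma_i=\ang{e_1,\ldots,e_{i-1},w,e_{i+1},\ldots,e_n}$, which provides an independent sanity check via the standard fact that the center of a toric valuation on an affine toric chart is nonempty iff the ray lies in the defining cone.
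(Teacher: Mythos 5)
Your proof is correct and follows essentially the same route as the paper: apply the change-of-coordinates formula from Corollary \ref{wbcc}, compute $v_E(y_i)=b_i/a_i$ and $v_E(y_j)=\frac{a_ib_j-a_jb_i}{ra_i}$, and observe that the center meets $U_i$ iff $v_E$ is non-negative on the chart's coordinate functions, which is exactly the stated inequality. (Your displayed formula for $v_E(y_j)$ also silently corrects a small typo in the paper's version, where the first term is printed as $\frac{b_i}{r}$ instead of $\frac{b_j}{r}$.)
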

\begin{proof}
	Let $y_1$, ..., $y_n$ be the local coordinates of $U_i$, then we have the following change of coordinates formula:
	\[ x_i=y_i^{\frac{a_i}{r}},\quad x_j=y_i^{\frac{a_j}{r}}y_j\mbox{ if }i\neq j.\]
	One can see that \[ v_E(y_i)=\frac{b_i}{a_i},\quad v_E(y_j)=\frac{b_i}{r}-\frac{a_jb_i}{ra_i}\mbox{ if }i\neq j.\]
	We know that $\cen_EY$ intersects $U_i$ non-trivially if and only if $\frac{b_i}{r}-\frac{a_jb_i}{ra_i}\geq 0$ for all $j\neq i$,
	or equivalently, $\frac{b_j}{a_j}\geq \frac{b_i}{a_i}$ for all $j\neq i$.
\end{proof}
\begin{cor}\label{wef}
	Assume that $Y\rightarrow X$ and $Y_1\rightarrow X$ are two $w$-morphisms and let $E$ and $F$ be the exceptional divisors of
	$Y\rightarrow X$ and $Y_1\rightarrow X$ respectively. Then there exists $u\in\Oo_X$ such that $v_E(u)<v_F(u)$.
\end{cor}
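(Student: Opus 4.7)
The plan is to argue by contradiction. Suppose $v_E(u) \geq v_F(u)$ for every $u \in \Oo_X$; since the corollary is vacuous when $E = F$, we may also assume $E \neq F$. The strategy is to convert this pointwise domination into a lower bound on the multiplicity $v_E(F)$ of $E$ in the pullback of $F$, and then contradict that bound via terminality of $Y_1$.

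First I would fix the embedding realising $Y_1 \to X$ as a weighted blow-up of $X \hookrightarrow \A^m/G'$ with weight $(a_1', \ldots, a_m')/r'$, so that $v_F(x_k') = a_k'/r'$. Writing $b_k' := r' \cdot v_E(x_k') \in \Z_{\geq 0}$, the hypothesis applied to the coordinate functions $x_k' \in \Oo_X$ forces $b_k' \geq a_k'$ for every $k$. Lemma \ref{val} then places the center of $E$ on $Y_1$ in some chart $U_j'$, on which $F$ is cut out by $y_j' = (x_j')^{r'/a_j'}$; combined with Corollary \ref{wbcc} this computes
\[ v_E(F) = v_E(y_j') = \tfrac{r'}{a_j'}\, v_E(x_j') = \tfrac{b_j'}{a_j'} \geq 1. \]

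Next I would take a log resolution $\tilde{X} \to X$ factoring through both $Y$ and $Y_1$ via $\pi$ and $\psi_1 : \tilde{X} \to Y_1$. Substituting $\psi_1^* K_{Y_1} = \pi^* K_X + a(X, F)\,\psi_1^* F$ into the two standard expansions of $K_{\tilde{X}} - \pi^* K_X$ and comparing the coefficients of the strict transform of $E$ (which is $\psi_1$-exceptional because $E \neq F$) gives
\[ a(X, E) = a(Y_1, E) + a(X, F) \cdot v_E(F). \]
Since both morphisms are $w$-morphisms, $a(X, E) = a(X, F) = 1/r_P$, whence $a(Y_1, E) = (1 - v_E(F))/r_P$. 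Terminality of $Y_1$ together with $E$ being exceptional over $Y_1$ forces $a(Y_1, E) > 0$, i.e.\ $v_E(F) < 1$, contradicting the lower bound established above.

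The main subtlety I anticipate is the identity $v_E(F) = b_j'/a_j'$ when $F$ is only $\Q$-Cartier on $Y_1$; this is handled by passing to a Cartier multiple $mF$, whose local equation on $U_j'$ is $(y_j')^m$, computing the multiplicity of $E$ in $\psi_1^*(mF)$ via Corollary \ref{wbcc}, and dividing by $m$.
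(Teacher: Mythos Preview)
Your proof is correct and takes a genuinely different route from the paper's argument, though both hinge on Lemma~\ref{val} and a discrepancy comparison.

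The paper argues directly rather than by contradiction: it arranges the embedding for $Y_1\to X$ so that $(x_n=0)$ is a Du Val section, which forces $v_E(x_n)=a(E,X)=a(F,X)=v_F(x_n)$ and hence $a_n=b_n=1$. It then observes that $\cen_{Y_1}E$ must be a non-Gorenstein point (the parenthetical ``easy computation'' there is exactly your discrepancy identity $a(E,X)=a(Y_1,E)+a(F,X)v_E(F)$ applied at a Gorenstein center), so this center misses the Gorenstein chart $U_n$. Lemma~\ref{val} then supplies an index $j$ with $b_j/a_j<b_n/a_n=1$, and $u=x_j$ works.

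Your approach bypasses the general-elephant step entirely: assuming $b_k'\ge a_k'$ for every coordinate, you locate the center in the chart $U_{j}'$ realizing the minimal ratio, read off $v_E(F)=b_j'/a_j'\ge 1$, and feed this into the discrepancy formula to force $a(Y_1,E)\le 0$, contradicting terminality. This is arguably cleaner and more self-contained, since it does not rely on the existence or properties of a Du Val section; the paper's route, on the other hand, makes the special role of the anticanonical coordinate visible. One minor remark: your assertion $b_k'\in\Z_{\ge 0}$ is correct (since $(x_k')^{r'}$ is $G'$-invariant), but the argument does not actually need integrality---the inequalities go through over $\Q$.
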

\begin{proof}
	Let $X\rightarrow \A^n_{(x_1,...,x_n)}/G$ be the embedding so that $Y_1\rightarrow X$ can be obtained by the weighted blow-up with
	with respect to the embedding. We may assume that $(x_n=0)$ defines a Du Val section. Then \[v_E(x_n)=a(E,X)=a(F,X)=v_F(x_n).\]
	It follows that $a_n=b_n=1$ where $(a_1,...,a_n)$ and $(b_1,...,b_n)$ are integers in Lemma \ref{val}.
	Now since $a(F,X)=a(E,X)$, one has that $\cen_{Y_1}E$ is a non-Gorenstein point (if $Y_1$ is generically Gorenstein along $\cen_{Y_1}E$
	then an easy computation shows that $a(E,X)>a(F,X)$). It follows that $\cen_{Y_1}E\cap U_n$ is empty
	since $a_n=1$ implies that $U_n$ is Gorenstein. Thus by Lemma \ref{val} we know that there exists $j$ so that $\frac{b_j}{a_j}<1$.
	Hence $v_E(u)>v_F(u)$ if $u=x_j$. 
\end{proof}

\subsection{Chen-Hacon factorizations}

We have the following factorization of steps of three-dimensional terminal MMP by J. A. Chen and Hacon.
\begin{thm}[\cite{ch} Theorem 3.3]\label{chf}
	Assume that either $X\dashrightarrow X'$ be a flip over $V$, or $X\rightarrow V$ is a divisorial contraction to a curve and $X$ is
	not Gorenstein over $U$. Then there exists a diagram
	\[\vc{\xymatrix{Y_1\ar[d] \ar@{-->}[r] & \cdots \ar@{-->}[r] & Y_k\ar[d]\\
		X\ar[rd] &  & X'\ar[ld] \\ & V &	 }}\]
	such that $Y_1\rightarrow X$ is a $w$-morphism, $Y_k\rightarrow X'$ is a divisorial contraction, $Y_1\dashrightarrow Y_2$ is a
	flip or a flop and $Y_i\dashrightarrow Y_{i+1}$ is a flip for $i>1$. If $X\rightarrow V$ is divisorial then $Y_k\rightarrow X'$
	is a divisorial contraction to a curve and $X'\rightarrow V$ is a divisorial contraction to a point.
\end{thm}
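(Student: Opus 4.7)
The plan is to construct the diagram by extracting a single divisor over a worst non-Gorenstein point via a $w$-morphism, then running a relative MMP over $V$ and identifying the output.

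First I would pick a non-Gorenstein point $P\in X$ lying in the exceptional (resp.~flipping) locus over $V$: in the divisorial case the hypothesis that $X$ is not Gorenstein over the image forces such a point to exist on the contracted curve, and in the flipping case the flipping locus is well-known to meet the non-Gorenstein set. By the existence theorem for $w$-morphisms (Hayakawa, used earlier in the paper to guarantee the existence of feasible resolutions), there is a $w$-morphism $Y_1\to X$ extracting a divisor $E$ with $a(E,X)=1/r_P$. Since $\rho(X/V)=1$ in both cases, we get $\rho(Y_1/V)=2$, so the relative Mori cone $\overline{NE}(Y_1/V)$ has exactly two extremal rays, one of which, call it $R_1$, is spanned by curves contracted by $Y_1\to X$.

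Next I would run the $K_{Y_1}$-MMP over $V$ along the second ray $R_2$. Using $K_{Y_1}=\phi^* K_X+\tfrac{1}{r_P}E$ together with the sign of $K_X$ on the flipping/contracting curves, one verifies that $R_2$ is either $K_{Y_1}$-negative (yielding a flip) or $K_{Y_1}$-trivial (yielding a flop); this is the possibly-flop first step $Y_1\dashrightarrow Y_2$. After this, $K$-triviality cannot recur because the discrepancy of $E$ is strictly positive, so all further steps are strictly $K$-negative small contractions, i.e.~flips. Termination of three-dimensional terminal flips (Kawamata--Shokurov) guarantees that the process stops at some $Y_k$ in finitely many steps, at which point the only available $K$-negative ray must give a divisorial contraction $Y_k\to X''$ (a Mori fibration is impossible because the divisor $E$ survives and $\rho(Y_k/V)=2$ is maintained).

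Finally I would identify $X''$ with $X'$: in the flip case this follows from uniqueness of the flip, since $Y_k\to X''\to V$ is a $K_{X''}$-ample small modification of $X\to V$, hence is the flip; in the divisorial-contraction-to-a-curve case one argues that $Y_k\to X''$ must contract (the strict transform of) $E$ to a point, and then $X''\to V$ is forced to be a divisorial contraction to a point by relative Picard-number considerations. The main obstacle I expect is the central step of showing that $E$ is never contracted by any intermediate flip $Y_i\dashrightarrow Y_{i+1}$ with $i<k$, so that $E$ really persists as a divisor all the way to $Y_k$; this is precisely where one uses the minimality of the discrepancy $1/r_P$ (the defining property of a $w$-morphism) together with a discrepancy comparison along each flip, exactly in the spirit of Corollary \ref{wef} of the present paper.
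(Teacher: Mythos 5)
This statement is not proved in the paper at all: it is quoted from Chen--Hacon \cite{ch}, Theorem 3.3, so there is no internal proof to compare against. Your reconstruction does follow the known strategy (extract a $w$-morphism over a non-Gorenstein point, then play the two-ray game over $V$), but as written it has one serious gap and one incorrect identification.

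The gap is the sentence asserting that the second extremal ray $R_2$ is $K_{Y_1}$-negative or $K_{Y_1}$-trivial ``using $K_{Y_1}=\phi^*K_X+\tfrac{1}{r_P}E$ together with the sign of $K_X$''. Writing $C$ for the strict transform of the flipping (or contracted) curve, one gets $K_{Y_1}\cdot C = K_X\cdot \phi_*C + \tfrac{1}{r_P}E\cdot C$, and since $C$ passes through the center of $E$ the second term is strictly positive while the first is strictly negative; nothing in your argument controls which dominates, and if $R_2$ were $K$-positive the two-ray game would require an antiflip and the construction would collapse. The actual mechanism is the general elephant: choosing $H\in|-K_X|$ Du Val with $a(E,X,H)=0$ (this is \cite{ch}, Lemma 2.7, invoked at the start of Section \ref{sfac} of this paper) gives $K_{Y_1}+H_{Y_1}=\phi^*(K_X+H)\equiv_V 0$, hence $K_{Y_1}\cdot C=-H_{Y_1}\cdot C\leq 0$. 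Some such input is indispensable, and it is also what the present paper imitates when it runs the $(K_{Z_1}+H_{Z_1}+\epsilon E_{Z_1})$-MMP in Section \ref{sfac}. (By contrast, the ``main obstacle'' you single out at the end is a non-issue: flips are isomorphisms in codimension one, so $E$ automatically survives until the final divisorial contraction; and the fact that at most the first step is a flop follows from the two-dimensionality of $\overline{NE}(Y_i/V)$ together with the persistence, via Corollary \ref{ntl2}, of a $K$-negative curve in $E_{Y_i}$, not from positivity of the discrepancy per se.)

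The incorrect identification is in the divisorial case: you claim $Y_k\rightarrow X''$ contracts the strict transform of $E$ to a point and that $X''\rightarrow V$ is then a divisorial contraction to a point. This is internally inconsistent and contradicts the statement being proved. Since $E$ lies over a point $v\in V$ while the original exceptional divisor $F$ of $X\rightarrow V$ dominates a curve $\Gamma\subset V$, the divisor contracted by $X'\rightarrow V$ to a point must be $E_{X'}$, and therefore $Y_k\rightarrow X'$ must contract $F_{Y_k}$ (to a curve); had $Y_k\rightarrow X''$ contracted $E$, the resulting $X''\rightarrow V$ would again be a divisorial contraction to the curve $\Gamma$ and the factorization would have achieved nothing.
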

\begin{rk}\label{chfr}
	Notation as in the above theorem. From the construction of the diagram we know that
	\begin{enumerate}[(1)]
	\item Let $C_{Y_1}$ be a flipping/flopping curve of $Y_1\dashrightarrow Y_2$. Then $C_X$ is a flipping curve of $X\dashrightarrow X'$.
	\item Assume that the exceptional locus of $X\rightarrow V$ contains a non-Gorenstein point $P$ which is not a $cA/r$ or a $cAx/r$ point,
		then $Y_1\rightarrow X$ can be chosen to be any $w$-morphism over $P$. This statement follows from the proof of \cite[Theorem 3.1]{ch}.
	\end{enumerate}
\end{rk}
We have the following properties of the depth \cite[Proposition 2.15, 3.8, 3.9]{ch}:
\begin{lem}\label{chfp}
	Let $X$ be a terminal threefold.
	\begin{enumerate}
	\item If $Y\rightarrow X$ is a divisorial contraction to a point, then $dep(Y)\geq dep(X)-1$.
	\item If $Y\rightarrow X$ is a divisorial contraction to a curve, then $dep(Y)\geq dep(X)$.
	\item If $X\dashrightarrow X'$ is a flip, then $dep(X)>dep(X')$.
	\end{enumerate}
\end{lem}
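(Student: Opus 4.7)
The three statements will be proved together, by combining the definition of $dep$ with the Chen-Hacon factorization (Theorem \ref{chf}) and the classification of divisorial contractions in Section \ref{stt}. The guiding philosophy is that $w$-morphisms decrease $dep$ by exactly one by construction, flops between terminal threefolds preserve $dep$ because they preserve the analytic type of singularities, and every step of the terminal threefold MMP reduces to a chain of these via Theorem \ref{chf}.

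\emph{Part (1).} The first case to handle is when $Y\to X$ is itself a $w$-morphism: prepending an optimal chain of $w$-morphisms of length $dep(Y)$ onto $Y\to X$ yields a chain of length $dep(Y)+1$ reaching a Gorenstein variety, so the definition of $dep$ forces $dep(X)\leq dep(Y)+1$. For a general divisorial contraction $Y\to X$ to a point $P$, I would pick an auxiliary $w$-morphism $Y'\to X$ over $P$; such a $w$-morphism exists over every non-Gorenstein terminal point by the classification tables in Section \ref{slk}. Since $Y$ and $Y'$ both have relative Picard number one over $X$, the birational map $Y\dashrightarrow Y'$ is an isomorphism in codimension one, and running a relative MMP over $X$ expresses it as a sequence of flips and flops; invoking part (3) and the flop-invariance of $dep$ reduces to the $w$-morphism case already handled.

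\emph{Part (2).} Here I would argue locally along the contracted curve $C\subset X$. At each non-Gorenstein point $P\in C$, Corollary \ref{wef} together with the classification of divisorial contractions to curves shows that the fibre of $Y\to X$ over $P$ carries terminal singularities whose total depth is at least $dep(P\in X)$, essentially because the generic Du Val section through $P$ lifts to a partial resolution on $Y$ of length at least $dep(P\in X)$. Summing over all non-Gorenstein points of $C$ yields $dep(Y)\geq dep(X)$.

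\emph{Part (3).} For a flip $X\dashrightarrow X'$ I would apply Theorem \ref{chf}, which fits the flip into a diagram whose first arrow $Y_1\to X$ is a $w$-morphism and whose last arrow $Y_k\to X'$ is a divisorial contraction, with flips and flops linking the $Y_i$. Since $Y_1\to X$ is a $w$-morphism, $dep(Y_1)\leq dep(X)-1$ by the definition of $dep$. Flops preserve $dep$ pointwise by the analytic symmetry of terminal threefold flops; any intermediate flips contribute further strict decreases in $dep$ by a Noetherian induction on $dep$. Applying part (1) or part (2) to $Y_k\to X'$ then gives $dep(X')\leq dep(Y_k)+1\leq dep(X)-1+1=dep(X)$, and tracking a flip somewhere in the chain yields the strict inequality.

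\emph{Main obstacle.} The chief difficulty is the non-$w$-morphism case of part (1): the a priori larger discrepancy $a(X,E)>1/r_P$ carries no direct numerical consequence for $dep(Y)$, and one must genuinely exploit the explicit classification of divisorial contractions to run the relative MMP and control the resulting flips. A secondary subtlety is ensuring non-circularity of the combined induction between (1) and (3); I would arrange the induction lexicographically, first on $dep(X)$ and then on the length of the Chen-Hacon diagram in Theorem \ref{chf}, so that each appeal to another part of the lemma occurs at a strictly smaller value of the inductive parameter.
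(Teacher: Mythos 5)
The paper does not prove this lemma at all: it is quoted verbatim from Chen--Hacon \cite[Propositions 2.15, 3.8, 3.9]{ch}, so there is no internal proof to compare against. Judged on its own merits, your proposal has two genuine gaps, both at steps you pass over as routine.

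First, in Part (1) you claim that two divisorial contractions $Y\rightarrow X$ and $Y'\rightarrow X$ over the same point, each of relative Picard number one, are isomorphic in codimension one, so that $Y\dashrightarrow Y'$ is a composition of flips and flops. This is false: $Y$ and $Y'$ extract \emph{different} divisors $E$ and $F$, so the map $Y\dashrightarrow Y'$ contracts $E$ and extracts $F$ and is not small. Relating two such contractions is exactly what forces the elaborate ``link'' diagrams of Section \ref{sfac} (first a $w$-morphism $Z_1\rightarrow Y$ extracting the missing divisor, then flips/flops, then a divisorial contraction $Z_k\rightarrow Y_1$), and this extra extraction/contraction changes the Picard-number and depth bookkeeping; your reduction of the general case of (1) to the $w$-morphism case therefore does not go through as written. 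Second, in Part (3) you assert that $dep(Y_1)\leq dep(X)-1$ for the $w$-morphism $Y_1\rightarrow X$ of Theorem \ref{chf} ``by the definition of $dep$.'' The definition only gives this if $Y_1$ happens to be the first step of some \emph{minimal} chain of $w$-morphisms; for an arbitrary $w$-morphism over $P$ one only knows $dep(Y_1)\geq dep(X)-1$, and the reverse inequality is a theorem, not a tautology (this is precisely why the paper introduces strict $w$-morphisms and devotes Lemmas \ref{depy} and \ref{pitn} to proving $dep(Y)=dep(X)-1$ under inductive hypotheses, and why Remark \ref{chfr}(2) is needed to control which $w$-morphism starts the Chen--Hacon diagram). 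Without that upper bound your chain of inequalities in (3) only yields $dep(X')\leq dep(Y_1)+1$, which is useless. Part (2) is likewise too thin: Corollary \ref{wef} compares two $w$-morphisms over a point and says nothing about divisorial contractions to curves, so the claimed lower bound on the depth of the fibre over a non-Gorenstein point of the contracted curve is unsupported. If you want a self-contained proof you should follow the actual route of \cite{ch}: prove (1) for $w$-morphisms directly from the definition, handle general divisorial contractions and flips together via the factorization of Theorem \ref{chf} combined with a genuine argument that the chosen first $w$-morphism drops the depth by one.
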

\subsection{The negativity lemma}

We have the negativity lemma for flips.
\begin{lem}\label{ntl}
	Assume that $X\dashrightarrow X'$ is a $K_X+D$-flip, then for all exceptional divisor $E$ one has that $a(E,X,D)\leq a(E,X',D_{X'})$.
	The inequality is strict if $\cen_XE$ is contained in the flipping locus.
\end{lem}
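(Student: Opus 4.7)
The statement is the standard negativity lemma for flips, so my plan follows the classical common-resolution argument. Take a smooth model $W$ dominating both $X$ and $X'$, with proper birational morphisms $p\colon W\rightarrow X$ and $q\colon W\rightarrow X'$ such that $f\circ p = f'\circ q\colon W\rightarrow V$, where $f$ and $f'$ are the flipping and flipped contractions. After further blowing up, I may assume that any prescribed exceptional divisor $E$ is realised on $W$. Since $X\dashrightarrow X'$ is an isomorphism in codimension one, every $p$-exceptional divisor is $q$-exceptional and conversely, and the strict transforms of $D$ and $D_{X'}$ on $W$ coincide. Hence
\[
B := q^{\ast}(K_{X'}+D_{X'}) - p^{\ast}(K_X+D) = \sum_F \bigl(a(F,X,D)-a(F,X',D_{X'})\bigr)F,
\]
with $F$ running over the common exceptional divisors, is itself $p$-exceptional.

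Next I verify that $B$ is $p$-nef. For any $p$-exceptional curve $C\subset W$ one has $p^{\ast}(K_X+D)\cdot C = 0$, so $B\cdot C=(K_{X'}+D_{X'})\cdot q_{\ast} C$. If $p(C)$ lies outside the flipping locus, the flip is an isomorphism near $p(C)$ and $q_{\ast} C = 0$; if $p(C)$ lies in the flipping locus, then $q_{\ast} C$ is either zero or a flipped curve, which is $(K_{X'}+D_{X'})$-positive by definition of the flip. In either case $B\cdot C\geq 0$. The negativity lemma — a $p$-nef and $p$-exceptional $\Q$-Cartier divisor is anti-effective — then gives $-B\geq 0$, which reads off as $a(E,X,D)\leq a(E,X',D_{X'})$ for every exceptional $E$, since $W$ can be chosen to realise any prescribed $E$.

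The delicate point is the strict inequality when $\cen_X E$ is contained in the flipping locus. My plan is to invoke the sharper form of the negativity lemma: for every point $x$ below $\mathrm{Supp}(B)$, the whole connected fibre $p^{-1}(x)$ lies in $\mathrm{Supp}(-B)$. Concretely, fix $x\in\cen_X E$ inside the flipping locus, and take a flipping curve $C_X$ through $x$; its strict transform $\tl C\subset W$ satisfies
\[
B\cdot\tl C = (K_{X'}+D_{X'})\cdot q_{\ast}\tl C - (K_X+D)\cdot C_X > 0
\]
since $q_{\ast}\tl C$ is (a positive multiple of) a flipped curve and $C_X$ is a flipping curve. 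Hence $\mathrm{Supp}(B)$ meets $p^{-1}(x)$, which is connected by Zariski's main theorem and contains a component of $E$. The sharp form of negativity then forces $E\subset\mathrm{Supp}(-B)$ with strictly positive multiplicity, giving $a(E,X,D)<a(E,X',D_{X'})$. The main obstacle is precisely this propagation step — transporting the positivity from $\tl C$ across the connected fibre to $E$ — which is exactly where the sharp version of the negativity lemma is required, and would need to be quoted or briefly recalled.
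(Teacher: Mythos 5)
Your argument is correct, and it is in substance the same as the paper's: the paper's entire proof is a one-line citation of \cite[Lemma 3.38]{km2}, and what you have written out is precisely the standard proof of that cited lemma --- common resolution, the $p$-exceptional, $p$-nef divisor $B=q^{\ast}(K_{X'}+D_{X'})-p^{\ast}(K_X+D)$, the negativity lemma for the inequality, and its sharp connected-fibre refinement \cite[Lemma 3.39(2)]{km2} for the strictness. Your version is simply self-contained where the paper defers to the reference. Two small remarks. First, in the strictness step $q_{\ast}\tl{C}$ need not be a positive multiple of a flipped curve: the strict transform of a flipping curve can be contracted by $q$ (this already happens in the Atiyah flop picture); but this is harmless, since $(K_{X'}+D_{X'})\cdot q_{\ast}\tl{C}\geq 0$ in either case and the strict positivity of $B\cdot\tl{C}$ comes from $-(K_X+D)\cdot C_X>0$ alone. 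Second, the ``propagation'' step you flag as the remaining obstacle is exactly the dichotomy of \cite[Lemma 3.39(2)]{km2}: $B\cdot\tl{C}>0$ forces $\tl{C}\subset\mathrm{Supp}(-B)$ because $-B$ is effective, hence $p^{-1}(x)$ meets $\mathrm{Supp}(-B)$ for every $x\in\cen_XE$, hence $p^{-1}(\cen_XE)\supset E$ lies entirely in $\mathrm{Supp}(-B)$; quoting that lemma closes the proof.
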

\begin{proof}
	It is a special case of \cite[Lemma 3.38]{km2}.
\end{proof}

What we really need is the following corollary of the negatively lemma.
\begin{cor}\label{ntl2}
	Assume that $X\dashrightarrow X'$ is a $K_X+D$-flip and $C\subset X$ is an irreducible curve which is not a flipping curve.
	Then $(K_X+D).C\geq(K_{X'}+D_{X'}).C_{X'}$. The inequality is strict if $C$ intersects the flipping locus non-trivially.
\end{cor}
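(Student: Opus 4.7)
The plan is to reduce the intersection-theoretic inequality on $X$ and $X'$ to an intersection on a common resolution, and there read it off directly from Lemma \ref{ntl}.

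First, choose a common resolution $p: W\to X$, $q: W\to X'$ (for instance a smooth model of the graph of $X\dashrightarrow X'$), and consider the difference of pullbacks
\[
\Delta \;=\; p\st(K_X+D)-q\st(K_{X'}+D_{X'})
\]
as a $\Q$-divisor on $W$. Since a flip is an isomorphism in codimension one, any prime divisor $E$ on $W$ which is not $p$-exceptional is also not $q$-exceptional (and vice versa), and on such $E$ the two pullbacks contribute equal coefficients because $D_{X'}$ is the strict transform of $D$. Therefore $\Delta$ is supported on the $p$-exceptional locus, which coincides with the $q$-exceptional locus and sits over the flipping locus of $X\dashrightarrow X'$. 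Writing $\Delta=\sum_E (a(E,X',D_{X'})-a(E,X,D))\,E$ with $E$ running over the $p$-exceptional primes, Lemma \ref{ntl} gives that every coefficient is non-negative, and strictly positive because for each such $E$ the center $\cen_X E$ lies in the flipping locus. Thus $\Delta$ is effective with all coefficients strictly positive.

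Next I would take $C_W$ to be the strict transform of $C$ under $p$. Because $C$ is not a flipping curve, the birational map $X\dashrightarrow X'$ is defined at the generic point of $C$, so $C_W$ also maps birationally onto $C_{X'}$ via $q$; moreover $C_W$ is not contained in the $p$-exceptional locus, hence not in the support of $\Delta$. By the projection formula,
\[
(K_X+D).C \;=\; p\st(K_X+D).C_W \qquad\text{and}\qquad (K_{X'}+D_{X'}).C_{X'} \;=\; q\st(K_{X'}+D_{X'}).C_W,
\]
so that $(K_X+D).C - (K_{X'}+D_{X'}).C_{X'} = \Delta.C_W$. Since $C_W$ is an irreducible curve not contained in any component of $\Delta$, each individual intersection $E.C_W$ is non-negative, giving the inequality.

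Finally, for strictness, assume $C$ meets the flipping locus. Then $C_W$ meets the preimage of the flipping locus, which is (a union of) $p$-exceptional divisors; consequently $E.C_W>0$ for at least one $E$ appearing in $\Delta$. Combined with the strict positivity of the corresponding coefficient (again by Lemma \ref{ntl}), this yields $\Delta.C_W>0$. The only subtlety is the bookkeeping between the two sets of exceptional divisors and checking that $C_W$ avoids them, but this is automatic from smallness of the flip and from $C$ not being a flipping curve, so I expect no real obstacle.
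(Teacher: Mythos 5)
Your argument is correct and is essentially the paper's own proof: take a common resolution with $C_W$ the strict transform of $C$, apply Lemma \ref{ntl} to see that $\phi\st(K_X+D)-{\phi'}\st(K_{X'}+D_{X'})$ is effective and supported over the flipping locus, and conclude by the projection formula. The only slight overstatement is that \emph{all} coefficients of $\Delta$ on exceptional divisors are strictly positive (divisors whose centers on $X$ are not contained in the flipping locus get coefficient zero), but this does not affect the inequality, and for strictness you only need the components lying over the flipping locus, which do carry positive coefficients.
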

\begin{proof}
	Let $\xymatrix{X & W\ar[l]_{\phi} \ar[r]^{\phi'} & X'}$ be a common resolution such that $C$ is not contained in the indeterminacy
	locus of $\phi$. Then Lemma \ref{ntl} implies that $F=\phi\st(K_X+D)-{\phi'}\st(K_{X'}+D_{X'})$ is an effective divisor and is
	supported on exactly those exceptional divisors whose centers on $X$ are contained in the flipping locus. Hence
	\[ (K_X+D).C-(K_{X'}+D_{X'}).C_{X'}=(\phi\st(K_X+D)-{\phi'}\st(K_{X'}+D_{X'})).C_W=F.C_W\geq 0.\]
	The last inequality is strict if and only if $C_W$ intersects $F$ non-trivially, or equivalently, $C$ intersects the flipping locus
	non-trivially.
\end{proof}

\section{Factorize divisorial contractions to points}\label{sfac}
Let $Y\rightarrow X$ be a divisorial contraction contracts a divisor $E$ to a point. We construct the diagram
\[ \vc{\xymatrix{ Z_1\ar@{-->}[r] \ar[d] & ... \ar@{-->}[r] & Z_k \ar[d] \\ Y \ar[rd] & & Y_1\ar[ld] \\ & X & }}\]
as follows: Let $Z_1\rightarrow Y$ be a $w$-morphism and let $H\in|-K_X|$ be a Du Val section. By \cite[Lemma 2.7 (ii)]{ch} one has that
$a(E,X,H)=0$. We run $(K_{Z_1}+H_{Z_1}+\epsilon E_{Z_1})$-MMP over $X$ for some $\epsilon>0$ such that $(Z_1,H_{Z_1}+E_{Z_1})$ is klt.
Notice that a general curve inside $E_{Z_1}$ intersects the pair negatively, and a general curve in $F$ intersects the pair positively
where $F=exc(Z_1\rightarrow Y)$. Thus after finitely many $K_{Z_1}+H_{Z_1}+E_{Z_1}$-flips $Z_1\dashrightarrow ...\dashrightarrow Z_k$,
the MMP ends with a divisorial contraction $Z_k\rightarrow Y_1$ which contracts $E_{Z_k}$, and $Y_1\rightarrow X$ is a divisorial
contraction contracts $F_{Y_1}$.

\begin{lem}\label{kan}
	Keep the above notation. Assume that $K_{Z_1}$ is anti-nef over $X$ and $E_{Z_1}$ is not covered by $K_{Z_1}$-trivial curves.
	Then $Z_i\dashrightarrow Z_{i+1}$ is a $K_{Z_i}$-flip or flop for all $i$ and $Z_k\rightarrow Y_1$ is a $K_{Z_k}$-divisorial contraction.
	In particular, $Y_1$ and $Z_2$, ..., $Z_k$ are all terminal.
\end{lem}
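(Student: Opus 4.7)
The plan is to prove, by induction on $i$, that $K_{Z_i}$ is anti-nef over $X$. Once this is established, every flipping curve $C$ of the step $Z_i \dashrightarrow Z_{i+1}$ satisfies $K_{Z_i}.C \le 0$ (such curves are contracted over $X$), so the step is a $K_{Z_i}$-flip or flop; the same applies to the contracted curves of $Z_k \to Y_1$. Terminality of $Z_2,\dots,Z_k$ and $Y_1$ is then automatic from the preservation of terminality under $K$-flips, $K$-flops, and $K$-divisorial contractions between terminal threefolds.

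The base case $i=1$ is the hypothesis. For the inductive step I would first derive the identity
\[K_{Z_i} + H_{Z_i} \sim_X a(F,X,H)\, F_{Z_i}\]
from $K_X + H \sim 0$, $a(E,X,H) = 0$, and the observation that the only exceptional divisors of $Z_i \to X$ (for $i<k$) are $E_{Z_i}$ and $F_{Z_i}$. Since $a(F,X,H) \ge 0$ (as $(X,H)$ is plt by the Du Val hypothesis), the class $(K+H)_{Z_i}$ is effective and supported on $F_{Z_i}$. Consequently, for $\epsilon$ small the $(K+H+\epsilon E)$-negative extremal ray $R$ contracted by the step is generated by curves that either lie in $F_{Z_i}$ or lie in $E_{Z_i}$ and are $(K+H)$-trivial.

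Next, to carry the induction I would combine the negativity lemma (Corollary~\ref{ntl2}) applied to the $(K+D)$-flip with $D = H + \epsilon E$ with the identity above. For curves $C' \subset Z_{i+1}$ outside the flipped locus, the negativity lemma lets one compare against the strict transform in $Z_i$ and, after subtracting the effective class $a(F,X,H) F_{Z_i}$, deduce $K_{Z_{i+1}}.C' \le 0$. For a flipped curve, one argues by dichotomy: a strict $K$-flip would produce a $K$-positive flipped curve contracted over $X$, contradicting the next round of the induction, so the step must be a $K$-flop, and crepancy of flops transports anti-nefness over $X$ automatically. For the final divisorial step $Z_k \to Y_1$, the hypothesis that $E_{Z_1}$ is not covered by $K_{Z_1}$-trivial curves transports through the intervening $K$-flops (which are isomorphisms in codimension one and crepant) to $E_{Z_k}$, so a general contracted curve $C \subset E_{Z_k}$ satisfies $K_{Z_k}.C < 0$ strictly, making $Z_k \to Y_1$ a genuine $K$-divisorial contraction.

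The main obstacle is the inductive verification that $K_{Z_{i+1}}$ remains anti-nef over $X$. The negativity lemma controls the combined class $K+H+\epsilon E$, not $K$ alone, so bridging this gap requires the explicit relation $K_{Z_i} + H_{Z_i} \sim_X a(F,X,H)\,F_{Z_i}$ together with the careful flip-versus-flop dichotomy on the flipped curves; the smallness of $\epsilon$ is essential to pin down where the extremal ray lives.
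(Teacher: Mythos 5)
Your inductive invariant --- that $K_{Z_i}$ stays anti-nef over $X$ for every $i$ --- is not true in general, and the ``dichotomy'' you use to rescue it is where the argument breaks. As soon as some step $Z_i\dashrightarrow Z_{i+1}$ is a genuine $K_{Z_i}$-flip (which the statement of the lemma explicitly permits, and which really occurs: the whole notion $Y\uan{X}Y_1$ of being \emph{negatively} linked, used throughout Section \ref{slk}, requires the intermediate maps to be flips), the flipped curve on $Z_{i+1}$ is $K_{Z_{i+1}}$-positive and is contracted over $X$, so $K_{Z_{i+1}}$ is not anti-nef over $X$. Your proposed way out --- ``a strict $K$-flip would contradict the next round of the induction, so the step must be a $K$-flop'' --- is circular: it assumes the invariant you are trying to propagate in order to rule out the very phenomenon that destroys it, and it would ``prove'' that every step is a flop, contradicting both the statement of the lemma and its applications. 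Relatedly, the bridge you sketch via Corollary \ref{ntl2} controls $(K+H+\epsilon E)$-degrees, and subtracting the class $a(F,X,H)F_{Z_i}$ (which is in fact $0$ here, since $K_{Z_1}+H_{Z_1}$ is numerically trivial over $X$, cf.\ the proof of Lemma \ref{kng}) still leaves the $\epsilon E$-term in the way; it does not yield $K_{Z_{i+1}}.C'\le 0$ for all contracted $C'$.

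What the conclusion actually needs is much weaker than anti-nefness of every $K_{Z_i}$: only that the extremal ray contracted at each step is $K$-non-positive. The paper's proof gets this from the fact that $\rho(Z_i/X)=2$, so $NE(Z_i/X)$ has exactly two boundary rays. One boundary ray is the flipped ray of the previous step; it is $(K+H+\epsilon E)$-positive, hence not the ray contracted next, and it is $K$-non-negative. The other boundary ray must be $K$-negative, because $E_{Z_i}$ still carries a $K_{Z_i}$-negative curve --- this is transported from $E_{Z_1}$ step by step via Corollary \ref{ntl2} applied to the $K$-flip/flop structure (i.e.\ with $D=0$), not to the $(K+H+\epsilon E)$-flip --- and a $K$-negative class in a two-dimensional cone forces one boundary ray to be $K$-negative. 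Since the contracted ray is the $K$-negative boundary ray, each $Z_i\dashrightarrow Z_{i+1}$ ($i\ge 2$) is a $K$-flip and $Z_k\rightarrow Y_1$ is a $K$-divisorial contraction, regardless of whether $K_{Z_i}$ is anti-nef. You have several of the right ingredients (the negativity lemma, the crepancy relation for $K+H$), but the two-dimensional cone argument is the missing engine, and without it the proof does not close.
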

\begin{proof}
	Assume first that $k=1$. If $Z_1\rightarrow Y_1$ is a $K_{Z_1}$-negative contraction then we have done. Otherwise
	$Z_1\rightarrow Y_1$ is a $K_{Z_1}$-trivial contraction. In this case $E_{Z_1}$ is covered by $K_{X_1}$-trivial curves,
	which contradicts to our assumption.\par
	Now assume that $k>1$. We know that $Z_1\dashrightarrow Z_2$ is a $K_{Z_1}$-flip or flop. Notice also that a general curve on
	$E_{Z_1}$ is $K_{Z_1}$-negative, hence a general curve on $E_{Z_2}$ is $K_{Z_2}$-negative by Corollary \ref{ntl2}.
	Now the relative effective cone $NE(Z_2/X)$ is a two-dimensional cone. One of a boundary of $NE(Z_2/X)$ corresponds to the
	flipped/flopped curve of $Z_1\dashrightarrow Z_2$, and is $K_{Z_2}$-non-negative. Since there is a $K_{Z_2}$-negative curve,
	we know that the other boundary of $NE(Z_2/X)$ is $K_{Z_2}$-negative. So if $k=2$ $Z_2\rightarrow Y_1$ is a $K_{Z_2}$-divisorial
	contraction, and for $k>2$ $Z_2\dashrightarrow Z_3$ is a $K_{Z_2}$-flip. One can prove the statement by
	repeating this argument $k-2$ times.
\end{proof}
We are going to find the sufficient conditions for the assumptions of Lemma \ref{kan}. Our final result is Lemma \ref{knef}
and Lemma \ref{kneff}.\par
Let $X\hookrightarrow \A^n_{(x_1,...,x_n)}/G=W/G$ be the embedding such that $Y\rightarrow X$ and $Z_1\rightarrow Y$ are both weighted blow-ups
with respect to this embedding, for some cyclic group $G$. Let $\phi:W'\rightarrow W$ be the first weighted blow-up and
assume that $P=\cen_{Y}F$ is the origin of $U_i\subset W'$. Let $y_1$, ..., $y_n$ be a local coordinate system of $U_i$ 
as in Corollary \ref{wbcc}. We know that $E|_{U_i}=(y_i=0)$. Let $f_4$, ..., $f_n$ be the defining equation of $X\subset W/G$.
Then $f'_4$, ..., $f'_n$ defines $Y|_{U_i}$ where $f'_i=(\phi|_{U_i}^{-1})\ts(f_i)$.
Since $Y$ has terminal singularities, the weighted embedding dimension of $Y|_{U_i}$ near $P$ is less than $4$. Hence we may write
$f'_j=\xi_jy_j+f'_j(y_1,...y_4)$ for some $\xi_j$ which do not vanish on $P$, for $5\leq j\leq n$. One can always assume that
$H_{Y}=(y_3=0)$ and so $i\neq 3$.\par 
Let ${f'_j}^{\circ}=f'_j|_{y_i=y_3=0}$. Then ${f'_4}^{\circ}$, ..., ${f'_n}^{\circ}$ defines $H\cap E$ near $P$. If
${f'_j}^{\circ}$ is irreducible as a $G'$-semi-invariant function, then we let $\eta'_j={f'_j}^{\circ}$. Otherwise let $\eta'_j$ be a
$G'$-semi-invariant irreducible factor of ${f'_j}^{\circ}$.
\begin{lem}\label{irc}
	Assume that $Y\rightarrow X$ can be viewed as a four-dimensional weighted blow-up, then
	$\eta'_4=...=\eta'_n=0$ defines an irreducible component of $H_{Y}\cap E$.
\end{lem}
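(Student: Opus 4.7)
The plan is to verify that the subscheme $Z$ cut out by $\eta'_4 = \cdots = \eta'_n = 0$ inside the plane $(y_3 = y_i = 0) \subset U_i$ is pure of dimension one, is irreducible, and is contained in $H_Y \cap E$. Because $H$ is a general Du Val elephant and $E$ is exceptional over a point of $X$, the divisor $E$ is not a component of $H_Y$, so $H_Y \cap E$ is pure of dimension one near $P$; hence any $Z$ with the three listed properties is automatically an irreducible component of $H_Y \cap E$.

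For the dimension and the containment, first note that the plane $(y_3 = y_i = 0)$ is $G'$-invariant because $y_3$ and $y_i$ are $G'$-semi-invariant by Lemma \ref{wbup}. For each $j \geq 5$ the function ${f'_j}^{\circ}$ has the nonzero linear part $\xi_j y_j$, so it defines a smooth hypersurface germ at $P$ and is analytically irreducible; in particular $\eta'_j = {f'_j}^{\circ}$, and the $n-4$ hypersurfaces $\eta'_j = 0$ meet transversally in a smooth two-dimensional germ $V$ which projects isomorphically onto $\A^2_{(y_a, y_b)}/G'$, with $\{a, b\} = \{1, 2, 4\}\setminus\{i\}$. Since each $\eta'_j$ divides ${f'_j}^{\circ}$, the inclusion $Z \subseteq H_Y \cap E$ is immediate. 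Moreover $\eta'_4|_V$ does not vanish identically, for otherwise the entire two-dimensional $V$ would sit inside the one-dimensional $H_Y \cap E$, a contradiction, so $Z = V(\eta'_4) \cap V$ is pure of dimension one.

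The key step is the irreducibility of $Z$, and this is where the four-dimensional hypothesis enters. It allows us to assume that $\eta'_4$ depends only on $y_1, \ldots, y_4$, so the projection $V \cong \A^2_{(y_a, y_b)}/G'$ identifies $\eta'_4|_V$ with a $G'$-semi-invariant element of $\Oo_{\A^2, 0}$. If $G'$ split the analytic irreducible factors of $\eta'_4$ into more than one orbit, the product over a single orbit would be a proper $G'$-semi-invariant factor of $\eta'_4$, contradicting the $G'$-semi-invariant irreducibility of $\eta'_4$; hence the factors form a single $G'$-orbit and $V(\eta'_4)$ is irreducible in $\A^2_{(y_a, y_b)}/G'$, so $Z$ is irreducible. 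The main obstacle I anticipate is exactly this step: ensuring that the $G'$-semi-invariant irreducibility of $\eta'_4$ passes from the ambient plane down to $V$, which the four-dimensional assumption secures by eliminating the possibility of a nontrivial dependence of $\eta'_4$ on $y_5, \ldots, y_n$.
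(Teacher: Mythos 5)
Your reduction is the same one the paper uses: the four-dimensional hypothesis forces $i\leq 4$ and makes each $\eta'_j=0$ for $j\geq 5$ a graph over the remaining two coordinates, so everything comes down to the irreducibility of the single equation $\eta'_4=0$ in two variables. The difficulty is in how you execute that last step. You work entirely with germs at $P$ and with \emph{analytic} irreducible factors, but the object the lemma is about is the projective curve $\Gamma=(\eta'_4=\cdots=\eta'_n=0)\subset E$, which Lemma \ref{kng} immediately treats as a global weighted complete intersection in $\Pp(a_1,\ldots,a_n)$ in order to compute $H.\Gamma$. Irreducibility of the germ of $Z$ at $P$ does not give irreducibility of $\Gamma$: a curve can be unibranch at one point and still have other components (in the affine chart away from the origin, or at infinity relative to the chart $U_i$). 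So even if your local argument went through, it would not prove the statement in the form the paper needs it.

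Moreover, the orbit argument itself conflates two different notions of irreducibility. The paper defines $\eta'_4$ as an irreducible factor of the polynomial ${f'_4}^{\circ}$ in the ring of $G'$-semi-invariant polynomials, whereas you run the orbit argument on \emph{analytic} branches at $P$. The product of the branches in a single $G'$-orbit is a semi-invariant element of the analytic local ring, not a polynomial, so its existence does not contradict polynomial semi-invariant irreducibility; already with $G'$ trivial, $y_1^2-y_2^2(1+y_2)$ is an irreducible polynomial with two analytic branches at the origin, and no contradiction arises. (In that example the curve is globally irreducible but your argument would wrongly conclude it is locally irreducible at $0$.) What saves the actual situation, and what the paper's proof implicitly uses, is that ${f'_4}^{\circ}$ is the dehomogenization of the weighted-homogeneous polynomial $(f_4)_w|_{x_3=0}$: its semi-invariant irreducible factors correspond to irreducible semi-invariant weighted-homogeneous polynomials, each of which cuts out an irreducible curve in the two-dimensional weighted projective space $\Pp(a_1,\ldots,a_4)\cap H$, and the projection $\Pp(a_1,\ldots,a_n)\dashrightarrow\Pp(a_1,\ldots,a_4)$ identifies $\Gamma$ with that curve. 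Your proof needs this global weighted-homogeneity input; the purely local implicit-function-theorem picture is not enough.
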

\begin{proof}
	Since $Y\rightarrow X$ can be viewed as a four-dimensional weighted blow-up, we know that $i\leq 4$ and
	${f'}^{\circ}_j=y_j+f'_j|_{y_3=y_i=0}$ for all $j>4$. Hence $\eta'_j={f'}^{\circ}_j$. One can see that the projection
	\[ (\eta'_4=...=\eta'_n=0)|_{H_Y\cap E}\subset \Pp(a_1,...,a_n)\rightarrow \Pp(a_1,...,a_4)\supset (\eta'_4=0)|_{H_Y\cap E}\]
	is an isomorphism. Since $\eta'_4$ is an irreducible function, it defines an irreducible curve.
\end{proof}

Notice that $\eta'_j$ is a polynomial in $y_1$, ..., $y_n$. There exists $\eta_j\in\Oo_W$ such that $\eta'_j=(\phi|_{U_i}^{-1})\ts(\eta_j)$.
We assume that $Y\rightarrow X$ is a weighted blow-up with the weight $\frac{1}{r}(a_1,...,a_n)$ and $Z_1\rightarrow Y$ is a weighted blow-up
with the weight $\frac{1}{r'}(a'_1,...,a'_n)$.

\begin{lem}\label{kng}
	Let $\Gamma=(\eta'_4=...=\eta'_n=0)$ and assume that $\Gamma$ is an irreducible and reduced curve. Then
	\[K_{Z_1}.\Gamma_{Z_1}=-\frac{a_3^2v_E(\eta_4)...v_E(\eta_n)r^{n-3}}{ma_1...a_n}+
		\frac{a'_iv_F(\eta'_4)...v_F(\eta'_n){r'}^{n-4}}{a'_1...a'_n}.\]
	Here $m$ is the integer in Lemma \ref{wbup} corresponds to the weighted blow-up $Y\rightarrow X$.
\end{lem}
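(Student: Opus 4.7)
The plan is to compute $K_{Z_1}\cdot\Gamma_{Z_1}$ via the discrepancy identity for the $w$-morphism $\phi:Z_1\to Y$, splitting the intersection into two contributions that match the two summands of the claimed formula. Writing $K_{Z_1}=\phi\st K_Y+a(F,Y)F$ and applying the projection formula on $\phi$ gives
\[K_{Z_1}\cdot\Gamma_{Z_1}=K_Y\cdot\Gamma+a(F,Y)\,F\cdot\Gamma_{Z_1}.\]
It then suffices to identify $K_Y\cdot\Gamma$ with the first (negative) summand and $a(F,Y)\,F\cdot\Gamma_{Z_1}$ with the second (positive) one; the former is a degree calculation on $E$, the latter a proper local intersection near $P$.

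For $K_Y\cdot\Gamma$, since $\Gamma\subset E$ is contracted by $\psi:Y\to X$, the projection formula on $\psi$ gives $K_Y\cdot\Gamma=a(E,X)\,E\cdot\Gamma$. The Du Val section $H=(x_3=0)$ together with $a(E,X,H)=0$ from \cite[Lemma 2.7(ii)]{ch} yields $a(E,X)=v_E(x_3)=a_3/r$. To compute $E\cdot\Gamma$ I use the identifications $E\cong\Pp(a_1,\ldots,a_n)/G'$ with $|G'|=m$ and $E|_E=\Oo_E(-r)$ from the corollaries following Lemma \ref{wbup}. Inside $E$ the curve $\Gamma$ is a complete intersection cut out by $y_3$ (of weight $a_3$) and $\eta'_4,\ldots,\eta'_n$ (of respective weights $r\,v_E(\eta_4),\ldots,r\,v_E(\eta_n)$), whence a weighted-B\'ezout count gives
\[E\cdot\Gamma=-\frac{a_3\,r^{n-2}\,v_E(\eta_4)\cdots v_E(\eta_n)}{a_1\cdots a_n\,m},\]
and multiplying by $a_3/r$ produces the first summand. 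For the second summand the analogous elephant argument on $Y$ (with $H_Y=(y_3=0)$ and $a(F,Y,H_Y)=0$) lets me write $a(F,Y)=v_F(y_3)=a'_3/r'$; the harder ingredient is $F\cdot\Gamma_{Z_1}$, now a \emph{proper} intersection since $\Gamma_{Z_1}\not\subset F$. I would compute it by lifting to the toric cover $\A^n\to U_i\cong\A^n/\ang{\tau,\tau'}$ of Lemma \ref{wbup}, on which $\Gamma$ lifts to the complete intersection $(y_3=y_i=\tilde\eta'_4=\cdots=\tilde\eta'_n=0)$, performing a weighted-B\'ezout proper-intersection computation on the weighted blow-up of $\A^n$ (with exceptional $\tilde F\cong\Pp(a'_1,\ldots,a'_n)$), and descending by the $\ang{\tau,\tau'}$-covering degree. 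The ``extra'' complete-intersection equation on the $Z_1$-side is $y_i$ (of weight $a'_i$), replacing the role played by $y_3$ on the $Y$-side; one factor of $a'_3$ produced by the B\'ezout count cancels against the denominator of $a(F,Y)=a'_3/r'$, leaving exactly $a'_i$ in the numerator, while the absence of an $m$-factor in the denominator reflects that a proper intersection of transverse subvarieties is preserved under the finite quotient.

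The main obstacle is this proper-intersection calculation of $F\cdot\Gamma_{Z_1}$: unlike $E\cdot\Gamma$ one cannot simply invoke a degree formula on $F$, and one has to keep careful track of the cyclic quotient $\ang{\tau,\tau'}$ from Lemma \ref{wbup} and verify that all the factors of $r'$, $|\ang{\tau,\tau'}|$ and $m'$ cancel in the right way to produce the compact expression stated, with only $a'_i$ in the numerator and no $m$-analogue in the denominator.
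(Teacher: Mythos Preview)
Your decomposition $K_{Z_1}\cdot\Gamma_{Z_1}=K_Y\cdot\Gamma+a(F,Y)\,F\cdot\Gamma_{Z_1}$ is equivalent to the paper's, which instead uses $K_{Z_1}+H_{Z_1}\equiv 0$ over $X$ to reduce to $-H_{Z_1}\cdot\Gamma_{Z_1}=-H_Y\cdot\Gamma+v_F(H_Y)\,F\cdot\Gamma_{Z_1}$; since $K_Y\cdot\Gamma=-H_Y\cdot\Gamma$ and $a(F,Y)=v_F(H_Y)=1/r'$, the two splits are literally the same two numbers. Your weighted-B\'ezout computation of the first term (on the ambient exceptional $\Pp(a_1,\ldots,a_n)/G'$, not on $E$ itself, which is only a surface) matches the paper's.

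Where the paper is cleaner is exactly the part you flag as the obstacle. Rather than lifting to a toric cover, the paper writes $\Gamma_{Z_1}$ as a complete intersection of strict transforms in the ambient weighted blow-up:
\[
\Gamma_{Z_1}=D_{4,Z_1}\cdots D_{n,Z_1}\cdot E_{Z_1}\cdot H_{Y,Z_1},\qquad D_{j,Z_1}=\psi\st D_j-v_F(\eta'_j)F,
\]
and similarly for $E_{Z_1}$ and $H_{Y,Z_1}$. Intersecting with $F$ and using $F\cdot\psi\st(\,\cdot\,)=0$ collapses the whole product to a single power of $F$:
\[
F\cdot\Gamma_{Z_1}=(-1)^{n-1}\,v_F(\eta'_4)\cdots v_F(\eta'_n)\,v_F(E)\,v_F(H_Y)\,F^n,
\]
after which one just plugs in $v_F(E)=a'_i/r'$, $v_F(H_Y)=1/r'$, and $F^n=(-1)^{n-1}{r'}^{n-1}/(a'_1\cdots a'_n)$ (here $m'=1$ because $Z_1\to Y$ is a $w$-morphism). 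This sidesteps all the bookkeeping with $\langle\tau,\tau'\rangle$ and quotient degrees that worried you, and immediately explains why $a'_i$ (the coefficient $v_F(E)$) appears in the numerator while no $m$-type factor appears in the denominator.
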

\begin{proof}
	Since $\Gamma\subset E$ and $K_{Z_1}+H_{Z_1}$ is numerically trivial over $X$, we only need to show that
	\begin{equation}\label{e1}
	H_{Z_1}.\Gamma_{Z_1}=\frac{a_3^2v_E(\eta_4)...v_E(\eta_n)r^{n-3}}{ma_1...a_n}-
		\frac{a'_iv_F(\eta'_4)...v_F(\eta'_n){r'}^{n-4}}{a'_1...a'_n}.
	\end{equation}
	We know that $H_{Z_1}.\Gamma_{Z_1}=H.\Gamma-v_F(H_Y)F.\Gamma_{Z_1}$. We need to say that the first term of (\ref{e1}) equals to
	$H.\Gamma$ and the second term of (\ref{e1}) equals to $v_F(H_Y)F.\Gamma_{Z_1}$.\par
	We have an embedding $Y\subset W'\subset\Pp_W(a_1,...,a_n)$. Let $D_j$ be the divisor on $W'$ corresponds to $\eta'_j$, then
	$\Gamma=D_4.\cdots. D_n.E.H$ is a weighted complete intersection, so $\Gamma_{Z_1}=D_{4,Z_1}.\cdots. D_{n,Z_1}.E_{Z_1}.H_{Z_1}$.
	To compute $H.\Gamma$ we view $\Gamma$ as a curve inside $\Pp(a_1,...,a_n)$ which is defined by $H=D_4=...=D_n=0$. It follows that
	\[ H.\Gamma=\frac{a_3^2v_E(\eta_4)...v_E(\eta_n)r^{n-3}}{ma_1...a_n}.\]
	To compute $F.\Gamma_{Z_1}$ one write
	\begin{align*}
	F.\Gamma_{Z_1}&=F.(\psi\st D_4-v_F(\eta'_4)F).\cdots.(\psi\st D_n-v_F(\eta'_n)F)
			.(\psi\st E-v_F(E)F).(\phi\st H_{Y}-v_F(H_{Y})F)\\
			&=(-1)^{n-1}v_F(\eta'_4)...v_F(\eta'_n)v_F(E)v_F(H_{Y})F^n.
	\end{align*}
	Since $Z_1\rightarrow Y$ is a $w$-morphism, the integer $\lambda$ in Section \ref{swu} is one. Hence
	we know that $F^n=\frac{(-1)^{n-1}{r'}^{n-1}}{a'_1...a'_n}$. Now $v_F(E)=\frac{a'_i}{r'}$ and $v_F(H_{Y})=a(Y,F)=\frac{1}{r'}$, so
	\[ v_F(H_Y)F.\Gamma_{Z_1}=\frac{a'_iv_F(\eta'_4)...v_F(\eta'_n){r'}^{n-4}}{a'_1...a'_n}.\]
\end{proof}
\begin{lem}\label{knef}
	Notation and assumption as in Lemma \ref{kng}. Assume that
	\begin{enumerate}[(i)]
	\item For all $4\leq j\leq n$, there exists an integer $\delta_j$ so that $x_{\delta_j}^{k_j}\in\eta_j$ for some positive integer $k_j$.
		Moreover, those integers $\delta_4$, ..., $\delta_n$ are all distinct.
	\item If $j\neq i$, $3$, $\delta_4$, ..., $\delta_n$, then $a_3a'_j\geq a_j$.
	\end{enumerate}
	Then $K_{Z_1}.\Gamma_{Z_1}\leq 0$.  
\end{lem}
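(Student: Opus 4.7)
The plan is to apply Lemma \ref{kng} and estimate the two terms in the formula for $K_{Z_1}.\Gamma_{Z_1}$ using conditions (i) and (ii), together with the structural identity $a'_3=1$.

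The first step is to observe that $\eta_j$ is $E$-weight homogeneous of weight $v_E(\eta_j)$ and contains no $x_3$. Indeed, ${f'_j}^{\circ}=f'_j|_{y_3=y_i=0}$ is a polynomial in $y_k$ for $k\neq i,3$, hence so is any irreducible factor $\eta'_j$; since $\eta'_j=\phi\st(\eta_j)/y_i^{v_E(\eta_j)}$ has no $y_i$ (resp.\ no $y_3$), every monomial $x^\alpha$ of $\eta_j$ must satisfy $v_E(x^\alpha)=v_E(\eta_j)$ (resp.\ $\alpha_3=0$). Combined with condition (i), this yields
\[v_E(\eta_j)=\frac{k_ja_{\delta_j}}{r}\quad\text{and}\quad v_F(\eta'_j)\leq \frac{k_ja'_{\delta_j}}{r'},\]
the second inequality coming from the monomial $y_{\delta_j}^{k_j}$ that appears in $\eta'_j$ as the transform of $x_{\delta_j}^{k_j}$ (for $\delta_j\neq i,3$; the case $\delta_j=i$ makes $\eta'_j$ have a nonzero constant term so $v_F(\eta'_j)=0$ and the conclusion is immediate, while $\delta_j=3$ is excluded by homogeneity). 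Finally, $a'_3=1$ follows from $v_F(y_3)=v_F(H_Y)=a(Y,F)=1/r'$, using $H_Y=(y_3=0)$ and the $w$-morphism hypothesis on $Z_1\rightarrow Y$.

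Substituting these values into the formula of Lemma \ref{kng} and cancelling the common factors $\prod k_j$, $r^{n-3}$, $a'_i$, and the matching products $\prod a_{\delta_j}$, $\prod a'_{\delta_j}$, the inequality $K_{Z_1}.\Gamma_{Z_1}\leq 0$ reduces (after using $a'_3=1$ to write $a'_1\cdots a'_n=a'_i a'_{j_0}\prod a'_{\delta_j}$) to
\[ a_3 r' a'_{j_0}\geq m a_i a_{j_0},\]
where $j_0$ is the unique index in $\{1,\ldots,n\}\setminus\{i,3,\delta_4,\ldots,\delta_n\}$. Condition (ii) applied at $j=j_0$ gives $a_3 a'_{j_0}\geq a_{j_0}$, so it suffices to establish $r'\geq m a_i$.

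The main obstacle will be this last inequality, which is a local statement about the quotient structure at $P=\cen_Y F$ rather than a formal manipulation. By Lemma \ref{wbup} the chart $U_i\cong\A^n/\ang{\tau,\tau'}$ and the exceptional divisor $E=\Pp(a_1,\ldots,a_n)/G_m$ have $r'$ and $m$ both controlled by the weights $(a_1,\ldots,a_n)$ and $r$; the required bound $r'\geq m a_i$ follows from a direct toric computation of the canonical class modulo the action of $\ang{\tau,\tau'}$, taking into account that the index of $K_Y$ at $P$ is prescribed by the discrepancy data of the $w$-morphism $Z_1\rightarrow Y$.
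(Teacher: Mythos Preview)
Your reduction is correct and mirrors the paper's argument: you extract the identities $r v_E(\eta_j)=k_j a_{\delta_j}$ and $r' v_F(\eta'_j)\leq k_j a'_{\delta_j}$, use $a'_3=1$, and boil the inequality from Lemma~\ref{kng} down to $a_3 r' a'_{j_0}\geq m a_i a_{j_0}$, then invoke condition~(ii) at the leftover index $j_0$. This is exactly what the paper does (after a harmless relabelling so that $j_0=2$).

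The gap is in your last paragraph. You reduce to $r'\geq m a_i$ and then wave at ``a direct toric computation of the canonical class modulo the action of $\langle\tau,\tau'\rangle$, taking into account that the index of $K_Y$ at $P$ is prescribed by the discrepancy data of the $w$-morphism $Z_1\rightarrow Y$''. This is not a proof, and the reasoning you sketch is slightly off: the $w$-morphism condition tells you $a(F,Y)=1/r'$, which you already used to get $a'_3=1$; it does \emph{not} by itself determine $r'$. What you need is the \emph{equality} $r'=m a_i$, which is a purely local statement about the chart $U_i$ and has nothing to do with $Z_1$. It comes straight out of Lemma~\ref{wbup}: the group $\langle\tau,\tau'\rangle$ acting at the origin of $U_i$ is cyclic of order $m a_i$ (the factor $a_i$ from $\tau$, the factor $m$ from the residual $\tau'$-action after the relation $\tau^{k_i}={\tau'}^{\lambda}$ is imposed), and this order is precisely the Cartier index $r'$ of $P$. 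The paper simply writes ``since $m a_i=r'$'' at the corresponding point; you should do the same, citing Lemma~\ref{wbup} (or see Remark~\ref{knef2}(3), where this identity is made explicit).
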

\begin{proof}
	Fix $j\geq 4$. From the construction and our assumption we know that that $i$, $\delta_4$, ..., $\delta_n$ are all distinct.
	One can see that $rv_E(\eta_j)=k_j a_{\delta_j}$ and $r'v_F(\eta'_j)\leq k_ja'_{\delta_j}$. Thus we have a relation
	\[ \frac{rv_E(\eta_j)}{a_{\delta_j}}\geq \frac{r'v_F(\eta'_j)}{a'_{\delta_j}}.\]\par
	One can always assume that if $j>4$, $j\neq i$, then $\delta_j=j$. By interchanging the order of $y_1$, ..., $y_4$ we may assume that
	$\delta_4=4$. Now if $i<4$ then we may assume that $i=1$. If $i>4$ then we may assume that $\delta_i=1$. We can write
	\[ \frac{a_3^2v_E(\eta_4)...v_E(\eta_n)r^{n-3}}{ma_1...a_n}=
		\frac{1}{ma_i}\frac{a_3}{a_2}\frac{rv_E(\eta_4)}{a_{\delta_4}}...\frac{rv_E(\eta_n)}{a_{\delta_n}}\]
	and \[ \frac{a'_iv_F(\eta_4)...v_F(\eta'_n){r'}^{n-4}}{a'_1...a'_n}=
		\frac{1}{r'}\frac{1}{a'_2}\frac{r'v_F(\eta'_4)}{a'_{\delta_4}}...\frac{r'v_F(\eta'_n)}{a'_{\delta_n}}.\]
	Since $ma_i=r'$, $\frac{a_3}{a_2}\geq\frac{1}{a'_2}$ and
	$\frac{rv_E(\eta_j)}{a_{\delta_j}}\geq \frac{r'v_F(\eta'_j)}{a'_{\delta_j}}$, we know that
	\[ \frac{a_3^2v_E(\eta_4)...v_E(\eta_n)r^{n-3}}{ma_1...a_n}\geq\frac{a'_iv_F(\eta'_4)...v_F(\eta'_n){r'}^{n-4}}{a'_1...a'_n},\]
	So $K_{Z_1}.\Gamma_{Z_1}\leq 0$.
\end{proof}
\begin{rk}\label{knef2}$ $
	\begin{enumerate}[(1)]
	\item From the construction we know that if $j\geq 5$, $j\neq i$, then one can choose $\delta_j=j$.
	\item If $Y\rightarrow X$ can be viewed as a four-dimensional weighted blow-up, then the condition (i) of Lemma \ref{knef}
		always holds. Indeed, in this case one has $i\leq 4$, so $\eta'_4$ is a two-variable irreducible function, hence there exists
		$\delta_4\leq 4$ such that $y_{\delta_4}^{k_4}\in\eta'_4$ for some positive integer $k_4$. One also has $\delta_j=j$ for all $j>4$.
		Thus the condition (i) of Lemma \ref{knef} holds.
	\item If for $j\neq i$, $3$, $\delta_4$, ..., $\delta_n$ one has that $a_j\leq a_i$, then the condition (ii) of Lemma \ref{knef} holds.
		Indeed, by Lemma \ref{wbup} we know that $U_i\cong\A^n/\ang{\tau,\tau'}$ where $\tau$ corresponds to the vector
		$v=\frac{1}{a_i}(a_1,...,a_{i-1},-r,a_{i+1},a_n)$. Let $\bar{v}$ be the vector corresponds to the cyclic action near $P\in U_i$,
		then $v\equiv m\bar{v}$ (mod $\Z^n$) and $r'=ma_i$. Since $Z_1\rightarrow Y$ is a $w$-morphism, and since $H_Y$ is defined by $y_3=0$,
		we know that $a'_3=1$ and $\bar{v}\equiv a_3\frac{1}{r'}(a'_1,...,a'_n)$ (mod $\Z^n$). One can see that 
		$a_3a'_j\equiv a_j$ (mod $r'$). This implies that $a_3a'_j\geq a_j$ since \[a_j\leq a_i\leq ma_i=r'.\]\par
	\end{enumerate}
\end{rk}

\begin{lem}\label{kneff}
	Assume that $K_{Z_1}$ is anti-nef over $X$ and there exists $u\in\Oo_X$ such that $v_E(u)<\frac{a(E,X)}{a(F,X)}v_F(u)$,
	then $Z_i\dashrightarrow Z_{i+1}$ is a $K_{Z_i}$-flip or flop for all $1\leq i\leq k-1$, and $Z_k\rightarrow Y_1$ is a terminal
	divisorial contraction.\par
	In particular, if there exists $j\neq i$ such that $a_3a'_j>a_j$, then the conclusion of this lemma holds.
\end{lem}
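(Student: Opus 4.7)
The plan is to reduce to Lemma \ref{kan} by verifying that $E_{Z_1}$ is not covered by $K_{Z_1}$-trivial curves over $X$, and then to handle the ``in particular'' clause by producing $u = x_j$ as a witness to the inequality. Set $\pi = \phi \circ \psi : Z_1 \to X$. The discrepancy formula gives
\[ K_{Z_1} = \pi^* K_X + a(E,X)\, E_{Z_1} + a(F,X)\, F, \]
and pulling back the principal divisor $(u)$ on $X$ yields $\pi^*(u) = U_{Z_1} + v_E(u)\, E_{Z_1} + v_F(u)\, F$. For any curve $C$ contracted by $\pi$ these two decompositions produce the identities
\[ K_{Z_1} \cdot C = a(E,X)\, E_{Z_1} \cdot C + a(F,X)\, F \cdot C, \]
\[ 0 = U_{Z_1} \cdot C + v_E(u)\, E_{Z_1} \cdot C + v_F(u)\, F \cdot C. \]

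Suppose for contradiction that $E_{Z_1}$ is covered by $K_{Z_1}$-trivial curves, and let $C$ be a general member of an irreducible covering family. Since $F \cap E_{Z_1}$ and $U_{Z_1} \cap E_{Z_1}$ are proper closed subsets of $E_{Z_1}$, genericity forces $C \not\subset F$ and $C \not\subset U_{Z_1}$, so $F \cdot C \geq 0$ and $U_{Z_1} \cdot C \geq 0$. From $\psi^* K_Y = K_{Z_1} - \tfrac{1}{r'} F$ together with $K_{Z_1} \cdot C = 0$, I get $K_Y \cdot \psi_* C = -\tfrac{1}{r'} F \cdot C$; since $C \not\subset F$, the class $\psi_* C$ is a non-zero cycle supported on $\psi(C) \subset E$, and because $E$ is contracted by the divisorial contraction $\phi$ with $-K_Y$ being $\phi$-ample, $K_Y \cdot \psi_* C < 0$, which forces $F \cdot C > 0$. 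Solving the first identity for $E_{Z_1} \cdot C$ (using $a(E,X) > 0$) and substituting into the second produces
\[ U_{Z_1} \cdot C = \frac{F \cdot C}{a(E,X)} \bigl( v_E(u)\, a(F,X) - v_F(u)\, a(E,X) \bigr), \]
which the hypothesis $v_E(u)\, a(F,X) < v_F(u)\, a(E,X)$, combined with $F \cdot C > 0$, makes strictly negative, contradicting $U_{Z_1} \cdot C \geq 0$. Hence $E_{Z_1}$ is not covered by $K_{Z_1}$-trivial curves and Lemma \ref{kan} delivers the main conclusion.

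For the ``in particular'' clause I would take $u = x_j$. Corollary \ref{wbcc} gives $v_E(x_j) = a_j/r$ and $v_F(x_j) = (a_j a'_i + a'_j r)/(r r')$, and, using $a'_3 = 1$ from Remark \ref{knef2}(3), $a(E,X) = a_3/r$ and $a(F,X) = (a_3 a'_i + r)/(r r')$. Substituting into $v_E(x_j)\, a(F,X) < v_F(x_j)\, a(E,X)$ gives $a_j(a_3 a'_i + r) < a_3(a_j a'_i + a'_j r)$, which after cancellation is exactly $a_j < a_3 a'_j$.

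The subtle point I expect to be the main obstacle is the strictness $F \cdot C > 0$ for the general covering curve: this rests on applying $\phi$-ampleness of $-K_Y$ to the non-zero class $\psi_* C$ supported in $E$, which in turn requires the generic choice $C \not\subset F$ (guaranteed because $F$ cannot dominate the two-dimensional $E_{Z_1}$). Once this strict positivity is in hand, the remainder is a clean substitution in the two intersection identities.
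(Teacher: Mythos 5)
Your proposal is correct, and it follows the same overall strategy as the paper: reduce to Lemma \ref{kan} by showing $E_{Z_1}$ is not covered by $K_{Z_1}$-trivial curves, and verify the ``in particular'' clause by taking $u=x_j$ and reducing the inequality $v_E(x_j)a(F,X)<v_F(x_j)a(E,X)$ to $a_j<a_3a'_j$ (your final computation is literally the one in the paper, via Lemma \ref{dcp}). Where you diverge is in how the non-coveredness is extracted from the hypothesis on $u$. The paper first uses anti-nefness of $K_{Z_1}$ to pin down the structure of the MMP: if $E_{Z_1}$ were covered by trivial curves these would span a boundary ray of $NE(Z_1/X)$, forcing $k=1$ and $Z_1\rightarrow Y_1$ to be a $K_{Z_1}$-trivial divisorial contraction of $E_{Z_1}$ to a curve; it then computes that the pullback of $F_{Y_1}$ to $Z_1$ is $F_{Z_1}+\frac{a(E,X)}{a(F,X)}E_{Z_1}$, which yields $v_E(u)\geq\frac{a(E,X)}{a(F,X)}v_F(u)$ for every $u\in\Oo_X$, contradicting the hypothesis. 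You instead intersect the two decompositions $K_{Z_1}=\pi^*K_X+a(E,X)E_{Z_1}+a(F,X)F$ and $\pi^*(u)=U_{Z_1}+v_E(u)E_{Z_1}+v_F(u)F$ with a general member $C$ of a covering family, using $F\cdot C>0$ (which you justify correctly via $\phi$-ampleness of $-K_Y$ applied to $\psi_*C$) and $U_{Z_1}\cdot C\geq0$ to get the same contradiction. Your route is more self-contained in that it never needs to identify $Z_1\rightarrow Y_1$ as a contraction to a curve, at the mild cost of invoking the standard fact that a surface covered by $K$-trivial curves admits an irreducible covering family whose general member avoids any fixed proper closed subset; the paper's route packages the same numerical information into the pullback of $F_{Y_1}$ and, as a byproduct, records the geometric structure of the degenerate case. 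Both arguments are sound and interchangeable here.
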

\begin{proof}
	We only need to show that $E_{Z_1}$ is not covered by $K_{Z_1}$-trivial curves, then the conclusion follows from Lemma \ref{kan}.\par
	Assume that $E_{Z_1}$ is covered by $K_{Z_1}$-trivial curves. Since $K_{Z_1}$ is anti-nef, those $K_{Z_1}$-trivial curves
	are contained in the boundary of the relative effective cone $NE(Z_1/X)$. Hence $k=1$ and $Z_1\rightarrow Y_1$ is a $K_{Z_1}$-trivial
	divisorial contraction. Notice that if $C_Y\subset E$ is a curve which do not contain $P$, then $K_{Z_1}.C_{Z_1}=K_Y.C_Y<0$,
	hence the curve $C_{Z_1}$ is not contracted by $Z_1\rightarrow Y_1$. Thus $Z_1\rightarrow Y_1$ is a divisorial contraction to the curve
	$C_{Y_1}$. Notice that in this case $a(E,Y_1)=0$.\par
	By computing the discrepancy one can see that the pull-back of $F_{Y_1}$ on $Z_1$ is $F_{Z_1}+\frac{a(E,X)}{a(F,X)}E_{Z_1}$.
	It follows that for all $u\in\Oo_X$ one has that \[v_E(u)\geq\frac{a(E,X)}{a(F,X)}v_F(u).\]
	Hence if there exists $u$ such that $v_E(u)<\frac{a(E,X)}{a(F,X)}v_F(u)$, then $E_{Z_1}$ is not covered by $K_{Z_1}$-trivial curves,
	so $Z_k\rightarrow Y_1$ is a terminal divisorial contraction.\par
	Now by Lemma \ref{dcp} we know that \[\frac{a(E,X)}{a(F,X)}=\frac{r'a_3}{r+a_3a'_i}.\]
	Consider $u=x_j$. For $j\neq i$ we know that $v_E(x_j)=\frac{a_j}{r}$
	and  \[ v_F(x_j)=v_F(y_jy_i^{\frac{a_j}{r}})=\frac{a'_j}{r'}+\frac{a_ja'_i}{rr'}=\frac{ra'_j+a_ja'_i}{rr'}.\]
	The inequality $v_E(u)\geq\frac{a(E,X)}{a(F,X)}v_F(u)$ becomes
	\[ \frac{a_j}{r}\geq \frac{r'a_3}{r+a_3a'_i}\frac{ra'_j+a_ja'_i}{rr'}=\frac{1}{r}\frac{a_3(ra'_j+a_ja'_i)}{r+a_3a'_i},\]
	or equivalently, \[ a_j(r+a_3a'_i)\geq a_3(ra'_j+a_ja'_i),\] or simpler, \[a_j\geq a_3a'_j.\]
	Hence the condition $a_3a'_j>a_j$ implies that $v_E(u)<\frac{a(E,X)}{a(F,X)}v_F(u)$.
\end{proof}
\begin{lem}\label{dcp}
	One has that \[a(E,X)=\frac{a_3}{r},\quad a(F,X)=\frac{r+a_3a'_i}{rr'}.\]
\end{lem}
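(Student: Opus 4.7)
The plan is to invoke \cite[Lemma 2.7(ii)]{ch} --- the Du Val section trick already used at the start of Section \ref{sfac} --- to reduce both discrepancy computations to valuations of a single coordinate function.

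First I would identify the Du Val section $H\subset X$ explicitly. Since the chart $U_i$ is chosen so that $H_Y=(y_3=0)$, and Corollary \ref{wbcc} gives the change of coordinates $x_3=y_3y_i^{a_3/r}$, the section $H$ is cut out by $x_3=0$ in $X$. Applying \cite[Lemma 2.7(ii)]{ch} to the divisorial contraction $Y\to X$ then yields $a(E,X,H)=0$, so
$$a(E,X)=v_E(H)=v_E(x_3)=\frac{a_3}{r},$$
which is the first formula.

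For $a(F,X)$ I would propagate the Du Val section one step further. Since $K_X+H\sim 0$ and $a(E,X,H)=0$, the pair $(Y,H_Y)$ is log trivial over $X$, so $H_Y$ itself serves as a Du Val section on $Y$ near $P$. Applying \cite[Lemma 2.7(ii)]{ch} to the $w$-morphism $Z_1\to Y$ then gives $a(F,Y,H_Y)=0$. Pulling $K_Y+H_Y\sim 0$ back through $\psi$ shows $K_{Z_1}+H_{Z_1}\sim 0$, and combined with $a(E_{Z_1},X,H)=0$ this forces $a(F,X,H)=0$, i.e.\ $a(F,X)=v_F(H)=v_F(x_3)$. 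Using the weights $\tfrac{1}{r'}(a'_1,\ldots,a'_n)$ of the second blow-up together with $a'_3=1$ (as observed in the proof of Lemma \ref{kneff}), one evaluates
$$v_F(x_3)=v_F(y_3)+\frac{a_3}{r}v_F(y_i)=\frac{1}{r'}+\frac{a_3}{r}\cdot\frac{a'_i}{r'}=\frac{r+a_3a'_i}{rr'},$$
giving the second formula.

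I do not anticipate a serious obstacle. The only careful check is that $H_Y$ really qualifies as a Du Val section on $Y$ so that \cite[Lemma 2.7(ii)]{ch} applies to $Z_1\to Y$; but this is automatic since $K_Y+H_Y\sim 0$ and the strict transform of a Du Val section by a divisorial contraction to a point remains Du Val. Everything else is mechanical valuation bookkeeping supplied by Corollary \ref{wbcc}.
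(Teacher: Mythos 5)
Your proposal is correct and matches the paper's argument: the first formula is obtained exactly as in the paper from $a(E,X,H)=0$ (established at the start of Section \ref{sfac} via \cite[Lemma 2.7(ii)]{ch}), giving $a(E,X)=v_E(H)=v_E(x_3)=a_3/r$. For the second formula your computation $v_F(x_3)=v_F(y_3)+\frac{a_3}{r}v_F(y_i)=\frac{1}{r'}+\frac{a_3}{r}\cdot\frac{a'_i}{r'}$ is term-for-term the paper's $a(F,X)=a(F,Y)+a(E,X)v_F(E)$, merely rephrased through the propagated Du Val section (whose validity, i.e.\ $a'_3=1$, the paper itself records in Remark \ref{knef2}(3)), so this is essentially the same proof.
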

\begin{proof}
	Since $a(E,X,H)=0$, we know that $a(E,X)=v_E(H)=\frac{a_3}{r}$. Then
	\[ a(F,X)=\frac{1}{r'}+\frac{a_3}{r}\frac{a'_i}{r'}=\frac{r+a_3a'_i}{rr'}.\]
\end{proof}
\begin{rk}
	Note that the assumption in Lemma \ref{knef} depends only on the first weighted blow-up $Y\rightarrow X$. In other word, we can
	check whether the assumption holds or not by simply consider the embedding which defines the weighted blow-up $Y\rightarrow X$,
	instead of considering the (possibly) larger embedding which defines both $Y\rightarrow X$ and $Z_1\rightarrow Y$. Likewise,
	to apply Lemma \ref{kneff} we can only consider the embedding defines $Y\rightarrow X$, if condition $a_3a'_j>a_j$
	already holds under this embedding.
\end{rk}
\begin{nota}$ $
	\begin{enumerate}[(1)]
	\item We say that the condition ($\Xi$) holds if the conditions (i) and (ii) in Lemma \ref{knef} holds for all possible choice of $\Gamma$.
		We say that the condition ($\Xi'$) holds if the conditions (2) and (3) in Remark \ref{knef2} holds for all possible choice of $\Gamma$.
		As explained in Remark \ref{knef2}, we know that the condition ($\Xi'$) implies the condition ($\Xi$).
	\item We say that the condition ($\Xi_-$) (resp. ($\Xi'_-$)) holds if the condition ($\Xi$) (reps. ($\Xi'$)) holds and the inequality
		in Lemma \ref{knef} is strict for all possible choice of $\Gamma$. Using the notation in Lemma \ref{knef} it is equivalent to say that 
		either there exists $j\neq i$, $3$, $\delta_4$, ..., $\delta_n$ such that $a_3a'_j>a_j$, or there exists
		$j\geq 4$ such that \[\frac{rv_E(\eta_j)}{a_{\delta_j}}>\frac{r'v_F(\eta'_j)}{a'_{\delta_j}}.\]
	\item We say that the condition ($\Theta_u$) holds for some function $u$ if $v_E(u)<\frac{a(E,X)}{a(F,X)}v_F(u)$. 
		We say that the condition ($\Theta_j$) holds for some index $j$ if $a_3a'_j>a_j$. In either case Lemma \ref{kneff} can be applied.
	\end{enumerate}
\end{nota}
\begin{nota}
	We say that a divisorial contraction $Y\rightarrow X$ is linked to another divisorial contraction $Y_1\rightarrow X$ if the
	diagram \[ \vc{\xymatrix{ Z_1\ar@{-->}[r] \ar[d] & ... \ar@{-->}[r] & Z_k \ar[d] \\ Y \ar[rd] & & Y_1\ar[ld] \\ & X & }}\]
	exists, where $Z_1\rightarrow Y$ is a strict $w$-morphism over a non-Gorenstein point, $Z_k\rightarrow Y_1$ is a divisorial contraction
	and $Z_i\dashrightarrow Z_{i+1}$ is a flip or a flop for all $1\leq i\leq k-1$. We use the notation $Y\ua{X}Y_1$ if
	$Y\rightarrow X$ is linked to $Y_1\rightarrow X$.\par
	Furthermore, if all $Z_i\dashrightarrow Z_{i+1}$ are all flips, or $k=1$, then we say that $Y$ is negatively linked to $Y_1$,
	and use the notation $Y\uan{X}Y_1$. 
\end{nota}
\begin{rk}
	At this point it is not clear that why $Z_1\rightarrow Y$ should be a divisorial contraction to a non-Gorenstein point. 
	In fact, from the classification of divisorial contractions between terminal threefolds (cf. tables in Section \ref{slk})
	one can see that if there are two different divisorial contractions $Y\rightarrow X$ and $Y_1\rightarrow X$, then
	$Y$ or $Y_1$ always contain a non-Gorenstein point. It is natural to construct the diagram starting with a most singular point,
	which is always a non-Gorenstein point.
\end{rk}

\begin{rk}$ $
	\begin{enumerate}[(1)]
	\item If ($\Xi$) or ($\Xi'$) holds and ($\Theta_u$) or ($\Theta_j$) holds for some function $u$ or index $j$, then by Lemma \ref{knef}
		and Lemma \ref{kneff} one has that $Y\ua{X}Y_1$.
	\item Assume that ($\Xi_-$) or ($\Xi'_-$) holds and ($\Theta_u$) or ($\Theta_j$) holds for some function $u$ or index $j$, then
		one has that $Y\uan{X}Y_1$.
	\end{enumerate}
\end{rk}

\begin{lem}\label{yly1}
	Assume that \[ X\cong(x_1(x_1+p(x_2,...,x_4))+g(x_2,...,x_4)=0)\subset\A^4/G,\]
	such that 
	\begin{enumerate}
	\item $v_E(g)=\frac{a_1}{r}+v_E(p)=2\frac{a_1}{r}-1$.
	\item $i=1$, $a_2+a_4=a_1$ and $a_3=1$.
	\end{enumerate}
	Then $Y\uan{X}Y_1$.
\end{lem}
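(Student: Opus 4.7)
The strategy is to construct the diagram defining $Y\uan{X}Y_1$ as follows: take a strict $w$-morphism $Z_1\to Y$ over the non-Gorenstein point $P=\cen_YF\in U_1$, run a $(K_{Z_1}+H_{Z_1}+\epsilon E_{Z_1})$-MMP over $X$ to produce $Z_1\dashrightarrow\cdots\dashrightarrow Z_k\to Y_1$ with $Y_1\to X$ a divisorial contraction, and then verify the hypotheses of Lemma \ref{knef} and Lemma \ref{kneff} in strict form so that each MMP step is a $K$-flip (not flop) and $Z_k\to Y_1$ is a terminal divisorial contraction.

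For the anti-nef condition in Lemma \ref{knef}, I would establish condition ($\Xi'$), which implies ($\Xi$). Condition (i) of Lemma \ref{knef} holds automatically by Remark \ref{knef2}(2), since $Y\to X$ is a four-dimensional weighted blow-up. For condition (ii), the only relevant index lies in $\{2,4\}\setminus\{\delta_4\}$; the hypothesis $a_2+a_4=a_1$ together with $a_2,a_4\geq 1$ gives $a_j\leq a_1=a_i$ for both $j=2,4$, so Remark \ref{knef2}(3) supplies condition (ii).

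The main work is to verify a strict inequality in order to get flips rather than flops, i.e., $(\Xi_-)$ or $(\Theta_j)$. I would analyze $Y\cap U_1$ concretely: since $v_E(x_1^2)=2a_1/r>v_E(x_1p)=v_E(g)$, the leading part of $f$ is $x_1p_w+g_w$, and the proper transform becomes
\[ f'=y_1+p_w(y_2,y_3,y_4)+g_w(y_2,y_3,y_4)+y_1h\]
for some $h$. Eliminating $y_1$ identifies $Y\cap U_1$ with a cyclic quotient of $\A^3_{(y_2,y_3,y_4)}$ in which $E$ is cut out by $p_w+g_w$ and $H_Y$ by $y_3$, so $\eta'_4$ is an irreducible factor of $p_w(y_2,0,y_4)+g_w(y_2,0,y_4)$. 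Using the congruence $a'_j\equiv a_j\pmod{r'}$ from Remark \ref{knef2}(3), together with the fact that the two summands $p_w$ and $g_w$ contribute monomials of strictly different weighted degrees (differing by $a_1/r$), I would track $v_F$-valuations of the monomials of $\eta'_4$ and show that either the strict weighted inequality $rv_E(\eta_4)/a_{\delta_4}>r'v_F(\eta'_4)/a'_{\delta_4}$ holds, or $a'_j>a_j$ for the remaining $j\in\{2,4\}\setminus\{\delta_4\}$. The hard case, and the main obstacle, is when $\eta'_4$ has only the pure-power monomial $y_{\delta_4}^{k_4}$ at minimal $v_F$-weight: then the weighted inequality is not strict and one must argue $a'_j>a_j$ directly, by identifying the cyclic quotient type of $P$ via the elimination above and invoking the $w$-morphism classification from Section \ref{slk}.
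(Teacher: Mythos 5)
Your frame (verify ($\Xi'$), then strictness plus the hypothesis of Lemma \ref{kneff}) is the right one, and your verification of ($\Xi'$) agrees with the paper's. But there is a genuine gap in the core of the argument. First, the strict inequality in Lemma \ref{knef} and the condition ($\Theta$) are not interchangeable: you need ($\Theta_u$) or ($\Theta_j$) to invoke Lemma \ref{kneff} at all (it is what rules out $Z_1\rightarrow Y_1$ being a $K_{Z_1}$-trivial contraction to a curve), and you separately need the strictness to force flips rather than flops; proving "either one or the other" does not yield $Y\uan{X}Y_1$. Second, and fatally for your fallback, the inequality $a'_j>a_j$ that you hope to extract "from the $w$-morphism classification" never holds here. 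Your own elimination shows that $P$ is the cyclic quotient point $\frac{1}{a_1}(a_2,a_3,a_4)$ with $0<a_j<a_1$ and $a_2+a_3+a_4=a_1+1$; the unique $w$-morphism over such a point is the Kawamata blow-up with weights exactly $\frac{1}{a_1}(a_2,a_3,a_4)$, so $a'_j=a_j$ for $j=2,3,4$ (compare type D4, where $P$ is $\frac{1}{b+1}(b,1,1)$). Hence ($\Theta_j$) fails for every coordinate index, and the "hard case" you flag cannot be resolved by the route you propose. The condition that does hold is ($\Theta_u$) for the non-coordinate function $u=x_1+p$: since the strict transform of $x_1+p$ on $Y\cap U_1$ is, up to a unit and a power of $y_1$, the strict transform of $g$, one gets $v_F(x_1+p)=v_E(x_1+p)+1$ while $\frac{a(E,X)}{a(F,X)}=1$. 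Your analysis, which tracks only the coordinates and the monomials of $\eta'_4$, never produces this function.

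The paper's proof makes this function a coordinate by re-embedding $X$ as $(x_1x_5+g=x_5-x_1-p=0)\subset\A^5/G$ with $x_5=x_1+p$ and $a_5=rv_E(p)$. Then $a'_5=rv_E(g)>rv_E(p)=a_5$ gives ($\Theta_5$) immediately, and $\eta'_5=y_5-p(y_2,0,y_4)$ gives $\frac{rv_E(\eta_5)}{a_5}>\frac{r'v_F(\eta'_5)}{a'_5}$, which is the required strictness. The larger embedding also fixes a hidden defect in your four-dimensional bookkeeping: $p_w^{\circ}+g_w^{\circ}$ is a sum of two homogeneous pieces whose weights differ by $a_1/r$, so it is not the strict transform of any $v_E$-homogeneous function of $x_2,x_3,x_4$ alone; the correct lift $\eta_4$ is a factor of $(x_1p_w+g_w)|_{x_3=0}$, homogeneous of weight $rv_E(g)$ rather than $rv_E(p)$, and using the naive minimal weight in Lemma \ref{kng} gives the wrong intersection number. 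If you want to avoid the five-dimensional embedding you must at minimum verify ($\Theta_u$) for $u=x_1+p$ directly and redo the $v_E$/$v_F$ comparison with the homogeneous lift of $\eta'_4$; as written, the proposal does not close.
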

\begin{proof}
	We know that $a_1>a_j$ for $j=2$, ..., $4$, so ($\Xi'$) holds. 
	Consider the embedding \[X\cong(x_1x_5+g(x_2,...,x_4)=x_5-x_1-p(x_2,...,x_4)=0)
		\subset\A^5_{(x_1,...,x_5)}/G,\]
	then $Y\rightarrow X$ can be obtained by weighted blowing-up the weight $\frac{1}{r}(a_1,...,a_5)$ with respect this embedding,
	where $a_5=rv_E(p)$. The origin of $U_1$ is a cyclic quotient point of type $\frac{1}{a_1}(-r,a_2,...,a_5)$. 
	The only $w$-morphism is the weighted blow-up corresponds to the weight $w(y_2,...y_4)=\frac{1}{a_1}(a_2,...,a_4)$.
	One can see that $a'_5=rv_E(g)>rv_E(p)=a_5$, hence ($\Theta_5$) holds. Moreover, one can see that $\eta'_5=y_5-p(y_2,0,y_4)$,
	so $r'v_F(\eta'_5)=rv_E(\eta_5)=rv_E(p(x_2,0,x_4))$, hence \[ \frac{rv_E(\eta_5)}{a_5}>\frac{r'v_F(\eta'_5)}{a'_5}.\]
	Thus $Y\uan{X}Y_1$.
\end{proof}

\begin{lem}\label{disef}
	Assume that $Y\rightarrow X$ and $Y_1\rightarrow X$ are two divisorial contractions such that $Y\ua{X}Y_1$. Let $E$ and $F$ be the
	exceptional divisors of $Y\rightarrow X$ and $Y_1\rightarrow X$ respectively. Assume that there exists $u\in\Oo_X$ such that
	$v_F(u)=\frac{1}{r}$ and $v_F(u')>0$ where $r$ is the Cartier index of $\cen_XE$ and $u'$ is the strict transform of $u$ on $Y$.
	Then $a(F,X)<a(E,X)$ if $a(E,X)>1$.
\end{lem}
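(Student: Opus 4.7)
The plan is to work inside the weighted blow-up model set up before Lemma~\ref{kng}: realize $Y\to X$ as a weighted blow-up of an embedding $X\hookrightarrow\A^n/G$ with weight $\tfrac1r(a_1,\ldots,a_n)$ whose third coordinate cuts out a Du Val section, place $Q=\cen_YF$ at the origin of the chart $U_i$, and realize the strict $w$-morphism $Z_1\to Y$ as a weighted blow-up of $Q$ with weight $\tfrac1{r'}(a'_1,\ldots,a'_n)$, $a'_3=1$. Lemma~\ref{dcp} then provides
\[a(E,X)-a(F,X)=\frac{a_3(r'-a'_i)-r}{rr'},\]
so the task reduces to proving $a_3(r'-a'_i)>r$.

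The main step is to convert the hypotheses on $u$ into the bound $a'_i\leq r'-r$. Since $E|_{U_i}=(y_i=0)$ and $v_F(y_i)=a'_i/r'$, the strict-transform factorization $\phi^*u=y_i^{v_E(u)}u'$ on $U_i$ gives
\[v_F(u)=v_E(u)\cdot\frac{a'_i}{r'}+v_F(u').\]
The assumption $v_F(u)=1/r>0$ forces $u$ to vanish at $\cen_XE=\cen_XF$, so $v_E(u)\geq 1/r$ because $v_E$ takes values in $\tfrac1r\Z_{\geq0}$; similarly $v_F(u')\geq 1/r'$, since $v_F(u')>0$ and $v_F$ takes values in $\tfrac1{r'}\Z_{\geq0}$ on regular functions on $U_i$. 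Substituting into the previous identity and rearranging,
\[\frac{a'_i}{rr'}\leq v_E(u)\cdot\frac{a'_i}{r'}=\frac{1}{r}-v_F(u')\leq\frac{1}{r}-\frac{1}{r'}=\frac{r'-r}{rr'},\]
hence $a'_i\leq r'-r$, equivalently $r'-a'_i\geq r$.

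To close the argument I would invoke integrality: $a(E,X)=a_3/r\in\tfrac1r\Z$ together with $a(E,X)>1$ forces $a_3\geq r+1$. Combining with $r'-a'_i\geq r$ yields $a_3(r'-a'_i)\geq(r+1)r>r$, which via the displayed formula produces $a(F,X)<a(E,X)$.

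The main subtle point is the bookkeeping needed to identify $F$, viewed as a divisor over $X$, with the exceptional divisor of the initial strict $w$-morphism $Z_1\to Y$, so that the formulas of Lemma~\ref{dcp} apply verbatim; this requires that the intermediate flips and flops $Z_i\dashrightarrow Z_{i+1}$ in the link $Y\ua{X}Y_1$ preserve divisorial valuations. Once this identification is in place, everything reduces to numerical manipulations on the two sets of weighted blow-up data.
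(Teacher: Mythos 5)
Your argument is correct and follows essentially the same route as the paper's: both hinge on the valuation identity $v_F(u)=v_F(u')+v_E(u)v_F(E)$ together with $v_E(u)\ge \frac{1}{r}$ and the positivity/integrality of $v_F(u')$ to bound $v_F(E)=a'_i/r'$ above by something less than $1$, and then compare discrepancies via $a(F,X)=\frac{1}{r'}+a(E,X)v_F(E)$ (your appeal to Lemma \ref{dcp} is just this formula written out in the weighted blow-up coordinates). The only cosmetic difference is that the paper gets by with the weaker bound $v_F(E)\le\frac{r'-1}{r'}$ and the raw hypothesis $a(E,X)>1$, whereas you derive the sharper $a'_i\le r'-r$ and additionally invoke integrality of $a_3$; both close the estimate, and your identification of $F$ with $exc(Z_1\to Y)$ across the flips/flops is exactly how the paper's construction sets things up.
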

\begin{proof}
	Notice that we have \[v_F(u)=v_F(\tl{u})+v_E(u)v_F(E).\] Since $v_E(u)\geq\frac{1}{r}$ and $v_F(\tl{u})>0$, we know that $v_F(E)<1$. Thus
	\[ a(F,X)=\frac{1}{r'}+a(E,X)v_F(E)\leq \frac{1}{r'}+a(E,X)\frac{r'-1}{r'}=a(E,X)+\frac{1-a(E,X)}{r'}\] where $r'$ is the Cartier index
	of $\cen_YF$. Hence $a(F,X)<a(E,X)$ if $a(E,X)>1$.
\end{proof}

\section{Constructing links}\label{slk}
The aim of this section is to prove the following proposition:
\begin{pro}\label{dpf}
	Let $X$ be a terminal threefold and $Y\rightarrow X$, $Y'\rightarrow X$ be two different divisorial contractions to points over $X$.
	Then there exists $Y_1$, ..., $Y_k$, $Y'_1$, ..., $Y'_{k'}$ such that
	\[ Y=Y_1\ua{X}Y_2\ua{X}...\ua{X}Y_k=Y'_{k'}\ual{X}...\ual{X}Y'_1=Y.\]
\end{pro}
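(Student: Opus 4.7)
The plan is to proceed by a case analysis driven by the classification of divisorial contractions to points between terminal threefolds recorded in Tables \ref{taA}--\ref{taE4}. First I would reduce to the case where the two contractions $Y\to X$ and $Y'\to X$ have the same image point $P\in X$: if $\cen_XE$ and $\cen_XE'$ are distinct points of $X$, then the two contractions are local over disjoint neighborhoods of $X$ and the statement is trivial, so the problem reduces to a local one at each non-Gorenstein or compound Du Val point $P$. This pins down the singularity type of $P\in X$ to one of the entries of Table \ref{ta3}, and the possible divisorial contractions $Y\to X$ are then exactly those listed in the tables of this section.

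The overall strategy is to exhibit, for each singularity type of $P$, a distinguished ``hub'' contraction $Y^\ast\to X$ such that every divisorial contraction $Y\to X$ over $P$ admits a chain of links $Y=Y_1\ua{X}Y_2\ua{X}\cdots\ua{X}Y_k=Y^\ast$. Once this is established, a zigzag between any two contractions $Y\to X$ and $Y'\to X$ over $P$ is obtained by concatenating a forward chain from $Y$ to $Y^\ast$ with the reversed chain from $Y^\ast$ back to $Y'$, which is exactly the shape of the diagram in the statement. The natural choice for $Y^\ast$ is a $w$-morphism (or one of the $w$-morphisms if several exist), consistent with the monotonicity provided by Lemma \ref{disef}: along a link with $a(E,X)>1$ the discrepancy strictly decreases, so iterated links must terminate at a contraction of minimal discrepancy $\tfrac{1}{r_P}$, which is a $w$-morphism.

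To build each individual link $Y\ua{X}Y_1$, I would choose a non-Gorenstein point $Q\in Y$ lying over $P$, together with a strict $w$-morphism $Z_1\to Y$ at $Q$, so that $Z_1\to Y$ and $Y\to X$ are both given by weighted blow-ups with respect to a common embedding $X\hookrightarrow\A^n/G$. I would then verify the hypotheses of Lemma \ref{knef} (condition $(\Xi)$ or its simpler sufficient form $(\Xi')$ from Remark \ref{knef2}) and the triggering inequality of Lemma \ref{kneff} (one of $(\Theta_u)$ or $(\Theta_j)$). These hypotheses reduce to explicit numerical inequalities among the weights $a_i$, $a'_i$ and the $v_E$-values of the defining equations $\eta_j$ of $X\cap H\cap E$, which can be read directly from the tables. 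For the cases in which $Y\to X$ is most naturally presented as a hypersurface blow-up after splitting off a linear factor---notably many of the $cA/r$, $cD/r$ and $cE/r$ sub-cases---Lemma \ref{yly1} supplies precisely the computation needed to certify the link as negative.

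The main obstacle is not conceptual but combinatorial: the classification produces on the order of sixty labeled cases (A1--A2, Ax1--Ax4, D1--D29, E1--E26), each possibly admitting several partner contractions, and in every case one must make the correct choice of intermediate point $Q\in Y$ and of $w$-morphism $Z_1\to Y$ so that $(\Xi)$ and the relevant $(\Theta)$ condition hold simultaneously. The delicate cases are those where $Y$ carries several non-Gorenstein points (typical for $cA/r$ and some $cD/r$ types): an injudicious choice of $Q$ produces a link into the ``wrong'' contraction, and one must instead start from the point with the most singular Cartier index so that the link lands on a divisorial contraction already on the list of partners of $Y'$. Once the correct bookkeeping is in place for each type, the verification of $(\Xi)$, $(\Xi')$ and $(\Theta)$ in each case is a routine inequality check on the tabulated weights, and assembling the forward chains from $Y$ and from $Y'$ to a common $w$-morphism completes the proof.
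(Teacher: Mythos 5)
Your proposal follows the same route as the paper: Proposition \ref{dpf} is proved there exactly by a case-by-case verification over the classification tables, with each individual link certified by Lemma \ref{knef} (or Remark \ref{knef2}) together with Lemma \ref{kneff}, or by Lemma \ref{yly1}, and with Lemma \ref{disef} driving the descent in discrepancy down to the $w$-morphisms (Propositions \ref{cd2}, \ref{cd6}, \ref{ce3}); the remaining content is that the $w$-morphisms over a fixed germ are pairwise connected by forward chains meeting at a common vertex, sometimes a third contraction (Propositions \ref{cd1}, \ref{cd5}).

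Two caveats. First, your reduction to a common centre is misstated: if $\cen_XE\neq\cen_XE'$ the statement is not ``trivial'' --- it would fail outright, since every link $Y\ua{X}Y_1$ produces a divisorial contraction over the \emph{same} point of $X$, so no chain could ever join the two. The proposition is implicitly a statement about a germ $P\in X$ (each subsection of Section \ref{slk} fixes the singularity type of that germ), so there is nothing to reduce. Second, and more substantively, the step you describe as ``a routine inequality check on the tabulated weights'' conceals the hardest part of the argument: before two $w$-morphisms can be linked to a common partner one must first determine which pairs of $w$-morphisms can actually coexist over a given germ, and this is done not by weight inequalities but by valuation-comparison and exclusion arguments in the spirit of Corollary \ref{wef} (Lemma \ref{dmor} for $cD$ points, and Lemmas \ref{ep}, \ref{epx}, \ref{epy}, \ref{epij} for $cE$ points). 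Without that coexistence analysis the forward chains issuing from $Y$ and from $Y'$ need not meet; with it, your plan is essentially the paper's proof.
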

\begin{proof}
	We need a case-by-case discussion, according to the type of the singularity on $X$. Please see Proposition \ref{ca},
	Proposition \ref{cax}, Proposition \ref{cd1}-\ref{cd6} and Proposition \ref{ce1}-\ref{ce4}.
\end{proof}
We keep the notation in Section \ref{sfac}. 
\subsection{Divisorial contractions to $cA/r$ points}
In this subsection we assume that $X$ has $cA/r$ singularities. Divisorial contractions over $X$ are listed in Table \ref{taA}.\par
\begin{table}
\begin{tabular}{|c|c|c|c|c|}\hline
No. & defining equations  & \tc{$(r;a_i)$}{weight} & \tc{type}{\scriptsize{$a(X,E)$}} & condition  \\\hline
A1 & $xy+z^{rk}+g\gq{ka}(z,u)$ & \tc{$(r;\beta,-\beta,1,r)$}{$\frac{1}{r}(b,c,a,r)$} & \tc{$cA/r$}{$a/r$} &
	\tcc{$b\equiv a\beta$ (mod $r$),}{$b+c=rka$}\\\hline
A2 & \tcc{$x^2-y^2+z^3+$}{$xu^2+g\gq6(x,y,z,u)$} & \tc{$(1;-)$}{$(4,3,2,1)$} & \tc{$cA_2$}{3} & $xz\not\in g(x,y,z,u)$ \\\hline	
\end{tabular}\caption{Divisorial contractions to $cA/r$ points} \label{taA}\end{table}
\begin{pro}\label{ca} $ $
	\begin{enumerate}[(1)]
	\item If $Y\rightarrow X$ is of type A1 with $a>1$, then $Y\uan{X}Y_1$ for some $Y_1\rightarrow X$ which is of type A1 with
		the discrepancy less than $a$.
	\item If $Y\rightarrow X$ is of type A1 with $a=1$ and $b>r$, then $Y\ua{X}Y_1$ where $Y_1$ is a type A1 weighted blow-up with
		the weight $\frac{1}{r}(b-r,c+r,1,r)$. One also has $Y_1\ua{X}Y$ if we begin with $Y_1\rightarrow X$ and interchange the role of
		$x$ and $y$. Moreover, $Y\not\uan{X}Y_1$ if and only if $\eta_4=y$.
	\item If $Y\rightarrow X$ is of type A2, then $Y\uan{X}Y_1$ where $Y_1\rightarrow X$ is a divisorial contraction with type A1.
	\end{enumerate}
\end{pro}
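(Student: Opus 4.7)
The plan is to apply the construction from Section \ref{sfac} case by case: for each situation, I pick a non-Gorenstein point $P \in Y$, take a strict $w$-morphism $Z_1 \to Y$ over $P$, run the $(K_{Z_1}+H_{Z_1}+\epsilon E_{Z_1})$-MMP over $X$ described just before Lemma \ref{kan}, and verify the hypotheses of Lemma \ref{knef} and Lemma \ref{kneff}. The resulting diagram is a link $Y \ua{X} Y_1$ (or $Y \uan{X} Y_1$), where the new divisorial contraction $Y_1 \to X$ has exceptional divisor the proper transform of the $w$-morphism exceptional divisor $F$. The type of $Y_1 \to X$ is then determined by comparing the valuation $v_F$ on $X$ (computed via Lemma \ref{dcp} and Corollary \ref{wbcc}) against Table \ref{taA}.

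For (1), the non-Gorenstein points on $Y$ sit at the origins of the charts $U_x$ and $U_y$, of Cartier indices $b$ and $c$. I pick one and choose $Z_1 \to Y$ to be the strict $w$-morphism whose weight makes $(z=0)$ the Du Val section. Condition $(\Xi')$ holds automatically by Remark \ref{knef2}(2) since $Y \to X$ is a four-dimensional weighted blow-up and the $u$-weight $r$ is maximal among the $a_i$; condition $(\Theta_\bullet)$ is verified from the specific form of the $w$-morphism weight, giving a strict inequality and hence the negative link $Y \uan{X} Y_1$. Lemma \ref{dcp} then computes the new discrepancy, which I expect to be of the form $a'/r$ with $a' < a$, and the weight of $Y_1$ is again of shape $\tfrac{1}{r}(b',c',a',r)$, placing $Y_1$ in the A1 row of Table \ref{taA}.

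For (2), $a=1$ makes the $U_z$ chart Gorenstein, so the only available choices for $P$ are the origins of $U_x$ and $U_y$. The same verification applies, and I expect the computed weight of $Y_1$ to come out to $\tfrac{1}{r}(b-r,c+r,1,r)$, which preserves both the congruence $\equiv \beta \pmod r$ and the sum constraint, so $Y_1$ is again type A1. Swapping $x \leftrightarrow y$ in the construction gives the reverse link $Y_1 \ua{X} Y$. For the refinement about $\eta_4$, I track precisely when the strict inequality $(\Xi'_-)$ fails: this happens exactly when the relevant curve $\Gamma=(\eta'_4=0)\cap H_Y\cap E$ is aligned along the $y$-direction, i.e. $\eta_4 = y$, in which case a flop appears in the MMP so only $Y \ua{X} Y_1$ (not $Y \uan{X} Y_1$) holds. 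Case (3) is handled by the same procedure at the non-Gorenstein point of $Y$ produced by the $(4,3,2,1)$ blow-up, and the output is again forced into the A1 row because the Gorenstein base rules out the $cA/r$ ($r>1$), $cAx/r$, $cD/r$ and $cE/2$ rows of Table \ref{taA} at that step.

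I anticipate the main obstacle to lie in the bookkeeping for (2): ensuring that the new weight $\tfrac{1}{r}(b-r,c+r,1,r)$ emerges correctly from $v_F$ and isolating the exact failure mode of $(\Xi'_-)$ responsible for the $\eta_4 = y$ dichotomy. The verification of $(\Xi')$ itself is largely automatic from Remark \ref{knef2}, but the \emph{strict} version requires a careful expansion of the local equation of $Y$ in the coordinates of $U_i$, the identification of $\eta'_4$, and the comparison of the ratios $r v_E(\eta_j)/a_{\delta_j}$ against $r' v_F(\eta'_j)/a'_{\delta_j}$ appearing in the proof of Lemma \ref{knef}. Case (3) may also require an auxiliary check that no A$x$, D or E row of the table is compatible with the computed $v_F$.
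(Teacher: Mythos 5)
Your plan follows the paper's proof almost exactly: for each case one passes to the relevant chart of $Y$ (the origin of $U_x$ with $b>r$ for A1, the $cAx/4$ point on $U_x$ for A2), takes the strict $w$-morphism there, verifies $(\Xi')$ and $(\Theta_j)$ via Remark \ref{knef2} and Lemmas \ref{knef}, \ref{kneff}, and reads off the type of $Y_1$ from $v_F$ and Lemma \ref{dcp}. Cases (2) and (3), including the $\eta_4=y$ dichotomy and the $x\leftrightarrow y$ symmetry, are located correctly.

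There is, however, a genuine gap in case (1), which is exactly the step you dismiss as expected bookkeeping. With $(y_1,\dots,y_4)=(x,u,z,y)$, $i=1$ and $\delta_4=4$, the condition $(\Theta_j)$ that is easy to verify is $(\Theta_4)$: $a_3a'_4=a\epsilon\geq rka>c=a_4$. But $j=4$ equals $\delta_4$, so this inequality is \emph{excluded} from the list of indices that certify $(\Xi'_-)$; hence $(\Theta_4)$ together with $(\Xi')$ only gives $Y\ua{X}Y_1$, not the negative link $Y\uan{X}Y_1$ claimed in (1). Writing the unique $w$-morphism weight over the origin of $U_x$ as $\frac{1}{b}(b-\delta,\epsilon,1,\delta)$ in the coordinates $(x,y,z,u)$, one has by Lemma \ref{dcp} that $a(F,X)=\frac{r-\delta a+ba}{rb}$, which equals $a/r$ precisely when $\delta a=r$, and $(\Xi'_-)$ at the only admissible index ($j=2$, the $u$-coordinate) is precisely the inequality $a\delta>r$. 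So both the strict drop of the discrepancy and the negativity of the link reduce to excluding $\delta a=r$ when $a>1$; this does not follow from the MMP machinery and requires a separate arithmetic argument (the paper shows that $\delta a=r$ would force $b$ to divide the positive integer $(a-1)(\beta+\lambda'\delta)<b$). Your proposal, which says only ``I expect the computed discrepancy to be $a'/r$ with $a'<a$'' and places the main difficulty in (2), does not supply this step; without it one obtains only $a(F,X)\leq a/r$ and $Y\ua{X}Y_1$.
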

\begin{proof}
	Assume first that $Y\rightarrow X$ is of type A1 and we are going to prove (1) and (2). If both $b$ and $c$ are less than $r$,
	then $a=k=1$. In this case there is exactly one divisorial contraction of type A1, so there is nothing to prove. Thus we may assume that
	one of $b$ or $c$, say $b>r$.\par
	The origin of the chart $U_x\subset Y$ is a cyclic quotient point. On this chart one can choose $(y_1,...,y_4)=(x,u,z,y)$ with $i=1$ and
	$\delta_4=4$. One can see that ($\Xi'$) holds. Now the two action in Lemma \ref{wbup} is given by
	\[ \tau=\frac{1}{b}(-r,c,a,r),\quad \tau'=\frac{1}{b}(\beta,\frac{-\beta(b+c)}{r},\frac{b-a\beta}{r},b-\beta).\]
	Since $U_x$ is terminal, there exists a vector $\tau''=\frac{1}{b}(b-\delta,\epsilon,1,\delta)$ such that
	$\tau\equiv a\tau''$ (mod $\Z^4$) and $\tau'\equiv\lambda'\tau''$ (mod $\Z^4$) for some integer $\lambda'$.
	There is exactly one $w$-morphism over the origin of $U_x$ extracts the exceptional divisor $F$ so that $v_F$ which corresponds to
	the vector $\tau''$. One can also see that \[\frac{\epsilon}{b}=v_F(y)=v_F(g')\geq \frac{rk}{b}\]
	where $g'$ is the strict transform of $g$ on $U_x$, since if $z^{rp}u^q\in g'$, then $ap+q\geq ak$ and
	\[v_F(z^{rp}u^q)=\frac{1}{b}(rp+\delta q)=\frac{1}{ab}(rap+\delta aq)\geq\frac{rka}{ab}=\frac{rk}{b}\] for $\delta a\geq r$ because that
	$\delta a\equiv r$(mod $b$) and $b>r$.
	Thus \[ a_4=c<rka\leq a\epsilon=a_3a'_4,\] hence ($\Theta_4$) holds and there exists $Y_1\rightarrow X$ such that $Y\ua{X}Y_1$.\par
	We need to check whether ($\Xi'_-$) holds or not. We has that ${f'_4}^{\circ}=\eta'_4=y_4+{g'}^{\circ}$. One always has that
	\[\frac{r'v_F(\eta'_4)}{a'_4}=\frac{b\frac{\epsilon}{b}}{\epsilon}=1.\] Now ${g'}^{\circ}=0$ if and only if
	\[\frac{rv_E(\eta_4)}{a_4}=\frac{r\frac{c}{r}}{c}=1,\] and $\delta a=r$ if and only if \[ a_3a'_2=a\delta=r=a_2.\]
	Thus ($\Xi'_-$) holds if and only if ${g'}^{\circ}\neq 0$ or $a$ do not divide $r$.\par
	One can compute the discrepancy of $Y_1\rightarrow X$ using Lemma \ref{dcp}. We know that $a'_i=b-\delta$ and $a_3=a$, so
	\[ a(F,X)=\frac{r+a_3a'_i}{rr'}=\frac{r-\delta a+ba}{rb}\leq \frac{a}{r}=a(E,X).\]
	If $a=1$, then $r=\delta$, so $a(F,X)=a(E,X)=\frac{1}{r}$. One can verify that
	$Y_1\rightarrow X$ is the weighted blow-up with the weight $\frac{1}{r}(b-r,c+r,1,r)$. In this case $Y\uan{X}Y_1$ if and only if
	${g'}^{\circ}\neq 0$. Hence $Y\not\uan{X}Y_1$ if and only if $\eta_4=y$. This proves (2).\par
	Now assume that $a>1$. We already know that $\delta a\geq r$. If $\delta a>r$ then $a(F,X)<a(E,X)$ and $Y\uan{X}Y_1$, so (1) holds.
	Hence one only need to show that $\delta a\neq r$. If $\delta a=r$, then
	\[ b=a\beta+\lambda' r=a(\beta+\lambda'\delta)\] where $\lambda'=\frac{b-a\beta}{r}$. 
	One can see that $b-\beta=(a-1)\beta+\lambda'a\delta$. On the other hand, since $\tau'\equiv\lambda'\tau''$ (mod $\Z^n$), we know that
	$b-\beta\equiv \lambda'\delta$ (mod $b$). Hence $b$ divides \[ b-\beta-\lambda'\delta=(a-1)(\beta+\lambda'\delta).\]
	It is impossible since $(a-1)(\beta+\lambda'\delta)$ is an positive integer and is less than $b$.\par
	Finally assume that $Y\rightarrow X$ of type A2. In this case one needs to look at the chart $U_x\subset Y$ and we choose
	$(y_1,...,y_4)=(x,z,y,u)$ with $i=1$ and $\delta_4=4$. One can see that ($\Xi'$) holds.
	The origin of the chart $U_x$ is a $cAx/4$ point of the following form
	\[ (x^2-y^2+z^3+u^2+g'(x,y,z,u)=0)\subset\A^4_{(x,y,z,u)}/\frac{1}{4}(1,1,2,3).\]
	From \cite[Theorem 7.9]{h1} we know that there are exactly two $w$-morphisms over this point which are
	defined by weighted blowing-up the weights $w_{\pm}(x\pm y,x\mp y,z,u)=\frac{1}{4}(5,1,2,3)$. 
	For both these two $w$-morphism one has that $a_2=2$, $a_3=3$ and $a'_2=2$, so ($\Theta_2$) holds and ($\Xi_-$) holds since $a_3a'_2>a_2$.
	Thus there exists $Y_1\rightarrow X$ so that $Y\uan{X}Y_1$. One can compute that the discrepancy of $Y_1\rightarrow X$ is one,
	so $Y_1\rightarrow X$ is of type A1. This proves (3).
\end{proof}

\subsection{Divisorial contractions to $cAx/r$ points}
In this subsection we assume that $X$ has $cAx/r$ singularities with $r=2$ or $4$. Divisorial contractions over $X$ are listed in Table
\ref{taAx}.\par
\begin{table}
\begin{tabular}{|c|c|c|c|c|}\hline
No. & defining equations  & \tc{$(r;a_i)$}{weight} & \tc{type}{\scriptsize{$a(X,E)$}} & condition  \\\hline
Ax1 & $x^2+y^2+g\gq{\frac{2k+1}{2}}(z,u)$ & \tc{$(4;1,3,1,2)$}{$\frac{1}{4}(b,c,1,2)$} & \tc{$cAx/4$}{$1/4$} &
	\small{\tcc{$(b,c)=(2k+1,2k+3)$}{or $(2k+3,2k+1)$}} \\\hline
Ax2 & \tcc{$x^2+y^2+(\lambda x+\mu y)p_{\frac{2k+1}{4}}(z,u)$}{$+g\gq{\frac{2k+3}{2}}(z,u)$} &
	\tc{$(4;1,3,1,2)$}{$\frac{1}{4}(b,c,1,2)$} & \tc{$cAx/4$}{$1/4$} &
	\small{\ttc{$(b,c,\lambda,\mu)=$}{$(2k+5,2k+3,1,0)$}{or $(2k+3,2k+5,0,1)$}} \\\hline
Ax3 & $x^2+y^2+g\gq k(z,u)$ & \tc{$(2;0,1,1,1)$}{$\frac{1}{2}(b,c,1,1)$} & \tc{$cAx/2$}{$1/2$} &
	\small{\tcc{$(b,c)=(k,k+1)$}{or $(k+1,k)$}} \\\hline
Ax4 & \tcc{$x^2+y^2+(\lambda x+\mu y)p_{\frac{k}{2}}(z,u)$}{$+g\gq{k+1}(z,u)$} &
	\tc{$(2;0,1,1,1)$}{$\frac{1}{2}(b,c,1,1)$} & \tc{$cAx/2$}{$1/2$} &
	\small{\ttc{$(b,c,\lambda,\mu)=$}{$(k+2,k+1,1,0)$}{or $(k+1,k+2,0,1)$}} \\\hline
\end{tabular}\caption{Divisorial contractions to $cAx/r$ points} \label{taAx}\end{table}

\begin{pro}\label{cax}
	\begin{enumerate}[(1)]
	\item Assume that $Y\rightarrow X$ is of type Ax1 or Ax3, then $Y\rightarrow X$ is the only divisorial contraction over $X$.
	\item Assume that $Y\rightarrow X$ is of type Ax2 or Ax4, then there are exactly two divisorial contractions over $X$. Let
		$Y_1\rightarrow X$ be another divisorial contraction, then $Y_1\rightarrow X$ has the same type of $Y\rightarrow X$ and
		one has that $Y\uan{X}Y_1\uan{X}Y$.
	\end{enumerate}
\end{pro}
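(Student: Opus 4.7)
My approach is to treat parts (1) and (2) separately: (1) follows directly from the cited local classification of divisorial contractions, while (2) requires the explicit link construction developed in Section \ref{sfac}. For part (1), Hayakawa's classification \cite[Theorems 7.4 and 8.4]{h1} enumerates all divisorial contractions over $cAx/r$ points of type Ax1 and Ax3 and produces a unique such morphism in each case. The two presentations $(b,c)=(2k+1,2k+3)$ and $(b,c)=(2k+3,2k+1)$ listed in Table \ref{taAx} describe the same morphism under an interchange of the roles of $x$ and $y$ in the equation $x^2+y^2+g(z,u)=0$, so the resulting exceptional divisors are identified.

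For part (2), the same classification \cite[Theorems 7.9 and 8.8]{h1} yields exactly two divisorial contractions, labelled by $(\lambda,\mu)=(1,0)$ and $(\lambda,\mu)=(0,1)$, whose weights $(b,c)$ differ, and whose exceptional divisors are therefore distinct. To construct the link I work with Ax2; the case Ax4 is strictly analogous with $r=2$ replacing $r=4$. Take $Y\to X$ to be the weighted blow-up of weight $\tfrac{1}{4}(2k+5,2k+3,1,2)$ with $(\lambda,\mu)=(1,0)$, so the defining equation of $X$ reads $x^2+y^2+xp_{(2k+1)/4}(z,u)+g_{\geq (2k+3)/2}(z,u)=0$. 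On the chart $U_x$, after the change of coordinates of Corollary \ref{wbcc}, the strict transform of $X$ is cut out by $y_x+y_y^2+p(y_z,y_u)+\tilde{g}(y_z,y_u,y_x^{1/4})=0$, which is smooth as an equation in the ambient affine space; the origin is however a non-Gorenstein cyclic quotient point $P$ of $Y$, of type determined by the actions $\tau,\tau'$ of Lemma \ref{wbup}. Let $Z_1\to Y$ be the strict $w$-morphism centered at $P$ and form the diagram of Section \ref{sfac}.

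It now suffices to verify $(\Xi'_-)$ together with one of $(\Theta_u)$ or $(\Theta_j)$ to invoke Lemmas \ref{knef} and \ref{kneff}. Since $Y\to X$ is a four-dimensional weighted blow-up, Remark \ref{knef2}(2) automatically gives $(\Xi')(\mathrm{i})$, while $(\mathrm{ii})$ can be read off the explicit form of the weight $\tau''$ on $U_x$; the strict upgrade $(\Xi'_-)$ is then obtained by comparing $rv_E(\eta_4)$ with $r'v_F(\eta'_4)$ using the form of the leading polynomial $\eta'_4$ on $E\cap H_Y$. For $(\Theta_u)$, observe that both $Y\to X$ and $Y_1\to X$ are $w$-morphisms (each has discrepancy $\tfrac{1}{4}=\tfrac{1}{r_P}$), so Corollary \ref{wef} produces $u\in\mathcal{O}_X$ with $v_E(u)<v_F(u)$; since $a(E,X)=a(F,X)$, this is exactly condition $(\Theta_u)$. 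Lemmas \ref{knef} and \ref{kneff} then give a terminal divisorial contraction $Z_k\to Y_1$ with $Y\uan{X}Y_1$. Lemma \ref{dcp} computes $a(F,X)=\tfrac{1}{4}$, and a valuative comparison of $v_F(x),v_F(y)$ against Table \ref{taAx} identifies $Y_1$ as the other Ax2 contraction with $(\lambda,\mu)=(0,1)$. The reverse link $Y_1\uan{X}Y$ follows by symmetry in $x$ and $y$.

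The main technical obstacle is the verification of $(\Xi'_-)$: although condition $(\Xi')(\mathrm{i})$ is free in the four-dimensional setting, one must unpack the cyclic action $\langle\tau,\tau'\rangle$ of Lemma \ref{wbup} on $U_x$, identify the non-Gorenstein point $P$ together with its associated $w$-morphism weight $\tau''$, and track the valuation data through the change of coordinates of Corollary \ref{wbcc}. Once $P$ and $\tau''$ are in hand, the identification of $Y_1$ with the remaining Ax2 contraction is forced by the very limited list of discrepancies available in Table \ref{taAx}.
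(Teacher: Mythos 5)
Your part (1) matches the paper: both rest on Hayakawa's enumeration in \cite[Sections 7, 8]{h1}. For part (2), however, you have taken a different and ultimately incomplete route. The paper does not verify $(\Xi'_-)$ and $(\Theta)$ in the four-dimensional chart $U_x$ at all; it observes that the Ax2/Ax4 equations have the shape $x(x+p)+g=0$ and invokes Lemma \ref{yly1}, whose proof re-embeds $X$ into $\A^5$ via the auxiliary coordinate $x_5=x+p$. That fifth coordinate is what makes the argument work: one gets $(\Theta_5)$ from $a_5'=rv_E(g)>rv_E(p)=a_5$, and the strict inequality needed for $(\Xi'_-)$ comes from the second defining equation, whose leading part $\eta_5'=y_5-p(y_2,0,y_4)$ satisfies $r'v_F(\eta_5')=rv_E(\eta_5)$ while $a_5'>a_5$. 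None of this data is visible in the four-dimensional embedding you work with.

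Concretely, two steps of your argument do not go through as written. First, your derivation of $(\Theta_u)$ from Corollary \ref{wef} is circular: that corollary compares the exceptional divisors of two $w$-morphisms \emph{over $X$}, whereas the $F$ appearing in $(\Theta_u)$ is the exceptional divisor of the $w$-morphism $Z_1\to Y$ over the chosen point $P\in Y$. That these two valuations coincide is essentially the conclusion you are trying to reach (it can be patched with a Lemma \ref{val} computation showing $\cen_Y F_1$ is the origin of $U_x$ and that $F_1$ is the unique $w$-morphism there, but you do not do this). Second, and more seriously, the strictness $(\Xi'_-)$ is asserted rather than proved. In the four-dimensional chart one has $\delta_4=2$ with $y'^2\in\eta_4'$, so $\frac{rv_E(\eta_4)}{a_{\delta_4}}=2$ while $\frac{r'v_F(\eta_4')}{a'_{\delta_4}}\leq 2$ with equality unless some further term of $\eta_4'$ has strictly smaller $F$-valuation; the only other candidate index is $j=4$, requiring $a_3a_4'>a_4=2$. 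Neither inequality is checked, and it is exactly to avoid this delicate verification that the paper passes to the five-variable presentation. You correctly identify $(\Xi'_-)$ as the main obstacle, but the mechanism you propose for overcoming it is not the one that works, and without it you only obtain $Y\ua{X}Y_1$, not the negative link $Y\uan{X}Y_1$ claimed in the statement.
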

\begin{proof}
	The number of divisorial contractions follows from \cite[Section 7,8]{h1}. So we can assume that $Y\rightarrow X$ is of type Ax2 or
	Ax4, and Lemma \ref{yly1} implies that $Y\uan{X}Y_1$.
\end{proof}
\subsection{Divisorial contractions to $cD$ points}
In this subsection we assume that $X$ has $cD$ singularities. At first we consider $w$-morphisms over $X$, which are listed in Table \ref{taD}.
Notice that for types D1, D2 or D5 in Table \ref{taD} there is at most one divisorial contraction over $X$ which is of the given type.
It is because that the equations of type D1, D2 and D5 coming from the normal form of $cD$-type singularities, which are unique,
and the blowing-up weights are determined by the defining equations.

\begin{table}
\begin{tabular}{|c|c|c|c|c|}\hline
No. & defining equations  & weight & \tc{type}{\scriptsize{$a(X,E)$}} & condition  \\\hline
D1 & $x^2+y^2u+\lambda yz^k+g\gq{l}(z,u)$ & $(b,b-1,1,2)$ & \tc{$cD$}{$1$} & $b=\min\{k-1,\rd{\frac{l}{2}}\}$\\\hline
D2 & $x^2+y^2u+\lambda yz^k+g\gq{2l}(z,u)$ & $(b,b,1,1)$ & \tc{$cD$}{$1$} & $b=\min\{k,l\}$ \\\hline
D3 & $\sep{x^2+ut+\lambda yz^k+g\gq{2b+2}(z,u)}{y^2+p_{2b}(x,z,u)+t}$ & \small{$(b+1,b,1,1,2b+1)$} & \tc{$cD$}{$1$} & $k\geq b+2$\\\hline
D4 & \tcc{$x^2+y^2u+yh\gq k(z,u)$}{$+g\gq{2b+1}(x,z,u)$} & $(b+1,b,1,1)$ & \tc{$cD$}{$1$} & $k\geq b+1$\\\hline
D5 & $\sep{x^2+yt+g\gq{2b}(z,u)}{yu+p_b(z,u)+t}$ & \small{$(b,b-1,1,1,b+1)$} & \tc{$cD$}{$1$} & $z^b\in p(z,u)$ \\\hline
\end{tabular}\caption{Divisorial contractions to $cD$ points with discrepancy one} \label{taD}\end{table}

\begin{lem}\label{dmor}
	Assume that there exists two different divisorial contractions with discrepancy one over $X$. Then one of the following holds:
	\begin{enumerate}[(1)]
	\item One of the divisorial contractions is of type D1.
	\item The two morphisms are of type D2 and D5 respectively.
	\item Both of the divisorial contractions are of type D3.
	\item Both of the divisorial contractions are of type D4.
	\end{enumerate}
\end{lem}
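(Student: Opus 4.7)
The plan is a direct case analysis on the types of the two divisorial contractions $Y\to X$ and $Y'\to X$. If either morphism is of type D1 then conclusion (1) holds and there is nothing to prove, so assume both morphisms lie in types D2--D5. The paragraph immediately preceding the lemma records that each of types D1, D2, D5 has at most one representative over $X$ (because in those rows the defining equation of $X$ is the unique $cD$ normal form and the weight is determined by it). Hence the pairs D2-D2 and D5-D5 cannot occur for a pair of distinct morphisms. The pairs D2-D5, D3-D3, and D4-D4 are precisely conclusions (2), (3), (4), so what remains to be ruled out is exactly the five cross-type pairs D2-D3, D2-D4, D3-D4, D3-D5, and D4-D5.

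To rule out each forbidden pair I would argue as follows. For the two 5-variable types, I first eliminate the auxiliary variable $t$ from the linear defining equation of Table \ref{taD}; after substitution D3 becomes
\[
x^2 - uy^2 - u\,p_{2b}(x,z,u) + \lambda y z^k + g_{\geq 2b+2}(z,u) = 0,
\]
and D5 becomes
\[
x^2 - y^2 u - y\,p_b(z,u) + g_{\geq 2b}(z,u) = 0
\]
with $z^b \in p_b(z,u)$. Suppose now that a forbidden pair occurs. Then the single polynomial $f$ cutting out $X$ simultaneously admits two different normal forms from Table \ref{taD} (after a coordinate change preserving the general elephant $H$, which is cut out by $u=0$ in every row). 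Comparing the lowest-weight part of $f$ under the two candidate weight systems, one finds a specific monomial whose presence is forced by one normal form but whose weight under the other system sits strictly above the leading level; this mismatch contradicts the equation-of-weight condition in one of the two rows. The comparisons use the constraints recorded in Table \ref{taD}: $b=\min\{k,l\}$ together with irreducibility of the leading form in D2, the strict inequality $k\geq b+2$ in D3, $k\geq b+1$ in D4, and $z^b\in p_b(z,u)$ in D5. The sign and shape of the $y^2 u$-term and of the $yz^k$- or $y\,p_?$-term are the decisive invariants in each case.

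The main obstacle is purely combinatorial bookkeeping: each forbidden pair splits further according to which of the integer parameters $b,k,l$ coincide and according to whether various coefficients $\lambda,\mu$ vanish, and one has to verify monomial-by-monomial that the two weight systems cannot simultaneously present the required leading forms. No new technique beyond Table \ref{taD} is needed, but the case analysis is lengthy; once every forbidden pair is eliminated, only the four situations listed in the statement remain.
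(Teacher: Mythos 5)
Your reduction is sound and matches the paper's: conclusion (1) absorbs any pair containing D1, uniqueness of the D2 and D5 normal forms kills D2--D2 and D5--D5, and what remains is to exclude exactly the five cross-type pairs D2--D3, D2--D4, D3--D4, D3--D5, D4--D5 (the paper packages this as three implications about the type of the second contraction given the first). The problem is that the exclusion step --- the entire mathematical content of the lemma --- is only asserted, not carried out. You claim that for each forbidden pair ``one finds a specific monomial whose presence is forced by one normal form but whose weight under the other system sits strictly above the leading level,'' but you do not exhibit such a monomial for even one pair, and it is not clear the comparison can be run as stated. The two contractions live on two \emph{different} embeddings of $X$, related by an uncontrolled analytic coordinate change; asking that the change of coordinates ``preserve the general elephant'' does not pin it down (and the elephant is not $(u=0)$ in every row: for D1 one has $v_E(u)=2$, so the weight-compatible elephant there is a $z$-section). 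The paper's own treatment of the D4--D4 case shows the phenomenon you would have to control: the second contraction is only of type D4 \emph{after} the substitution $x\mapsto x-p(z,u)$, so a monomial-by-monomial comparison of two fixed normal forms of ``the same $f$'' is not well posed without first normalizing that substitution.

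The paper avoids this difficulty entirely by working upstairs on $Y$ rather than comparing normal forms on $X$. Since both contractions have discrepancy one, the second exceptional divisor $F$ satisfies $a(F,Y)<1$, so its center $P=\cen_YF$ is a non-Gorenstein point of $Y$; these points are explicitly visible in the charts $U_x$, $U_y$, $U_t$ of the weighted blow-up, and the valuations of discrepancy less than one over them are classified (cyclic quotient and $cA/b$ points, via \cite[Propositions 3.1, 3.4]{me}). Computing $v_F(x),v_F(y),v_F(z),v_F(u)$ from that classification and matching against Table \ref{taD} --- together with Corollary \ref{wef}, which forces $v_F$ to exceed $v_E$ on some function --- immediately determines which types $Y_1\to X$ can have. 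If you want to salvage your route, you would need, for each forbidden pair, to (a) normalize the coordinate change between the two embeddings (this is where Corollary \ref{wef} would enter), and (b) actually produce the contradicting monomial; as written, neither step is done, so the argument is a plan rather than a proof.
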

\begin{proof}
	Assume that $Y\rightarrow X$ and $Y_1\rightarrow X$ are the two given divisorial contractions. It is enough to prove the following statements:
	\begin{enumerate}[(i)]
	\item If $Y\rightarrow X$ is of type D4, then $Y_1\rightarrow X$ is of type D1 or D4.
	\item If $Y\rightarrow X$ is of type D2, then $Y_1\rightarrow X$ is of type D1 or D5.
	\item If $Y\rightarrow X$ is of type D5, then $Y_1\rightarrow X$ is not of type D3.
	\end{enumerate}\par
	Let $E$ and $F$ be the exceptional divisor of $Y\rightarrow X$ and $Y_1\rightarrow X$ respectively. Then $a(F,Y)<1$ since otherwise
	$a(F,X)>1$. Thus $P=\cen_FY$ is a non-Gorenstein point.\par
	First assume that $Y\rightarrow X$ is of type D4. In this case $P$ may be the origin of $U_x$ or the origin of $U_y$ and they are both
	cyclic quotient points. Exceptional divisors over $P$ with discrepancy less than one are described in \cite[Proposition 3.1]{me}.
	The origin of $U_x$ is a $\frac{1}{b+1}(b,1,1)$ point. If $P$ is this point, then since $z=0$ defines a Du Val section,
	we have that $v_F(z)=a(F,X)=1$. One can verify that $v_F(u)=v_F(z)=1$ and $v_F(x)=v_F(y)=b$. Now $v_F(x)=b$ only when
	$xp_b(z,u)\in g(x,z,u)$ for some homogeneous polynomial $p(z,u)$ of degree $b$. One can check that $v_F(x+p(z,u))=b+1$.
	In this case $Y_1\rightarrow X$ is also of type D4 after a change of coordinate $x\mapsto x-p(z,u)$. If $P$ is the origin of $U_y$, then
	it is a $\frac{1}{b}(1,-1,1)$ point. One can verify that $v_F(u)>1$. This implies that $Y_1\rightarrow X$ is of type D1.\par
	Now assume $Y\rightarrow X$ is of type D2, then $P$ is the origin of $U_y\subset Y$, which is a $cA/b$ point. Exceptional divisors of
	discrepancy less than one over $P$ are described in \cite[Proposition 3.4]{me}. One can verify that if $\lambda\neq 0$ and $k=b$, then
	$v_F(u)=1$. In this case $Y_1\rightarrow X$ is of type D5. Otherwise $v_F(u)=2$ and so $Y_1\rightarrow X$ is of type D1.\par
	Finally assume that $Y\rightarrow X$ is of type D5. One can see that $Y_1\rightarrow X$ can not have type D3 since $z^b\in p(z,u)$. 
	This finishes the proof.
\end{proof}
\begin{pro}\label{cd1}
	Assume that there exists two different divisorial contractions with discrepancy one over $X$, say $Y\rightarrow X$ and $Y_1\rightarrow X$.
	\begin{enumerate}[(1)]
	\item If $Y\rightarrow X$ and $Y_1\rightarrow X$ are both of type D4, then $Y\uan{X}Y_1\uan{X}Y$.
	\item If $Y\rightarrow X$ is of type D3 and $Y_1\rightarrow X$ is of type D1, then $Y\uan{X}Y_1$. If $Y_1\rightarrow X$ is of type D3, then
		there exists another divisorial contraction $Y_2\rightarrow X$ which is of type D1, so that
		$Y\uan{X}Y_2\stackrel{-}{\underset{X}{\Leftarrow}}Y_1$.
	\item If $Y\rightarrow X$ is of type D2 and $Y_1\rightarrow X$ is of type D5, then $Y\uan{X}Y_1$.	
	\item If $Y\rightarrow X$ is of type D1 and $Y_1\rightarrow X$ is not of type D3, then $Y\uan{X}Y_1$.
	\end{enumerate}
\end{pro}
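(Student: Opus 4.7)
The plan is to establish each of the four links by following the blueprint of Section \ref{sfac}: given $Y \to X$ with exceptional divisor $E$ and $Y_1 \to X$ with exceptional divisor $F$, I take the strict $w$-morphism $Z_1 \to Y$ at the non-Gorenstein point $P = \cen_Y F$ (such a $P$ exists because $a(F,X) = 1$ forces $a(F,Y) < 1$, as already noted in the proof of Lemma \ref{dmor}), and run the $(K_{Z_1} + H_{Z_1} + \epsilon E_{Z_1})$-MMP over $X$ until it contracts $E_{Z_k}$ onto $Y_1$. The desired conclusion $Y \uan{X} Y_1$ then reduces to verifying $(\Xi'_-)$ together with $(\Theta_u)$ or $(\Theta_j)$, so that Lemma \ref{knef} and Lemma \ref{kneff} can be applied to force all intermediate steps to be flips.

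For Case (1), the analysis in the proof of Lemma \ref{dmor} forces the defining equation into the normal form
\[ x(x + p_b(z,u)) + y^2 u + y h\gq{b+1}(z,u) + g''\gq{2b+1}(z,u) = 0, \]
with $p_b$ homogeneous of weight $b$. Identifying $(x_1,x_2,x_3,x_4) = (x,y,z,u)$ gives weights $(a_1,a_2,a_3,a_4) = (b+1,b,1,1)$, so $a_2 + a_4 = a_1$, $a_3 = 1$, and $i = 1$; moreover the non-$x$ part has $v_E$-weight $2b+1 = 2a_1/r - 1 = a_1/r + v_E(p_b)$. This puts us squarely in the hypothesis of Lemma \ref{yly1}, which delivers $Y \uan{X} Y_1$; the coordinate symmetry $x \mapsto x + p_b(z,u)$ exchanges $Y$ and $Y_1$ and supplies the reverse link.

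For Cases (3) and (4), I would write down the explicit local embedding for $Y \to X$, locate the non-Gorenstein point $P$ as the origin of the appropriate affine chart $U_\bullet \subset Y$, and read off the strict $w$-morphism $Z_1 \to Y$ (and its weights $a'_j$) from the description of exceptional divisors of discrepancy less than one over cyclic quotient or $cA$ points used in the proof of Lemma \ref{dmor}, namely \cite[Proposition 3.1, 3.4]{me}. Condition $(\Xi'_-)$ then becomes a direct comparison of $\frac{r v_E(\eta_j)}{a_{\delta_j}}$ with $\frac{r' v_F(\eta'_j)}{a'_{\delta_j}}$, and $(\Theta_j)$ reduces to $a_3 a'_j > a_j$ for a suitable index $j$. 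The output of the MMP is identified with $Y_1 \to X$ by reading off its defining equation and extracted valuation from the final weighted blow-up chart and comparing with Table \ref{taD}.

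The main obstacle is Case (2). When $Y_1$ is itself of type D3, the MMP initiated from $Z_1 \to Y$ does not terminate on $Y_1$; Lemma \ref{dmor} implies that the new divisorial contraction produced is forced to be of type D1, and this is exactly the intermediate $Y_2$ in the statement. I would therefore build two independent links $Y \uan{X} Y_2$ and $Y_1 \uan{X} Y_2$, each a D3-to-D1 instance, and paste them together. The delicate point is to verify $(\Xi'_-)$ and $(\Theta_j)$ on the five-variable ambient with weight $(b+1,b,1,1,2b+1)$: one must track how the extra equation $y^2 + p_{2b}(x,z,u) + t = 0$ transforms on the relevant chart $U_\bullet$ and confirm that both strict $w$-morphisms produce $K$-anti-nef configurations whose $(K+H+\epsilon E)$-MMP ends on the \emph{same} D1 contraction. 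The sub-case with $Y_1$ of type D1 is then a single instance of this construction.
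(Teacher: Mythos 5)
Your proposal follows the paper's own proof essentially step for step: case (1) is handled by Lemma \ref{yly1} exactly as you describe, cases (3) and (4) by chart-by-chart verification of ($\Xi_-$) and ($\Theta_j$) on the relevant chart $U_\bullet\subset Y$, and case (2) by producing a D3-to-D1 link from each of $Y$ and $Y_1$. The one point you flag as delicate --- that both MMPs land on the \emph{same} type D1 contraction --- is settled by the observation recorded before Table \ref{taD} that there is at most one divisorial contraction of type D1 over $X$, since the D1 equation comes from the unique normal form and determines the blow-up weight.
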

\begin{proof}
	Assume first that $Y\rightarrow X$ and $Y_1\rightarrow X$ are both of type D4. Notice that in this case $xp_b(z,u)\in g(x,z,u)$.
	Thus $Y\uan{X}Y_1$ by Lemma \ref{yly1}.\par
	Now assume that $Y\rightarrow X$ is of type D3. Consider the chart $U_t\subset Y$ which is defined by
	\[ (x^2+u+\lambda yz^kt^{b+k-2b-2}+g'(z,u,t)=y^2-p(x,z,u)+t=0)\subset\]\[\A^5_{(x,y,z,u,t)}/\frac{1}{2b+1}(b+1,b,1,1,-1).\]
	Notice that using the notation in Section \ref{sfac}, we know that
	\[{f'_4}^{\circ}=\eta'_4=x^2+u+g'(0,u,0),\quad {f'_5}^{\circ}=y^2+p(x,0,u).\]
	$\eta'_5$ can be $y\pm \mu u^b$ if $p(x,0,u)=-\mu^2u^{2b}$ for some $\mu\in\Cc$ and otherwise $\eta'_5={f'_5}^{\circ}$. One can see that
	$\eta'_4=\eta'_5=0$ defines an irreducible and reduced curve. There is only one $w$-morphism over the origin of $U_t$ which is defined
	by weighted blowing up the weight $w(x,y,z,u,t)=\frac{1}{2b+1}(b+1,b,1,2b+2,2b)$. Now in this case we choose $(y_1,...,y_5)=(x,y,z,u,t)$
	with $i=5$, $\delta_4=4$, $\delta_5=2$. One can see that ($\Xi$) holds and ($\Theta_4$) holds. Also one has that
	$\frac{rv_E(\eta_4)}{a_4}=2b+2$ while $\frac{r'v_F(\eta'_4)}{a'_4}=1$. Thus ($\Xi_-$) holds and so there exists
	$Y_2\rightarrow X$ such that $Y\uan{X}Y_2$. One can compute that $Y_2\rightarrow X$ which is of type D1. If $Y_1\rightarrow X$ is of
	type D1 then $Y_2=Y_1$ since there are at most one divisorial contraction with type D1. This proves the statement (2).\par
	Now assume that $Y\rightarrow X$ is of type D2 and $Y_1\rightarrow X$ is of type D5. In this case we consider the embedding corresponds to
	of type $Y_1\rightarrow X$. Under this embedding $Y\rightarrow X$ is given by the weighted blow-up with the weight $(b,b,1,1,b)$
	and the chart $U_y\subset Y$ is given by \[ U_y=(x^2-t+g'(y,z,u)=yu+z^b+t=0)\subset\A^5_{(x,y,z,u,t)}/\frac{1}{b}(0,-1,1,1,0).\]
	We take $(y_1,...,y_5)=(y,u,z,x,t)$ with $\delta_4=4$ and $\delta_5=5$. Then ($\Xi$) holds. The origin of $U_y$ is a $cA/b$ point
	and the weight $w(y_1,...,y_5)=\frac{1}{b}(b-1,1,1,b,2b)$ defines a $w$-morphism over $U_y$. One can see that ($\Theta_5$) holds.
	Moreover, since $a'_5=2b>b=a_5$, we know that ($\Xi_-$) holds. Thus $Y\uan{X}Y_1$.\par
	Finally assume that $Y\rightarrow X$ is of type D1 and $Y_1\rightarrow X$ is not of type D3. Let $b$ and $b_1$ be the integers in Table
	\ref{taD} corresponds to $Y\rightarrow X$ and $Y_1\rightarrow X$ respectively. First we claim that $b\leq b_1$. Indeed,
	if $Y_1\rightarrow X$ is of type D5, then $z^{b_1}\in h(z,u)$ which implies that $b_1\geq b+1$. If $Y_1\rightarrow X$ is of type D2
	or D4 then the inequality $b\leq b_1$ follows from Corollary \ref{wef}.	Now the origin of the chart $U_u\subset Y$ is defined by
	\[ (x^2+y^2+\lambda yz^ku^{k-b-1}+g'(z,u)=0)\subset\A^4_{(x,y,z,u)}/\frac{1}{2}(b,b-1,1,1),\] which is a $cAx/2$ point.
	We can take $(y_1,...,y_4)=(u,y,z,x)$ with $i=1$ and $\delta_4=4$. Now $w$-morphisms over this point is fully described in
	\cite[Section 8]{h1}. Since $b\leq b_1$, we know that the $2k-b-1>b$, and the
	multiplicity of $g'(z,u)$ is greater than or equal to $2b$. Hence if $F$ is the exceptional divisor of a $w$-morphism over $Y$,
	then \[v_F(y)\geq b>b-1=v_E(y).\] Thus ($\Xi_-$) and ($\Theta_2$) holds and one has $Y\uan{X}Y_1$.
\end{proof}

Now we study divisorial contractions of discrepancy greater than one. All such divisorial contractions are listed in Table \ref{taD2}.

\begin{table}
\begin{tabular}{|c|c|c|c|c|}\hline
No. & defining equations  & weight & \tc{type}{\scriptsize{$a(X,E)$}} & condition  \\\hline
D6 & $x^2+y^2u+z^k+g\gq{2b+1}(x,y,z,u)$ & $(b+1,b,a,1)$ & \tc{$cD$}{$a$} & $ak=2b+1$\\\hline
D7 & $\sep{x^2+yt+g\gq{2b+2}(y,z,u)}{yu+z^k+p_{b+1}(z,u)+t}$ & $(b+1,b,a,1,b+2)$ & \tc{$cD$}{$a$} & $ak=b+1$\\\hline
D8 & $\sep{x^2+ut+\lambda z^{\frac{b+1}{4}}+g\gq{b+1}(y,z,u)}{y^2+\mu z^{\frac{b-1}{4}}+p_{b-1}(x,z,u)+t}$ &
	$(\frac{b+1}{2},\frac{b-1}{2},4,1,b)$ & \tc{$cD$}{$4$} &
	\ttc{$\frac{b+1}{4}\in\N$, $\lambda=1$,}{$\mu=0$, or $\frac{b-1}{4}\in\N$,}{$\mu=1$, $\lambda=0$}.\\\hline
D9 & $\sep{x^2+ut+z^{\frac{b+1}{2}}+g\gq{b+1}(y,z,u)}{y^2+p_{b-1}(x,z,u)+t}$ & $(\frac{b+1}{2},\frac{b-1}{2},2,1,b)$
	& \tc{$cD$}{$2$} &\\\hline
D10 & $x^2+y^2u+z^b+g\gq{2b}(y,z,u)$ & $(b,b,2,1)$ & \tc{$cD$}{2} & \\\hline
D11 & \tcc{$x^2+y^2u+yp_3(z,u)+$}{$u^3+g\gq6(z,u)$} & $(3,3,1,2)$ & \tc{$cD_4$}{2} & $z^3\in p(z,u)$\\\hline
D12 & \tcc{$x^2+y^2u+z^3+$}{$yu^2+g\gq6(y,z,u)$} & $(3,4,2,1)$ & \tc{$cD_4$}{3} & \\\hline
\end{tabular}\caption{Divisorial contractions to $cD$ points with discrepancies greater than one} \label{taD2}\end{table}

\begin{pro}\label{cd2}
	Assume that $Y\rightarrow X$ is a divisorial contraction with the discrepancy $a>1$. Then there exists a divisorial contraction
	$Y_1\rightarrow X$ such that $Y\ua{X}Y_1$, and $a(F,X)<a$ where $F=exc(Y_1\rightarrow X)$.
\end{pro}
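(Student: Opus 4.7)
The plan is to argue case-by-case across the divisorial contractions of discrepancy $a>1$ listed in Table~\ref{taD2}, namely types D6 through D12, applying the machinery developed in Section~\ref{sfac}. In each case I proceed in four steps: (1) identify a non-Gorenstein point $P\in Y$ on which to build the $w$-morphism $Z_1\to Y$; (2) write down the appropriate $w$-morphism using the classification of strict $w$-morphisms over $P$ (from \cite{h1}, \cite{h2}, \cite{c}, \cite{me}); (3) verify conditions ($\Xi$) and ($\Theta_j$) or ($\Theta_u$) so that Lemma~\ref{knef} and Lemma~\ref{kneff} apply and produce $Y_1\to X$ with $Y\ua{X}Y_1$; and (4) show $a(F,X)<a$ via Lemma~\ref{dcp}.

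For types D6, D10, D11 and D12, the morphism $Y\to X$ can be viewed as a four-dimensional weighted blow-up, so Remark~\ref{knef2}(2) gives condition~(i) of Lemma~\ref{knef} for free, and condition~(ii) reduces to comparing the weights of the variables against the maximum weight. The non-Gorenstein point $P$ will lie in $U_x$ or $U_y$ and is a cyclic quotient singularity or a $cA/r$ point. I expect to verify ($\Theta_j$) by checking directly that $a_3 a'_j>a_j$ for some index $j$ (most often $j=4$ or $j=2$ depending on the chart). For type D11 the condition $z^3\in p(z,u)$ supplies the required monomial in $\eta'_4$, analogously to how $a=k=1$ versus $b>r$ was handled in the A1 case; for type D12 a direct weight computation at the origin of $U_z$ suffices.

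For types D7, D8 and D9, the variety $Y\to X$ is given as a complete intersection in a five-dimensional ambient space, so I have to analyse both $\eta'_4$ and $\eta'_5$. The main issue is to identify an irreducible $G$-semi-invariant factor of each ${f'_j}^\circ$ and to check that $\Gamma=(\eta'_4=\eta'_5=0)$ is an irreducible reduced curve on $H_Y\cap E$; this will be done by projecting to suitable coordinate subspaces, as in Lemma~\ref{irc}. Once the $\eta'_j$ are fixed, condition ($\Xi$) holds by locating a monomial $x_{\delta_j}^{k_j}\in\eta_j$ with distinct indices (here some cases are close enough to the situation of Lemma~\ref{yly1} that a direct appeal is possible), and ($\Theta_j$) or ($\Theta_u$) again follows from a weight comparison.

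Finally, once $Y_1\to X$ is produced in each case, the inequality $a(F,X)<a$ will be verified through Lemma~\ref{dcp}: writing $a(F,X)=\frac{r+a_3a'_i}{rr'}$ and $a(E,X)=\frac{a_3}{r}$, the strict inequality is equivalent to $r<a_3(r'-a'_i)$, and this will follow from the explicit weights of the $w$-morphisms. Alternatively, when the hypothesis of Lemma~\ref{disef} is available (the strict transform of a suitable Cartier generator at $\cen_XF$ still vanishes on $F$), the conclusion $a(F,X)<a(E,X)$ is immediate from $a>1$. The main obstacle I anticipate is the five-dimensional cases D7--D9: there the combinatorics of picking $\delta_4,\delta_5$ and ensuring that $\eta'_4,\eta'_5$ cut out an irreducible reduced curve is considerably more delicate than in the D6, D10--D12 cases, and some subcases (e.g.\ the two alternatives $(\lambda,\mu)=(1,0)$ vs.\ $(0,1)$ in D8) may require separate arguments.
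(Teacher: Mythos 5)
Your plan follows essentially the same route as the paper: a case-by-case pass through Table \ref{taD2}, choosing a chart and coordinates $(y_1,\dots,y_n)$ at a non-Gorenstein point of $Y$, verifying ($\Xi$) and a ($\Theta$)-condition so that Lemma \ref{knef} and Lemma \ref{kneff} produce the link $Y\ua{X}Y_1$, and then bounding $a(F,X)$. The paper streamlines two of your steps --- it obtains ($\Theta_u$) (resp.\ ($\Theta_z$)) in all cases at once from $v_E(u)=1$ (resp.\ $v_E(z)=1$) together with $a>1$, and it gets $a(F,X)<a$ uniformly from Lemma \ref{disef} after observing that $v_F(z)=1$ or $v_F(u)=1$ for the resulting $Y_1\rightarrow X$ (rather than computing $a(F,X)$ explicitly via Lemma \ref{dcp}) --- but these are simplifications of, not departures from, your approach; the only detail to adjust is that for type D12 the relevant chart is $U_y$, not $U_z$.
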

\begin{proof}
	First notice that ($\Theta_u$) holds in case D6-D10 or D12, and ($\Theta_z$) holds in case D11. It is because that
	$v_E(u)$ or $v_E(z)=1$ in those cases and $\frac{a(E,X)}{a(F,X)}=a>1$.\par
	Now we list all cases in Table \ref{taD2}, write down the chart on $Y$ we are looking at, and write down the variables $y_1$, ..., $y_n$.
	One can easily see that ($\Xi$) holds in all cases.
	\begin{enumerate}[(1)]
	\item Assume that $Y\rightarrow X$ is of type D6. Consider the chart $U_x\subset Y$ and take $(y_1,...,y_4)=(x,y,z,u)$ with $\delta_4=4$.
	\item Assume that $Y\rightarrow X$ is of type D7. Consider the chart $U_t\subset Y$ and take $(y_1,...,y_5)=(y,u,z,x,t)$,
		$\delta_4=4$ and $\delta_5=1$ or $2$.
	\item Assume that $Y\rightarrow X$ is of type D8 or D9. We consider the chart $U_t\subset Y$ and take $(y_1,...,y_5)=(x,y,z,u,t)$ with
		$\delta_4=4$ and $\delta_5=2$.
	\item Assume that $Y\rightarrow X$ is of type D10. We consider the chart $U_y\subset Y$ and take $(y_1,...,y_4)=(y,u,z,x)$ with $\delta_4=4$.
	\item Assume that $Y\rightarrow X$ is of type D11. We consider the chart $U_y\subset Y$ and $(y_1,...,y_4)=(y,z,u+\lambda y,x)$ for
		some $\lambda\in\Cc$ with $\delta_4=4$.
	\item Assume that $Y\rightarrow X$ is of type D12. We consider the chart $U_y\subset Y$ and $(y_1,...,y_4)=(y,z,x+\lambda y,u)$ for
		some $\lambda\in\Cc$ with $\delta_4=4$.
	\end{enumerate}		
	Now we know that there exists $Y_1\rightarrow X$ so that $Y\ua{X}Y_1$. Then $Y_1\rightarrow X$ is of one of types in Table \ref{taD} or
	Table \ref{taD2}. One can see that $v_F(z)=1$ if $Y_1\rightarrow X$ is of type D1-D5, D7-D9, D11 and $v_F(u)=1$ if $Y_1\rightarrow X$
	is of type D6, D10 or D12. Since $\cen_YF$ is the origin of the chart $U_x$, $U_y$ or $U_t$, one can apply Lemma \ref{disef} to say that
	$a(F,X)<a$. This finishes the proof.
\end{proof}

\subsection{Divisorial contractions to $cD/r$ points with $r>1$}
In this subsection we assume that $X$ has $cD/r$ singularities with $r=2$ or $3$. We first study $w$-morphisms over $X$.

\begin{table}
\begin{tabular}{|c|c|c|c|c|}\hline
No. & defining equations  & \tc{$(r;a_i)$}{weight} & \tc{type}{\scriptsize{$a(X,E)$}} & condition  \\\hline
D13 & $x^2+y^3+g\gq k(y,z,u)$ & \tc{$(3;0,2,1,1)$}{$\frac{1}{3}(3,2,4,1)$} & \tc{$cD/3$}{$1/3$}
	& \small{\ttc{$k=2$ and $zu^2$}{or $z^3\in g$, or}{$k=3$ and $z^2u\in g$}} \\\hline
D14 & $x^2+y^3+z^3+g\gq4(y,z,u)$ & \tc{$(3;0,2,1,1)$}{$\frac{1}{3}(6,5,4,1)$} & \tc{$cD/3$}{$1/3$} & \\\hline
D15 & $x^2+yzu+g\gq2(y,z,u)$ & \tc{$(2;1,1,1,0)$}{$\frac{1}{2}(3,1,1,2)$} & \tc{$cD/2$}{$1/2$} & \\\hline
D16 & $x^2+yzu+g\gq3(y,z,u)$ & \tc{$(2;1,1,1,0)$}{$\frac{1}{2}(3,b,c,d)$} & \tc{$cD/2$}{$1/2$} &
	$(b,c,d)=$\tcc{$(3,1,2)$}{$(1,1,4)$}\\\hline
D17 & $\sep{x^2+yt+g\gq3(z,u)}{zu+y^3+t}$ & \tc{$(2;1,1,1,0,1)$}{$\frac{1}{2}(3,1,1,2,5)$} & \tc{$cD/2$}{$1/2$} & \\\hline
D18 & $x^2+y^2u+\lambda yz^k+g\gq l(z,u)$ & \tc{$(2;1,1,1,0)$}{$\frac{1}{2}(b,b-2,1,4)$} & \tc{$cD/2$}{$1/2$} &
	\small{$b=\min\{k-2,\ru{\frac{l}{2}}-1\}$}\\\hline
D19 & $x^2+y^2u+\lambda yz^k+g\gq l(z,u)$ & \tc{$(2;1,1,1,0)$}{$\frac{1}{2}(b,b,1,2)$} & \tc{$cD/2$}{$1/2$} &
	$b=\min\{k,l\}$ \\\hline
D20 & $\sep{x^2+ut+\lambda yz^k+g\gq {b+2}(z,u)}{y^2+p_b(x,z,u)+t}$ &
	\tc{$(2;1,1,1,0,0)$}{\small{$\frac{1}{2}(b+2,b,1,2,2b+2)$}} & \tc{$cD/2$}{$1/2$} & $k\geq b+4$ \\\hline
D21 & \tcc{$x^2+y^2u+yh\gq k(z,u)$}{$+g\gq{b+1}(x,z,u)$}  & \tc{$(2;1,1,1,0)$}{$\frac{1}{2}(b+2,b,1,2)$} &
	\tc{$cD/2$}{$1/2$} & $k\geq b+2$ \\\hline
D22 & $\sep{x^2+yt+g\gq{2b}(z,u)}{yu+z^b+t}$ & \tc{$(2;1,1,1,0,1)$}{\small{$\frac{1}{2}(b,b-2,1,2,b+2)$}} &
	\tc{$cD/2$}{$1/2$} & \\\hline
\end{tabular}\caption{Divisorial contractions to $cD/r$ points with discrepancy one} \label{taD3}\end{table}

\begin{pro}\label{cd3}
	Assume that $X$ has $cD/3$ singularities.
	\begin{enumerate}[(1)]
	\item If $Y\rightarrow X$ is of type D14, or $Y\rightarrow X$ is of type D13 and both $zu^2$ and $z^2u\not\in g(y,z,u)$, then
		there is only one $w$-morphism over $X$.
	\item If $Y\rightarrow X$ is of type D13 and $zu^2$ or $z^2u\in g(y,z,u)$, then there are two or three $w$-morphisms over $X$.
		Say $Y_1\rightarrow X$, ..., $Y_k\rightarrow X$ are other $w$-morphisms with $k=1$ or $2$, then
		$Y\uan{X}Y_i\uan{X}Y$ for all $1\leq i\leq k$.
	\end{enumerate}
\end{pro}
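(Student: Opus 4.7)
The plan is to handle the two parts separately, leveraging the explicit classification of divisorial contractions to $cD/3$ points and the general framework built in Section \ref{sfac}.

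For Part (1), I would argue by enumeration from the classification (the references for D13 and D14 being \cite{k2} and \cite{h4}). In the D14 case, the defining equation $x^2+y^3+z^3+g\gq{4}(y,z,u)$ forces the weight $\frac{1}{3}(6,5,4,1)$ uniquely: the $x^2$, $y^3$, $z^3$ terms must each have weight $2$, and the group action $\frac{1}{3}(0,2,1,1)$ then forces the weight of $u$ to be $1/3$. In the D13 case, the $x^2$ and $y^3$ terms force the weights of $x$ and $y$ to be $1$ and $2/3$ respectively, so the only remaining ambiguity is how to assign the weights $4/3$ and $1/3$ among $z$ and $u$. An alternative assignment is only valid when an off-diagonal monomial (one of $zu^2$ or $z^2u$) lies in $g$ and provides an additional constraint of weight $2$; if neither is in $g$, the assignment is rigid and the $w$-morphism is unique.

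For Part (2), I would first enumerate the alternative $w$-morphisms explicitly: if $zu^2\in g$ then swapping the roles of $z$ and $u$ gives a second D13 $w$-morphism with weight $\frac{1}{3}(3,2,1,4)$; likewise if $z^2u\in g$ one obtains another alternative; when both monomials are present in $g$, all three $w$-morphisms coexist. To construct the link $Y\uan{X}Y_1$ between any two of these, I would apply the framework of Section \ref{sfac}: choose a strict $w$-morphism $Z_1\to Y$ over the non-Gorenstein point of $Y$ visible in the chart corresponding to the swapped coordinate, then run the $(K_{Z_1}+H_{Z_1}+\epsilon E_{Z_1})$-MMP over $X$ until it terminates with a divisorial contraction $Z_k\to Y_1$ realizing the target $w$-morphism. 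The required auxiliary function with $v_E(u)<v_F(u)$ exists by Corollary \ref{wef} (applied to two different $w$-morphisms), which is what makes condition ($\Theta_u$) available.

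To justify that each MMP step is $K_{Z_1}$-negative, I would verify conditions ($\Xi_-$) and ($\Theta_u$) or ($\Theta_j$) in each sub-case, so that Lemmas \ref{knef} and \ref{kneff} apply. The key observation is that the presence of the extra monomial $zu^2$ or $z^2u$ in $g$ produces a strict inequality of the form $a_3 a'_j > a_j$ or a strict inequality $\frac{rv_E(\eta_j)}{a_{\delta_j}}>\frac{r'v_F(\eta'_j)}{a'_{\delta_j}}$, which is exactly what ($\Xi_-$) demands. The symmetry $Y_1\uan{X}Y$ follows by reversing the roles, using the same analysis on the alternative $w$-morphism.

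The main obstacle will be the bookkeeping of the intricate case analysis: for each of the (up to three) pairs of $w$-morphisms, one must identify the correct chart on $Y$, compute the weights $(a_1',\ldots,a_4')$ of the unique strict $w$-morphism over its non-Gorenstein point, determine the relevant $\eta_j$ and $\eta_j'$ from the defining equation, and verify the strict inequalities. A subtler point is ensuring that the case where three $w$-morphisms coexist (both $zu^2$ and $z^2u$ in $g$) produces pairwise links for all three; this requires checking the construction independently for each of the three pairings rather than relying on transitivity, which is not part of the statement of negative linking.
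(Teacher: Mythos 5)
Your strategy for part (2) is essentially the paper's: work on the chart of $Y$ containing the non-Gorenstein point (the paper uses $U_z$, which is an index-$4$ point defined by $x^2+y^3+g'=0$ in $\A^4/\frac{1}{4}(3,2,1,1)$), take the $w$-morphism $Z_1\to Y$ over it (weight $\frac{1}{4}(3,5,1,2)$ in coordinates $(z,x,u+\lambda z,y)$), verify ($\Theta_2$) and ($\Xi'_-$), and conclude $Y\uan{X}Y_1$ via Lemmas \ref{knef} and \ref{kneff}; the three-$w$-morphism case is handled by the coordinate change $u\mapsto u-z$ on the normal form $x^2+y^3+zu(z+u)$, exactly as you anticipate. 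Two points in your write-up would not survive as stated, however. First, your justification of ($\Theta_u$) via Corollary \ref{wef} is a misapplication: that corollary compares the valuations of two $w$-morphisms $Y\to X$ and $Y_1\to X$ over the \emph{same} base point, whereas the condition ($\Theta_u$) involves $F=exc(Z_1\to Y)$, the exceptional divisor of the \emph{second} blow-up, and the inequality $v_E(u)<\frac{a(E,X)}{a(F,X)}v_F(u)$. One must instead compute the weight of the $w$-morphism over the non-Gorenstein point of $U_z$ explicitly and check the index condition ($\Theta_j$) directly, which is what the paper does (and what you also propose to do in your "verify in each sub-case" step, so this is repairable but the cited justification is wrong).

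Second, your part (1) is not a proof. Counting $w$-morphisms over a $cD/3$ point is not reducible to permuting the weights among the given coordinates: one must rule out weighted blow-ups arising after arbitrary analytic changes of coordinates preserving the normal form (note that even in the paper's part (2) the second $w$-morphism only appears after the substitution $u\mapsto u+\lambda z$, which your "assign $4/3$ and $1/3$ to $z$ and $u$" dichotomy does not see). The paper does not attempt this and simply cites Hayakawa's classification (\cite[Section 9]{h1}) for the count in both parts; your sketch, if pursued, would need to reprove a substantial portion of that classification. Finally, a small remark in your favor: the statement only asserts $Y\uan{X}Y_i\uan{X}Y$ for each $i$, not pairwise links among $Y_1$ and $Y_2$, so the extra bookkeeping you flag for the three-morphism case is unnecessary; the reverse links follow by symmetry since each $Y_i$ is again of type D13 with the relevant monomial present.
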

\begin{proof}
	The statement about the number of $w$-morphisms follows from \cite[Section 9]{h1}. Now we may assume that $Y\rightarrow X$ is of
	type D13 and $zu^2$ or $z^2u\in g(y,z,u)$. The chart $U_z\subset Y$ is defined by
	\[ (x^2+y^3+g'(y,z,u)=0)\subset\A^4_{(x,y,z,u)}/\frac{1}{4}(3,2,1,1)\] with $u^2$ or $zu\in g'(y,z,u)$.
	We can take $(y_1,...,y_4)=(z,x,u+\lambda z,y)$ for some $\lambda\in\Cc$ with $\delta_4=4$. Now the $w$-morphism over $U_z$ can be obtained
	by weighted blowing-up the weight $w(y_1,...y_4)=\frac{1}{4}(3,5,1,2)$. One can see that ($\Theta_2$) and ($\Xi'_-$) holds.
	Hence we can get a divisorial contraction $Y_1\rightarrow X$ such that $Y\uan{X}Y_1$. One can compute that $Y_1\rightarrow X$ is also a
	$w$-morphism.\par
	If there are three $w$-morphisms over $X$, then the defining equation of $X$ is of the form $x^2+y^3+zu(z+u)$ as in \cite[Section 9.A]{h1},
	so $g'(y,z,u)=u(z+u)$. One can make a change of coordinate $u\mapsto u-z$ and again consider the weighted blow-up with the same weight
	$\frac{1}{4}(3,2,1,5)$. In this way we can get a divisorial contraction $Y_2\rightarrow X$ which is different to $Y_1$ and we also
	have that $Y\uan{X}Y_2$. This finishes the proof.
\end{proof}
\begin{pro}\label{cd4}
	Assume that $X$ has $cD/2$ singularities and $Y\rightarrow X$ is of type D15, D16 or D17.
	\begin{enumerate}[(1)]
	\item If $Y\rightarrow X$ is of type D15, then there is only one $w$-morphism over $X$.
	\item If $Y\rightarrow X$ is of type D17, then there exists exactly two $w$-morphisms over $X$. The other one $Y_1\rightarrow X$ is
		of type D16 and one has that $Y\ua{X}Y_1$.
	\item If $Y\rightarrow X$ is of type D16 and there is no $w$-morphisms over $X$ with type D17, then there are exactly three $w$-morphisms
		over $X$. They are all of type D16 and are negatively linked to each other.
	\end{enumerate}
\end{pro}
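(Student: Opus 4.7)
The plan is to combine Hayakawa's classification of $w$-morphisms over $cD/2$ points from \cite{h2} with the linking machinery of Section \ref{sfac}. The count of $w$-morphisms in each case is read off directly from Hayakawa's list: a unique morphism in (1), exactly two in (2) (one of each type D16 and D17), and exactly three in (3), all of type D16. This settles (1) and the cardinality statements of (2) and (3); what remains is the construction of the links.

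For (2), let $Y\to X$ be of type D17 with weight $\frac{1}{2}(3,1,1,2,5)$ on the embedding $\{x^2+yt+g_{\geq 3}(z,u)=0,\ zu+y^3+t=0\}$. The most singular non-Gorenstein point $P$ of $Y$ lies in the chart $U_t$, where the second equation eliminates $t$ and makes $P$ analytically a cyclic quotient point of index $5$. I would label coordinates on this chart so that $i=5$ and choose $\delta_4,\delta_5$ from the leading monomials in $g$ and $y^3$, putting $\eta_4'$ and $\eta_5'$ in the form required by Lemma \ref{kng}. Condition $(\Xi')$ then follows from Remark \ref{knef2}(2) because each remaining weight $a_j$ is dominated by $a_i=5$. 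The unique $w$-morphism over $P$ is readable from Hayakawa's analysis of cyclic quotient points; its weights determine the index $j$ for which $a_3a'_j>a_j$ holds, giving $(\Theta_j)$. Lemma \ref{knef} and Lemma \ref{kneff} then produce a divisorial contraction $Y_1\to X$ of discrepancy $1/2$, which by the classification must be of type D16.

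For (3), the three $w$-morphisms correspond to three symmetric weight triples in Table \ref{taD3}, reflecting the three choices of coordinate-line direction in the leading cubic $yzu$ of the defining equation. Fixing two of them $Y$ and $Y_1$, the non-Gorenstein center $P$ on $Y$ whose $w$-morphism extracts the exceptional divisor $F$ of $Y_1\to X$ is located via Lemma \ref{val}, and has either cyclic quotient or $cA$-type structure. I would then repeat the chart-and-coordinate setup of (2), so that $(\Xi')$ follows automatically from weight comparisons. Since $F$ has discrepancy $1/2$ equal to $a(E,X)$, strictness in $(\Xi'_-)$ must come from the fact that the two weight triples differ in at least one non-$\delta$ coordinate, which also supplies the strict inequality $a_3a'_j>a_j$ required for $(\Theta_j)$. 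Thus $Y\uan{X}Y_1$, and applying this to all three ordered pairs (using the symmetry of the defining equation in the three factors of $yzu$) yields pairwise negative links.

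The main obstacle is matching, for each pair in (3), the correct non-Gorenstein point $P$ of $Y$ with the $w$-morphism over $P$ that extracts the divisor of the partner contraction $Y_1\to X$. This requires classifying $w$-morphisms over the cyclic-quotient or $cA$-type singularities arising on $Y$, and invoking the relevant lower-index parts of Hayakawa's case list. Once $P$ and its $w$-morphism are pinned down, the verification of $(\Xi'_-)$ and $(\Theta_j)$ reduces to direct inequality checks between the weight triples and is mostly bookkeeping.
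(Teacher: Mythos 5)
Your proposal follows essentially the same route as the paper: read the counts off Hayakawa's classification in \cite[Section 4]{h2}, then for each pair of $w$-morphisms pass to the chart of $Y$ containing the relevant non-Gorenstein point (the index-$5$ cyclic quotient point on $U_t$ for D17, the $cAx/4$ or index-$3$ point on $U_u$ resp.\ $U_y$ for the two D16 weight types), identify the $w$-morphism over that point, and verify ($\Xi$)/($\Xi'$) together with ($\Theta_j$), with ($\Xi'_-$) in case (3) coming from the same strict inequality $a'_j>a_j$. One small correction: in case (2) you cannot get ($\Xi'$) from Remark \ref{knef2}(2), since D17 is a five-dimensional weighted blow-up and that remark only covers four-dimensional ones; as the paper does (taking $(y_1,\dots,y_5)=(y,u,y+z,x,t)$ with $\delta_4=4$, $\delta_5=1$), you must exhibit the $\delta_j$'s explicitly to check condition (i) of Lemma \ref{knef} — which your plan of reading them off the leading monomials already accomplishes.
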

\begin{proof}
	The statement about the number of $w$-morphisms follows from \cite[Section 4]{h2}. Assume that $Y\rightarrow X$ is of type D17.
	Consider the chart $U_t\subset Y$ with $(y_1,...,y_5)=(y,u,y+z,x,t)$ with $\delta_4=4$ and $\delta_5=1$. One can see that ($\Xi$) holds.
	Now the origin of $U_t$ is a cyclic quotient point. Let $F$ be the exceptional divisor of the $w$-morphism over $U_t$, then
	one has that $v_F(y_1,...y_5)=\frac{1}{5}(6,2,1,3,3)$. One can see that ($\Theta_1$) holds.\par
	Assume that $Y\rightarrow X$ is of type D16 and there is no $w$-morphisms of type D17 over X. By \cite[Section 4]{h2} we know that
	neither $y^4$ nor $z^4\in g(y,z,u)$. Assume first that $(b,c,d)=(1,1,4)$. Consider the chart $U_u\subset Y$ which has a $cAx/4$ singular
	point at the origin. We choose $(y_1,...,y_4)=(y,u,y+z,x)$ with $\delta_4=4$. One can see that ($\Xi'$) holds. Let $w$ be the
	weight on $U_u$ so that $w(y_1,...,y_4)=\frac{1}{4}(5,2,1,3)$. Then the weighted blow-up with weight $w$ gives a $w$-morphism.
	It follows that ($\Theta_1$) holds and also ($\Xi'_-$) holds since $a'_1=5>3=a_1$. Hence there exists a $w$-morphism
	$Y_1\rightarrow X$ so that $Y\uan{X}Y_1$. If we interchange the role of $y$ and $z$, we can get another $w$-morphism $Y_2\rightarrow X$
	with $Y\uan{X}Y_2$.\par
	Now assume that $(b,c,d)=(3,1,2)$. Consider the chart $U_y\subset Y$ which is defined by
	\[ (x^2+zu+g'(y,z,u)=0)\subset\A^4_{(x,y,z,u)}/\frac{1}{3}(0,1,1,2).\] One can take $(y_1,...,y_4)=(y,u,y+z,x)$ and then ($\Xi'$) holds.
	Let $w$ be the weight $w(y_1,...,y_4)=\frac{1}{3}(1,5,1,3)$, then the weighted blow-up with the weight $w$
	gives a $w$-morphism over $U_y$ and ($\Theta_2$) and ($\Xi'_-$) holds. If we take $w$ to be another weight
	$w(x,y,z,u)=\frac{1}{3}(3,1,4,2)$, then we get another $w$-morphism over $U_y$ and ($\Theta_z$) holds.
	Thus we can get two different $w$-morphisms over $X$ and $Y$ is negatively linked to both of them.
\end{proof}
\begin{pro}\label{cd5}
	Assume that $X$ has $cD/2$ singularities and $Y\rightarrow X$ is of type D18-D23 and assume that there are two $w$-morphisms
	$Y\rightarrow X$ and $Y_1\rightarrow X$.
	\begin{enumerate}[(1)]
	\item Assume that $Y\rightarrow X$ is of type D18 and $Y_1\rightarrow X$ is not of type D20, then $Y\uan{X}Y_1$.
	\item Assume that both $Y\rightarrow X$ and $Y_1\rightarrow X$ are not of type D18, then one of the following holds: 
		\begin{enumerate}[({2}-1)]
		\item $Y\rightarrow X$ is of type D19 and $Y_1\rightarrow X$ is of type D22. One has that $Y\uan{X}Y_1$.
		\item Both $Y\rightarrow X$ is of type D21 and $Y\uan{X}Y_1\uan{X}Y$.
		\item Both $Y\rightarrow X$ is of type D20 and there exists another $w$-morphism $Y_2\rightarrow X$ which is of type D18,
			so that $Y\uan{X}Y_2\stackrel{-}{\underset{X}{\Leftarrow}}Y_1$.
		\end{enumerate}
	\end{enumerate}
\end{pro}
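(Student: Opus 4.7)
The plan is to follow the template of Propositions~\ref{cd1}, \ref{cd3} and \ref{cd4}. First, Hayakawa's classification in \cite{h2} restricts the possible pairs of distinct $w$-morphisms of types D18--D22 over a fixed $cD/2$ point to exactly those listed in (1), (2-1), (2-2), (2-3); this is a bookkeeping step against the constraints in Table~\ref{taD3}. Then in each case I locate $\cen_Y F$, where $F = exc(Y_1\rightarrow X)$, on the appropriate affine chart $U_\bullet\subset Y$, choose variables so that the condition $(\Xi)$ or $(\Xi')$ is transparent, and establish $(\Theta_u)$ or $(\Theta_j)$ by a weight comparison; Lemmas~\ref{knef} and~\ref{kneff} then deliver the link, while the strict-inequality refinement $(\Xi_-)$ or $(\Xi'_-)$ upgrades it to a negative link.

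For case (1), the center of $F$ lies in the chart $U_u\subset Y$, which carries a $cAx/2$ singularity. Taking $(y_1,\dots,y_4) = (u,y,z,x)$ with $i=1$, $\delta_4=4$ gives $(\Xi')$ for free. The assumption that $Y_1\rightarrow X$ is not of type D20 means the $w$-morphism of $U_u$ extracting $F$ is an honest four-variable weighted blow-up, and a direct comparison of its weights with those of $Y\rightarrow X$ should yield $a_3 a'_2 > a_2$ together with a strict inequality on $v_E(\eta_4)/a_4$ versus $v_F(\eta'_4)/a'_4$, giving $(\Theta_2)$ and $(\Xi'_-)$; this matches the pattern used at the end of the proof of Proposition~\ref{cd1}(4).

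Cases (2-1) and (2-2) are in the same spirit as Proposition~\ref{cd1}(3) and Proposition~\ref{cd1}(1). For (2-1), I would use the five-variable embedding of $X$ coming from $Y_1$ of type D22 to realise $Y$ of type D19 as a weighted blow-up; the chart $U_y\subset Y$ is a $cA/b$ point, and with $(y_1,\dots,y_5)=(y,u,z,x,t)$, $\delta_4=4$, $\delta_5=5$, one checks $(\Xi)$ and reads off $(\Theta_5)$ and $(\Xi_-)$ from the inequality $a'_5 > a_5$. For (2-2), the form of the D21 equation automatically supplies the cross term needed to invoke Lemma~\ref{yly1}, which gives $Y\uan{X}Y_1$, and the argument is symmetric in $Y$ and $Y_1$ and so yields $Y_1\uan{X}Y$ as well.

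The main obstacle is case (2-3). Here both $Y$ and $Y_1$ are of type D20, and a single chart argument does not produce a direct link between two D20 contractions, because $\cen_Y F$ is extracted by a $w$-morphism whose exceptional divisor naturally has the shape of type D18 rather than D20. As in the passage from D3 to D1 in Proposition~\ref{cd1}(2), my plan is to first construct an auxiliary $w$-morphism $Y_2\rightarrow X$ of type D18 by working on the chart $U_t\subset Y$ with $(y_1,\dots,y_5) = (x,y,z,u,t)$, $\delta_4=4$, $\delta_5=2$, verifying $(\Xi)$, and using the defining equations of type D20 to check $(\Theta_4)$ together with the strict inequality $v_E(\eta_4)/a_4 > v_F(\eta'_4)/a'_4$ that yields $(\Xi_-)$. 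Uniqueness of the type D18 $w$-morphism in the presence of a D20 $w$-morphism (again from \cite{h2}) should guarantee that starting from $Y_1$ one arrives at the same $Y_2$, so part (1) applied to $(Y_1,Y_2)$ closes the diagram. The technical work, as in the earlier propositions of Section~\ref{slk}, is the verification of the $(\Xi)/(\Theta)$ inequalities in the five-variable chart for D20 and the discrepancy/defining-equation check confirming that the contracted divisor of the auxiliary $w$-morphism is indeed of type D18 rather than D19 or D21.
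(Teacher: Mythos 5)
Your proposal is correct and follows exactly the route the paper intends: the paper's own proof of Proposition~\ref{cd5} is omitted with the remark that the computation is the proof of Proposition~\ref{cd1} with types D1--D5 replaced by D18--D22 (together with an analogue of Lemma~\ref{dmor} from \cite[Section 5]{h2}), and your case-by-case plan --- Lemma~\ref{yly1} for the two-D21 case, the five-variable D22 embedding for D19, the $U_t$-chart link from D20 to an auxiliary D18, and the $U_u$-chart $(\Xi'_-)/(\Theta_2)$ argument for D18 versus non-D20 --- is precisely that translation.
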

\begin{proof}
	The computation is similar to the proof of Proposition \ref{cd1} after replacing the types D1-D5 by D18-D22 so we will omit the proof.
	Notice that an analog result of Lemma \ref{dmor} can be proved by a similar computation, or can be directly followed by
	\cite[Section 5]{h2}.
\end{proof}

Now we consider non-$w$-morphisms over $X$. Notice that there is no divisorial contraction with discrepancy greater than $\frac{1}{3}$ over
$cD/3$ points. Divisorial contractions of discrepancy greater than $\frac{1}{2}$ over $cD/2$ points are listed in Table \ref{taD4}. 
\begin{table}
\begin{tabular}{|c|c|c|c|c|}\hline
No. & defining equations  & \tc{$(r;a_i)$}{weight} & \tc{type}{\scriptsize{$a(X,E)$}} & condition  \\\hline
D23 & \tcc{$x^2+y^2u+z^m+$}{$g\gq{b+1}(x,y,z,u)$} & \tc{$(2;1,1,1,0)$}{$\frac{1}{2}(b+2,b,a,2)$}
	& \tc{$cD/2$}{$a/2$} & \tcc{$ma=2b+2$,}{$a$ and $b$ are odd} \\\hline
D24 & $\sep{x^2+yt+g\gq{b+2}(z,u)}{yu+z^m+p_{\frac{b}{2}+1}(z,u)+t}$ &
	\tc{$(2;1,1,1,0,1)$}{$\frac{1}{2}(b+2,b,a,2,b+4)$}
	& \tc{$cD/2$}{$a/2$} & \tcc{$ma=b+2$}{$a\equiv b$(mod $2$)} \\\hline
D25 & \tcc{$x^2+y^2u+z^{4b}+$}{$g\gq{4b}(y,z,u)$} & \tc{$(2;1,1,1,0)$}{$(2b,2b,1,1)$} & \tc{$cD/2$}{$1$} & \\\hline
D26 & $x^2+yzu+y^4+z^b+u^c$ & \tc{$(2;1,1,1,0)$}{$(2,1,2,1)$} & \tc{$cD/2$}{$1$} & \tcc{$b,c\geq 4$}{$b$ is even} \\\hline
D27 & $\sep{x^2+ut+y^4+z^4}{yz+u^2+t}$ & \tc{$(2;1,1,1,0,0)$}{$(2,1,1,1,3)$} & \tc{$cD/2$}{$1$} & \\\hline
D28 & $\sep{x^2+ut+g\gq{2b+2}(y,z,u)}{y^2+p_{2b}(x,z,u)+t}$ &
	\tc{$(2;1,1,1,0,0)$}{$(b+1,b,1,1,2b+1)$} & \tc{$cD/2$}{$1$} &
	\ttc{Either $b$ is odd, or}{$b$ is even and}
	{$xz^{b-1}$ or $z^{2b}\in p$}\\\hline
D29 & $\sep{x^2+ut+g\gq{2b+2}(y,z,u)}{y^2+p_{2b}(x,z,u)+t}$ & \tc{$(2;1,1,1,0,0)$}{$(b+1,b,2,1,2b+1)$} &
	\tc{$cD/2$}{$2$} & $xz^{\frac{b-1}{2}}$ or $z^b\in p$\\\hline
\end{tabular}
\caption{Divisorial contractions to $cD/r$ points with large disprepancies} \label{taD4}
\end{table}

\begin{pro}\label{cd6}
	Assume that $r=2$ and $Y\rightarrow X$ is a divisorial contraction with the discrepancy $\frac{a}{2}>1$. Then there exists a
	divisorial contraction $Y_1\rightarrow X$ such that $Y\ua{X}Y_1$, and $a(F,X)<\frac{a}{2}$ where $F=exc(Y_1\rightarrow X)$.
\end{pro}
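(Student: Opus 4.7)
The plan is to follow the proof of Proposition \ref{cd2} step by step, now applied to the three relevant rows of Table \ref{taD4}. The assumption $a/2>1$ excludes types D25--D28, all of which have discrepancy $1$, so I only have to treat D23, D24 and D29. For each of these I will produce a divisorial contraction $Y_1\to X$ with $Y\ua{X}Y_1$ and $a(F,X)<a/2$, where $F=\exp(Y_1\to X)$.

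In every case the coordinate $u$ satisfies $v_E(u)=1$ while $a(E,X)/a(F,X)=a/2>1$, so condition $(\Theta_u)$ will hold for any $w$-morphism $Z_1\to Y$ with $v_F(u)\geq 1$; the latter is transparent from the explicit classification of $w$-morphisms on each chart of $Y$. To verify $(\Xi)$ I choose coordinates adapted to each type: for D23 work on $U_x\subset Y$ with $(y_1,\ldots,y_4)=(x,y,z,u)$ and $\delta_4=4$; for D24 work on $U_t\subset Y$ with $(y_1,\ldots,y_5)=(y,u,z,x,t)$, $\delta_4=4$ and $\delta_5=1$; for D29 work on $U_t\subset Y$ with $(y_1,\ldots,y_5)=(x,y,z,u,t)$, $\delta_4=4$ and $\delta_5=2$, exactly paralleling cases D8--D9 of Proposition \ref{cd2}. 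In each situation, condition (i) of Lemma \ref{knef} can be read off from the pure monomials ($z^m$, $y^2$, etc.) appearing in the defining equations, and condition (ii) reduces, via Remark \ref{knef2}(3), to the inequality $a_j\leq a_i$ among the ambient weights. Combining Lemma \ref{knef} with Lemma \ref{kneff} then yields $Y_1\to X$ with $Y\ua{X}Y_1$.

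To conclude $a(F,X)<a/2$, I apply Lemma \ref{disef} with the Du Val section coordinate of $X$ (namely $z$ for D23 and D24, $u$ for D29): it has $v_F=1/r=1/2$, and its strict transform on $Y$ passes through $\cen_YF$ in every case, since the center always lies at the origin of the chart $U_x$ or $U_t$. Because $a(E,X)=a/2>1$, Lemma \ref{disef} supplies the required strict inequality. The principal bookkeeping obstacle is in cases D24 and D29, where the five-dimensional ambient space and the pair of defining equations force a more delicate chart analysis than in D23, and where one must correctly identify which $w$-morphism over $\cen_YF$ is being used; however, the computations are entirely parallel to those already performed for cases D7--D9 in Proposition \ref{cd2}, so I expect to omit several routine details by referring back to that argument.
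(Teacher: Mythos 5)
Your reduction to types D23, D24 and D29 already diverges from the paper: the paper's proof of this proposition runs through \emph{all} of D23--D29, and it must, because Proposition \ref{dpf} needs every non-$w$-morphism over a $cD/2$ point (i.e.\ every entry of Table \ref{taD4}, discrepancy $>\frac12$) to be linked downward. Reading ``$\frac a2>1$'' literally and discarding D25--D28 (discrepancy $1$) is defensible against the letter of the statement, but it does not match the proof being compared against, and it would leave the linking argument of Section \ref{slk} incomplete. Your chart choices and the verification of $(\Xi)$/$(\Theta)$ for D23, D24, D29 do essentially track the paper, although your claim that $(\Theta_u)$ follows from ``$v_F(u)\geq 1$'' is shaky --- $v_F(u)$ is a sum of two fractional contributions, each typically $<1$, so that bound is not automatic; the paper instead verifies the index form $(\Theta_j)$ via $a_3a'_j>a_j$, which is what actually goes through.

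The serious gap is the last step. You propose to get $a(F,X)<\frac a2$ from Lemma \ref{disef}, applied to ``the Du Val section coordinate'' with $v_F=\frac12$. But for a Du Val section $H$ one has $v_F(H)=a(F,X)$, so asserting $v_F(H)=\frac12$ \emph{is} asserting that $Y_1\to X$ is a $w$-morphism --- that is, you are assuming (a strengthening of) the conclusion. Nor can the hypothesis of Lemma \ref{disef} be checked by running through the possible types of $Y_1\to X$, as is done in Proposition \ref{cd2}: in the Gorenstein tables every type has a coordinate of weight exactly $1$, but in Table \ref{taD4} a contraction of type D23 with $a,b\geq 3$ has coordinate weights $\frac{b+2}2,\frac b2,\frac a2,1$, none equal to $\frac12$, so no function with $v_F=\frac12$ exists and Lemma \ref{disef} is simply unavailable. (Incidentally, for D29 the section with $v_E=a(E,X)=2$ is $(z=0)$, not $(u=0)$.) The paper avoids all of this by computing $a(F,X)=a(F,Y)+a(E,X)\,v_F(E)$ directly from the explicit weights of the $w$-morphism $Z_1\to Y$, e.g.\ $a(F,X)\leq\frac{2+ma}{2b+4}=1$ for D23; and for D29 the required bound $a'_5<4b+2$ needs its own case analysis on whether $z^b$ or $xz^{\frac{b-1}2}$ lies in $p$, together with the computation of $a'_2$ modulo $4b+2$ --- this is not ``routine details parallel to D7--D9'' and cannot be omitted.
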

\begin{proof}
	First assume that $Y\rightarrow X$ is of type D23. Consider the chart $U_x\subset Y$ which is defined by
	\[ (x+y^2u+z^m+g'(x,y,z,u)=0)\subset\A^4_{(x,y,z,u)}/\frac{1}{b+2}(-2,b,a,2).\]
	We take $(y_1,...,y_4)=(x,y,z,u)$ with $\delta_4=2$ or $4$. One can see that ($\Xi'$) holds.
	Since $a_4=2$ and $a\geq 3$, we know that ($\Theta_4$) holds. Thus there exists $Y_1\rightarrow X$ such that
	$Y\ua{X}Y_1$. The origin of $U_x$ is a cyclic quotient point. let $F$ be the exceptional divisor of the $w$-morphism over this point. Then
	$v_F(x)\leq \frac{m}{b+2}$. It follows that \[ a(F,X)=\frac{1}{b+2}+\frac{a}{2}v_F(x)\leq \frac{2+ma}{2b+4}=1<\frac{a}{2}.\]\par
	Assume that $Y\rightarrow X$ is of type D24. Consider the chart $U_t\subset Y$ which is defined by
	\[ (x^2+y+g'(z,u,t)=yu+z^m+p(z,u)+t=0)\subset\A^4_{(x,y,z,u,t)}/\frac{1}{b+4}(c_1,...,c_5)\] where $(c_1,...,c_5)=(b+2,b,a,2,-2)$
	if $a$, $b$ are odd, and $(c_1,...,c_5)=(b+3,b+2,\frac{a}{2},1,-1)$ if $a$, $b$ are even. We take $(y_1,...,y_5)=(y,u,z,x,t)$ with
	$\delta_4=4$ and $\delta_5=1$ or $2$. Then ($\Xi$) holds. Now the origin of $U_t$ is a cyclic quotient point.
	Let $F$ be the exceptional divisor over this point, then $v_F(y_1,...,y_5)=\frac{1}{b+4}(a'_1,...,a'_5)$ with $a'_2+a'_4=b+4$.
	It follows that $a(a'_2+a'_4)>b+4=a_2+a_4$, hence ($\Theta_j$) holds for $j=2$ or $4$. Thus there exists
	$Y_1\rightarrow X$ so that $Y\ua{X}Y_1$. One has that \[ a(F,X)=\frac{1}{b+4}+\frac{a}{2}v_F(x)\leq\frac{2+ma}{2b+8}\leq\frac{1}{2}.\]
	Hence $Y_1\rightarrow X$ is a $w$-morphism.\par
	Assume that $Y\rightarrow X$ is of type D25. The chart $U_y\subset Y$ is given by
	\[ (x^2+yu+z^{4b}+g'(y,z,u)=0)\subset\A^4_{(x,y,z,u)}/\frac{1}{4b}(0,2b-1,1,2b+1).\]
	We take $(y_1,...,y_4)=(y,u,z,x)$ with $\delta_4=4$. One can see that ($\Xi'$) holds. The origin of $U_y$
	is a $cA/4b$ point and there is only one $w$-morphism over this point. Let $F$ be the exceptional divisor of the $w$-morphism,
	then $v_F(y_1,...,y_4)=\frac{1}{4b}(2b-1,2b+1,1,4b)$. Hence ($\Theta_2$) holds. One can also
	compute that $a(F,X)=\frac{1}{2}$, hence there exists a $w$-morphism $Y_1\rightarrow X$ such that $Y\ua{X}Y_1$.\par
	Assume that $Y\rightarrow X$ is of type D26. The chart $U_z\subset Y$ is a $cA/4$ point given by
	\[ (x^2+yu+y^4+z^{2b-4}+u^cz^{c-4}=0)\subset\A^4_{(x,y,z,u)}/\frac{1}{4}(0,1,1,3).\]
	We take $(y_1,...,y_4)=(y,u,y+z,x)$ with $\delta_4=4$. One can see that ($\Xi'$) holds. Now let $w$ be the weight
	such that $w(y_1,...,y_4)=\frac{1}{4}(1,3,1,4)$ if $b=4$ and $w(y_1,...,y_4)=\frac{1}{4}(1,7,1,4)$ if $b\geq 6$. Hence ($\Theta_2$) holds
	and there exists $Y_1\rightarrow X$ such that $Y\ua{X}Y_1$. One can compute that $a(F,X)=\frac{1}{2}$.\par
	Assume that $Y\rightarrow X$ is of type D27. Consider the chart $U_t\subset Y$ which is defined by
	\[ (x^2+u+y^4+z^4=yz+u^2+t=0)\subset\A^5_{(x,y,z,u,t)}/\frac{1}{6}(5,1,1,4,2).\] We take $(y_1,...,y_5)=(u,y,y+z,x,t)$ with $\delta_4=4$
	and $\delta_5=1$. In this case ($\Xi$) holds. Now the origin of $U_t$ is a cyclic quotient point. Let
	$F$ be the exceptional divisor over this point which corresponds to a $w$-morphism, then $v_F(y_1,...,y_5)=\frac{1}{6}(5,1,1,4,2)$.
	One can see that ($\Theta_1$) holds. Thus there exists $Y_1\rightarrow X$ which extracts $F$, so that
	$Y\ua{X}Y_1$. One can compute that $a(F,X)=\frac{1}{2}$.\par
	Finally assume that $Y\rightarrow X$ is of type D28 or D29. The chart $U_t\subset Y$ is defined by
	\[ (x^2+u+g'(y,z,u,t)=y^2+p(x,z,u)+t=0)\subset\A^4_{(x,y,z,u,t)}/\frac{1}{4b+2}(1,-1,a-2b-1,2,-2),\]
	where $a=2$ if case D28 and $a=4$ in case D29. We take $(y_1,...,y_5)=(x,y,z,u,t)$ with $\delta_4=4$ and $\delta_5=2$. Then ($\Xi$) holds.
	The origin of $U_t$ is a cyclic quotient point. Let $F$ be the exceptional divisor corresponds
	to the $w$-morphism over this point, then $v_F(y_1,...,y_5)=\frac{1}{4b+2}(a'_1,...,a'_5)$ with $a'_1+a'_2=4b+2$,
	$a'_2(2b+1-a)\equiv 1$ (mod $4b+2$) and $a'_3=1$. From the defining equation one can see that $a'_4>1$, hence ($\Theta_4$) holds.
	Thus there exists a divisorial contraction $Y_1\rightarrow X$ which extracts $F$, so that $Y\ua{X}Y_1$.
	We only need to show that $a(F,X)<\frac{a}{2}$.\par
	Assume that $Y\rightarrow X$ is of type D28. In this case $a'_2$ is the integer such that $a'_2(2b-1)\equiv 1$ (mod $4b+2$). If
	$b$ is odd, then $a'_2=b$ since \[ b(2b-1)=2b^2-b=(4b+2)\frac{b-1}{2}+1.\] One can see that $a'_5\leq 2b$.
	If $b$ is even, then $a'_2=3b+1$ since \[ (3b+1)(2b-1)=6b^2-b-1=(4b+2)(\frac{3}{2}b-1)+1.\] Hence $a'_1=b+1$. Now since
	$xz^{b-1}$ or $z^{2b}\in p(x,z,u)$, we also have that $a'_5\leq 2b$. In either cases we have
	\[a(F,X)=\frac{1}{4b+2}+\frac{a'_5}{4b+2}\leq\frac{2b+1}{4b+2}=\frac{1}{2}<1=\frac{a}{2}.\]\par
	Finally assume that $Y\rightarrow X$ is of type D29. We want to show that $a'_5<4b+2$. Then
	\[ a(F,X)=\frac{1}{4b+2}+2\frac{a'_5}{4b+2}\leq\frac{1}{4b+2}+\frac{8b+2}{4b+2}<2=\frac{a}{2}\] and we can finish the proof.
	If $z^b\in p(x,z,u)$, then $a'_5\leq b$. If $a'_2<2b+1$, then $a'_5\leq 4b$. Assume that $z^b\not\in p(x,z,u)$ and $a'_2\geq 2b+1$, 
	then $a'_1\leq 2b+1$ and $xz^{\frac{b-1}{2}}\in p(x,z,u)$. Hence \[ a'_5\leq 2b+1+\frac{b-1}{2}<4b+2.\]
\end{proof}

\subsection{Divisorial contractions to $cE$ points}
In this subsection we assume that $X$ has $cE$ singularities. First we study $w$-morphisms over $cE$ points. All $w$-morphisms over
$cE$ type points are listed in Table \ref{taE} and Table \ref{taE2}.
\begin{table}
\begin{tabular}{|c|c|c|c|c|}\hline
No. & defining equations  & weight & \tc{type}{\scriptsize{$a(X,E)$}} & condition  \\\hline
E1 & $x^2+y^3+g\gq4(y,z,u)$ & $(2,2,1,1)$ & \tc{$cE_6$}{$1$} & $\frac{\partial^2}{\partial y^2}g(y,z,u)=0$ \\ \hline
E2 & $x^2+xp_2(z,u)+y^3+g\gq5(y,z,u)$ & $(3,2,1,1)$ & \tc{$cE_{6,7}$}{$1$} & \\ \hline
E3 & $x^2+y^3+g\gq6(y,z,u)$ &  $(3,2,2,1)$ & \tc{$cE$}{$1$} & \\ \hline
E4 & $x^2+y^3+y^2p_2(z,u)+g\gq8(y,z,u)$  & $(4,3,2,1)$ & \tc{$cE$}{$1$} &  \\ \hline
E5 & $x^2+xp_4(y,z,u)+y^3+g\gq9(y,z,u)$  & $(5,3,2,1)$ & \tc{$cE$}{$1$} & \\ \hline
E6 & $x^2+y^3+y^2p_3(z,u)+g\gq{10}(y,z,u)$  &$(5,4,2,1)$ & \tc{$cE_{7,8}$}{$1$} & \\ \hline
E7 & $x^2+y^3+g\gq{12}(y,z,u)$  & $(6,4,3,1)$ & \tc{$cE$}{$1$} &  \\ \hline
E8 & $x^2+y^3+y^2p_4(z,u)+g\gq{14}(y,z,u)$  & $(7,5,3,1)$  & \tc{$cE_{7,8}$}{$1$} & \\ \hline
E9 & $x^2+xp_7(y,z,u)+y^3+g\gq{15}(y,z,u)$  & $(8,5,3,1)$ & \tc{$cE_{7,8}$}{$1$} & \\ \hline
E10 & $x^2+y^3+g\gq{18}(y,z,u)$  & $(9,6,4,1)$ & \tc{$cE_{7,8}$}{$1$} &  \\ \hline
E11 & $x^2+y^3+y^2p_6(z,u)+g\gq{20}(y,z,u)$  & $(10,7,4,1)$  & \tc{$cE_8$}{$1$} & \\ \hline
E12 & $x^2+y^3+g\gq{24}(y,z,u)$  &$(12,8,5,1)$  & \tc{$cE_8$}{$1$} & \\ \hline
E13 & $x^2+y^3+g\gq{30}(y,z,u)$  & $(15,10,6,1)$ & \tc{$cE_8$}{$1$} &  \\ \hline
\end{tabular}
\caption{Divisorial contractions to $cE$ points with discrepancy one}\label{taE}
\end{table}
\begin{table}
\begin{tabular}{|c|c|c|c|c|}\hline
No. & defining equations  & weight & \tc{type}{\scriptsize{$a(X,E)$}} & condition  \\\hline
E14 & $\sep{x^2+y^3+tz+g\gq6(y,z,u)}{p_4(x,y,z,u)+t}$  & $(3,2,1,1,5)$ & \tc{$cE_{6,7}$}{$1$} & $p(x,y,z,u)$ is irreducible \\ \hline
E15 & $x^2+xp_2(z,u)+y^3+g\gq6(x,y,z,u)$  & $(4,2,1,1)$ & \tc{$cE_6$}{$1$} &  \\ \hline
E16 & $\sep{x^2+y^3+tp_2(z,u)+g\gq6(y,z,u)}{q_3(y,z,u)+t}$  & $(3,2,1,1,4)$ & \tc{$cE_7$}{$1$} & $q(y,z,u)$ is irreducible \\ \hline
E17 & $x^2+y^3+yz^3+g\gq6(y,z,u)$  & $(3,3,1,1)$ & \tc{$cE_7$}{$1$} & $y^2u^2\in g$\\ \hline
E18 & $\sep{x^2+yt+g\gq{10}(y,z,u)}{y^2+p_6(y,z,u)+t}$ & $(5,3,2,1,7)$ & \tc{$cE_{7,8}$}{$1$} & $y^2+p(y,z,u)$ is irreducible \\ \hline
\end{tabular}
\caption{Divisorial contractions to $cE$ points with discrepancy one, continued}\label{taE2}
\end{table}

We assume that there exist two different $w$-morphisms over $X$, say $Y\rightarrow X$ and $Y_1\rightarrow X$. Let $F=exc(Y_1\rightarrow X)$.
Let $P=\cen_YF$. One always has that $a(F,Y)<1$, so $P$ is a non-Gorenstein point.

\begin{lem}\label{ep}
	Assume that both $Y\rightarrow X$ and $Y_1\rightarrow X$ are of type E1-E13. Then $Y\rightarrow X$ is not of type E1 or E6.
\end{lem}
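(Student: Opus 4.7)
The plan is to argue by contradiction, assuming that $Y\to X$ is of type E1 (or E6) while $Y_1\to X$ is a different $w$-morphism with $F=exc(Y_1\rightarrow X)$ also of type E1--E13. As remarked just before the statement, since $a(F,X)=a(E,X)=1$ and $F\neq E$, the center $P=\cen_YF$ must be a non-Gorenstein point of $Y$, and $a(F,Y)<1$.

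First I would enumerate the non-Gorenstein points of $Y$ chart by chart using Lemma \ref{wbup} and Corollary \ref{wbcc}. For type E1 with weight $(2,2,1,1)$, only $U_x$ and $U_y$ support a nontrivial cyclic action $\frac{1}{2}(1,0,1,1)$ and $\frac{1}{2}(0,1,1,1)$ respectively. Substituting the change of coordinates into $x^2+y^3+g$ with $\partial^2g/\partial y^2=0$, I would check that dividing by $y_1^4$ on $U_x$ yields an equation with constant term $1$, so the fixed locus of the $\frac{1}{2}$-action does not meet $Y|_{U_x}$; hence the only candidate for $P$ is the origin of $U_y$, where $Y$ acquires a singularity of type $cAx/2$ or $cD/2$ according to the rank of the weight-4 part $yq_2(z,u)+p_4(z,u)$ of $g$. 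An analogous analysis for type E6 with weight $(5,4,2,1)$ would pin down a short list of candidates at the origins of $U_x$ and $U_y$ (both cyclic-quotient points).

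Next, for each candidate $P$, I would apply the classification of $w$-morphisms over the relevant singularity type, taken from Tables \ref{taAx}, \ref{taD}, \ref{taD3}, i.e.\ cases Ax3--Ax4, D1--D5, D15--D22, whichever occurs. For every resulting exceptional divisor $F$ over $P$, I would compute
\[ a(F,X)=a(F,Y)+v_F(E)\cdot a(E,X)=\tfrac{1}{r_P}+v_F(E), \]
and $v_F(u)=v_F(y_4)+v_F(y_2 u^{\mathrm{exponent}})$, using Corollary \ref{wbcc} to pull back the coordinate $u$ from $X$ to the local chart on $Y$, and then pull it forward again by the weight defining $F$. The key numerical input is that $v_E(u)=1$ for type E1 (resp.\ $v_E(u)=1$ for type E6), and that every type E1--E13 has $v_F(u)=1$; imposing both $a(F,X)=1$ and $v_F(u)=1$ forces stringent divisibility conditions on the local weights, which I would verify fail for every $w$-morphism in the list.

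The main obstacle is the enumeration: for type E1 the local type at the origin of $U_y$ depends on whether $q_2$ has rank $0$, $1$, or $2$ and on the shape of $p_4$, producing several subcases each with its own collection of $w$-morphisms from Tables \ref{taAx}--\ref{taD3}; for type E6 the arithmetic is slightly different on $U_x$ and $U_y$. In every subcase one obtains either $v_F(u)>1$ (excluding all of E1--E13) or an integer equation with no solution compatible with the tables, yielding the required contradiction. By contrast, the reason this argument does \emph{not} rule out other types E2--E5, E7--E13 is that the weight of $u$ can also be pulled back from a non-Gorenstein point via a weight whose coordinates add up to the ambient index, which does happen for those cases but fails in the two exceptional situations E1 and E6.
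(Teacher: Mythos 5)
Your overall strategy---locate the non-Gorenstein points of $Y$, enumerate the divisors $F$ with $a(F,Y)<1$ over them, and compute $a(F,X)$---is the right starting point and is essentially what the paper does for type E1 (there the unique candidate point is the origin of $U_y$, a $cAx/2$ point, and the unique divisor of discrepancy $<1$ over it has $a(G,X)=2$, so the numerics do close the case). But your treatment of E6 contains both factual errors and a genuine gap. First, the enumeration is wrong: for E6 the origin of $U_x$ does not lie on $Y$ at all (since $x^2$ appears in the lowest-weight part of the defining equation), the origin of $U_y$ is a $cAx/4$ \emph{hypersurface} singularity rather than a cyclic quotient point, and you miss the origin of $U_z$, which is a non-Gorenstein (cyclic quotient) point whenever $X$ is $cE_7$.

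The more serious problem is that the purely numerical criterion you propose does not fail for E6. Over the $cAx/4$ point at the origin of $U_y$ there is a $w$-morphism with weight $\frac{1}{4}(5,3,2,1)$ in the coordinates $(x',y',z',u')$; its exceptional divisor $F$ satisfies $a(F,Y)=\frac14$ and $v_F(E)=v_F(y')=\frac34$, hence $a(F,X)=\frac14+\frac34=1$, and $v_F(u)=1$ holds automatically because $(u=0)$ is a Du Val section and $a(F,X)=1$. So the ``stringent divisibility conditions'' you expect to be violated are in fact satisfied, and no contradiction arises at this level. This is exactly why the paper's proof of the E6 case is long: it must invoke Corollary \ref{wef} (the existence of $\sigma\in\Oo_X$ with $v_F(\sigma)>v_E(\sigma)$) to exclude the possibility that $Y\rightarrow X$ and $Y_1\rightarrow X$ are weighted blow-ups with respect to the \emph{same} embedding $X\hookrightarrow\A^4$, and then carry out a delicate analysis of how two distinct embeddings can differ (writing $x_j'=x_j+q_j$, controlling $v_F$ via an economic resolution over a $w$-morphism on $U_y$, and splitting into the cases $x_j=z$ and $x_j=y$, $q_j=p$) to derive a contradiction in the remaining cases. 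Your proposal contains no mechanism for comparing different embeddings and never uses Corollary \ref{wef}, so as written it cannot rule out type E6; indeed your closing claim that the numerical conditions ``fail in the two exceptional situations E1 and E6'' is the opposite of what happens for E6.
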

\begin{proof}
	Assume that $Y\rightarrow X$ is of type E1, then the only non-Gorenstein point on $Y$ is the origin
	of \[ U_y=({x'}^2+{y'}^2+g'(z',u')=0)\subset\A^4_{(x',y',z',u')}/\frac{1}{2}(0,1,1,1).\] This is a $cAx/2$ point. The exceptional
	divisor $G$ of discrepancy less than one over this point is given by the weighted blow-up with the weight
	$w(x',y',z',u')=\frac{1}{2}(2,3,1,1)$. One can compute that $a(G,X)=2$, hence there are only one $w$-morphism over $X$. Thus
	$Y\rightarrow X$ is not of type E1.\par
	Assume that $Y\rightarrow X$ is of type E6. If $X$ has $cE_8$ singularities then there is only one non-Gorenstein point on $Y$,
	namely the origin of $U_y$. If $X$ has $cE_7$ singularities, then the origin of $U_z$ is also a non-Gorenstein point. Assume first
	that $P$ is the origin of $U_z$, then $P$ is a cyclic quotient point of index two and there is only one exceptional divisor
	over $P$ with discrepancy less than one. Hence $F$ should correspond to this exceptional divisor. One can compute that
	$v_F(x,y,z,u)=(3,3,1,1)$, so $Y_1\rightarrow X$ should be of type E17. Nevertheless, in this case one can see that
	$v_F(\sigma)\leq v_E(\sigma)$ for all $\sigma\in\Oo_X$. This contradict to Corollary \ref{wef}. Hence $P$ can not be the origin of $U_z$.\par
	We want to show that $P$ is also not the origin of $U_y$. The chart $U_y$ is defined by
	\[ ({x'}^2+y'(y'+p(z',u'))+g'(y',z',u')=0)\subset\A^4_{(x',y',z',u')}/\frac{1}{4}(1,3,2,1).\]
	The origin of $U_y$ is a $cAx/4$ point. Since $(u=0)$ defines a Du Val section, we know that $v_F(u)=v_F(y')+v_F(u')=1$.
	Hence both $v_F(y')$ and $v_F(u')<1$. This means that $v_F(y)\leq 3$. Assume that $Y\rightarrow X$ and $Y_1\rightarrow X$ corresponds
	to the same embedding $X\hookrightarrow \A^4$. Then since $v_F(y)\leq 3$, we know that $Y_1\rightarrow X$ is of type E1-E5.
	However, in those cases one always has that $v_F(\sigma)\leq v_E(\sigma)$ for all $\sigma\in\Oo_X$. This contradict to Corollary \ref{wef}.
	Thus $Y_1\rightarrow X$ corresponds to a different embedding.\par
	Let $Z\rightarrow Y$ be a $w$-morphism over the origin of $U_y$. From the classification we know that $Z\rightarrow Y$ is a
	weighted blow-up with the weight $w(x',y',z',u')=\frac{1}{4}(5,k,2,1)$ for $k=3$ or $7$. One can compute that non-Gorenstein points on $Z$
	over $U_y$ are cyclic quotient points. Let $\bar{Z}\rightarrow Z$ be a economic resolution over those cyclic quotient points, then
	$F$ appears on $\bar{Z}$ since $a(F,Y)<1$. Moreover, $\bar{Z}\rightarrow X$ can be viewed as a sequence of weighted blow-ups with
	respect to the embedding $X\hookrightarrow \A^4_{(x,y,z,u)}$. We write $(x_1,...,x_4)=(x,y,z,u)$ and let
	$X\hookrightarrow\A^4_{(x'_1,...,x'_4)}$ be the embedding corresponds to the weighted blow-up $Y_1\rightarrow X$.
	One can always assume that $x'_4=x_4=u$ since $v_E(u)=1$. We write $x'_j=x_j+q_j$. Since $Y\rightarrow X$ and $Y_1\rightarrow X$
	correspond to different embedding, there exists $j<4$ such that $q_j\neq 0$, and $v_F(x'_j)>v_F(x_j)=v_F(q)$. Since $\bar{Z}\rightarrow X$
	can be viewed as a sequence of weighted blow-ups with respect to the embedding $X\hookrightarrow \A^4_{(x,y,z,u)}$, we know that the
	defining equation of $\bar{Z}$ is of the form $x_j+q_j+\bar{h}$ such that $v_F(x'_j)=v_F(\bar{h})$. Hence there is exactly one $j$ such that
	$q_j\neq 0$, and the defining equation of $X$ is of the form $\xi(x_j+q_j)+h$. One can see that either $x_j=z$, or $x_j=y$ and $q_j=p$.\par
	Now if $x_j=z$, then $x'_1=x_1=x$ and $x'_2=x_2=y$. One can see that $v_F(x'_2)=v_F(y)\leq 3$. So $Y_1\rightarrow X$ is of type E1-E5.
	In those cases $v_F(x'_j)\leq 2$, so $v_F(x_j)=v_F(q_j)=1$ and $v_F(x'_j)=2$. Hence $Y_1\rightarrow X$ is of type E3-E5 and
	$v_F(y)=v_F(x'_2)\geq 2$. Also since $v_F(q_j)=1$, $q_j=\lambda u$ for some $\lambda\in\Cc$, hence $v_F(z)=v_F(x'_j-g_j)=1$.
	But then \[ \frac{v_F(z)}{v_E(z)}=\frac{1}{2}\leq\frac{v_F(y)}{v_E(y)}.\]
	By Lemma \ref{val}, $\cen_YF$ can not be the origin of $U_y$. This leads to a contradiction.\par
	Finally we assume that $x_j=y$ and $q_j=p$. Notice that $p=\lambda_1zu+\lambda_2u^3$, hence $v_F(p)\geq 2$. If $v_F(z)=v_F(x'_3)=1$,
	then $Y_1\rightarrow X$ is of type E1 or E2, and so $v_F(x'_j)=2$. However we know that $v_F(p)\geq 2$. This contradict to the
	assumption that $v_F(x'_j)>v_F(q_j)=v_F(p)$. Hence $v_F(z)\geq 2$ and so $v_F(p)\geq 3$. Since \[v_F(y)=v_F(x_j)=v_F(q_j)=v_F(p)\geq 3\]
	and $v_F(y)\leq 3$ by the previous discussion, we know that $v_F(y)=3$. Recall that we write
	\[ U_y=({x'}^2+y'(y'+p(z',u'))+g'(y',z',u')=0)\subset\A^4_{(x',y',z',u')}/\frac{1}{4}(1,3,2,1).\]
	Since $v_F(y)=3$, $v_F(E)=v_F(y')=\frac{3}{4}$. This means that $a(F,Y)=\frac{1}{4}$, so $F$ corresponds to a $w$-morphisms over $U_y$.
	Nevertheless, as we mentioned before, $w$-morphisms over $U_y$ can be obtained by a weighted blow-up with respect to the above embedding,
	hence $Y_1\rightarrow X$ and $Y\rightarrow X$ corresponds to the same four-dimensional embedding, this leads to a contradiction.
\end{proof}

\begin{lem}\label{epx}
	Assume that both $Y\rightarrow X$ and $Y_1\rightarrow X$ are of type E1-E13. If $P$ is the origin of $U_x\subset Y$, then
	$Y\rightarrow X$ is of type E2, E5 or E9, and $Y_1\rightarrow X$ has the same type. One has that $Y\uan{X}Y_1\uan{X}Y$.
\end{lem}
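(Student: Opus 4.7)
The plan has two parts. The first rules out all types other than E2, E5, E9. The second uses Lemma \ref{yly1} symmetrically to produce $Y_1$, identify its type, and establish both negative links.

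For part one, I observe that the origin of $U_x$ lies on the proper transform of $X$ if and only if the weight-leading form $f_w$ of the defining equation satisfies $f_w(1,0,0,0) = 0$. For the Gorenstein cases E1, E3, E4, E6--E8, E10--E13, the leading form $f_w$ consists of $x^2$ together with terms of the form $y^3$, $y^2 p_k(z,u)$, or $g_k(y,z,u)$, all of which vanish at $(y,z,u) = (0,0,0)$. Hence $f_w(1,0,0,0) = 1 \neq 0$, and the origin of $U_x$ does not lie on $Y$. In contrast, the equations for E2, E5, E9 contain the extra term $xp_k(y,z,u)$ with $p_k$ of positive degree in $(y,z,u)$; this enters $f_w$ and vanishes at the origin, so $f_w(1,0,0,0) = 0$ and the origin of $U_x$ lies on $Y$. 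Since $a_1 > 1$ in all three cases, the origin is a non-Gorenstein point, giving the first half of the statement.

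For part two, I treat E2 in detail; E5 and E9 are analogous after relabeling so that the weight-one coordinate plays the role of $x_3$. Write $X = (x(x + p_2(z,u)) + y^3 + g_{\geq 5}(y,z,u) = 0) \subset \A^4$ with weight $(3,2,1,1)$ on $(x,y,z,u)$. Taking $(x_1, x_2, x_3, x_4) = (x, y, z, u)$, I verify the hypotheses of Lemma \ref{yly1}: $a_3 = 1$, $a_2 + a_4 = 3 = a_1$, and $v_E(y^3 + g_{\geq 5}) = 5 = a_1 + v_E(p_2) = 2a_1 - 1$. The lemma produces $Y \uan{X} Y_1$, where $Y_1$ extracts the divisor $F$ over the origin of $U_x$. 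To identify $Y_1$'s type, make the coordinate change $x' = x_1 + p$, transforming the defining equation into $x'(x' + (-p)) + g = 0$, which has the same E2 normal form with the same weight on $(x', x_2, x_3, x_4)$. A direct computation from the proof of Lemma \ref{yly1} shows $v_F(x') = a_1$ and $v_F(x_j) = a_j$ for $j \geq 2$. Hence $Y_1 \rightarrow X$ is the weighted blow-up of $X$ in the new embedding with the original weight, i.e.\ of the same type as $Y$.

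For the reverse link $Y_1 \uan{X} Y$, I apply Lemma \ref{yly1} once more, this time to $Y_1 \rightarrow X$ in the embedding $(x', x_2, x_3, x_4)$. The defining equation $x'(x' + (-p)) + g = 0$ satisfies the same numerical hypotheses, so the lemma gives $Y_1 \uan{X} Y_2$. The new coordinate is $x'' = x' + (-p) = x$, returning us to the original embedding, so $Y_2 = Y$. The main obstacle is the verification that $v_F(x') = a_1$ (rather than $v_F(x) = a_1$) in the new embedding: this requires careful tracking through the proof of Lemma \ref{yly1}, and in the E5, E9 cases one must first relabel coordinates so that $x_3$ has weight one and $p$ is a function of $(x_2, x_3, x_4)$.
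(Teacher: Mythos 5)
Your part one is correct and is essentially what the paper does (the paper simply observes that the origin of $U_x$ lying on $Y$ forces type E2, E5 or E9), and your final appeal to Lemma \ref{yly1} for the two negative links is also the paper's last step. The problem is in the middle. In this lemma $Y_1\rightarrow X$ is a \emph{given} second $w$-morphism, with $F=exc(Y_1\rightarrow X)$ and $P=\cen_YF$, and you must prove the conclusion for that $Y_1$. What you actually do is run Lemma \ref{yly1} to manufacture \emph{some} divisorial contraction linked to $Y$ and then identify its type; you never show that the divisor extracted by that construction is the given $F$. Since $P$ is a cyclic quotient point of index $b=a_1$, there are $b-1$ distinct valuations over $P$ of discrepancy less than one, each of which yields discrepancy $1$ over $X$ after adding the contribution from $E$, so a priori $F$ could be any of them; Lemma \ref{yly1} only produces the one with $a(F,Y)=\frac{1}{b}$, namely the $w$-morphism over $P$. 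Until the other $b-2$ candidates are ruled out, neither ``$Y_1\rightarrow X$ has the same type as $Y\rightarrow X$'' nor ``$Y\uan{X}Y_1$'' is established for the given $Y_1$.

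The paper closes exactly this gap: writing $v_F(y',z',u')=\frac{1}{b}(c',d',a')$ with $c'+d'=b$ and $a(F,Y)=\frac{a'}{b}$, it computes $v_F$ on the ambient coordinates, observes that $v_F\leq v_E$ on all of them, and then invokes Corollary \ref{wef} (there must exist $\sigma$ with $v_F(\sigma)>v_E(\sigma)$) to force $\sigma=x+p(z,u)$ and to realize $Y_1$ as a weighted blow-up in the embedding with coordinate $\sigma$ and weight $(b_1,c_1,d_1,1)$ satisfying $c_1\leq c$ and $d_1\leq d$. Running the same argument with $Y$ and $Y_1$ interchanged gives the reverse inequalities, hence $c_1=c$, $d_1=d$, which forces $a'=1$, $c'=c$, $d'=d$; only then is $F$ identified as the $w$-morphism over $P$ and Lemma \ref{yly1} applicable. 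Your assertion that ``a direct computation shows $v_F(x')=a_1$'' presupposes this identification; without it one only knows $v_F(x+p)>b-1$, and the type of the constructed contraction is not pinned down either. The missing ingredient is therefore the combination of Corollary \ref{wef} with the symmetry between $Y$ and $Y_1$.
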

\begin{proof}
	The assumption implies that the origin of $U_x$ is contained in $Y$, so $Y\rightarrow X$ is of type E2, E5, E9 and $P$ is a cyclic
	quotient point. If $Y\rightarrow X$ is a weight blow-up with the weight $(b,c,d,1)$, then $b=c+d$ and 
	\[ U_x=(x'+p(z',u')+g'(x',y',z',u')=0)\subset \A^4_{(x',y',z',u')}/\frac{1}{b}(-1,c,d,1).\]
	Since $a(F,Y)<1$, $F$ is the valuation described by \cite[Proposition 3.1]{me}. Hence $v_F(y',z',u')=\frac{1}{b}(c',d',a')$ with
	$c'+d'=b$ and $\frac{a'}{b}=a(F,Y)$. Since $a(F,X)=a(F,Y)+v_F(x')=1$, we know that $v_F(x')=1-\frac{a'}{b}$. One can compute that
	\[ v_F(x,y,z,u)=(b-a',c-\frac{(a'c-c')}{b},d-\frac{a'd-d'}{b},1).\] Since $c'<b$ and $a'c\equiv c'$(mod $b$), we know that
	$\frac{a'c-c'}{b}\geq0$, so $v_F(y)\leq c=v_E(y)$. Likewise, we know that $v_F(z)\leq d=v_E(z)$. One also has that $v_F(x)<v_E(x)$ and
	$v_F(u)=v_E(u)$.\par
	On the other hand, Corollary \ref{wef} says that there exists $\sigma\in\Oo_X$ such that $v_F(\sigma)>v_E(\sigma)$. This can only happen
	when \[ v_F(x')=v_F(p(z',u'))<v_F(x'+p(z',u'))=v_F(g'(x',y',z',u'))\] and in this case one can choose $\sigma=x+p(z,u)\in\Oo_X$.
	Now $Y_1\rightarrow X$ can be obtained by a weighted blow-up with respect to the embedding
	\[ X\hookrightarrow (\sigma^2-\sigma p(z,u)+y^3+g(y,z,u)=0)\subset\A^4_{(\sigma,y,z,u)} \] and with the weight
	$w_1(\sigma,y,z,u)=(b_1,c_1,d_1,1)$ where $c_1=c-\frac{(a'c-c')}{b}$ and $d_1=d-\frac{a'd-d'}{b}$. Since $c_1\leq c$, $d_1\leq d$
	and $b_1=v_F(v)>v_E(v)$, by Lemma \ref{val} we know that $\cen_{Y_1}E$ is the origin of $U_{1,\sigma}$.\par
	Now if we interchange $Y$ and $Y_1$, then the above argument yields that $c\leq c_1$ and $d\leq d_1$. Hence $c=c_1$ and $d=d_1$ and
	so $Y\rightarrow X$ and $Y_1\rightarrow X$ are of the same type. One has that $a'=1$ and $c'=c$, $d'=d$. Thus $F$ is the exceptional
	divisor of the $w$-morphism over $P$. Now we know that $Y\uan{X}Y_1$ by Lemma \ref{yly1} and also $Y_1\uan{X}Y$ by the symmetry.
\end{proof}

\begin{lem}\label{epy}
	Assume that both $Y\rightarrow X$ and $Y_1\rightarrow X$ are of type E1-E13. Then $P$ is not the origin of $U_y\subset Y$.
\end{lem}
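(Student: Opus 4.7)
The plan is to run a case analysis parallel to the proof of Lemma \ref{ep}, ruling out each of the types E2--E5 and E7--E13 for $Y\to X$ (types E1 and E6 being already excluded by Lemma \ref{ep}). For each type I would describe the chart $U_y\subset Y$ explicitly via Corollary \ref{wbcc}: with $i=2$ and weight $(a_1,a_2,a_3,a_4)$ read from Table \ref{taE}, the chart is a cyclic quotient $\A^4/\frac{1}{a_2}(a_1,-1,a_3,a_4)$ whose defining equation is obtained from $x^2+y^3+g$ by substituting $x=y_1y_2^{a_1}$, $y=y_2^{a_2}$, $z=y_3y_2^{a_3}$, $u=y_4y_2^{a_4}$ and dividing by $y_2^{w(f)}$. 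In each case the origin of $U_y$ is a terminal singularity whose type (cyclic quotient, $cAx/r$, or $cD/r$) can be read off directly, and whose exceptional divisors of discrepancy less than one are classified in \cite{me}, \cite{h1} and \cite{h2}.

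Since $X$ is Gorenstein, the general elephant condition yields $v_F(u)=a(F,X)=1=v_E(u)$. Combined with $u=y_4y_2^{a_4}$ this forces $v_F(y_4)+a_4\,v_F(y_2)=1$, a strong constraint that cuts the list of candidate valuations $F$ over $P$ down to at most a handful in each type. For each surviving candidate I would compute $v_F(x)$, $v_F(y)$, $v_F(z)$ from $v_F(y_1),\ldots,v_F(y_4)$ via the change-of-coordinates formula and compare with $v_E$, aiming either to show $v_F(\sigma)\leq v_E(\sigma)$ for every $\sigma\in\Oo_X$, contradicting Corollary \ref{wef}, or to show that the numerical data $(v_F(x),v_F(y),v_F(z),v_F(u))$ cannot match any entry of Table \ref{taE}.

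The main obstacle, as in the second half of the proof of Lemma \ref{ep}, is the possibility that $Y_1\to X$ arises from a \emph{different} four-dimensional embedding of $X$, produced by a coordinate change $x_j\mapsto x_j+q_j$. To handle this I would take an economic resolution $\bar Z\to Z\to Y$ over the non-Gorenstein point $P$, so that $\bar Z\to X$ factors as a sequence of weighted blow-ups with respect to the original embedding $X\hookrightarrow\A^4_{(x,y,z,u)}$. Analyzing the defining equation of $\bar Z$ along $F$ then shows that any $q_j$ with $v_F(x_j+q_j)>v_F(x_j)$ is forced to be of the form $\lambda u+\cdots$ with $v_F(q_j)$ small enough that $F$ must coincide with a $w$-morphism described directly from the original embedding; but then the inequality $v_F(y)\leq a_2$, which is read off the $U_y$ chart, together with the identity $v_F(u)=1$, contradicts the weight pattern of every remaining type in Table \ref{taE}. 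This case-by-case verification, while somewhat tedious, is the main obstacle and completes the proof.
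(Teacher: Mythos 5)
Your overall strategy --- a case-by-case analysis in the style of Lemma \ref{ep}, terminating either in a violation of Corollary \ref{wef} or in a mismatch with Table \ref{taE} --- is not the paper's argument, and its proposed endgame fails precisely in the cases that matter. Once E1 and E6 are excluded by Lemma \ref{ep}, the relevant types are E4, E8 and E11 (for most of the remaining types the weight of the defining equation equals $3c$, so $y^3$ contributes a unit on $U_y$ and the origin of $U_y$ does not even lie on $Y$; the paper records that only E4, E8, E11 survive, with the origin of $U_y$ a cyclic quotient point). In these cases neither of your two contradiction mechanisms applies: exactly as in Lemma \ref{epx}, Corollary \ref{wef} \emph{forces} the existence of $\sigma=y+p(z,u)$ with $v_F(\sigma)>v_E(\sigma)$, so one cannot hope to prove $v_F\leq v_E$ on all of $\Oo_X$; and the resulting data $(v_F(x),v_F(\sigma),v_F(z),v_F(u))=(b_1,c_1,d_1,1)$ realizes $Y_1\rightarrow X$ as a legitimate weighted blow-up of type E1--E13 with respect to the new embedding $X\hookrightarrow\A^4_{(x,\sigma,z,u)}$, so there is no ``mismatch with the table'' to exploit. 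Your concluding claim that $v_F(y)\leq c$ together with $v_F(u)=1$ ``contradicts the weight pattern of every remaining type'' is false: for $Y$ of type E11 one has $c=7$, and $(v_F(y),v_F(u))=(3,1)$ or $(5,1)$ is perfectly consistent with $Y_1$ being of type E4 or E8.

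The missing idea is the symmetry/monotonicity argument that actually closes the paper's proof. Transporting the computation of Lemma \ref{epx} to the $U_y$ chart shows that $\cen_{Y_1}E$ is again the origin of the $\sigma$-chart $U_{1,\sigma}\subset Y_1$, and that the first weight strictly drops, $b_1<b$, because $b>c$ for the types E4, E8, E11. Since the hypotheses are completely symmetric in $Y$ and $Y_1$, the same computation run from the side of $Y_1$ gives $b<b_1$, whence $b<b_1<b$, a contradiction. Without this strict decrease of $b$ coupled with the symmetry, your case analysis has no mechanism to terminate. (Your third paragraph on economic resolutions and changes of embedding imports machinery that is specific to the E6 case of Lemma \ref{ep}; in the situation of Lemma \ref{epy} the point $P$ is a cyclic quotient point and the change of embedding is already controlled by the argument of Lemma \ref{epx}, so that detour is neither needed nor sufficient.)
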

\begin{proof}
	By lemma \ref{ep} we know that $Y\rightarrow X$ is not of type E1 or E6, hence $Y\rightarrow X$ is of type E4, E8 or E11 and the origin
	of $U_y$ is a cyclic quotient point. We assume that $Y\rightarrow X$ is a weighted blow-up with the weight
	$(b,c,d,1)$. Following the same computation as in the proof of Lemma \ref{epx}, we may write $Y_1\rightarrow X$ as a weighted blow-up
	with respect to the embedding \[ X\hookrightarrow (x^2+(\sigma-p(z,u))^2\sigma+g(\sigma,z,u)=0)\subset\A^4_{(x,\sigma,z,u)}\]
	and with the weight $(b_1,c_1,d_1,1)$, and $\cen_{Y_1}E$ is the origin of $U_{1,\sigma}\subset Y_1$. Nevertheless, in this case one always
	has that $b_1<b$ since $b>c$. The symmetry between $Y$ and $Y_1$ yields that $b>b_1>b$, which is impossible.
\end{proof}
\begin{lem}\label{epij}
	Assume that both $Y\rightarrow X$ and $Y_1\rightarrow X$ are of type E1-E13. Let $X\hookrightarrow \A^4_{(x_1,...,x_4)}$
	be the embedding corresponds to $Y\rightarrow X$ in Table \ref{taE}. Then $P$ is the origin of $U_i$ for some $i\leq 4$.
\end{lem}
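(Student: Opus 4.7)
The goal is to pin down the non-Gorenstein center $P = \cen_Y F$ to one of the four torus-fixed points of the ambient weighted blow-up. I would first establish that $P$ is an isolated non-Gorenstein point of $Y$ lying in $E$. Indeed, $F$ is exceptional over the single (Gorenstein) point blown up by the $w$-morphism $Y \to X$, so $\cen_Y F \subset E$; and from $a(F, Y) = 1 - v_F(E) < 1$, together with the fact that at a Gorenstein terminal (i.e., $cDV$) point of a threefold the discrepancies of all exceptional divisors are positive integers and hence $\ge 1$, we conclude that $P$ must be non-Gorenstein on $Y$. Since non-Gorenstein terminal singularities on threefolds are isolated (by the Mori-Reid classification in Table \ref{ta3}), $P$ is a single point.

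The heart of the argument is to verify that the isolated non-Gorenstein points on $Y \cap E$ are precisely the four torus-fixed points, namely the origins of $U_x$, $U_y$, $U_z$, $U_u$. I proceed case by case through types E2, E3, E4, E5, E7--E13 (types E1 and E6 being excluded by Lemma \ref{ep}), using the explicit defining equations and weights listed in Table \ref{taE}. For each type and each chart $U_i$, Lemma \ref{wbup} describes the cyclic quotient structure, whose non-Gorenstein locus is a union of coordinate subspaces; intersecting with the proper transform $Y$ yields the non-Gorenstein points of $Y$ in that chart. If $Y$ were to intersect a 1-dim singular orbit of the ambient in a positive-dim subvariety, then $Y$ would inherit a 1-dim non-Gorenstein locus, violating its terminality; hence this intersection is always finite and, by inspection, lies inside the coordinate-axis intersections, i.e., at the origins.

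The main obstacle is bookkeeping the cases whose weights share a common factor (notably $(3,2,2,1)$ in E3, $(4,3,2,1)$ in E4, $(6,4,3,1)$ in E7, $(10,7,4,1)$ in E11, etc.), where the ambient weighted blow-up genuinely has 1-dim strata of cyclic-quotient singularities. In each such case, the explicit form of $g$ in Table \ref{taE} (and the linear terms one reads off on each chart) forces the proper transform $Y$ to be smooth at generic points of these strata, so that any non-Gorenstein point of $Y$ along such a stratum must be at its intersection with another coordinate subspace, i.e., at an origin. Once this verification is carried out uniformly across the listed types, the conclusion that $P$ is the origin of some $U_i$ with $i \in \{1,2,3,4\}$ follows immediately.
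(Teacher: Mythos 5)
There is a genuine gap, and it sits exactly at the heart of the lemma. Your strategy is to show that the non-Gorenstein points of $Y$ lying on $E$ are \emph{only} the four chart origins, and in the cases with non-coprime weights you assert that the explicit form of $g$ forces $Y$ to avoid (or be smooth along) the one-dimensional singular strata $U_i\cap U_j$ of the ambient space. This is false. Take type E7, with weight $(6,4,3,1)$ and $f=x^2+y^3+g_{\geq 12}$: the stratum $z=u=0$ carries $\Z/2$-quotient singularities of the ambient, and the exceptional divisor $(f_w=0)\subset\Pp(6,4,3,1)$ meets it in points with $x\neq 0\neq y$ (solutions of $x^2+(1+\lambda)y^3=0$), at which $Y$ acquires genuine cyclic quotient, hence non-Gorenstein, points. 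The same happens for E10--E13. The paper's proof does not deny the existence of such points; on the contrary, it records that a point of $Y$ on $U_i\cap U_j$ is a cyclic quotient point of index $d=\gcd(a_i,a_j)$, and the real content of the lemma is that the \emph{center of the particular divisor $F$} extracted by the second $w$-morphism $Y_1\rightarrow X$ cannot be one of these points. Your argument never uses that $F$ comes from another $w$-morphism, so it cannot distinguish $P$ from the quotient points that genuinely do live on $U_i\cap U_j$.

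The missing idea is Corollary \ref{wef}: for two distinct $w$-morphisms there must exist $u\in\Oo_X$ with $v_E(u)<v_F(u)$. If $P$ were a point of $U_1\cap U_2$ (say), then $P$ is a cyclic quotient point of index $d$ with local coordinates $(x_1',x_3',x_4')$, any discrepancy-less-than-one valuation $F$ over it has $v_F(x_1',x_3',x_4')=\frac{1}{d}(a_1',a_3',a_4')$ with $a_l'<d$, and pushing this down gives $v_F(x_l)\leq v_E(x_l)$ for every coordinate $x_l$, contradicting the corollary. Without this (or some substitute that exploits the second $w$-morphism), the case analysis you propose cannot close.
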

\begin{proof}
	Assume that $P$ is not the origin of $U_i$ for all $i\leq 4$. Then $Y$ has a non-Gorenstein point on $U_i\cap U_j$ for some $i\neq j$.
	In this case $Y\rightarrow X$ is of type E7 or E10-E13. For simplicity we assume that $i=1$ and $j=2$. If $Y\rightarrow X$ is a
	weighted blow-up with the weight $(a_1,...,a_4)$, then we have the following observation:
	\begin{enumerate}[(1)]
	\item $a_1=dk_1$ and $a_2=dk_2$ for some integers $k_1$, $k_2$ and $d$. We may assume that $k_2=2$ and $k_1$ is odd.
	\item $x_1^{k_2}$ and $x_2^{k_1}$ appear in $f$ where $f$ is the defining equation of $X$. Moreover $v_E(x_1^{k_2})=v_E(x_2^{k_1})=v_E(f)$.
	\item $P$ is a cyclic quotient point of index $d$. On $U_1$ the local coordinate system is given by $(x'_1,x'_3,x'_4)$, where $x'_l$
		is the strict transform of $x_l$ on $U_1$.
	\end{enumerate}
	 Since $F$ is a valuation of discrepancy less than one over $P$, we know that $v_F(x'_1,x'_3,x'_4)=\frac{1}{d}(a'_1,a'_3,a'_4)$
	 with $a'_l<d$ for $l=1$, $3$, and $4$. One can compute that
	 \[v_F(x_1,...,x_4)=(k_1a'_1,k_2a'_1,\frac{1}{d}(a'_3+a_3a'_1),\frac{1}{d}(a'_4+a_4a'_1)),\]
	 and $Y_1\rightarrow X$ can be obtained by the weighted blow-up with respect to the same embedding $X\hookrightarrow \A^4_{(x_1,...,x_4)}$
	 and with the weight $v_F$. Nevertheless, one can easily see that $v_F(x_l)\leq v_E(x_l)$ for all $1\leq l\leq 4$. This contradict to
	 Corollary \ref{wef}. 
\end{proof}

\begin{pro}\label{ce1}
	Assume that both $Y\rightarrow X$ and $Y_1\rightarrow X$ are both of type E1-E13. Then $Y\rightarrow X$ is of type E2, E5 or E9, and
	$Y_1\rightarrow X$ has the same type. One has that $Y\uan{X}Y_1\uan{X}Y$.
\end{pro}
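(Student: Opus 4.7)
My plan is to assemble the proposition directly from the four preceding lemmas, using case elimination to pin down the location of $P := \cen_YF$. By Lemma \ref{epij} we already know $P$ is the origin of one of the four standard charts $U_x$, $U_y$, $U_z$, $U_u$ of the weighted blow-up $Y\to X$. The strategy is to rule out three of these four possibilities, so that $P$ must be the origin of $U_x$ and Lemma \ref{epx} delivers both the trichotomy E2/E5/E9 and the linking $Y\uan{X}Y_1\uan{X}Y$.

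The elimination of $U_u$ is immediate: for every type E1--E13 in Table \ref{taE} one has $a_4=1$, so Lemma \ref{wbup} gives $U_u\cong \A^4$. In particular $U_u$ has no non-Gorenstein point, whereas $P$ must be non-Gorenstein because $a(F,Y)<1$. The elimination of $U_y$ is exactly Lemma \ref{epy}, and Lemma \ref{ep} already discards the two auxiliary types E1 and E6.

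The main remaining task, and the only step that requires real work, is to show that $P$ cannot be the origin of $U_z$. For types with $a_3=1$ (type E2) the chart $U_z$ is smooth and there is nothing to prove. For the remaining types E3--E5, E7--E13 the origin of $U_z$ is a cyclic quotient singularity of index $a_3$, and I would proceed by the same template exhibited in the $cE_7$-part of the proof of Lemma \ref{ep}. Namely, exceptional divisors $F$ of discrepancy less than one over a cyclic quotient point are classified by \cite[Proposition 3.1]{me}, which pins down $v_F$ on the local coordinates of $U_z$. Pulling those valuations back through the change-of-coordinates $x_j=y_jy_i^{a_j/r}$, $x_i=y_i^{a_i/r}$ yields an explicit expression for $v_F(x,y,z,u)$; in every case the resulting tuple satisfies $v_F(\sigma)\le v_E(\sigma)$ for every $\sigma\in\Oo_X$, which contradicts Corollary \ref{wef}. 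The hard part is simply the bookkeeping, but it is purely combinatorial and runs parallel to the $U_y$ computation of Lemma \ref{epy} and the $U_z$ sub-case of Lemma \ref{ep}.

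Once $U_y$, $U_z$, and $U_u$ are all excluded, Lemma \ref{epij} forces $P$ to be the origin of $U_x$. Lemma \ref{epx} then concludes: $Y\to X$ must be of type E2, E5, or E9, $Y_1\to X$ is of the same type, and $Y\uan{X}Y_1\uan{X}Y$. This gives Proposition \ref{ce1}.
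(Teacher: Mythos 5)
Your overall architecture (use Lemma \ref{epij} to localize $P$, discard $U_u$ by Gorenstein-ness, $U_y$ by Lemma \ref{epy}, and then try to discard $U_z$ so that Lemma \ref{epx} finishes) is close in outline to the paper, but the one step you defer to ``bookkeeping'' --- eliminating the origin of $U_z$ --- is precisely where the paper has to do something genuinely different, and your proposed method for it does not work.

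The gap is this: computing $v_F$ on the coordinates $x,y,z,u$ of the fixed embedding and observing $v_F(x_l)\le v_E(x_l)$ for all $l$ does \emph{not} contradict Corollary \ref{wef}, because that corollary only guarantees the existence of \emph{some} $\sigma\in\Oo_X$ with $v_E(\sigma)<v_F(\sigma)$, and $\sigma$ need not be one of the chosen coordinates. The paper's own Lemma \ref{epx} shows this is not a hypothetical loophole: in the $U_x$ case one gets exactly the same coordinate inequalities $v_F(x_l)\le v_E(x_l)$, yet the configuration is realized (types E2, E5, E9), the witness for Corollary \ref{wef} being the non-coordinate function $\sigma=x+p(z,u)$. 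For the $U_z$ case the analogous potential witness is $z_1=z+\lambda u^k$, and ruling it out is the actual content of the proof. The paper does this with a symmetric, two-embedding argument: it first reduces (via Lemma \ref{epx} applied to either side) to the case where $\cen_YF$ and $\cen_{Y_1}E$ are \emph{both} at the respective $U_z$-origins, normalizes $u=u_1$ and $v_E(f)\le v_F(f_1)$, shows via Lemma \ref{val} that $z_1=z+\lambda u^k$ with $k<v_E(z)$, and then derives a contradiction from the fact that $(u=0)$ being Du Val forces $z^4$, $yz^3$ or $z^5\in f$, hence a monomial $u^{4k}$, $yu^{3k}$ or $u^{5k}$ of too small weight in $f$ or $f_1$. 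None of this appears in your proposal, and without it the elimination of $U_z$ is unsupported; note also that your argument never uses where $\cen_{Y_1}E$ sits on $Y_1$, whereas the paper's contradiction essentially needs both centers simultaneously. To repair the proof you would need to add this two-sided analysis (or an equivalent argument showing that no replacement $z\mapsto z+h(u)$ can produce $v_F>v_E$).
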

\begin{proof}
	Let \[X\hookrightarrow (f(x,y,z,u)=0)\subset \A^4_{(x,y,z,u)}\] be the embedding corresponds to $Y\rightarrow X$, and
	\[ X\hookrightarrow (f_1(x_1,y_1,z_1,u_1)=0)\subset \A^4_{(x_1,y_1,z_1,u_1)}\]
	be the embedding corresponds to $Y_1\rightarrow X$. If $\cen_YF$ is the origin of $U_x\subset Y$, or $\cen_{Y_1}E$ is the origin of
	$U_{x_1}\subset Y_1$, then the statement follows from Lemma \ref{epx}. Assume that we are not in this case. Then Lemma \ref{epy} and
	Lemma \ref{epij} imply that $\cen_YF=U_z\subset Y$, and $\cen_{Y_1}E=U_{z_1}\subset Y_1$.\par
	We may assume that $v_E(f)\leq v_F(f_1)$. Since $v_E(u)=v_F(u_1)=1$, one can always assume that $u=u_1$ and $(u=0)$ defines a Du Val
	section. Lemma \ref{val} implies that $v_E(z)>v_E(z_1)$ and $v_F(z)<v_F(z_1)$, hence $z\neq z_1$. We may write $z_1=z+h$. If
	$v_E(h)\geq v_E(z)$, then we may replace $z$ by $z+h$, which will lead to a contradiction. Hence $v_E(h)<v_E(z)$. Thus $h=\lambda u^k$
	for some $k<v_E(z)$. Since $(u=0)$ defines a Du Val section, we know that $z^4$, $yz^3$ or $z^5\in f$. It follows that $u^{4k}$, $yu^{3k}$
	or $u^{5k}$ appear in either $f$ or $f_1$. This means that $v_E(f)\leq 4k$, $v_E(y)+3k$ or $5k$ for some $k<v_E(z)$.
	One can easily check that for all the cases in Table \ref{taE} this inequality never holds. Thus we get a contradiction.
\end{proof}

\begin{pro}\label{ce2}
	Assume that $Y\rightarrow X$ is of type E14-E18. Then there exists $Y_1\rightarrow X$ which is of type E3 or E6, such that $Y\ua{X}Y_1$.
	Moreover, if $Y\rightarrow X$ is of type E15 or E17, then $Y\uan{X}Y_1$.
\end{pro}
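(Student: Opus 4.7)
For each of types E14--E18 my plan is to imitate the case-by-case analysis used for Proposition \ref{cd6}: pick a suitable affine chart on $Y$ containing the non-Gorenstein point $P$, identify the $w$-morphism $Z_1\to Y$ over $P$, verify the appropriate structural conditions $(\Xi)$ or $(\Xi')$ on the chosen local coordinates, verify that $(\Theta_u)$ or $(\Theta_j)$ holds, and then appeal to Lemma \ref{knef} and Lemma \ref{kneff} to produce $Y_1\to X$ with $Y\ua{X}Y_1$. The extra refinement $(\Xi_-)$ or $(\Xi'_-)$ will be needed in types E15 and E17 to upgrade $\ua{X}$ to $\uan{X}$.

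Concretely, for types E14, E16 and E18 (the complete-intersection cases) I plan to solve the variable $t$ out of the second equation and project to the hypersurface in $\A^4$ obtained by substituting it into the first; then look at the chart $U_t\subset Y$, which is a cyclic-quotient point whose $w$-morphism has a weight that can be read off from the explicit equations in Table \ref{taE2}. In each case I choose $(y_1,\dots,y_5)$ so that $\delta_4$ and $\delta_5$ exist (condition (i) of Lemma \ref{knef}) and so that the remaining index $j$ satisfies $a_j\le a_i$, ensuring condition (ii) holds; then either $(\Theta_u)$ for $u=x$ or $u=z$, or $(\Theta_j)$ for the distinguished index, applies directly. For E15 and E17 the same procedure works with the chart $U_x$ (respectively $U_y$), and here the asymmetry between $x$ and $p_2(z,u)$ (respectively between $y$ and $z$) in the defining equation yields a strict inequality $\frac{rv_E(\eta_j)}{a_{\delta_j}}>\frac{r'v_F(\eta'_j)}{a'_{\delta_j}}$, so that $(\Xi'_-)$ holds and Lemma \ref{yly1} applies almost directly, giving $Y\uan{X}Y_1$.

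Once existence of $Y_1\to X$ is secured, I identify its type as follows. The discrepancy computation of Lemma \ref{dcp}, combined with $a_3=v_E(u)=1$ and the explicit values of $r,r',a'_i$ read off from the $w$-morphism, shows $a(F,X)=1$; hence $Y_1\to X$ is among the $w$-morphisms E1--E18. By Corollary \ref{wef} it cannot coincide with $Y$, and by tracking the valuations $v_F(x), v_F(y), v_F(z), v_F(u)$ via the change of coordinates formula from Corollary \ref{wbcc}, the defining equation of $X$ rewritten in the new embedding must be one of the normal forms E3 or E6 (these being the only hypersurface $w$-morphisms compatible with the remaining valuations after eliminating $t$ or after absorbing the cross terms $xp_2$, $yz^3$, $y^2p_2$, $tz$, $tq_3$, $yt$ respectively into the normal form $x^2+y^3+g$).

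The main obstacle I anticipate is the bookkeeping in the last step: showing that the resulting $Y_1\to X$ is genuinely of type E3 or E6 (rather than of another E-type or a repeat of $Y$) requires careful control of the new embedding of $X$ and of the weight $v_F(x,y,z,u)$, particularly for E18 where $y^2+p_6$ is irreducible and the elimination of $t$ produces an equation whose weighted-homogeneous leading part must be matched with the normal form in Table \ref{taE}. As in Lemma \ref{epij} and Proposition \ref{ce1}, this matching is ultimately a finite check using the list of valuations $v_E$ and $v_F$, and I expect it to go through in each of the five cases with no conceptual new ingredient beyond the tools already developed in Section \ref{sfac}.
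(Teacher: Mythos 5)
Your proposal follows essentially the same route as the paper: for E14, E16, E18 one works in the chart $U_t$, for E15 in $U_x$ and for E17 in $U_y$, verifies $(\Xi)$ or $(\Xi')$ together with a $(\Theta)$ condition for the $w$-morphism over the relevant non-Gorenstein point, and then identifies $Y_1$ as type E3 (resp.\ E6 for E18) by computing $v_F(x,y,z,u)$. One caveat: Lemma \ref{yly1} does not literally apply to E15 or E17 (for E15 with weight $(4,2,1,1)$ its hypothesis $v_E(g)=2\frac{a_1}{r}-1$ fails, and E17 is not of the required shape), but this is harmless since the strict inequality you describe is exactly the direct verification of $(\Xi'_-)$ that the paper carries out at the $cAx/4$ (resp.\ $cD/3$) point on the chosen chart.
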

\begin{proof}
	Assume that $Y\rightarrow X$ is of type E14. Consider the chart
	\[U_t=({x'}^2+{y'}^3+z'+g'(y',z',u',t')=p(x',y',z',u')+t'=0)\subset\A^5_{(x',y',z',u',t')}/\frac{1}{5}(3,2,1,1,4).\]
	We choose $(y_1,...,y_5)=(x',y',u',z',t')$ with $\delta_4=1$ and $\delta_5=2$ or $4$, or $\delta_4=2$ and $\delta_5=1$ or $4$.
	Then ($\Xi$) holds. Now let $F$ be the exceptional divisor corresponds to the $w$-morphism over the origin
	of $U_t$, then \[v_F(y_1,...,y_5)=\frac{1}{5}(3,2,1,6,4).\] One can see that ($\Theta_4$) holds.
	Thus there exists a divisorial contraction $Y_1\rightarrow X$ so that $Y\ua{X}Y_1$ which extracts $F$. One can compute that
	$v_F(x,y,z,u)=(3,2,2,1)$, so $Y_1\rightarrow X$ is of type E3.\par
	Assume that $Y\rightarrow X$ is of type E15. Consider the chart
	\[ U_x=({x'}^2+p(z',u')+{y'}^3+g'(x',y',z',u')=0)\subset\A^4_{(x',y',z',u')}/\frac{1}{4}(3,2,1,1).\] We choose $(y_1,...,y_4)=(x',z',u',y')$
	with $\delta_4=4$. Then ($\Xi'$) holds. Now the origin of $U_x$ is a $cAx/4$ point. After a suitable
	change of coordinates we may assume that ${u'}^2\not\in p(z',u')$. Then the $w$-morphism over this point can be given by
	a weighted blow-up with the weight $v_F(y_1,...,y_4)=\frac{1}{4}(3,5,1,2)$. One can see that ($\Theta_2$) and ($\Xi'_-$) hold.
	Hence there exists a divisorial contraction $Y_1\rightarrow X$ such that $Y\uan{X}Y_1$. One also has $v_F(x,y,z,u)=(3,2,2,1)$,
	so $Y_1\rightarrow X$ is of type E3.\par
	Assume that $Y\rightarrow X$ is of type E16. Consider the chart
	\[U_t=({x'}^2+{y'}^3+p(z',u')+g'(y',z',u',t')=q(y',z',u')+t'=0)\subset\A^5_{(x',y',z',u',t')}/\frac{1}{4}(3,2,1,1,3).\]
	We choose $(y_1,...,y_5)=(y',z',u',x',t')$ with $\delta_4=4$ and $\delta_5=1$ or $2$. Then ($\Xi$) holds.
	Now the origin of $U_t$ is a $cAx/4$ point. After a suitable change of coordinates we may assume that ${u'}^2\not\in p(z',u')$.
	Then the weight $v_F(y_1,...,y_5)=\frac{1}{4}(2,5,1,3,3)$ defines a $w$-morphism over $U_t$. One can see that ($\Theta_2$) holds.
	Hence there exists a divisorial contraction $Y_1\rightarrow X$ such that $Y\ua{X}Y_1$. One can
	compute that $v_F(x,y,z,u)=(3,2,2,1)$, so again $Y_1\rightarrow X$ is of type E3.\par
	Assume that $Y\rightarrow X$ is of type E17. Consider the chart
	\[ U_y=({x'}^2+{y'}^3+{z'}^3+g'(y',z',u')=0)\subset\A^4_{(x',y',z',u')}/\frac{1}{3}(0,2,1,1).\]
	We can choose $(y_1,...,y_4)=(y',z',u',x')$ with $\delta_4=4$. Then ($\Xi'$) holds.
	The origin of $U_y$ is a $cD/3$ point. Notice that ${y'}^2{u'}^2\in g'(y',z',u')$, so the $w$-morphism over $U_y$ is given by
	weighted blowing-up the weight $v_F(y_1,...,y_4)=\frac{1}{3}(2,4,1,3)$. One can see that ($\Theta_2$) and ($\Xi'_-$) hold.
	One has that $v_F(x,y,z,u)=(3,2,1,1)$, so there exists $Y_1\rightarrow X$ which is of type E3, such that $Y\uan{X}Y_1$.\par
	Finally assume that $Y\rightarrow X$ is of type E18. Consider the chart
	\[U_t=({x'}^2+y'+g'(y',z',u',t')={y'}^2+p(y',z',u')+t=0)\subset\A^5_{(x',y',z',u',t')}/\frac{1}{7}(5,3,2,1,6).\]
	We choose $(y_1,...,y_5)=(y',z',u',x',t')$ with $\delta_4=4$ and $\delta_5=1$. Then ($\Xi$) holds.
	The origin of $U_t$ is a cyclic quotient point. Let $F$ be the exceptional divisor corresponds to the $w$-morphism over this point,
	then \[v_F(y_1,...,y_5)=\frac{1}{7}(10,2,1,5,6)\] (notice that the irreducibility of $y^2+p(y,z,u)$ implies that $p(0,z,u)\neq 0$, so
	$v_F(t)=\frac{6}{7}$). One can see that ($\Theta_1$) holds. Thus there exists a divisorial contraction
	$Y_1\rightarrow X$ which extracts $F$, so that $Y\ua{X}Y_1$. One can compute that $v_F(x,y,z,u)=(5,4,2,1)$, so $Y_1\rightarrow X$ is of
	type E6. 
\end{proof}

Now we study divisorial contractions over $cE$ points with discrepancy greater than one. Those divisorial contractions are given in
Table \ref{taE3}.

\begin{table}
\begin{tabular}{|c|c|c|c|c|}\hline
No. & defining equations  & {weight} & \tc{type}{\scriptsize{$a(X,E)$}} & condition  \\\hline
E19 & \tcc{$x^2+(y+p_2(z,u))^3+$}{$yu^3+g\gq 6(z,u)$} & $(3,3,2,1)$ & \tc{$cE_6$}{$2$} & $z\in p(z,u)$ \\\hline
E20 & $\sep{x^2+yt+g\gq{10}(y,z,u)}{y^2+p_6(z,u)+t}$ & $(5,3,2,2,7)$ & \tc{$cE_7$}{$2$} & $gcd(p_6,g_{10})=1$\\\hline
E21 & $x^2+y^3+u^7+g\gq{14}(z,u)$ & $(7,5,3,2)$ & \tc{$cE_{7,8}$}{$2$} & \tcc{$yz^3$, $z^5$ or}{$z^4u\in g(z,u)$} \\\hline
\end{tabular}
\caption{Divisorial contractions to $cE$ points with large disprepancies} \label{taE3}
\end{table}

\begin{pro}\label{ce3}
	Assume that $Y\rightarrow X$ is a divisorial contraction with discrepancy $a>1$. Then there exists a $w$-morphism $Y_1\rightarrow X$,
	such that $Y\ua{X}Y_1$.
\end{pro}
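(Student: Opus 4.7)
The plan is to mimic the strategy used in Proposition \ref{cd2} and Proposition \ref{cd6}: perform a case-by-case analysis over the three families E19, E20, E21 in Table \ref{taE3}. In each case I will choose the appropriate affine chart on $Y$ containing a non-Gorenstein point $P$, pick a local coordinate system $(y_1,\ldots,y_n)$, verify the conditions ($\Xi$) (or ($\Xi'$)) and one of ($\Theta_u$), ($\Theta_j$), and then invoke Lemma \ref{knef} together with Lemma \ref{kneff} to produce the link $Y \ua{X} Y_1$. The fact that $Y_1 \to X$ is actually a $w$-morphism will be verified at the end of each case by computing $a(F,X)$ directly from the valuation of the relevant local coordinates, as in Proposition \ref{cd6}.

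First I would handle E19. Here the non-Gorenstein point on $Y$ lies in the chart $U_y$ (after a change of coordinate replacing $y$ by $y+p_2(z,u)$ one arrives at a normal form), giving a $cD/3$-type singularity (or a related cyclic quotient situation) whose $w$-morphisms are classified in Section \ref{slk}. I take $(y_1,\ldots,y_4)$ with $\delta_4$ chosen so that a pure monomial in some variable appears in the defining equations and verify condition ($\Xi'$). Because $v_E(u)=1$ while $a(E,X)/a(F,X)>1$, condition ($\Theta_u$) with $u$ one of the low-weight variables will be easy to check.

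Next I would treat E20, which is the codimension-two embedding case, analogous to cases D24, D27, D29 of Proposition \ref{cd6}. I would pass to the chart $U_t \subset Y$, rewrite the defining equations of $Y|_{U_t}$, and take $(y_1,\ldots,y_5)=(y',z',u',x',t')$ (or some permutation). The origin of $U_t$ will be a cyclic quotient point whose unique $w$-morphism is given by an explicit weight $v_F$; the conditions ($\Xi$) and ($\Theta_j$) (for $j$ corresponding to a low-weight variable) will then be verified by direct inspection, just as in the E18 analysis in Proposition \ref{ce2}. Finally for E21, the chart $U_x$ or $U_y$ contains a $cE$- or cyclic-quotient-type point, and I would again select coordinates using the monomials $yz^3$, $z^5$ or $z^4u$ guaranteed by the hypothesis in Table \ref{taE3} to provide the required $\delta_j$ for ($\Xi$) and the strict inequality for ($\Theta_j$).

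The main obstacle will be the last step in each case: showing that the resulting divisorial contraction $Y_1 \to X$ has discrepancy exactly $\tfrac{1}{r_{P}}$, i.e.\ is a genuine $w$-morphism rather than another non-$w$ contraction. In the $cD$ analogues this was handled via Lemma \ref{disef} (using $v_F$ of a suitably chosen coordinate with $v_F(u)=1/r$ and $v_F(u')>0$), and I expect the same mechanism to apply here, but the bookkeeping with the weights $(3,3,2,1)$, $(5,3,2,2,7)$, $(7,5,3,2)$ and the cyclic quotient actions at the relevant centers is delicate. Once this is done for each of E19, E20, E21, Proposition \ref{ce3} follows immediately.
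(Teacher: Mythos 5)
Your plan is essentially the paper's own proof: for each of E19, E20, E21 the paper passes to the chart $U_y$, $U_t$, $U_y$ respectively, verifies ($\Xi'$) or ($\Xi$) together with ($\Theta_4$), ($\Theta_1$), ($\Theta_2$) against the explicit $w$-morphism weight at the non-Gorenstein center, and then confirms by direct discrepancy computation that $Y_1\rightarrow X$ is a $w$-morphism. One small caution: Lemma \ref{disef} only yields the inequality $a(F,X)<a(E,X)$, so for the final step you do need the direct computation of $a(F,X)$ (e.g.\ via Lemma \ref{dcp} or the explicit weights) that you mention, not the Lemma \ref{disef} mechanism alone.
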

\begin{proof}
	Assume first that $Y\rightarrow X$ is of type E19. The chart $U_y\subset Y$ is defined by
	\[ (x^2+(y+p(z,u))^3+u^3+g'(y,z,u)=0)\subset\A^4_{(x,y,z,u)}/\frac{1}{3}(0,1,1,2).\] One can choose $(y_1,...,y_4)=(x,y+p,z,u)$
	with $\delta_4=1$. Then ($\Xi'$) holds. The origin of $U_y$ is a $cD/3$ point. The $w$-morphism over this point
	is given by weighted blowing-up the weight \[ w(y_1,...,y_4)=(3,4,1,2)\mbox{ or }(6,4,1,5).\] One can see that ($\Theta_4$) holds.
	Thus there exists $Y_1\rightarrow X$ such that $Y\ua{X}Y_1$. A direct computation shows that
	$Y_1\rightarrow X$ is a $w$-morphism.\par
	Assume that $Y\rightarrow X$ is of type E20. The chart $U_t\subset Y$ is defined by
	\[ (x^2+y+g'(y,z,u,t)=y^2+p(z,u)+t=0)\subset\A^5_{(x,y,z,u,t)}/\frac{1}{7}(5,3,2,2,6).\]
	We take $(y_1,...,y_5)=(y,z,u,x,t)$ with $\delta_4=4$ and $\delta_5=1$. Then ($\Xi$) holds. The $w$-morphism
	over the origin of $U_t$ is given by weighted blowing-up the weight $w(y_1,...,y_5)=\frac{1}{7}(5,1,1,6,3)$ One can see that 
	($\Theta_1$) holds. Hence there exists a divisorial contraction $Y_1\rightarrow X$ such that $Y\ua{X}Y_1$.
	One can compute that $Y_1\rightarrow X$ is a $w$-morphism.\par
	Finally assume that $Y\rightarrow X$ is of type E21. The chart $U_y\subset Y$ is defined by
	\[ (x^2+y+u^7+g'(y,z,u)=0)\subset\A^4_{(x,y,z,u)}/\frac{1}{5}(2,4,3,2).\] One can choose $(y_1,...,y_4)=(y,z,u,x)$. Then
	($\Xi'$) holds. The $w$-morphism over $U_y$ is given by weighted blowing-up the weight
	$w(y_1,...,y_4)=\frac{1}{5}(2,4,1,1)$. One can see that ($\Theta_2$) holds. Thus there exists a divisorial contraction 
	$Y_1\rightarrow X$ such that $Y\ua{X}Y_1$. One can compute that $Y_1\rightarrow X$ is a $w$-morphism.
\end{proof}

\subsection*{Divisorial contractions to $cE/2$ points}

Finally we need to study divisorial contractions over $cE/2$ point. All such divisorial contractions are listed in Table \ref{taE4}
\begin{table}
\begin{tabular}{|c|c|c|c|}\hline
No. & defining equations  & \tc{$(r;a_i)$}{weight} & \tc{type}{\scriptsize{$a(X,E)$}} \\\hline
E22 & $x^2+y^3+g\gq 3(y,z,u)$ & \tc{$(2;1,0,1,1)$}{$\frac{1}{2}(3,2,3,1)$} & \tc{$cE/2$}{$1/2$} \\\hline
E23 & $x^2+y^3+g\gq 5(y,z,u)$ & \tc{$(2;1,0,1,1)$}{$\frac{1}{2}(5,4,3,1)$} & \tc{$cE/2$}{$1/2$} \\\hline
E24 & $x^2+xp_{\frac{5}{2}}(y,z,u)+y^3+g\gq 6(y,z,u)$ & \tc{$(2;1,0,1,1)$}{$\frac{1}{2}(7,4,3,1)$} & \tc{$cE/2$}{$1/2$} \\\hline
E25 & $x^2+y^3+g\gq 9(y,z,u)$ & \tc{$(2;1,0,1,1)$}{$\frac{1}{2}(9,6,5,1)$} & \tc{$cE/2$}{$1/2$} \\\hline
E26 & $x^2+y^3+z^4+u^8+g\gq8(y,z,u)$ & \tc{$(2;1,0,1,1)$}{$(4,3,2,1)$} & \tc{$cE/2$}{$1$} \\\hline
\end{tabular}
\caption{Divisorial contractions to $cE/2$ points}\label{taE4}
\end{table}

\begin{pro}\label{ce4}
	Let $Y\rightarrow X$ be a divisorial contraction.
	\begin{enumerate}[(1)]
	\item Assume that there are two $w$-morphisms over $X$, then
		\begin{enumerate}[({1}-1)]
		\item If $Y\rightarrow X$ is of type E22 and assume that there exists another $w$-morphism $Y_1\rightarrow X$. Then
			$Y_1\rightarrow X$ is of type E22 or E23, and $Y\ua{X}Y_1$.
		\item If $Y\rightarrow X$ is of type E23, then there are exactly two $w$-morphisms. The other one $Y_1\rightarrow X$ is of type E22.
			Interchange $Y$ and $Y_1$ we can back to Case (1-1).
		\item If $Y\rightarrow X$ is of type E24, then there are exactly two $w$-morphisms. They are both of type E24 and
			are negatively linked to each other.
		\item $Y\rightarrow X$ is not of type E25.
		\end{enumerate}
	\item Assume that $Y\rightarrow X$ is of type E26, then there is a $w$-morphism $Y_1\rightarrow X$ which is of type E22 such that
		$Y\uan{X}Y_1$.
	\end{enumerate}
\end{pro}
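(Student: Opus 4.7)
The plan is to follow the same template used throughout Section \ref{slk}, and in particular in Propositions \ref{ce1}--\ref{ce3}: for each relevant type, exhibit a chart on the weighted blow-up $Y$ that contains the non-Gorenstein center $P=\cen_YF$ of the second $w$-morphism, choose the local coordinates $(y_1,\dots,y_n)$ and indices $\delta_j$ so that $(\Xi)$ or $(\Xi')$ holds (using Remark \ref{knef2}(2) when the situation is four-dimensional), read off the valuation $v_F$ of the unique $w$-morphism over $P$ from the $cD$, $cAx$ or cyclic-quotient classification, verify $(\Theta_j)$ or $(\Theta_u)$, and conclude with Lemma \ref{knef} or Lemma \ref{kneff}. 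The enumeration of $w$-morphisms and the assertion (1-4) that type E25 admits no partner come directly from the references to \cite{h1} cited after Table \ref{taE4}; only the construction of the links requires work.

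For (1-1) and (1-2) I would work on the chart $U_y\subset Y$, which for E22 (weight $\tfrac12(3,2,3,1)$) is a cyclic-quotient or $cD/3$-type point and for E23 (weight $\tfrac12(5,4,3,1)$) is a higher-index cyclic-quotient point. Taking $(y_1,\dots,y_4)=(y,u,z,x)$ with $\delta_4=4$ makes $(\Xi')$ automatic, and the $w$-morphism over $P$ extracts a divisor $F$ with $v_F(y)>v_E(y)$, which is precisely $(\Theta_2)$ after the translation $a_3a'_2>a_2$. An application of Lemma \ref{dcp} then identifies $Y_1\to X$ as type E22 or E23 depending on whether the resulting discrepancy is $\tfrac12$ against the appropriate weight. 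For (1-3) the defining equation of E24 has the $x$-linear form $x(x+p_{5/2})+y^3+g_{\geq 6}$, so the chart $U_x$ is cyclic-quotient and, after the substitution $\sigma=x+\tfrac12 p$ used in Lemma \ref{epx}, the hypothesis of Lemma \ref{yly1} is satisfied (or can be verified by hand for $(\Xi'_-)$ and $(\Theta_j)$); this produces a negative link $Y\uan{X}Y_1$ with $Y_1$ again of type E24 by a discrepancy check, and symmetry gives the opposite link.

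For (2) I would look at the chart $U_y\subset Y$ of the type-E26 weighted blow-up with weight $(4,3,2,1)$; here $U_y$ is a $cE/2$ point whose classification in Table \ref{taE4} forces its unique $w$-morphism to be of type E22 and to extract a divisor $F$ with $v_F(x,y,z,u)=\tfrac12(3,2,3,1)$. Taking $(y_1,\dots,y_4)=(y,z,u,x)$ with $\delta_4=4$ one obtains $(\Xi')$, while $v_F(y)=\tfrac32>1=v_E(y)$ yields $(\Theta_2)$ and in fact $a_3a'_2>a_2$ strictly, so $(\Xi'_-)$ also holds and Lemma \ref{kneff} produces the negative link $Y\uan{X}Y_1$ with $Y_1\to X$ of type E22.

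The main obstacle is the delicate arithmetic of part (1): even after the chart and the coordinate change are fixed, determining which of $v_F(x),v_F(y),v_F(z),v_F(u)$ strictly dominate their $E$-counterparts requires solving the congruences that describe the sub-discrepancy one divisor over each non-Gorenstein point, in analogy with \cite[Proposition 3.1, 3.4]{me} used in Lemma \ref{dmor}. Pinning down from this computation whether $Y_1$ is of type E22 or E23 in case (1-1), and whether the link in case (1-3) is strictly negative (hence the symmetry $Y\uan{X}Y_1\uan{X}Y$), is the step where the argument has to be carried out case by case and is where a mistake would most easily creep in.
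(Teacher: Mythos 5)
Your general template is the right one (it is the template of the whole section), and your treatment of case (1-3) matches the paper: the chart $U_x$ of the type E24 blow-up is a cyclic quotient point of index $7$ with a unique $w$-morphism, and after reordering the coordinates so that $a_3=1$ and $a_2+a_4=a_1$ the hypotheses of Lemma \ref{yly1} hold, giving $Y\uan{X}Y_1$ with $a(F,X)=\tfrac12$. But in the two remaining cases you have placed the computation at the wrong point. For (1-1) the only non-Gorenstein point of the type E22 blow-up is the origin of $U_z$ (the weight is $\tfrac12(3,2,3,1)$, so the index-$3$ chart is $U_z$, not $U_y$), and it is a $cD/3$ point. The paper's key step, which your plan omits, is the discrepancy identity $\tfrac12=a(F,X)=a(F,Y)+\tfrac12 v_F(z')$, which forces $a(F,Y)=v_F(z')=\tfrac13$ and hence identifies $F$ with one of the $w$-morphisms over that $cD/3$ point classified in Table \ref{taD3} (types D13/D14, with $v_F(x',y',z',u'+\lambda z')=\tfrac13(b,c,1,4)$, $(b,c)=(3,2)$ or $(6,5)$); this is what pins $Y_1$ down as type E22 or E23 and dictates the coordinate choice $(y_1,\dots,y_4)=(y',u'+\lambda z',u'+\xi z',x')$ with $\xi\neq\lambda$ chosen so that $u+\xi z$ cuts a Du Val section. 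Working on $U_y$ with $(y,u,z,x)$ as you propose does not engage with the actual non-Gorenstein center and would not produce the link.

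For (2) the chart $U_y$ of the type E26 blow-up is \emph{not} a $cE/2$ point: since $y'$ appears linearly in the transformed equation, the origin of $U_y$ is a cyclic quotient point of type $\tfrac16(1,2,5,1)$, and $F$ is the exceptional divisor of the $w$-morphism over it, with $v_F(y_1,\dots,y_4)=\tfrac16(2,5,1,1)$ and $v_F(x,y,z,u)=\tfrac12(3,2,3,1)$. Your asserted inequality $v_F(y)=\tfrac32>1=v_E(y)$ is numerically wrong (for E26 one has $v_E(y)=3$ and $v_F(y)=1$); the conditions $(\Theta_2)$ and $(\Xi'_-)$ are verified in the local coordinates on $U_y$, not on the ambient functions $x,y,z,u$. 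More generally, your last paragraph concedes that the congruence arithmetic determining $v_F$ is deferred, but that arithmetic \emph{is} the proof in cases (1-1), (1-2) and (2); as written, the proposal is a plan whose concrete instantiations at the two places where it is spelled out point at the wrong chart or the wrong singularity, so it does not yet constitute a proof.
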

\begin{proof}
	The statement about the number of $w$-morphisms follows from \cite[Section 10]{h1}. First assume that there exists two $w$-morphisms
	over $X$ and $Y\rightarrow X$ is of type E22. Let $F=exc(Y_1\rightarrow X)$. The only non-Gorenstein point on $Y$ is the origin of $U_z$,
	which is a $cD/3$ point defined by \[ ({x'}^2+{y'}^3+g'(y',z',u')=0)\subset\A^4_{(x',y',z',u')}/\frac{1}{3}(0,2,1,1).\]
	One can see that \[\frac{1}{2}=a(F,X)=a(F,Y)+\frac{1}{2}v_F(z'),\] hence $a(F,Y)=v_F(z')=\frac{1}{3}$. Thus $F$ corresponds to a 
	$w$-morphism over $U_z$. From Table \ref{taD3} we know that \[v_F(x',y',z',u'+\lambda z')=\frac{1}{3}(b,c,1,4)\]
	for some $\lambda\in\Cc$, where $(b,c)=(3,2)$ or $(6,5)$. Now one can choose $(y_1,...,y_4)=(y',u'+\lambda z',u'+\xi z',x')$ with
	$\delta_4=4$, where $\xi\in\Cc$ is a number so that $u+\xi z$ defines a Du Val section on $X$, and $\xi\neq \lambda$. Thus the 
	($\Xi'$) and ($\Theta_2$) hold and $Y\ua{X}Y_1$.\par
	Now assume that $Y\rightarrow X$ is of type E24. The chart $U_x\subset Y$ is defined by
	\[ (x'+p(y',z',u')+{y'}^3+g'(x',y',z',u')=0)\subset\A^4_{(x',y',z',u')}/\frac{1}{7}(5,4,3,1).\]
	The origin is a cyclic quotient point and there is only one $w$-morphism over this point. Let $F$ be the exceptional divisor corresponds
	to this $w$-morphism. By Lemma \ref{yly1} we know that there exists a divisorial contraction $Y_1\rightarrow X$ which extracts $F$,
	such that $Y\uan{X}Y_1$. One can compute that $a(F,X)=\frac{1}{2}$. Hence $Y_1\rightarrow X$ is also a $w$-morphism.\par 
	Finally assume that $Y\rightarrow X$ is of type E26. Consider the chart $U_y\subset Y$ which is defined by
	\[ ({x'}^2+y'+{z'}^4+{u'}^8+g'(y',z',u')=0)\subset\A^4_{(x',y',z',u')}/\frac{1}{6}(1,2,5,1).\]
	One can take $(y_1,...,y_4)=(y',z',u',x')$ with $\delta_4=4$. One can see that ($\Xi'$) holds. Now let $F$ 
	be the exceptional divisor corresponds to the $w$-morphism over $U_t$, then $v_F(y_1,...,y_4)=\frac{1}{6}(2,5,1,1)$. Hence
	($\Theta_2$) and ($\Xi'_-$) hold. Thus there exists a divisorial contraction $Y_1\rightarrow X$ which extracts $F$, so that
	$Y\uan{X}Y_1$. One can compute that $v_F(x,y,z,u)=\frac{1}{2}(3,2,3,1)$, so $Y_1\rightarrow X$ is of type E22. 
\end{proof}

\section{Estimating depths}\label{sdep}

We want to understand the change of singularities after running the minimal model program. The final result is the following proposition: 

\begin{pro}\label{ied}$ $
	\begin{enumerate}[(1)]
	\item Assume that $Y\rightarrow X$ is a divisorial contraction between terminal and $\Q$-factorial threefolds.
		\begin{enumerate}[({1}-1)]
		\item If $Y\rightarrow X$ is a divisorial contraction to a point, then
			\[gdep(X)\leq gdep(Y)+1\mbox{ and }dep(X)\leq dep(Y)+1.\]
			If $Y\rightarrow X$ is a divisorial contraction to a curve, then
			\[gdep(X)\leq gdep(Y)\mbox{ and }dep(X)\leq dep(Y).\]
		\item $\gd(X)\geq \gd(Y)$ and the inequality is strict if the non-isomorphic locus on $X$ contains a Gorenstein singular point.
		\end{enumerate}			
	 \item Assume that $X\dashrightarrow X'$ is a flip between terminal and $\Q$-factorial threefolds. 
	 	\begin{enumerate}[({2}-1)]
	 	\item \[ gdep(X)>gdep(X')\mbox{ and }dep(X)>dep(X').\]
	 	\item $\gd(X)\leq\gd(X')$ and the inequality is strict if the non-isomorphic locus on $X'$ contains a Gorenstein singular point.
	 	\end{enumerate}
	\end{enumerate}
\end{pro}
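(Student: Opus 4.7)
The plan is to split the proposition into its four sub-claims and reduce each to three elementary operations whose effects on $dep$ and $gdep$ are controlled: $w$-morphisms (for which $gdep$ is governed by the definition and by the notion of strict $w$-morphism, and $dep$ by Lemma \ref{chfp}), terminal three-dimensional flops (which preserve analytic isomorphism types of singular points and hence both $dep$ and $gdep$), and flips (which decrease $dep$ strictly by Lemma \ref{chfp}(3)). The two reduction tools are the Chen-Hacon factorization of Theorem \ref{chf}, which handles flips and divisorial contractions to curves, and the linking diagrams of Proposition \ref{dpf}, which handle divisorial contractions to points.

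For part (1-1), the curve case follows directly: apply Theorem \ref{chf} and accumulate the $dep$ and $gdep$ changes along the resulting diagram of $w$-morphism, flips and flops, and divisorial contractions. For the point case, first suppose $Y\to X$ is itself a $w$-morphism; then $gdep(X)\leq gdep(Y)+1$ by appending $Y\to X$ to any feasible-resolution sequence of $Y$, and $dep(X)\leq dep(Y)+1$ is Lemma \ref{chfp}(1). For a non-$w$-morphism $Y\to X$, I would use Proposition \ref{dpf} to link $Y$ to a $w$-morphism $Y'\to X$ through a finite chain of $\ua{X}$ and $\ual{X}$ relations. Each linking diagram has top row consisting of a strict $w$-morphism followed by flips and flops, with a divisorial contraction down to the neighbouring model, so the bound propagates step by step along the chain by induction on chain length, combining the previously treated $w$-morphism case with the elementary bounds on flips and flops.

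For part (2-1), the $dep$ inequality is immediate from Lemma \ref{chfp}(3). For $gdep$, apply Theorem \ref{chf} to factorize the flip as $Y_1\to X$ a $w$-morphism, flips and flops $Y_1\dashrightarrow\cdots\dashrightarrow Y_k$, and $Y_k\to X'$ a divisorial contraction. The two endpoints contribute at most $+1$ each to the differences $gdep(X)-gdep(Y_1)$ and $gdep(X')-gdep(Y_k)$, flops leave $gdep$ unchanged, and flips strictly decrease $gdep$; this last assertion I would prove by induction on $gdep(X)$, invoking Theorem \ref{chf} again on each intermediate flip. Strictness in $gdep(X)>gdep(X')$ is then forced because the factorization of a genuine flip must contain at least one genuine flip among the middle maps.

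Parts (1-2) and (2-2) follow from the identity $\gd=gdep-dep$ combined with the bounds established above. The main obstacle will be the strict inequality when the non-isomorphic locus contains a Gorenstein singular point: the argument must trace that Gorenstein point through the Chen-Hacon factorization or the linking chain and show that its presence in the exceptional locus forces an additional $w$-morphism in the feasible-resolution sequence that is not needed to reach a Gorenstein model, contributing exactly one extra unit to $\gd$. I expect this bookkeeping, carried out via the classification tables and the explicit linking constructions of Section \ref{slk}, to be the most delicate part of the proof, since one must check that no elementary operation creates or destroys Gorenstein points in a way that would spoil the count.
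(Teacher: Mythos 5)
Your overall architecture (reduce everything to $w$-morphisms via the Chen--Hacon factorization and the linking diagrams of Proposition \ref{dpf}, and induct on generalized depth) matches the paper's, and your treatment of (1-1) and the $gdep$ half of (2-1) is essentially workable once organized as a single simultaneous induction on $gdep$ over all four types of maps. But there is a genuine gap in parts (1-2) and (2-2): they do \emph{not} follow from ``the identity $\gd=gdep-dep$ combined with the bounds established above.'' The inequality $\gd(X)\geq\gd(Y)$ for a divisorial contraction reads $gdep(X)-gdep(Y)\geq dep(X)-dep(Y)$, and the one-sided bounds $gdep(X)\leq gdep(Y)+1$, $dep(X)\leq dep(Y)+1$ say nothing about this comparison; likewise $\gd(X)\leq\gd(X')$ for a flip requires $gdep(X)-gdep(X')\leq dep(X)-dep(X')$, which is not implied by both differences being positive. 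The missing ingredient is the statement the paper calls $(\Pi)$: a \emph{strict} $w$-morphism over a non-Gorenstein point, i.e.\ one with $gdep(Y)=gdep(X)-1$, also satisfies $dep(Y)=dep(X)-1$ and hence preserves $\gd$ exactly. This equality is what allows $\gd(X)=\gd(Y_1)\leq\cdots\leq\gd(X')$ to propagate through the factorization diagrams (Lemma \ref{laast}) and along the linking chains (Lemma \ref{astn1}); it is proved by a separate nested induction on the Du Val type of the general elephant (Lemmas \ref{depy} and \ref{pitn}, using that $w$-morphisms improve the general elephant), and nothing in your outline supplies it. You have also misplaced the delicacy: the Gorenstein strictness you flag as the hard part is the easy computation ($dep(P\in X)=0$ forces $dep(Y)\geq dep(X)$, which yields the extra unit of $\gd$ directly), whereas the non-Gorenstein equality is the real work.

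A second, smaller gap: your argument for $gdep(X)>gdep(X')$ needs the first map $Y_1\to X$ of the Chen--Hacon diagram to be a \emph{strict} $w$-morphism; an arbitrary $w$-morphism only satisfies $gdep(Y_1)\geq gdep(X)-1$, and if it is not strict the accounting yields merely $gdep(X')\leq gdep(X)+1$. Remark \ref{chfr}(2) lets you choose the $w$-morphism freely only when the relevant point is not of type $cA/r$ or $cAx/r$, so for those types one must first prove that \emph{every} $w$-morphism is strict (Corollary \ref{cawmo}), which itself invokes the linking results and the induction hypothesis at lower depth. Finally, the factorization of a flip need not contain a flip among the middle maps: the alternative, by the cited Remark 3.4 of Chen--Hacon, is that $Y_k\to X'$ is a divisorial contraction to a curve, and the strict decrease of $gdep$ then comes from that last map rather than from an intermediate flip.
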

\begin{cor}\label{feam}
	Assume that \[ X_0\dashrightarrow X_1\dashrightarrow...\dashrightarrow X_k\] is a process of the minimal model program.
	Then \begin{enumerate}[(1)]
	\item $\rho(X_0/X_k)\geq gdep(X_k)-gdep(X_0)$ and the equality holds if and only if $X_i\dashrightarrow X_{i+1}$ is a strict $w$-morphism
		for all $i$.
	\item $\gd(X_k)\geq \gd(X_0)$.
	\end{enumerate}
	In particular, if $X$ is a terminal $\Q$-factorial threefold and $W\rightarrow X$ is a resolution of singularities. Then
	$\rho(W/X)\geq gdep(X)$ and the equality holds if and only if $W$ is a feasible resolution of $X$.
\end{cor}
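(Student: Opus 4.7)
The plan is to apply Proposition \ref{ied} telescopically along the MMP sequence. Decompose $X_0 \dashrightarrow \cdots \dashrightarrow X_k$ into $f$ flips, $d_p$ divisorial contractions to points, and $d_c$ divisorial contractions to curves; flips preserve the Picard number while each divisorial contraction drops it by one, so $\rho(X_0/X_k) = d_p + d_c$. Proposition \ref{ied} controls the per-step change in $gdep$: the jump $gdep(X_{i+1}) - gdep(X_i)$ is at most $+1$ under a divisorial contraction to a point (by (1-1)), at most $0$ under a divisorial contraction to a curve (by (1-1)), and at most $-1$ under a flip (by (2-1)). Summing these per-step inequalities yields
\[ gdep(X_k) - gdep(X_0) \;\leq\; d_p - f \;\leq\; d_p + d_c \;=\; \rho(X_0/X_k), \]
which is the inequality in (1). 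Part (2) is the same telescoping applied to $\gd$, which Proposition \ref{ied} (1-2) and (2-2) show is non-decreasing at every MMP step.

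For the equality clause in (1), the second inequality above forces $f = d_c = 0$, so every step is a divisorial contraction to a point, and the first inequality then forces $gdep(X_{i+1}) = gdep(X_i) + 1$ at every step. To conclude these steps are strict $w$-morphisms one still needs the $w$-morphism property, which I would extract from the finer analysis underlying Proposition \ref{ied} (1-1): a non-$w$-morphism divisorial contraction to a point can, by the link construction of Proposition \ref{dpf}, be factored through a genuine $w$-morphism at an additional cost in $gdep$, so only $w$-morphisms can realise the maximal $+1$ jump. The converse direction is immediate from the definition of strict $w$-morphism, since such a chain contributes $+1$ to both $gdep$ and $\rho$ at each step.

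For the final assertion, given a resolution $W \to X$ with $X$ terminal and $\Q$-factorial, run a $K_W$-MMP relative to $X$ to obtain $W = X_0 \dashrightarrow \cdots \dashrightarrow X_k = X$. Since $W$ is smooth, $gdep(W) = 0$, and (1) gives $\rho(W/X) \geq gdep(X)$. If equality holds, the equality clause of (1) forces every step to be a strict $w$-morphism, so $W = X_0 \to X_1 \to \cdots \to X_k = X$ is a chain of $w$-morphisms from a smooth variety, exhibiting $W$ as a feasible resolution of $X$. Conversely, if $W$ is a feasible resolution then by definition it is realised as a chain of exactly $gdep(X)$ $w$-morphisms over $X$, giving $\rho(W/X) = gdep(X)$. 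The main point requiring care is the equality characterisation in (1), which rests on the refined link-analysis of Section \ref{slk} rather than the bare inequality of Proposition \ref{ied}.
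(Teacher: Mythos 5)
Your proposal is correct and follows essentially the same route as the paper: telescoping the per-step inequalities of Proposition \ref{ied} against the count of divisorial contractions (the paper writes $\rho(X_0/X_k)=k-m$ with $m$ the number of flips, which is your $d_p+d_c$), and then running a $K_W$-MMP over $X$ with $gdep(W)=0$ for the final assertion. The one point you flag as delicate --- that a maximal $+1$ jump in $gdep$ forces the step to be a strict $w$-morphism --- is asserted with the same brevity in the paper's own proof, so your extra caution there is reasonable but does not constitute a different argument.
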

\begin{proof}
	The statement (2) easily follows from inequalities in Proposition \ref{ied}. Assume that the sequence
	contains $m$ flips, then $\rho(X_0/X_k)=k-m$. On the other hand, we know that
	\[ gdep(X_{i+1})\leq\left\lbrace \begin{array}{ll} gdep(X_i)-1 & \mbox{if $X_i\dashrightarrow X_{i+1}$ is a flip} \\
		gdep(X_i)+1 & \mbox{otherwise}\end{array}\right..\]
	It follows that $gdep(X_k)\leq gdep(X_0)+k-2m$, hence $gdep(X_k)-gdep(X_0)\leq \rho(X_0/X_k)$.
	Now $gdep(X_k)-gdep(X_0)=\rho(X_0/X_k)$ if and only if $m=0$ and $gdep(X_{i+1})=gdep(X_i)+1$ for all $i$. Which is equivalent to
	that $X_i\dashrightarrow X_{i+1}$ is a strict $w$-morphism for all $i$.\par
	Now assume that $X$ is a terminal $\Q$-factorial threefold and $W\rightarrow X$ is a resolution of singularities.
	We can run $K_W$-MMP over $X$ and the minimal model is $X$ itself. Since $gdep(W)=0$, one has that
	$\rho(W/X)\geq gdep(X)$ and the equality holds if and only if $W$ is a feasible resolution of $X$.
\end{proof}

The inequalities for the depth part is exactly Lemma \ref{chfp}. We only need to prove the inequalities for the generalized depth and
the Gorenstein depth. 

\begin{cov}
	Let $\Ss$ be a set consists of birational maps between $\Q$-factorial terminal threefolds.
	We say that $\aast_{\Ss}$ holds if for all $Z\dashrightarrow V$ inside $\Ss$ one has that
	\begin{enumerate}[(1)]
	\item If $Z\rightarrow V$ is a divisorial contraction to a point, then
		\[ \gd(V)\geq \gd(Z)\geq \gd(V)-(dep(Z)-dep(V)+1).\]	
	\item If $Z\rightarrow V$ is a divisorial contraction to a smooth curve, then
		\[ \gd(V)\geq \gd(Z)\geq \gd(V)-(dep(Z)-dep(V)).\]
	\item If $Z\dashrightarrow V$ is a flip, then
		\[ \gd(V)\geq \gd(Z)\geq \gd(V)-(dep(Z)-dep(V)-1).\]
	\item If $Z\dashrightarrow V$ is a flop, then $\gd(V)=\gd(Z)$.
	\end{enumerate}
	Moreover, if there exists a Gorenstein singular point $P\in V$ such that $P$ is not contained in the isomorphic locus of
	$Z\dashrightarrow V$, then $\gd(V)>\gd(Z)$ unless $V\dashrightarrow Z$ is a flop.\par
	We say that $\aast^{(1)}_{\Ss}$ holds if statement (1) is true, but statement (2), (3) is unknown.\par
	If $V\dashrightarrow Z$ is a flip or a divisorial contraction. We denote the condition $\aast_{V\dashrightarrow Z}=\aast_{\Ss}$
	where $\Ss$ is the set contains only one element $V\dashrightarrow Z$.
\end{cov}
It is easy to see that if $Y\rightarrow X$ is a divisorial contraction, then $\aast_{Y\rightarrow X}$ holds if and only if the inequalities in
Proposition \ref{ied} (1) hold. Likewise, if $X\dashrightarrow X'$ is a flip, then $\aast_{X\dashrightarrow X'}$ holds if and only if
the inequalities in Proposition \ref{ied} (2) hold.

\begin{rk}
	If $Z\dashrightarrow V$ is a flop, then the singularities on $Z$ and $V$ are the same by \cite[Theorem 2.4]{ko2}. Hence the statement (4)
	is always true.
\end{rk}

\begin{cov}
	Given $n\in\Z_{\geq0}$. We denote
	\[ \Ss_n=\se{\phi:Z\dashrightarrow V}{\tcc{$\phi$ is a flip, a flop or a divisorial contraction between} 
		{between $\Q$-factorizal terminal threefolds, $gdep(Z)\leq n$}}.\]  
\end{cov}

\begin{lem}\label{pgd}
	Assume that $Y\rightarrow X$ and $Y_1\rightarrow X$ are two divisorial contractions between terminal threefolds, such that
	$Y\ua{X}Y_1$. If $\aast_{\Ss_{gdep(Y)-1}}$ holds, then $gdep(Y_1)\leq gdep(Y)$. Moreover, $gdep(Y_1)=gdep(Y)$ if and only if $Y_1\ua{X}Y$
\end{lem}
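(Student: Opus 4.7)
The plan is to trace the generalized depth through a linking diagram
\[
Z_1 \dashrightarrow Z_2 \dashrightarrow \cdots \dashrightarrow Z_k
\]
witnessing $Y \ua{X} Y_1$, where $Z_1 \to Y$ is a strict $w$-morphism over a non-Gorenstein point and $Z_k \to Y_1$ is a divisorial contraction. Since $Z_1 \to Y$ is strict one has $gdep(Z_1) = gdep(Y) - 1$; an easy induction on $i$ then yields $gdep(Z_i) \leq gdep(Y) - 1$, so every arrow in the diagram (including $Z_k \to Y_1$) lies in $\Ss_{gdep(Y)-1}$ and the assumed $\aast_{\Ss_{gdep(Y)-1}}$ applies. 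From that hypothesis flops preserve $gdep$ while flips strictly decrease it, giving $gdep(Z_k) \leq gdep(Y) - 1$. A final application of $\aast$ to $Z_k \to Y_1$ gives $gdep(Y_1) \leq gdep(Z_k) + 1 \leq gdep(Y)$ when the contraction is to a point, and $gdep(Y_1) \leq gdep(Z_k) \leq gdep(Y) - 1$ when it is to a curve. Either way the main inequality $gdep(Y_1) \leq gdep(Y)$ follows.

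For the ``moreover'' part, the direction $Y_1 \ua{X} Y \Rightarrow gdep(Y_1) = gdep(Y)$ is immediate: apply the first assertion after swapping the roles of $Y$ and $Y_1$ to get the reverse inequality. For the converse I would assume $gdep(Y_1) = gdep(Y)$ and exploit the rigidity this imposes. The inequality chain above must be an equality chain, which forces (i) $Z_k \to Y_1$ to contract a divisor to a point and not to a curve, (ii) each $Z_i \dashrightarrow Z_{i+1}$ to be a flop rather than a flip (a flip would produce a strict drop in $gdep$), and (iii) $gdep(Y_1) = gdep(Z_k) + 1$. Granting that (iii) further forces $Z_k \to Y_1$ to itself be a strict $w$-morphism over a non-Gorenstein point, the reversed chain $Z_k \dashrightarrow Z_{k-1} \dashrightarrow \cdots \dashrightarrow Z_1$ is still a sequence of flops (flops are symmetric), and taking $Z_k \to Y_1$ as the leading strict $w$-morphism and $Z_1 \to Y$ as the trailing divisorial contraction produces the diagram witnessing $Y_1 \ua{X} Y$.

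The main obstacle I anticipate is precisely this last implication: that $gdep(Y_1) = gdep(Z_k) + 1$ forces $Z_k \to Y_1$ to be a strict $w$-morphism over a non-Gorenstein point. I would unpack the equality case of the first clause of $\aast$: tightness in $\gd(Z_k) \geq \gd(Y_1) - (dep(Z_k) - dep(Y_1) + 1)$ gives $\gd(Y_1) - \gd(Z_k) = dep(Z_k) - dep(Y_1) + 1$. Combined with $dep(Y_1) \leq dep(Z_k) + 1$ from Lemma \ref{chfp} and $\gd(Y_1) \geq \gd(Z_k)$ from $\aast$ itself, the expected solution is $dep(Y_1) = dep(Z_k) + 1$ with $\gd(Y_1) = \gd(Z_k)$; the final strict-inequality clause of $\aast$ rules out a Gorenstein singular point in the non-isomorphic locus of $Y_1$, so the center $\cen_{Y_1} F$ of the contracted divisor $F$ is non-Gorenstein. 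A unit jump in $dep$ across a contraction to a non-Gorenstein point identifies $Z_k \to Y_1$ as a $w$-morphism, and the unit jump in $gdep$ promotes it to a strict one, closing the argument.
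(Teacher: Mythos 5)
Your treatment of the main inequality is exactly the paper's argument: $gdep(Z_1)=gdep(Y)-1$ because $Z_1\rightarrow Y$ is strict, the whole chain then lies in $\Ss_{gdep(Y)-1}$, flops preserve and flips strictly decrease the generalized depth, and the final contraction raises it by at most one. That part is fine, as is the easy direction of the ``moreover'' (apply the inequality to a diagram witnessing $Y_1\ua{X}Y$, noting that $\Ss_{gdep(Y_1)-1}\subseteq\Ss_{gdep(Y)-1}$ once the first part is known). You have also correctly isolated the one genuinely delicate point, which the paper's own proof passes over in a single sentence: that equality at the last arrow, $gdep(Y_1)=gdep(Z_k)+1$, must force $Z_k\rightarrow Y_1$ to be a strict $w$-morphism \emph{over a non-Gorenstein point}, so that the reversed chain of flops is a legitimate witness for $Y_1\ua{X}Y$.

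Your proposed justification of that step does not go through as written. First, the identity $\gd(Y_1)-\gd(Z_k)=dep(Z_k)-dep(Y_1)+1$ is nothing but a restatement of $gdep(Y_1)=gdep(Z_k)+1$; combined with $\gd(Y_1)\geq\gd(Z_k)$ and $dep(Y_1)\leq dep(Z_k)+1$ it only says that the common value $t$ of the two sides is nonnegative, and $t=1$ (i.e.\ $\gd(Y_1)=\gd(Z_k)+1$ with $dep(Y_1)=dep(Z_k)$) is just as consistent as your ``expected solution'' $t=0$; nothing forces $dep(Y_1)=dep(Z_k)+1$. Second, even granting $\gd(Y_1)=\gd(Z_k)$, the strictness clause of $\aast$ only excludes a Gorenstein \emph{singular} point from the non-isomorphic locus; it does not exclude a smooth center, which has to be ruled out by a separate (easy) observation that contracting a divisor to a smooth point cannot raise $gdep$. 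Third, and most seriously, the implication ``$dep$ drops by exactly one across a divisorial contraction to a non-Gorenstein point, hence the contraction is a $w$-morphism'' is established nowhere in the paper and is not obviously true; it is essentially the content of the claim rather than a lemma you may invoke. To be fair, the paper's proof itself merely asserts that tightness of all the inequalities is equivalent to ``$Z_k\rightarrow Y_1$ is a strict $w$-morphism and all intermediate maps are flops,'' so you have put your finger on a real thin spot in the exposition --- but your patch does not close it.
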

\begin{proof}
	We have a diagram
	\[ \vc{\xymatrix{ Z_1\ar@{-->}[r]\ar[d] & ... \ar@{-->}[r] & Z_k \ar[d] \\ Y \ar[rd] & & Y_1\ar[ld] \\ & X & }}\]
	such that $Z_1\rightarrow Y$ is a strict $w$-morphism and $Z_i\dashrightarrow Z_{i+1}$ is a flip or a flop for all
	$1\leq i\leq k-1$. Since $gdep(Z_1)=gdep(Y)-1$ and $\aast_{\Ss_{gdep(Y)-1}}$ holds, we know that $gdep(Z_2)\leq gdep(Y)-1$. Repeat this
	argument $k-2$ times one can say that $gdep(Z_k)\leq gdep(Y)-1$. Again since $\aast_{\Ss_{gdep(Y)-1}}$ holds we know that
	$gdep(Y_1)\leq gdep(Z_k)+1=gdep(Y)$.\par
	Now $gdep(Y_1)=gdep(Y)$ if and only if all the inequalities above are equalities. It is equivalent to $Z_k\rightarrow Y_1$ is a strict
	$w$-morphism and $Z_i\dashrightarrow Z_{i+1}$ is a flop for all $i=1$, ..., $k-1$, or $k=1$. In other word, we also have $Y_1\ua{X}Y$.
\end{proof}
\begin{cor}\label{yua}
	Assume that $Y\rightarrow X$ is a a strict $w$-morphism over $P\in X$ and $Y_1\rightarrow X$ is another divisorial contraction over $P$.
	If $\aast_{\Ss_{gdep(Y)}}$ holds, then there exists divisorial contractions $Y_1\rightarrow X$, ..., $Y_k\rightarrow X$ such that
	\[Y_1\ua{X}...\ua{X}Y_k\ua{X}Y.\]
\end{cor}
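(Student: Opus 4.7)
The plan is to combine Proposition \ref{dpf} with Lemma \ref{pgd}, exploiting the strictness of $Y\rightarrow X$ to force equality of generalized depths along a connecting chain and thereby reverse one half of the zigzag supplied by Proposition \ref{dpf}.

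Proposition \ref{dpf} applied to $Y$ and $Y_1$ produces a zigzag
\[ Y=A_1\ua{X}A_2\ua{X}\cdots\ua{X}A_j=B_l\ual{X}B_{l-1}\ual{X}\cdots\ual{X}B_1=Y_1, \]
where all intermediate objects are divisorial contractions over $P$. The right half already reads $Y_1=B_1\ua{X}\cdots\ua{X}B_l=A_j$, so only the left half needs to be reversed; the target is to establish $A_j\ua{X}A_{j-1}\ua{X}\cdots\ua{X}A_1=Y$ and then concatenate.

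I will show by induction on $i$ that $gdep(A_i)=gdep(Y)$ for every $i$ and that $A_{i+1}\ua{X}A_i$ for $1\leq i<j$. The base case $A_1=Y$ is immediate. For the inductive step, assume $gdep(A_i)=gdep(Y)$. Since $A_i\ua{X}A_{i+1}$ and the hypothesis $\aast_{\Ss_{gdep(Y)-1}}$ of Lemma \ref{pgd} is subsumed by our standing assumption, the lemma yields $gdep(A_{i+1})\leq gdep(A_i)=gdep(Y)$. Thus the divisorial contraction $A_{i+1}\rightarrow X$ lies in $\Ss_{gdep(Y)}$, so statement (1) of $\aast$ applies and gives the opposite inequality $gdep(A_{i+1})\geq gdep(X)-1=gdep(Y)$, the last equality being the definition of a strict $w$-morphism. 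The forced equality $gdep(A_{i+1})=gdep(A_i)$ now triggers the ``moreover'' clause of Lemma \ref{pgd}, which delivers $A_{i+1}\ua{X}A_i$, completing the induction.

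Splicing the reversed left chain onto the right half yields
\[ Y_1=B_1\ua{X}\cdots\ua{X}B_l=A_j\ua{X}A_{j-1}\ua{X}\cdots\ua{X}A_1=Y, \]
which is the asserted sequence. There is essentially no obstacle to this argument beyond bookkeeping; the only delicate point worth flagging is the calibration of the hypothesis, since Lemma \ref{pgd} consumes $\aast$ at level $gdep(Y)-1$ while the lower bound $gdep(A_{i+1})\geq gdep(Y)$ requires $\aast$ at level $gdep(Y)$, which is precisely why the corollary assumes $\aast_{\Ss_{gdep(Y)}}$ rather than the weaker $\aast_{\Ss_{gdep(Y)-1}}$.
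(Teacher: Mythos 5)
Your proof is correct and follows essentially the same route as the paper: invoke Proposition \ref{dpf} to get the zigzag, then reverse the half emanating from the strict $w$-morphism $Y$ by squeezing $gdep$ between the upper bound from Lemma \ref{pgd} and the lower bound $gdep\geq gdep(X)-1=gdep(Y)$ supplied by $\aast_{\Ss_{gdep(Y)}}$, so that the ``moreover'' clause of Lemma \ref{pgd} applies at each step. Your remark on the calibration of the hypothesis (level $gdep(Y)$ versus $gdep(Y)-1$) is exactly the point the paper's proof relies on implicitly.
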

\begin{proof}
	By Proposition \ref{dpf} we know that there exists $Y_1$, ..., $Y_l$, $Y'_1$, ..., $Y'_{l'}$ such that 
	\[	Y_1\ua{X}...\ua{X}Y_l=Y'_{l'}\ual{X}...\ual{X}Y'_1\ual{X}Y.\]
	One can apply Lemma \ref{pgd} to the sequence $Y\ua{X}Y'_1\ua{X}...\ua{X}Y'_{l'}$ and conclude that $gdep(Y'_i)\leq gdep(Y)$ for all $i$.
	Since $Y'_i\rightarrow X\in\Ss_{gdep(Y)}$ for all $i=1$, ..., $l'$, one has $gdep(Y'_i)\geq gdep(X)-1=gdep(Y)$.
	Thus $gdep(Y'_i)=gdep(Y)$ and Lemma \ref{pgd} says that one has \[Y'_{l'}\ua{X}...\ua{X}Y'_1\ua{X}Y.\]
	Now we can take $k=l+l'-1$ and let $Y_i=Y'_{l'-i+l}$ for $l<i\leq k$.
\end{proof}
\begin{cor}\label{cawmo}
	Assume that $\aast_{\Ss_{n-2}}$ holds. Assume that $P\in X$ is a $cA/r$ point or a $cAx/r$ point such that $gdep(X)=n$.
	Then every $w$-morphism over $P$ is a strict $w$-morphism. In particular, one can always assume that the morphism $Y_1\rightarrow X$ in
	Theorem \ref{chf} is a strict $w$-morphism.
\end{cor}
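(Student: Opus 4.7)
The plan is to show, setting $n := gdep(X)$, that any $w$-morphism $Y \to X$ over $P$ satisfies $gdep(Y) = n-1$. The inequality $gdep(Y) \geq n-1$ is automatic: appending $Y \to X$ to any feasible resolution of $Y$ near the preimages of $P$ gives a feasible resolution of $P \in X$ one longer, so $gdep(P \in X) \leq 1 + \sum_{Q \mapsto P} gdep(Q \in Y)$, which together with the fact that $Y \to X$ is an isomorphism outside $P$ yields $gdep(X) \leq gdep(Y) + 1$. All the work is in the reverse inequality.

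For the upper bound $gdep(Y) \leq n-1$, I first produce a reference strict $w$-morphism $Y' \to X$ over $P$, namely the first step of any feasible resolution of $X$ realizing $gdep(X) = n$ (which can be taken over $P$ by localizing at the singularity). I then invoke the classification-based linking of Section \ref{slk}. When $P$ is of type $cA/r$, both $Y$ and $Y'$ are type A1 contractions with $a = 1$ (the only type of $w$-morphism over a $cA/r$ point), and iterating the transition $(b,c,1,r) \rightsquigarrow (b-r, c+r, 1, r)$ of Proposition \ref{ca}(2), together with its symmetric variant, links them by a chain
\[
Y' = Y_0 \ua{X} Y_1 \ua{X} \cdots \ua{X} Y_k = Y.
\]
When $P$ is of type $cAx/r$, Proposition \ref{cax} yields at most two $w$-morphisms over $P$, related by $\uan{X}$ (hence by $\ua{X}$) in either direction, giving the same kind of chain.

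With the chain in place, I iterate Lemma \ref{pgd}. Starting from $gdep(Y_0) = gdep(Y') = n-1$, the hypothesis $\aast_{\Ss_{n-2}}$ supplies $\aast_{\Ss_{gdep(Y_i) - 1}}$ at each step (as long as $gdep(Y_i) \leq n-1$, which is maintained inductively), so Lemma \ref{pgd} gives $gdep(Y_{i+1}) \leq gdep(Y_i) \leq n-1$. The induction yields $gdep(Y) \leq n-1$, proving the equality and hence the strictness of $Y \to X$.

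For the \emph{in particular} clause, the $w$-morphism $Y_1 \to X$ produced by Theorem \ref{chf} lies over some non-Gorenstein point $P$ of the exceptional locus. If $P$ is of type $cD$, $cD/r$, $cE$, or $cE/r$, Remark \ref{chfr}(2) allows replacing $Y_1 \to X$ by any $w$-morphism over $P$, so we pick a strict one coming from a feasible resolution; if $P$ is of type $cA/r$ or $cAx/r$, the main part of this corollary guarantees that the given $Y_1 \to X$ is automatically strict. The principal obstacle throughout is the manufacture of the linking chain from $Y$ to $Y'$, which depends entirely on the case-by-case classification worked out in Propositions \ref{ca} and \ref{cax}; everything downstream is a routine induction with Lemma \ref{pgd}.
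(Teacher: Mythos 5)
Your proof is correct and follows essentially the same route as the paper: build a chain of links between the given $w$-morphism and a reference strict one via Propositions \ref{ca} and \ref{cax}, propagate the bound $gdep(Y_i)\leq n-1$ along the chain with Lemma \ref{pgd}, and handle the ``in particular'' clause by splitting into the $cA/r$, $cAx/r$ case and the Remark \ref{chfr}(2) case. The only cosmetic difference is that the paper invokes the two-way links together with the equality criterion in Lemma \ref{pgd}, while you use the one-way chain plus the a priori inequality $gdep(Y)\geq n-1$; these are interchangeable.
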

\begin{proof}
	Proposition \ref{ca} and Proposition \ref{cax} says that if $Y\rightarrow X$ and $Y_1\rightarrow X$ are two different $w$-morphisms
	over $P$, then there exists $Y_2\rightarrow X$, ..., $Y_k\rightarrow X$ such that
	\[Y_1\underset{X}{\Leftrightarrow}Y_2\underset{X}{\Leftrightarrow}...\underset{X}{\Leftrightarrow}Y_k=Y.\]
	We can assume that $Y\rightarrow X$ is a strict $w$-morphism, so $gdep(Y)=n-1$.	Lemma \ref{pgd} implies that $Y_1$ is also a strict
	$w$-morphism. Hence every $w$-morphism over $P$ is a strict $w$-morphism.\par
	Now assume that $X$ is in the diagram in Theorem \ref{chf} and $P$ is a non-Gorenstein point in the exceptional set of $X\rightarrow W$.
	If $P$ is a $cA/r$ or a $cAx/r$ point, then we already know that every $w$-morphism over $P$ is a strict $w$-morphism.  
	Otherwise by Remark \ref{chfr} (2) we know that any $w$-morphism $Y_1\rightarrow X$ over $P$
	induces a diagram in Theorem \ref{chf}. Hence we can choose $Y_1\rightarrow X$ to be a strict $w$-morphism. 
\end{proof}

\begin{cov}
	Let $DV$ be the set of following symbols
	\[ DV=\{A_i,D_j,E_k\}_{i\in\N,j\in\N_{\geq 4},k=6,7,8}.\]
	One can define an ordering on $DV$ by
	\[ A_i<A_{i'}<D_j<D_{j'}<E_k<E_{k'}\mbox{ for all }i<i',j<j',k<k'.\]\par
	Given $\square\in DV$, define
	\[\T_{\square}=\se{\tcc{$X$ is a terminal}{$\Q$-factorial threefold}}
		{\tcc{$GE(P\in X)\leq\square$ for all}{non-Gorenstein point $P\in X$}},\]
	\[ \T_{\square,n}=\se{X\in\T_{\square}}{gdep(X)\leq n} \] and \[\T_n=\bigcup_{\square\in DV}\T_{\square,n}.\]
	Here $GE(P\in X)$ denotes the type of the general elephant near $P$. That is, the type of a general Du Val section $H\in|-K_X|$ near
	an analytic neighborhood of $P\in X$.
\end{cov}
\begin{cov}
	Let $\T$ be a set of terminal threefolds. We say that the condition $(\Pi)_{\T}$ holds if for all
	$X\in \T$ and for all $Y\rightarrow X$ which is a strict $w$-morphism over a non-Gorenstein point of $X$, one has that
	$dep(Y)=dep(X)-1$.
\end{cov}
\begin{rk}\label{rge}$ $
	\begin{enumerate}[(1)]
	\item Assume that $Y\rightarrow X$ is a $w$-morphism over a non-Gorenstein point $P$, then the general elephant of $Y$ over $X$ is better
		than the general elephant of $X$ near $P$. It is because that if $H\in|-K_X|$ near $P$, then $H_Y\in|-K_Y|$ and
		$H_Y\rightarrow H$ is a partial resolution by \cite[Lemma 2.7]{ch}.
	\item One has that $(\Pi)_{\T_{A_1}}$ always holds since if $GE(P\in X)=A_1$ for some non-Gorenstein point $P$, then $P$ is a cyclic
		quotient point of index two (cf. \cite[(6.4)]{re}). In this case there is only one $w$-morphism $Y\rightarrow X$ over $P$ and
		$Y$ is smooth over $X$. Hence \[gdep(P\in X)=dep(P\in X)=1\] and $gdep(Y)=dep(Y)=0$ over $X$. Thus $Y\rightarrow X$ is 
		a strict $w$-morphism and also $dep(Y)=dep(X)-1$.
	\end{enumerate}
\end{rk}

\begin{lem}\label{depy}
	Assume that $Y\rightarrow X$ is a strict $w$-morphism over a non-Gorenstein point $P$. If $\aast_{\Ss_{gdep(Y)}}$ holds
	and $(\Pi)_{\T_{\square}}$ holds for all $\square<GE(P\in X)$, then $dep(Y)=dep(X)-1$.
\end{lem}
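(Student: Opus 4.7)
The lower bound $dep(Y)\geq dep(X)-1$ is immediate from Lemma \ref{chfp}(1); the task is to prove the reverse inequality $dep(Y)\leq dep(X)-1$. I first reduce to the case where $P$ is the only non-Gorenstein point of $X$: since $Y\to X$ is an isomorphism away from $P$, every non-Gorenstein point $Q\neq P$ of $X$ transports unchanged to $Y$ and contributes identically to both sides, so the claim collapses to the purely local identity
\[ \sum_{Q\in Y,\,Q\mapsto P} dep(Q\in Y) = dep(P\in X)-1. \]
In this local set-up Remark \ref{rge}(1) applied to the $w$-morphism $Y\to X$ forces every non-Gorenstein point of $Y$ to have general elephant strictly smaller than $\square:=GE(P\in X)$, so $Y\in\T_{\square'}$ for some $\square'<\square$ and the inductive hypothesis $(\Pi)_{\T_{\square'}}$ is at our disposal.

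Next, I fix a minimal sequence $X_n\to X_{n-1}\to\cdots\to X_0=X$ with $n=dep(X)$. Lemma \ref{chfp}(1) allows $dep$ to drop by at most one per step, so minimality forces $dep(X_i)=n-i$; in particular $dep(X_1)=n-1$, and $X_1\to X$ is automatically a divisorial contraction over the unique non-Gorenstein point $P$. Since $\aast_{\Ss_{gdep(Y)}}$ holds by assumption, Corollary \ref{yua} with $Y_1:=X_1$ produces a chain
\[ X_1=Y_1\ua{X}Y_2\ua{X}\cdots\ua{X}Y_k\ua{X}Y \]
of divisorial contractions over $P$. Because each $Y_i$ is obtained from $X$ by contracting an exceptional divisor to $P$, Remark \ref{rge}(1) again forces every non-Gorenstein point of $Y_i$ to lie over $P$ with general elephant $<\square$, so $Y_i\in\T_{\square_i}$ for some $\square_i<\square$.

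I then track $dep$ along each link. Unpacking the definition, $Y_i\ua{X}Y_{i+1}$ means there is a diagram
\[ Y_i\leftarrow W_{i,0}\dashrightarrow W_{i,1}\dashrightarrow\cdots\dashrightarrow W_{i,l_i}\to Y_{i+1} \]
in which $W_{i,0}\to Y_i$ is a strict $w$-morphism over a non-Gorenstein point of $Y_i$, the middle arrows are flips or flops, and $W_{i,l_i}\to Y_{i+1}$ is a divisorial contraction. Since $Y_i\in\T_{\square_i}$ with $\square_i<\square$, the inductive hypothesis $(\Pi)_{\T_{\square_i}}$ gives $dep(W_{i,0})=dep(Y_i)-1$. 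Flips strictly decrease $dep$ (Lemma \ref{chfp}(3)) and flops preserve singularities and hence $dep$, while the closing divisorial contraction satisfies $dep(Y_{i+1})\leq dep(W_{i,l_i})+1$ by Lemma \ref{chfp}(1). Chaining these bounds yields
\[ dep(Y_{i+1})\leq dep(W_{i,l_i})+1\leq dep(W_{i,0})+1=dep(Y_i), \]
so by iteration $dep(Y)\leq dep(X_1)=dep(X)-1$, and combined with Lemma \ref{chfp}(1) one obtains the desired equality.

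The main obstacle is verifying that the inductive hypothesis $(\Pi)_{\T_{\square'}}$ really applies at \emph{every} intermediate variety $Y_i$ in the chain: this rests on the key observation that the construction underlying Corollary \ref{yua} produces only divisorial contractions over the single point $P$, so that Remark \ref{rge}(1) keeps the general elephants of all non-Gorenstein points of each $Y_i$ strictly below $\square$. A secondary subtlety is the local reduction itself, which uses the additivity of $dep$ over non-Gorenstein points together with the fact that $Y\to X$ is supported entirely over $P$.
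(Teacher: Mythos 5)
Your argument is correct and follows essentially the same route as the paper's proof: both link $Y$ to a depth-minimizing $w$-morphism $Y_1$ over $P$ via Corollary \ref{yua}, use Remark \ref{rge}(1) to place each intermediate $Y_i$ in some $\T_{\square_i}$ with $\square_i<GE(P\in X)$ so that the hypothesis $(\Pi)$ gives $dep(Z_{i,1})=dep(Y_i)-1$ for the strict $w$-morphism opening each link, and then chain the inequalities of Lemma \ref{chfp} across the flips/flops and the closing divisorial contraction to get $dep(Y_{i+1})\leq dep(Y_i)$. The only cosmetic differences are that you argue directly rather than by contradiction and add an explicit (harmless) reduction to the case where $P$ is the only non-Gorenstein point.
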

\begin{proof}
	By the definition we know that $dep(Y)\geq dep(X)-1$. Assume that $dep(Y)>dep(X)-1$. Then there exists $Y_1\rightarrow X$ such that
	$dep(Y_1)=dep(X)-1<dep(Y)$. Corollary \ref{yua} says that there exist divisorial contractions $Y_2\rightarrow X$, ..., $Y_k\rightarrow X$
	such that $Y_1\ua{X}Y_2\ua{X}...\ua{X}Y_k\ua{X}Y$. From Remark \ref{rge} (1) we know that $Y_i\in\T_{\square}$ for some $\square<GE(P\in X)$
	for all $i$, hence if
	\[ \vc{\xymatrix{ Z_{i,1}\ar@{-->}[r]\ar[d] & ... \ar@{-->}[r] & Z_{i,k_i} \ar[d] \\ Y_i \ar[rd] & & Y_{i+1}\ar[ld] \\ & X & }}\]
	is the induced diagram of $Y_i\ua{X}Y_{i+1}$, then $dep(Z_{i,1})=dep(Y_i)-1$. By Lemma \ref{chfp} we know that
	\[ dep(Y_{i+1})\leq dep(Z_{i,k_i})+1\leq dep(Z_{i,1})+1=dep(Y_i)\] for all $i$. Hence $dep(Y_1)\leq dep(Y)$.
	This leads to a contradiction for we assume that $dep(Y)>dep(Y_1)$.
\end{proof}
\begin{lem}\label{pitn}
	Fix an integer $n$. Assume that $\aast_{\Ss_{n-1}}$ holds, then $(\Pi)_{\T_n}$ holds.
\end{lem}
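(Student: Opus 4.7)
The plan is to prove $(\Pi)_{\T_n}$ by induction on the general elephant type $\square\in DV$, in the total order defined on $DV$. Throughout, fix $X\in\T_n$ and a strict $w$-morphism $Y\to X$ over a non-Gorenstein point $P$, and set $\square_0:=GE(P\in X)$. The inductive statement is that for every such pair with $\square_0\leq\square$, one has $dep(Y)=dep(X)-1$.

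One hypothesis of Lemma \ref{depy}, namely $\aast_{\Ss_{gdep(Y)}}$, is free throughout the induction: since $Y\to X$ is a strict $w$-morphism, $gdep(Y)=gdep(X)-1\leq n-1$, so $\Ss_{gdep(Y)}\subseteq\Ss_{n-1}$ and the standing hypothesis $\aast_{\Ss_{n-1}}$ transfers. The induction is therefore aimed only at verifying the other hypothesis, $(\Pi)_{\T_{\square'}}$ for $\square'<\square_0$.

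The base case $\square_0=A_1$ is exactly Remark \ref{rge}(2): then $P$ is a cyclic quotient of index two, the unique $w$-morphism resolves it, and $dep(Y)=dep(X)-1=0$ tautologically. For the inductive step, assume the conclusion for every $\square'<\square_0$ and apply Lemma \ref{depy}. Tracing through its proof, $(\Pi)_{\T_{\square'}}$ is only invoked at the threefolds $Y_i$ appearing in the chain $Y_1\ua{X}\cdots\ua{X}Y_k\ua{X}Y$ supplied by Corollary \ref{yua}. Remark \ref{rge}(1) places each such $Y_i$ in $\T_{\square'}$ for some $\square'<\square_0$, and iterating Lemma \ref{pgd} along the chain (legitimate under $\aast_{\Ss_{n-1}}$) gives $gdep(Y_i)\leq gdep(Y)=n-1$; hence each $Y_i$ sits in $\T_n$, and the inductive hypothesis applies at every link to yield $dep(Z_{i,1})=dep(Y_i)-1$. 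Lemma \ref{depy} then delivers $dep(Y)=dep(X)-1$, closing the induction.

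The main technical obstacle is the gdep bookkeeping for the initial divisorial contraction $Y_1\to X$ produced inside Lemma \ref{depy}, which is chosen only to satisfy $dep(Y_1)=dep(X)-1$ and whose gdep is a priori unconstrained. One must therefore rerun the argument from the proof of Corollary \ref{yua} (Lemma \ref{pgd} applied backwards along the $Y'$-half of the chain, together with $\aast_{\Ss_{n-1}}$ and the dichotomy between flips and flops) to force every intermediate $Y_i$ into $\T_n\cap\T_{\square'}$. Only once this control is secured does the inductive hypothesis, which is really a statement restricted to $\T_n$, legitimately cover every use of $(\Pi)_{\T_{\square'}}$ in Lemma \ref{depy}.
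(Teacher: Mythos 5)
Your proposal is correct and follows essentially the same route as the paper: induction on the general elephant type in the ordering on $DV$, with the base case $A_1$ supplied by Remark \ref{rge}(2) and the inductive step by Lemma \ref{depy}, whose hypothesis $\aast_{\Ss_{gdep(Y)}}$ is inherited from $\aast_{\Ss_{n-1}}$ because $gdep(Y)=gdep(X)-1\leq n-1$. The only difference is that you make explicit the bookkeeping point that Lemma \ref{depy} nominally asks for $(\Pi)_{\T_{\square'}}$ on the unrestricted class while the induction only furnishes $(\Pi)_{\T_{\square',n}}$, and that the gdep control from Lemma \ref{pgd} and Corollary \ref{yua} closes this gap --- a point the paper leaves implicit.
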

\begin{proof}
	We need to show that for all $X\in\T_n$ and for all strict $w$-morphism $Y\rightarrow X$ over a non-Gorenstein point, one has that
	$dep(Y)=dep(X)-1$. By Remark \ref{rge} (2) we know that $(\Pi)_{\T_{A_1}}$ holds, hence $(\Pi)_{\T_{A_m,n}}$ as well as $(\Pi)_{T_{D_4,n}}$
	holds for all $m\in\N$ by Lemma \ref{depy} and by induction on $m$. Then one can prove that $(\Pi)_{T_{D_m,n}}$ as well as
	$(\Pi)_{T_{E_6,n}}$ holds by again applying Lemma \ref{depy} and by induction on $m$. The statement $(\Pi)_{T_{E_7},n}$ and
	$(\Pi)_{T_{E_8},n}$ can be proved by the same way.
\end{proof}

\begin{lem}\label{laast}
	Fix an integer $n$ and assume that $\aast_{\Ss_{n-1}}$ holds. Assume that we have a diagram
	\[\vc{\xymatrix{Y_1\ar[d] \ar@{-->}[r] & \cdots \ar@{-->}[r] & Y_k\ar[d]\\
		X\ar[rd] &  & X'\ar[ld] \\ & W &	 }}\]
	such that $gdep(X)=n$, $Y_1\rightarrow X$ is a strict $w$-morphism, $Y_i\dashrightarrow Y_{i+1}$ is a flip or a flop for
	$i=1$, ..., $k-1$, and $Y_k\rightarrow X'$ is a divisorial contraction. Then 
	\begin{enumerate}[(1)]
	\item $\gd(X)=\gd(Y_1)\leq\gd(X')$ and $gdep(X')\leq gdep(X)$.
	\item $\aast_{Y_i\dashrightarrow Y_{i+1}}$ holds for $i=1$, ..., $k-1$.
	\item $\aast_{Y_k\rightarrow X'}$ holds.
	\end{enumerate}
	Moreover, if the non-isomorphic locus of $X\dashrightarrow X'$ on $X'$ contains a Gorenstein singular point, then $\gd(X)<\gd(X')$.
\end{lem}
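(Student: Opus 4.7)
The plan is to exploit the inductive hypothesis $\aast_{\Ss_{n-1}}$ to establish $\aast$ at every step of the chain, then chain the resulting $\gd$- and $gdep$-inequalities. Since $gdep(X)=n$ we have $X\in\T_n$, and Lemma \ref{pitn} tells us $\aast_{\Ss_{n-1}}$ implies $(\Pi)_{\T_n}$. Applying $(\Pi)_{\T_n}$ to the strict $w$-morphism $Y_1\to X$ (which necessarily sits over a non-Gorenstein point, since it drops $gdep$) gives $dep(Y_1)=dep(X)-1$; combining with the defining identity $gdep(Y_1)=gdep(X)-1=n-1$ of a strict $w$-morphism yields $\gd(Y_1)=\gd(X)$.

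Next I will show by induction on $i$ that $gdep(Y_i)\leq n-1$. The base case $i=1$ is the computation above. For the inductive step, if $gdep(Y_i)\leq n-1$ then $Y_i\dashrightarrow Y_{i+1}$ lies in $\Ss_{n-1}$, so by hypothesis $\aast_{Y_i\dashrightarrow Y_{i+1}}$ holds; flops preserve $gdep$ while flips strictly decrease it, so $gdep(Y_{i+1})\leq gdep(Y_i)\leq n-1$. This establishes (2). At the end of the chain, $Y_k\to X'$ also lies in $\Ss_{n-1}$, which gives (3). Chaining the $\gd$-inequalities supplied by $\aast$ at each step produces
\[\gd(X)=\gd(Y_1)\leq\gd(Y_2)\leq\cdots\leq\gd(Y_k)\leq\gd(X'),\]
and the corresponding $gdep$-bounds give $gdep(X')\leq gdep(Y_k)+1\leq n=gdep(X)$, establishing (1).

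For the strict inequality, suppose $P'\in X'$ is a Gorenstein singular point in the non-isomorphic locus of $X\dashrightarrow X'$. I trace $P'$ backwards along the chain, using the local isomorphism on the iso locus of each map and, at a flop, Kollár's preservation of analytic singularity type to pair Gorenstein singular points across the non-isomorphic locus. Because $P'$ lies in the non-iso locus of the composition while $Y_1\to X$ is an isomorphism away from its exceptional divisor, at some internal step the current lift must land in the non-iso locus of either a flip or the divisorial contraction $Y_k\to X'$; the ``moreover'' clause of $\aast_{\Ss_{n-1}}$ then yields a strict $\gd$-increase at that step, which combined with the non-strict inequalities elsewhere gives $\gd(X)<\gd(X')$. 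The main obstacle is precisely this last tracing step: I must rule out the possibility that the failure of isomorphy is absorbed entirely by flops (for which $\aast$ does not provide a strict inequality), and this requires careful bookkeeping showing that a trace terminating in the exceptional locus of $Y_1\to X$, or threading only through flops to a Gorenstein point of $X$, must be accompanied by a strict $\gd$-contribution from either a flip or the final divisorial contraction elsewhere in the chain.
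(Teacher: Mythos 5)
Your treatment of items (1)--(3) is correct and is essentially the paper's own argument: invoke Lemma \ref{pitn} to obtain $(\Pi)_{\T_n}$ and hence $\gd(Y_1)=\gd(X)$, propagate $gdep(Y_i)\leq n-1$ along the chain so that every factor lies in $\Ss_{n-1}$, and chain the resulting inequalities (your bound $gdep(X')\leq gdep(Y_k)+1\leq n$ is in fact the correct form of the printed chain). One caveat: your parenthetical claim that $Y_1\rightarrow X$ ``necessarily sits over a non-Gorenstein point, since it drops $gdep$'' is not a valid justification --- every strict $w$-morphism drops $gdep$ by one by definition, including those over Gorenstein singular points, and for a strict $w$-morphism over a Gorenstein singular point one gets $\gd(Y_1)<\gd(X)$ rather than equality (this is exactly how the Gorenstein case is treated in Lemma \ref{astn1}). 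The hypothesis that the center of $Y_1\rightarrow X$ is non-Gorenstein is genuinely needed for (1) and must be imported from the situations in which the lemma is applied (the Chen--Hacon diagrams and the linking diagrams, where the initial $w$-morphism is by construction taken over a non-Gorenstein point); the paper's statement leaves this implicit as well.

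The ``moreover'' clause is where your proposal stops short of a proof, and you say so yourself. The paper's argument there is a single assertion: since the non-isomorphic locus of $X\dashrightarrow X'$ on $X'$ is covered by the non-isomorphic loci of the factor maps, either some $Y_i\dashrightarrow Y_{i+1}$ or $Y_k\rightarrow X'$ has a Gorenstein singular point in its non-isomorphic locus on its target, hence at least one of the inequalities is strict. As you observe, that last step only follows from the strict part of $\aast$ when the witnessing factor is not a flop, so the content missing from your write-up (and not spelled out in the paper either) is precisely the reduction to a non-flop witness: when the witnessing step is a flop one must transport the Gorenstein singular point across the flopped locus into $Y_i$ using Koll\'ar's theorem that flops preserve analytic singularities and continue the trace, and one must then exclude the terminal configurations in which the trace reaches $Y_1$ without ever crossing a flip or the final divisorial contraction. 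You have correctly isolated this as the crux, but identifying the obstacle is not the same as overcoming it; as submitted, the final clause of the lemma is unproven.
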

\begin{proof}
	Since $\aast_{\Ss_{n-1}}$ holds, we know that $(\Pi)_{\T_n}$ holds by Lemma \ref{pitn}. Hence $\gd(Y_1)=\gd(X)$.
	We know that $gdep(Y)=n-1$. Since $\aast_{\Ss_{n-1}}$ holds, one can prove that $gdep(Y_i)\leq gdep(Y_1)\leq n-1$ for all $i$ and
	hence $\aast_{Y_i\dashrightarrow Y_{i+1}}$ as well as $\aast_{Y_k\rightarrow X'}$ holds. Thus
	\[gdep(X')\leq gdep(Y_k)-1\leq gdep(Y_1)-1\leq gdep(X)\] and \[ \gd(X)=\gd(Y_1)\leq...\leq\gd(Y_k)\leq \gd(X').\]
	Now if the non-isomorphic locus of $X\dashrightarrow X'$ on $X'$ contains a Gorenstein singular point, then either the non-isomorphic locus
	of $Y_i\dashrightarrow Y_{i+1}$ on $Y_{i+1}$ contains a Gorenstein singular point, or the non-isomorphic locus of $Y_k\rightarrow X'$
	on $X'$ contains a Gorenstein singular point. Hence at least one of the above inequalities is strict. Thus one has $\gd(X)<\gd(X')$.
\end{proof}

\begin{lem}\label{astn1}
	Assume that $\aast_{\Ss_{n-1}}$ holds, then $\aast_{\Ss_n}^{(1)}$ holds.
\end{lem}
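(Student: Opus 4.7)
The plan is to prove $\aast^{(1)}_{\Ss_n}$ by reducing any divisorial contraction to a point $Z\rightarrow V$ with $gdep(Z)\leq n$ to the strict $w$-morphism case, using the linking machinery of Section \ref{slk} together with Lemma \ref{laast}. First I would dispose of the case that $Z\rightarrow V$ is already a strict $w$-morphism over the contracted point $P$. Since $\aast_{\Ss_{n-1}}$ holds, Lemma \ref{pitn} gives $(\Pi)_{\T_n}$. If $P$ is non-Gorenstein, $(\Pi)$ yields $dep(Z)=dep(V)-1$, which combined with $gdep(Z)=gdep(V)-1$ gives $\gd(Z)=\gd(V)$. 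If $P$ is Gorenstein then $dep(P\in V)=0$ while new non-Gorenstein points appearing on $Z$ above $P$ force $dep(Z)\geq dep(V)$, so $\gd(V)-\gd(Z)=1+(dep(Z)-dep(V))\geq 1$, supplying both the strict inequality required by the Gorenstein singular point in the non-isomorphic locus and the equality case of the lower bound.

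For a general $Z\rightarrow V$ I would first build a chain of links down to a $w$-morphism. If $Z\rightarrow V$ is not a $w$-morphism, one of Propositions \ref{ca}(1), \ref{cd2}, \ref{cd6}, or \ref{ce3} (according to the singularity type of $P$) yields a link $Z\ua{V}Z_1$ of strictly smaller discrepancy. Iterating produces a chain $Z=Z_0\ua{V}Z_1\ua{V}\cdots\ua{V}Z_k=\tilde{Z}$ ending in a $w$-morphism $\tilde{Z}\rightarrow V$. Each $Z_i$ satisfies $gdep(Z_i)\leq gdep(Z)\leq n$, so Lemma \ref{laast} applies at every link via $\aast_{\Ss_{n-1}}$ and gives $\gd(Z)\leq \gd(\tilde{Z})$ and $gdep(\tilde{Z})\leq gdep(Z)$. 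Since $\tilde{Z}\rightarrow V$ is a $w$-morphism, the inequality $gdep(V)\leq gdep(\tilde{Z})+1\leq gdep(Z)+1$ is immediate from the definition of $gdep$, which is the required $gdep$ bound.

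To obtain $\gd(V)\geq \gd(Z)$ with the strictness when $P$ is a Gorenstein singular point, I would extend the chain one step further to a strict $w$-morphism $Y\rightarrow V$ over $P$ via Corollary \ref{yua}. The required hypothesis $\aast_{\Ss_{gdep(Y)}}=\aast_{\Ss_{gdep(V)-1}}$ follows from $\aast_{\Ss_{n-1}}$ whenever $gdep(V)\leq n$; the only exception is the boundary case $gdep(V)=n+1$, where the chain inequality $gdep(\tilde{Z})\leq gdep(Z)\leq n=gdep(V)-1$ forces $\tilde{Z}$ to already be a strict $w$-morphism, so the first step applies directly. In the generic case, Lemma \ref{laast} along $\tilde{Z}\ua{V}\cdots\ua{V}Y$ yields $\gd(\tilde{Z})\leq \gd(Y)$, and the first-step analysis of $Y\rightarrow V$ gives $\gd(Y)=\gd(V)$ when $P$ is non-Gorenstein or $\gd(Y)\leq \gd(V)-1$ when $P$ is Gorenstein, completing the desired inequality.

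The main obstacle will be the coordination of the two-stage chain construction together with the correct hypothesis tracking for Corollary \ref{yua}; in particular, the boundary case $gdep(V)=n+1$ requires the key observation that the terminal $\tilde{Z}$ of the reduction chain is then automatically a strict $w$-morphism, which removes the need to invoke Corollary \ref{yua} precisely where it is unavailable.
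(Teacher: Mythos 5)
Your proposal is correct and follows essentially the same route as the paper: the strict $w$-morphism case is settled by $(\Pi)_{\T_n}$ for a non-Gorenstein center and by the arithmetic $dep(P\in V)=0$, $dep(Z)\geq dep(V)$ for a Gorenstein center, and the general case is reduced to it by linking $Z\rightarrow V$ to a strict $w$-morphism over $P$ and propagating the inequalities with Lemma \ref{laast}. The paper performs the linking in a single application of Corollary \ref{yua} followed by induction on the length of the chain (its telescoping $\gd$--$dep$ inequality is just the unwound form of your $gdep(V)\leq gdep(\tl{Z})+1\leq gdep(Z)+1$), so your two-stage chain and your explicit treatment of the boundary case $gdep(V)=n+1$ are organizational refinements rather than a different argument.
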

\begin{proof}
	Let $Y\rightarrow X$ be a divisorial contraction to a point which belongs to $\Ss_n$. We know that $gdep(Y)=n$. Assume first that
	$Y\rightarrow X$ is a strict $w$-morphism over a point $P\in X$. Notice that $dep(Y)\geq dep(X)-1$ by Lemma \ref{chfp}.
	If $P$ is a non-Gorenstein point, then $\gd(Y)=\gd(X)$ by Lemma \ref{pitn}, hence $\aast_{Y\rightarrow X}$ holds. If $P$ is a
	Gorenstein point, then \[ \gd(X)=gdep(X)-dep(X)=gdep(Y)+1-dep(X)=\gd(Y)+dep(Y)-dep(X)+1.\]
	Moreover, since $dep(P\in X)=0$, $dep(Y)-dep(X)\geq 0$, hence \[\gd(X)>\gd(Y)=\gd(X)-(dep(Y)-dep(X)+1).\]
	Thus $\aast_{Y\rightarrow X}$ holds.\par
	In general by Corollary \ref{yua} there exists $Y_1\rightarrow X$, ..., $Y_k\rightarrow X$ such that $Y_k\rightarrow X$ is a strict
	$w$-morphism and one has $Y\ua{X}Y_1\ua{X}...\ua{X}Y_k$. By induction on $k$ we may assume that $\aast_{Y_i\rightarrow X}$ holds
	for all $i$ (notice that $gdep(Y_i)\leq gdep(Y)=n$ for all $i$ by Lemma \ref{pgd}). Now we have a diagram 
	\[ \vc{\xymatrix{ Z_1\ar@{-->}[r]\ar[d] & ... \ar@{-->}[r] & Z_k \ar[d] \\ Y \ar[rd] & & Y_1\ar[ld] \\ & X & }}.\]
	By Lemma \ref{laast} we know that $\gd(Y)\leq\gd(Y_1)$ and $\aast_{Z_i\dashrightarrow Z_{i+1}}$ as well as $\aast_{Z_k\rightarrow Y_1}$
	holds. Since $\aast_{Y_1\rightarrow X}$ holds, we know that $\gd(X)\geq\gd(Y_1)\geq\gd(Y)$ and
	\begin{align*}
	\gd(Y)+(dep(Y)-dep(X)+1)&=\gd(Z_1)+(dep(Z_1)-dep(X)+2)\\
	&\geq gdep(Z_1)-dep(X)+2\\
	&\geq gdep(Z_k)-dep(X)+2\\
	&\geq gdep(Y_1)-dep(X)+1\\
	&\geq \gd(Y_1)+(dep(Y_1)-dep(X)+1)\\
	&\geq \gd(X).
	\end{align*}
	Moreover if $Y\rightarrow X$ is a divisorial contraction to a Gorenstein point, then $Y_1\rightarrow X$ is also a divisorial contraction
	to a Gorenstein point. Hence $\gd(Y_1)<\gd(X)$ and we also have $\gd(Y)<\gd(X)$.
\end{proof}

\begin{proof}[Proof of Proposition \ref{ied}]
	We need to say that $\aast_{\Ss_n}$ holds for all $n$ and we will prove by induction on $n$. If $n=0$ then $\Ss_0$ consists only
	smooth blow-downs and smooth flops. One can see that $\aast_{\Ss_0}$ holds. In general assume that $\aast_{\Ss_{n-1}}$ holds.
	By Lemma \ref{astn1} we know that $\aast_{\Ss_n}^{(1)}$ holds. Hence it is enough to show that given a flip $X\dashrightarrow X'$
	or a divisorial contraction to a curve $X\rightarrow V$ such that $gdep(X)=n$, one has that $\aast_{X\dashrightarrow X'}$ or
	$\aast_{X\rightarrow V}$ holds.\par
	If $X\rightarrow V$ is a smooth blow-down, then $\gd(X)=\gd(V)$ so there is nothing to prove. In general we have a diagram as in
	Theorem \ref{chf}:
	\[\vc{\xymatrix{Y_1\ar[d] \ar@{-->}[r] & \cdots \ar@{-->}[r] & Y_k\ar[d]\\
		X\ar[rd] &  & X'\ar[ld] \\ & V & }}.\]
	By Lemma \ref{laast} we know that $\gd(X)\leq \gd(X')$ and $\aast_{Y_i\dashrightarrow Y_{i+1}}$ as well as $\aast_{Y_k\rightarrow X'}$
	holds. One has that
	\begin{align*}
	\gd(X)+(dep(X)-dep(X'))&=gdep(X)-dep(X')\\
	&=gdep(Y_1)-dep(X')+1\\
	&\geq gdep(Y_k)-dep(X')+1\\
	&\geq gdep(X')-dep(X')=\gd(X').
	\end{align*}
	Moreover, if $X\dashrightarrow X'$ is a flip, then either one of $Y_i\dashrightarrow Y_{i+1}$ is a flip, or $Y_k\rightarrow X'$ is a
	divisorial contraction to a curve by \cite[Remark 3.4]{ch}. This implies that either $gdep(Y_1)>gdep(Y_k)$ or $gdep(Y_k)\geq gdep(X')$.
	If $X\rightarrow V$ is a divisorial contraction to a curve then $Y_k\rightarrow X'$ is a divisorial contraction to a curve, hence
	one always has that $gdep(Y_k)\geq gdep(X')$. In conclusion, we have \[ \gd(X)\geq \gd(X')-(dep(X)-dep(X')-1).\]\par
	If $X\dashrightarrow X'$ is a flip then one can see that $\aast_{X\dashrightarrow X'}$ holds. Now assume that $X\rightarrow V$ is a
	divisorial contraction to a point. Then $X'\rightarrow V$ is a divisorial contraction to a point. We know that $\aast_{X'\rightarrow V}$
	holds since $\aast_{\Ss_n}^{(1)}$ holds and $gdep(X')\leq n$ by Lemma \ref{laast}. One can see that $\gd(X)\leq \gd(X')\leq \gd(V)$
	and \begin{align*}
	\gd(X)&\geq \gd(X')-(dep(X)-dep(X')-1)\\
	&\geq\gd(V)-(dep(X')-dep(V)+1)-(dep(X)-dep(X')-1)\\
	&=\gd(V)-(dep(X)-dep(V)).
	\end{align*}
	Thus $\aast_{X\rightarrow V}$ holds.
\end{proof}

\section{Comparing different feasible resolutions}\label{sflp}

In this section we describe the difference between two different feasible resolutions of a terminal $\Q$-factorial threefold.
The final result is the diagrams in Theorem \ref{thm1}. 
\subsection{General discussion}
\begin{lem}\label{lfp}
	Assume that $X$ is a terminal threefold and $Y\rightarrow X$, $Y_1\rightarrow X$ are two different strict $w$-morphisms over $P\in X$,
	such that $Y\ua{X}Y_1$. Let
	\[ \vc{\xymatrix{ Z_1\ar@{-->}[r]\ar[d] & ... \ar@{-->}[r] & Z_k \ar[d] \\ Y \ar[rd] & & Y_1\ar[ld] \\ & X & }}\]
	be the corresponding diagram, then one of the following holds:
	\begin{enumerate}[(1)]
	\item $Y\uan{X}Y_1\uan{X}Y$ and $k=1$.
	\item $k=2$ and $Z_1\dashrightarrow Z_2$ is a smooth flop.
	\item $k=2$, $Z_1\dashrightarrow Z_2$ is a singular flop, $P\in X$ is of type $cA/r$ and both $Y\rightarrow X$ and $Y_1\rightarrow X$
		are of type A1 in Table \ref{taA}.
	\end{enumerate}
\end{lem}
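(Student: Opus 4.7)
The proof plan combines a $gdep$-computation, an MMP structure argument, and a geometric analysis of the resulting flop.

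First, I would show that every intermediate map $Z_i\dashrightarrow Z_{i+1}$ must be a flop. Since $Y\rightarrow X$ and $Y_1\rightarrow X$ are strict $w$-morphisms, and by the definition of $Y\ua{X}Y_1$ the map $Z_1\rightarrow Y$ is a strict $w$-morphism, one has $gdep(Z_1)=gdep(X)-2$ and $gdep(Y_1)=gdep(X)-1$. Proposition \ref{ied} asserts that $gdep$ strictly decreases along flips, is preserved by flops, and drops by at most one across a divisorial contraction to a point. Applied to $Z_k\rightarrow Y_1$ this gives $gdep(Z_k)\geq gdep(X)-2$; combined with monotonicity along the flip/flop chain, $gdep(Z_i)=gdep(X)-2$ for every $i$, so no step can be a flip.

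Next I would bound $k\leq 2$. The link was constructed by running a $(K_{Z_1}+H_{Z_1}+\epsilon E_{Z_1})$-MMP over $X$, and the proof of Lemma \ref{kan} shows that $NE(Z_i/X)$ is two-dimensional with a $K_{Z_i}$-negative extremal ray covered by curves in $E_{Z_i}$. If $k>2$, then after the flop $Z_1\dashrightarrow Z_2$ one boundary of $NE(Z_2/X)$ is the $K_{Z_2}$-trivial flopped curve while the other is $K_{Z_2}$-negative, so $Z_2\dashrightarrow Z_3$ would be a $K_{Z_2}$-flip, contradicting the previous paragraph.

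When $k=1$, the diagram reduces to $Y\leftarrow Z_1\rightarrow Y_1$, and the established $gdep$-drop $gdep(Y_1)=gdep(Z_1)+1$ together with the discrepancy computation for $E$ on $Y_1$ shows that $Z_1\rightarrow Y_1$ is itself a strict $w$-morphism; hence $Y_1\uan{X}Y$ by symmetry, giving case~(1). When $k=2$ the flop $Z_1\dashrightarrow Z_2$ is automatic, and a smooth flop gives case~(2).

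The main obstacle is the singular-flop subcase of $k=2$. Here the flopping curve meets a non-Gorenstein point of $Z_1$, which by the Koll\'ar--Mori classification of three-dimensional terminal flops must be of type $cA/r$. Every non-Gorenstein point of $Z_1$ is either inherited from $Y$ or arises on the exceptional divisor of $Z_1\rightarrow Y$, so matching the required $cA/r$ type against the tables in Section \ref{slk} forces $P\in X$ itself to be a $cA/r$ point. Within the $cA/r$ classification, a direct inspection of Proposition \ref{ca}, and in particular part~(2) with the condition $\eta_4=y$, shows that the only pairs of distinct strict $w$-morphisms $Y,Y_1\to X$ admitting a link with $k=2$ over such a point are both of type A1, which is exactly case~(3).
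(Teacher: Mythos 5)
The first half of your plan is sound and close to the paper's own: the impossibility of flips follows from $gdep(Y)=gdep(Y_1)$ (the paper packages this as Lemma \ref{pgd}), and your two-dimensional-cone argument for $k\leq 2$ is a legitimate reuse of the reasoning in Lemma \ref{kan}; in the $k=1$ case the identification of $Z_1\rightarrow Y_1$ as a strict $w$-morphism is again Lemma \ref{pgd}, so case (1) comes out correctly.

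The genuine gap is in your treatment of the singular-flop case. There is no ``Koll\'ar--Mori classification of three-dimensional terminal flops'' forcing the non-Gorenstein point on a flopping curve to be of type $cA/r$: the classification in \cite[Theorem 2.2]{km} that the paper invokes elsewhere (Lemma \ref{iafp}) concerns $K$-\emph{negative} extremal neighborhoods, i.e.\ flips, and does not apply to the $K$-trivial contraction $Z_1\rightarrow W$. Moreover, even if one knew the flopping curve met only $cA/r$ points of $Z_1$, this would not force $P\in X$ to be a $cA/r$ point, since $w$-morphisms over $cD$, $cD/2$, $cE$ and $cE/2$ points routinely produce $cA/r$ (and other) singularities on their exceptional divisors. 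What actually has to be done — and what the paper does — is an exhaustive case analysis on the type of $P$: using Propositions \ref{ca}--\ref{ce4} one first lists the pairs of distinct strict $w$-morphisms with $Y\not\uan{X}Y_1$ (type A1/A1 over $cA/r$, type D17 over $cD/2$, types E14, E16, E18 over $cE$, types E22, E23 over $cE/2$), and then in each non-$cA/r$ case one computes, in explicit coordinates on the relevant chart of $Y$ and for the explicit weight of the $w$-morphism $Z_1\rightarrow Y$, that either the configuration leads to a flip and hence a contradiction (the D17 case, via the second curve $\Gamma_2$), or that the flopping curve $\Gamma_{Z_1}$ misses every singular point of $Z_1$, so the flop is smooth. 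Your proposal has no mechanism to carry out, or to bypass, these chart-by-chart verifications, and without them the trichotomy between cases (2) and (3) is not established.
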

\begin{proof}
	Since $Y\rightarrow X$ and $Y_1\rightarrow X$ are both strict $w$-morphisms, we know that $gdep(Y)=gdep(Y_1)$. Hence
	$Z_i\dashrightarrow Z_{i+1}$ can not be a flip. Thus if $Y\uan{X}Y_1$, then $k=1$.\par
	Now assume that $Y\not\uan{X}Y_1$. According to the result in Section \ref{slk} one has that
	\begin{enumerate}[(i)]
	\item If $P$ is of type $cA/r$, then by Proposition \ref{ca} we know that $Y\rightarrow X$ and $Y_1\rightarrow X$ are both of type A1.
		Since $Y\not\uan{X}Y_1$, we know that $f^{\circ}_4=\eta_4=y$ is irreducible. Hence there are only one $K_{Z_1}$-trivial curve on
		$Z_1$ and so $k=2$.
	\item $P$ is not of type $cAx/r$ since one always has that $Y\uan{X}Y_1$ if $Y\rightarrow X$ and $Y_1\rightarrow X$
		are two different $w$-morphisms over $P$ by Proposition \ref{cax}.
	\item $P$ is not of type $cD$ by Proposition \ref{cd1}.
	\item $P$ is not of type $cD/3$ by Proposition \ref{cd3}.
	\item If $P$ is of type $cD/2$, then by Proposition \ref{cd4} and Proposition \ref{cd5} we know that $Y\rightarrow X$ is of type D17
		in Table \ref{taD3}. We know that $Z_1\rightarrow Y$ is a $w$-morphism over the origin of 
		\[ U_t=({x'}^2+y'+g'(z',u',t')=z'u'+{y'}^3+t'=0)\subset\A^5_{(x',y',z',u',t')}/\frac{1}{5}(3,1,1,2,3)\]
		and we take $(y_1,...,y_5)=(y',u',y'+z',x',t')$. One can see that there are two curves contained in $y_3=y_5=0$, namely
		$\Gamma_1=({x'}^2+g'(0,u',0)=y'=y'+z'=u'=0)$ and $\Gamma_2=({x'}^2+g'(-y',-{y'}^2,0)=u'+{y'}^2=y'+z'=t'=0)$.
		We know that ($\Theta_1$) holds. When computing the intersection number for $\Gamma_2$ one can take $\delta_4=4$ and $\delta_5=2$,
		Hence ($\Xi_-$) holds in this case. This implies that the proper transform $\Gamma_{2,Z_1}$ of $\Gamma_2$ on $Z_1$ is a
		$K_{Z_1}$-trivial curve and do not contained in $exc(Z_1\rightarrow Y)$. So $Z_i\dashrightarrow Z_{i+1}$ is a flip along
		$\Gamma_{2,Z_i}$ for some $i$ and hence $gdep(Y_1)<gdep(Y)$. This is a contradiction. Thus this case won't happen. 
	\item If $P$ is of type $cE$, then by Proposition \ref{ce1} and Proposition \ref{ce2} we know that $Y\rightarrow X$ is of type E14, E16
		or E18 in Table \ref{taE2}. In those cases $Y\rightarrow X$ are five-dimensional weighted blow-ups. Let $\Gamma\subset Y$
		be a curve contained in $exc(Y\rightarrow X)$ such that the proper transform $\Gamma_{Z_1}$ of $\Gamma$ on $Z_1$ is
		a possibly $K_{Z_1}$-trivial curve. From the proof of Proposition \ref{ce2}, one can see that: 
		\begin{enumerate}[({vi}-i)]
		\item $Y\rightarrow X$ is of type E14. In this case $Z_1\rightarrow Y$ is a $w$-morphism over the origin of 
			\[U_t=({x'}^2+{y'}^3+z'+g'(y',z',u',t')=p(x',y',z',u')+t'=0)\]\[\subset\A^5_{(x',y',z',u',t')}/\frac{1}{5}(3,2,1,1,4).\]
			We choose $(y_1,...,y_5)=(x',y',u',z',t')$ with $\delta_4=1$ and $\delta_5=2$ or $4$, or $\delta_4=2$ and $\delta_5=1$ or $4$.
			We also know that ($\Theta_4$) holds. If both $\delta_4$ and $\delta_5\neq 4$, then ($\Xi_-$) holds which implies that
			$Y\uan{X}Y_1$. This contradicts to our assumption. Hence $\delta_5=4$. Now $\Gamma=({x'}^2+{y'}^3=z'=u'=t'=0)$. 
		\item $Y\rightarrow X$ is of type E16. In this case $Z_1\rightarrow Y$ is a $w$-morphism over the origin of 
			\[U_t=({x'}^2+{y'}^3+p(z',u')+g'(y',z',u',t')=q(y',z',u')+t'=0)\]\[\subset\A^5_{(x',y',z',u',t')}/\frac{1}{4}(3,2,1,1,3).\]
			As in the proof of Proposition \ref{ce2} we choose $(y_1,...,y_5)=(y',z',u',x',t')$ with $\delta_4=4$ and $\delta_5=1$ or $2$.
			We know that ($\Theta_2$) holds. Hence if $\delta_5=1$ then ($\Xi_-$) holds and so $Y\uan{X}Y_1$. This leads to a contradiction.
			Thus $\delta_5=2$ and one has $\Gamma=({x'}^2+{y'}^3=z'=u'=t'=0)$.
		\item $Y\rightarrow X$ is of type E18. In this case $Z_1\rightarrow Y$ is a $w$-morphism over the origin of 
			\[U_t=({x'}^2+y'+g'(y',z',u',t')={y'}^2+p(y',z',u')+t=0)\]\[\subset\A^5_{(x',y',z',u',t')}/\frac{1}{7}(5,3,2,1,6).\]
			We choose $(y_1,...,y_5)=(y',z',u',x',t')$ with $\delta_4=4$ and $\delta_5=1$. We know that ($\Theta_1$) holds.
			If ${z'}^3\in p$ then we can choose $\delta_5=2$. Then since ($\Theta_1$) holds we know that ($\Xi_-$) holds and so $Y\uan{X}Y_1$.
			This contradicts to our assumption. Hence ${z'}^3\not\in p$. In this case $u$ divides $p$ and also $X$ has $cE_8$ singularities,
			so ${z'}^5\in g'$. One can see that $\Gamma=({x'}^2+{z'}^5=y'=u'=t'=0)$. 
		\end{enumerate}
		Now the origin of $U_t$ is a cyclic quotient point and the $w$-morphism over this point is a weighted blow-up with the weight
		$v_F(x',y',z',u',t')=\frac{1}{5}(3,2,6,1,4)$, $\frac{1}{4}(3,2,5,1,3)$ and $\frac{1}{7}(5,10,2,1,6)$ respectively.
		An easy computation shows that $\Gamma_{Z_1}$ do not pass through any singular point of $Z_1$, hence $Z_1\dashrightarrow Z_2$
		is a smooth flop. Since there is only one $K_{Z_1}$-trivial curve, we know that $k=2$. 
	\item If $P$ is of type $cE/2$, then by Proposition \ref{ce4} we know that $Y\rightarrow X$ is of type E22 or E23 in Table \ref{taE4}.
		Moreover, if $Y\rightarrow X$ is of type E23 then $Y_1\rightarrow X$ is of type E22. In this case we one interchange $Y$ and $Y_1$
		and hence always assume that $Y\rightarrow X$ is of type E22. As in the proof of Proposition \ref{ce4} we know that
		$Z_1\rightarrow Y$ is a $w$-morphism over the origin of $U_z\subset Y$, which is a $cD/3$ point defined by
		\[ ({x'}^2+{y'}^3+g'(y',z',u')=0)\subset\A^4_{(x',y',z',u')}/\frac{1}{3}(0,2,1,1).\]
		The only possible $K_{Z_1}$-trivial curve $\Gamma_{Z_1}$ is the lifting of the curve $\Gamma=({x'}^2+{y'}^3=z'=u'=0)$
		on $Z_1$. Moreover, $Z_1\rightarrow Y$ is defined by weighted blowing-up the weight
		\[v_F(x',y',z',u'+\lambda z')=\frac{1}{3}(b,c,1,4)\]
		for some $\lambda\in\Cc$, where $(b,c)=(3,2)$ or $(6,5)$. If $(b,c)=(6,5)$, then one can see that $(\Xi_-)$ holds by considering
		the function $y$, hence $Y\uan{X}Y_1$ which contradict to our assumption. Hence $(b,c)=(3,2)$. In this case
		an easy computation shows that $\Gamma_{Z_1}$ do not pass through any singular point of $Z_1$, so $Z_1\dashrightarrow Z_2$ is
		a smooth flop. Since there is only one $K_{Z_1}$-trivial curve, we know that $k=2$.
	\end{enumerate}
\end{proof}

We need to construct a factorization of the flop in Lemma \ref{lfp} (3). Assume that 
\[ X=(xy+f(z,u)=0)\subset\A^4_{(x,y,z,u)}/\frac{1}{r}(\beta,-\beta,1,0)\] is a $cA/r$ singularity with $f(z,u)=z^{rk}+g(z,u)$
and $Y\rightarrow X$, $Y_1\rightarrow X$ are two different strict $w$-morphisms with the following factorization
\[ \vc{\xymatrix{ Z_1\ar@{-->}[rr]\ar[d] &  & Z_2 \ar[d] \\ Y \ar[rd] & & Y_1\ar[ld] \\ & X & }}\]
such that $Z_1\dashrightarrow Z_2$ is a flop.
\begin{rk}\label{kg1}
	One always has that $k>1$ since if $k=1$ then there is only one $w$-morphism over $X$ by the classification Table \ref{taA}.
\end{rk}

\begin{lem}\label{flp}
	$Z_1\dashrightarrow Z_2$ is a flop over 
	\[V=(\bar{x}\bar{y}+\bar{u}f_1(\bar{z},\bar{u})=0)\subset\A^4_{(\bar{x},\bar{y},\bar{z},\bar{u})}/\frac{1}{r}(\beta,-\beta,1,0),\]
	where $f_1=f(zu^{\frac{1}{r}},u)/u^k$. Moreover, $Z_1=Bl_{(\bar{x},\bar{u})}V$ and $Z_2=Bl_{(\bar{y},\bar{u})}V$.
\end{lem}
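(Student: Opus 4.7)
The plan is to compute in affine charts and identify $V$ explicitly as the common small contraction of $Z_1$ and $Z_2$. By Proposition \ref{ca}(2) and Remark \ref{kg1}, I may assume $k \geq 2$ and that $Y \to X$ and $Y_1 \to X$ are type A1 weighted blow-ups with weights $\tfrac{1}{r}(b, rk-b, 1, r)$ and $\tfrac{1}{r}(b-r, rk-b+r, 1, r)$. The $G$-semi-invariance of $xy + f(z, u)$ under $\tfrac{1}{r}(\beta, -\beta, 1, 0)$ forces $f(z, u) = \tilde f(z^r, u)$, and the condition $v_E(f) = k$ forces each monomial $z^m u^j$ of $f$ to satisfy $m/r + j \geq k$. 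Hence $f_1(\bar z, \bar u) := f(\bar z \bar u^{1/r}, \bar u)/\bar u^k = \tilde f(\bar z^r \bar u, \bar u)/\bar u^k$ is an honest polynomial in $\Cc[\bar z, \bar u]$ with $f_1(\bar z, 0) = \bar z^{rk}$.

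Next, I pass to the chart $U_u \subset Y$: by Corollary \ref{wbcc}, the substitution $x = \bar x \bar u^{b/r}$, $y = \bar y \bar u^{(rk-b)/r}$, $z = \bar z \bar u^{1/r}$, $u = \bar u$ transforms $xy + f(z, u)$ into $\bar u^k (\bar x \bar y + f_1(\bar z, \bar u))$, so $U_u \subset Y$ is cut out by $\bar x \bar y + f_1 = 0$ in $\A^4_{(\bar x, \bar y, \bar z, \bar u)}/\tfrac{1}{r}(\beta, -\beta, 1, 0)$. A parallel computation on $U_u \subset Y_1$ yields the same equation with the roles of $\bar x$ and $\bar y$ interchanged. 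Setting $V := (\bar x \bar y + \bar u f_1 = 0)$, I note that the chart of $Bl_{(\bar x, \bar u)} V$ where $\bar x_V = t \bar u$ has strict transform $t \bar y + f_1 = 0$, which matches $U_u(Y)$ under the identification $\bar x_Y = t$. Since $Z_1 \to Y$ is an isomorphism over this chart (the $w$-morphism lives over $U_x(Y)$), this realizes $U_u(Z_1)$ as one chart of $Bl_{(\bar x, \bar u)} V$.

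The other chart of $Bl_{(\bar x, \bar u)} V$, where $\bar u = s \bar x$, has strict transform $\bar y + s f_1(\bar z, s \bar x) = 0$, giving smooth coordinates $(\bar x, s, \bar z)$ with induced $G$-action $\tfrac{1}{r}(\beta, -\beta, 1)$. I then verify that this chart matches $Z_1$ over $U_x(Y)$: the latter is a cyclic quotient of order $b$, and the $w$-morphism $Z_1 \to Y$ extracts precisely the divisor corresponding to the $\Pp^1$-fibers of $Bl_{(\bar x, \bar u)} V$ over its singular locus $(\bar x = \bar y = \bar u = 0)$. Gluing the two charts identifies $Z_1 = Bl_{(\bar x, \bar u)} V$ globally; the symmetric argument using $Y_1$ gives $Z_2 = Bl_{(\bar y, \bar u)} V$, and the flop structure $Z_1 \dashrightarrow Z_2$ over $V$ is then automatic.

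The main obstacle is the careful tracking of fractional powers of $\bar u$ and the $G$-action across the two affine charts, and the precise matching of the $w$-morphism $Z_1 \to Y$ on $U_x(Y)$ against the $G$-quotient chart of $Bl_{(\bar x, \bar u)} V$: one must check that the cyclic quotient of order $b$ on $U_x(Y)$ decomposes under $Z_1 \to Y$ into exactly the $\tfrac{1}{r}(\beta, -\beta, 1)$-action on the blow-up chart, which ultimately reduces to arithmetic in the defining weights $\tfrac{1}{r}(b, rk-b, 1, r)$.
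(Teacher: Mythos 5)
Your overall strategy --- write $V$ down explicitly, identify the two affine charts of $Bl_{(\bar{x},\bar{u})}V$ with charts of $Z_1$, and argue symmetrically for $Z_2$ --- is the same as the paper's, and your first-chart computation (matching the chart $\bar{x}=t\bar{u}$ with $U_u\subset Y\subset Z_1$ via $t=x_1$, together with the observation that $f_1$ is a genuine polynomial) is correct. The gap is in the second chart. You assert that the chart $\bar{u}=s\bar{x}$ of $Bl_{(\bar{x},\bar{u})}V$ ``matches $Z_1$ over $U_x(Y)$,'' justified by saying that the $w$-morphism $Z_1\rightarrow Y$ extracts precisely the divisor corresponding to the $\Pp^1$-fibers of $Bl_{(\bar{x},\bar{u})}V$ over its singular locus. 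This conflates two different morphisms: $Z_1\rightarrow Y$ is \emph{divisorial} (its exceptional locus over the origin of $U_x(Y)$ is an entire quotient of a weighted projective plane, contracted to a point of $Y$), whereas $Bl_{(\bar{x},\bar{u})}V\rightarrow V$ is \emph{small} (its exceptional locus is a single $\Pp^1$ over the origin of $V$, since $(\bar{x}=\bar{u}=0)$ is a Weil divisor on $V$ that fails to be Cartier only at isolated points). Consequently the second chart of $Bl_{(\bar{x},\bar{u})}V$ cannot be ``$Z_1$ over $U_x(Y)$''; at best it is one specific affine chart, namely the $u$-chart $U'_u$ of the weighted blow-up $Z_1\rightarrow Y$ with weight $\frac{1}{b}(b-r,rk,1,r)$, and identifying it as such is exactly the computation that has to be carried out --- it cannot be inferred from the picture you describe.

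Even granting the abstract isomorphism of your second chart with $U'_u$ (both are $\frac{1}{r}(\beta,-\beta,1)$-quotients of a hypersurface, which is consistent), the lemma requires more: the two charts must glue inside $Z_1$ exactly as the two charts of $Bl_{(\bar{x},\bar{u})}V$ glue, i.e.\ the functions $\bar{x},\bar{y},\bar{z},\bar{u}$ must pull back compatibly to both. The paper establishes this by composing the coordinate changes through $U_u(Y)$, $U_x(Y)$ and $U'_u(Z_1)$ to get $u_1=x'u'$, $z_1=z'$, $y_1=y'u'$, $x_1={u'}^{-1}$, from which $\bar{x}=x'=x_1u_1$ and the blow-up structure can be read off; you defer precisely this step to ``arithmetic in the defining weights.'' You also never locate the flopping curve of $Z_1\dashrightarrow Z_2$, so the closing claim that the flop structure ``is then automatic'' is unsupported: one still must check that the one curve contracted by $Bl_{(\bar{x},\bar{u})}V\rightarrow V$ is the $K_{Z_1}$-trivial curve of the given flop, which the paper does by tracking the strict transform of $\Gamma_Y$ with $\Gamma_Y|_{U_u}=(y_1=z_1=u_1=0)$ and $\Gamma_Y|_{U_x}=(x_2=y_2=z_2=0)$. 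In short, the skeleton is right, but the step that constitutes the actual content of the lemma is missing, and the heuristic offered in its place is based on an incorrect identification of exceptional loci.
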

\begin{proof}
	We may assume that $Y\rightarrow X$ is a weighted blow-up with the weight $w(x,y,z,u)=\frac{1}{r}(b,c,1,r)$ with $b>r$.
	The chart $U_u\subset Y$ is defined by \[(x_1y_1+f_1(z_1,u_1)=0)\subset\A^4_{(x_1,y_1,z_1,u_1)}/\frac{1}{r}(b,c,1,r)\]
	and the chart $U_x\subset Y$ is defined by \[(y_2+f_2(x_2,z_2,u_2)=0)\subset\A^4_{(x_2,y_2,z_2,u_2)}/\frac{1}{b}(b-r,c,1,r)\]
	for some $f_2$. As in the proof of Proposition \ref{ca} we know that $Z_1\rightarrow Y$ is a weighted blow-up over the origin
	of $U_x$ with the weight $w'(x_2,y_2,z_2,u_2)=\frac{1}{b}(b-r,rk,1,r)$. The flopping curve $\Gamma$ of $Z_1\rightarrow Z_2$ is the
	strict transform of the curve $\Gamma_Y\subset Y$ such that $\Gamma_Y|_{U_u}=(y_1=z_1=u_1=0)$ and $\Gamma_Y|_{U_x}=(x_2=y_2=z_2=0)$. 
	One can see that $\Gamma$ intersects $exc(Z_1\rightarrow Y)$ at the origin of $U'_u\subset Z_1$ which is defined by
	\[ (y'+f'(x',z',u')=0)\subset\A^4_{(x',y',z',u')}/\frac{1}{r}(b,0,1,-b).\] It is easy to see that $\Gamma$ is contained in
	$U'_u\cup U_u$, and on $U'_u$ we know that $\Gamma$ is defined by $(x'=z'=0)$.\par
	We have the following change of coordinates formula:
	\[ x=x_1u_1^{\frac{b}{r}},\quad y=y_1u_1^{\frac{c}{r}},\quad z=z_1u_1^{\frac{1}{r}},\quad u=u_1, \mbox{ and }\]
	\[ x=x_2^{\frac{b}{r}},\quad y=y_2x_2^{\frac{c}{r}},\quad z=z_2x_2^{\frac{1}{r}},\quad u=u_2x_2.\]
	And also \[ x_2=x'{u'}^{\frac{b-r}{b}},y_2=y'{u'}^{\frac{rk}{b}}, \quad z_2=z'{u'}^{\frac{1}{b}}\quad, u_2={u'}^{\frac{r}{b}}.\]
	One can see that
	\[ x={x'}^{\frac{b}{r}}{u'}^{\frac{b-r}{r}}, y=y'{x'}^{\frac{c}{r}}{u'}^{\frac{rk}{b}+\frac{c}{r}-\frac{c}{b}}
		=y'{x'}^{\frac{c}{r}}{u'}^{\frac{c}{r}+1}, \quad z=z'{x'}^{\frac{1}{r}}{u'}^{\frac{1}{r}}\mbox{ and }u=x'u'.\]
	It follows that \[ u_1=x'u',\quad z_1=z',\quad y_1=y'u'\mbox{ and }x_1={u'}^{-1}.\] If we choose an isomorphism
	\[ U'_u\cong (y_1+u'f'(x',z',u')=0)\subset\A^4_{(x',y_1,z',u')}/\frac{1}{r}(b,-b,1,-b),\]
	then $U_u\cup U'_u=Bl_{(x',u_1)}V$ where \[ V=(x'y_1+u_1f_1(z_1,u_1)=0)\subset\A^4_{(x',y_1,z_1,u_1)}/\frac{1}{r}(b,-b,1,0)\]
	by notice that
	\begin{align*}
	f'(x',z',u')&=f_2(x'{u'}^{\frac{b-r}{b}},z'{u'}^{\frac{1}{b}},{u'}^{\frac{r}{b}})/{u'}^{\frac{rk}{b}}\\
	&=f(z_2x_2^{\frac{1}{r}},u_2x_2)/x_2^k{u'}^{\frac{rk}{b}}\\
	&=f(z'{x'}^{\frac{1}{r}}{u'}^{\frac{1}{r}},x'u')/{x'}^k{u'}^k\\
	&=f(z_1u_1^{\frac{1}{r}},u_1)/{u_1}^k\\
	&=f_1(z_1,u_1).
	\end{align*}
	Now we can choose $(\bar{x},\bar{y},\bar{z},\bar{u})=(x',y_1,z_1,u_1)$.\par
	Finally we know that $Y_1\rightarrow X$ is a weighted blow-up with the weight $\frac{1}{r}(b-r,c+r,1,r)$ and $Z_2\rightarrow Y_1$
	is a $w$-morphism over the origin of $U_{1,y}\subset Y_1$. Hence the local picture of $Z_2\rightarrow V$ can be obtained by
	a similar computation, but interchange the role of $x$ and $y$. One has that $Z_2=Bl_{(\bar{y},\bar{u})}V$.
\end{proof}

\subsection{Explicit factorization of flops}

In this subsection we assume that \[V=(xy+uf(z,u)=0)\subset\A^4_{(x,y,z,u)}/\frac{1}{r}(\beta,-\beta,1,0)\]
with $f(z,u)=z^{rk}+g(z,u)$ for some $k>1$. Let $Z_1=Bl_{(x,u)}V$ and $Z_2=Bl_{(y,u)}V$, such that $Z_1\dashrightarrow Z_2$ is a 
$\Q$-factorial terminal flop. Let $w$ be the weight $w(z,u)=\frac{1}{r}(1,r)$ and let $m=w(f(z,u))$. Then $m\leq k$.
Let $U_{1,x}\subset Z_1$ be the chart \[ (y+u_1f_1(z,u_1)=0)\subset\A^4_{(x,y,z,u_1)}/\frac{1}{r}(\beta,-\beta,1,-\beta)\]
and $U_{1,u}\subset Z_1$ be the chart \[ (x_1y+f(z,u)=0)\subset\A^4_{(x_1,y,z,u)}/\frac{1}{r}(\beta,-\beta,1,0)\]
with the relations $u=u_1x$, $x=x_1u$ and $f_1=f(z,xu_1)$. Similarly, let $U_{2,y}\subset Z_2$ be the chart
\[ (x+u_2f_2(z,u_2)=0)\subset\A^4_{(x,y,z,u_2)}/\frac{1}{r}(\beta,-\beta,1,\beta)\]
and $U_{2,u}\subset Z_2$ be the chart \[ (xy_2+f(z,u)=0)\subset\A^4_{(x,y_2,z,u)}/\frac{1}{r}(\beta,-\beta,1,0)\]
with the relations $u=u_2y$, $y=y_2u$ and $f_2=f(z,yu_2)$.\par
\begin{lem}\label{v1}
	Let $\phi:V'\rightarrow V$ be a strict $w$-morphism. Then 
	\begin{enumerate}[(1)]
	\item The chart $U'_u\subset V'$ is $\Q$-factorial.
	\item If $m=k$ then $U'_z\subset V'$ is smooth. Otherwise $U'_z$ contains exactly one non-$\Q$-factorial $cA$ point
		which is defined by $xy+uf''(z,u)=0$ where $f''=f(z^{\frac{1}{r}},zu)/z^m$. One has that $w(f'')<m$.
	\item All other singular points on $V'$ are cyclic quotient points.
	\end{enumerate}
\end{lem}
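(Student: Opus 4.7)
My plan is to realize $\phi$ explicitly as a weighted blow-up and analyze each of the four affine charts. Since $V$ is $cA/r$, Table \ref{taA} (case A1 for discrepancy $\tfrac{1}{r}$) shows $\phi$ is a weighted blow-up with weight $\frac{1}{r}(b,c,1,r)$ subject to $b+c=r(m+1)$, $b\equiv\beta\pmod{r}$, and $b,c>0$. The four affine charts $U'_x,U'_y,U'_z,U'_u$ and their quotient structures come from Lemma \ref{wbup}, and the local equations from the substitution of Corollary \ref{wbcc}, after dividing by the exceptional factor.

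On $U'_z$: with $a_z=b_z=1$, both actions $\tau,\tau'$ in Lemma \ref{wbup} are trivial, so $U'_z\subset\A^4$. Substituting $x=x_2z_2^{b/r}$, $y=y_2z_2^{c/r}$, $u=u_2z_2$, $z=z_2^{1/r}$ into $xy+uf(z,u)$ and invoking the $G$-invariance of $f$, the equation factors as $z_2^{m+1}(x_2y_2+u_2f''(z_2,u_2))$, giving $U'_z=(x_2y_2+u_2f''(z_2,u_2)=0)\subset\A^4$. If $m=k$, the coefficient of $z^{rk}$ in $f$ yields $f''(0,0)=1$; the analytic substitution $\tilde{u}_2=u_2f''$ turns the equation into $x_2y_2+\tilde{u}_2=0$, and a Jacobian check extends smoothness across the chart. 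If $m<k$, then $f''(0,0)=0$ and the origin is the claimed $cA$-point. The inequality $w(f'')<m$ follows because the $z^{rk}$-monomial of $f$ transforms into the $z^{k-m}$-term of $f''$ of $w$-weight $(k-m)/r$; under the standing hypothesis that $Z_1\dashrightarrow Z_2$ is a $\Q$-factorial terminal flop, the case $k\geq m(r+1)$ would force $uf$ to factor into two $G$-invariant components and destroy the $\Q$-factoriality of $Z_1$ or $Z_2$, so $k<m(r+1)$ and $(k-m)/r<m$.

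The charts $U'_x$ and $U'_y$ are handled together: the group $\ang{\tau,\tau'}$ is cyclic of order $b$ on $U'_x$ and $c$ on $U'_y$, and after the analogous substitution the defining equation becomes linear in one of the ambient coordinates (solving for $y_y$ on $U'_x$), so the proper transform is a smooth affine $3$-space and the only singularities are the cyclic quotient points at the origin. This handles the portion of (3) concerning these two charts.

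On $U'_u$: with $b_u=r$ and $a_u=0$, both $\tau,\tau'$ reduce to the original $\frac{1}{r}(\beta,-\beta,1,0)$, so $U'_u\cong\A^4/G$. The parallel substitution gives $U'_u=(x'y'+f_1(z',u)=0)/G$ with $f_1(z,u)=u^{-m}f(zu^{1/r},u)$. To establish $\Q$-factoriality, I reduce to showing that $f_1$ admits no nontrivial analytic factorization at any of its zero loci: such a factorization would pull back through $\phi$ to a factorization of $uf$ in $\Oo_V$ beyond the evident decomposition $u\cdot f$, and the geometry of the two small blow-ups $Z_1,Z_2$ rules this out. I expect this $\Q$-factoriality step to be the main obstacle, since unlike the bookkeeping on the other charts it requires invoking the global flop hypothesis via a careful analysis of how irreducible factors of $f_1$ lift to $V$ and interact with $Z_1$ and $Z_2$.
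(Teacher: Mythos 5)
Your realization of $\phi$ as the weighted blow-up with weight $\frac{1}{r}(b,c,1,r)$ subject to $b+c=r(m+1)$, the chart equations, the cyclic groups of orders $b$ and $c$ on $U'_x$ and $U'_y$, and the treatment of $U'_z$ (including smoothness at the origin when $m=k$ via $f''(0,0)=1$) all agree with the paper's computation, and your reduction of (1) to the irreducibility of $f'=f(zu^{1/r},u)/u^m$ as a $\Z/r\Z$-invariant function is exactly the criterion the paper cites from Koll\'ar. But the proposal has two genuine gaps. The main one is that part (1) is not actually proved: you assert that a factorization of $f'$ ``would pull back through $\phi$ to a factorization of $uf$ in $\Oo_V$'' and explicitly defer this as the main obstacle. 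That descent is the entire content of the paper's argument. Given $f'=f'_1f'_2$ with $f'_l=\sum\xi^{(l)}_{ij}z^{ri}u^j$, one sets $m_l=-\min\{j-i\}$ over the monomials of $f'_l$, uses the fact that every monomial of $f'$ satisfies $j-i\geq -m$ (because $f'=f(zu^{1/r},u)/u^m$) to deduce $m_1+m_2\leq m$, and then checks that $f_l=\sum\xi^{(l)}_{ij}z^{ri}u^{j-i+m_l}$ is a regular invariant function on $V$ with $\phi\st f_l=u^{m_l}f'_l$; hence $f=u^{m-m_1-m_2}f_1f_2$ is reducible, contradicting the $\Q$-factoriality of the chart $U_{1,u}=(x_1y+f(z,u)=0)/G$ of $Z_1$. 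Note that the contradiction must be with the irreducibility of $f$ (which is what $\Q$-factoriality of $Z_1$ controls), not of $uf$: the latter is always reducible, and indeed $V$ itself is not $\Q$-factorial. The nontrivial point you skip is precisely that the factors of $f'$ clear the denominator $u^m$ correctly and descend to regular functions on $V$.

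The second gap is the inequality $w(f'')<m$. Your argument rests on the implication that $k\geq m(r+1)$ forces $f$ to factor into $G$-invariant components; this is false: $f=z^{rk}+u^m$ with $m=1$ is irreducible for every $k$, so no bound of the form $k<m(r+1)$ follows from the $\Q$-factoriality of $Z_1$ or $Z_2$ alone. Tracking only the image $z^{k-m}$ of the monomial $z^{rk}$ would indeed give $w(f'')\leq(k-m)/r<m$ if $k<m(r+1)$ held, but that hypothesis needs a different justification. Writing $f=\sum\sigma_{ij}z^{ri}u^j$, one has $w(f'')<m$ if and only if some monomial satisfies $i+(r+1)j<(r+1)m$; in particular it suffices that the weight-$m$ part of $f$ contain a monomial with $i>0$, and such information has to be extracted from the specific shape of $f$ arising in the singular-flop situation of Lemma \ref{flp}, not from reducibility considerations. (The paper's printed justification of this inequality is itself truncated, but the reason you give does not work.) Finally, for (3) you only examine $U'_x$ and $U'_y$; the singular points of $U'_u$ and of $U'_z$ away from the origin of the chart are left unaddressed.
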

\begin{proof}
	From Table \ref{ca} we know that $V'\rightarrow V$ is a weighted blow-up with the weight $\frac{1}{r}(b,c,1,r)$ with $b+c=r(m+1)$.
	The statement (3) follows from direct computations. One can compute that 
	\[ U'_u\cong (xy+f'(z,u)=0)\subset\A^4_{(x,y,z,u)}/\frac{1}{r}(\beta,-\beta,1,0)\] with $f'=f(zu^{\frac{1}{r}},u)/u^m$.
	By \cite[2.2.7]{ko} we know that $U'_u$ is $\Q$-factorial if and only if $f'$ is irreducible as an $\Z/r\Z$-invariant function.
	If $U'_u$ is not $\Q$-factorial, then $f'=f'_1f'_2$ for some non-unit $\Z/r\Z$-invariant functions $f'_1$ and $f'_2$. Write
	\[f'_1=\sum_{(i,j)\in\Z^2_{\geq0}}\xi_{i,j}z^{ri}u^j\mbox{ and }f'_2=\sum_{(i,j)\in\Z^2_{\geq0}}\zeta_{i,j}z^{ri}u^j.\] Let
	\[m_1=-\min_{\xi_{i,j}\neq 0}\{j-i\},\quad m_2=-\min_{\zeta_{i,j}\neq 0}\{j-i\}.\] Notice that if $f'=\sum_{(i,j)}\sigma_{i,j}z^{ri}u^j$,
	then $j+m-i\geq 0$ if $\sigma_{i,j}\neq 0$ because $f'=f(zu^{\frac{1}{r}},u)/u^m$. Hence we have the relation $m_1+m_2\leq m$. Now let
	\[f_1=\sum_{(i,j)}\xi_{ij}z^{ir}u^{j-i+m_1}\mbox{ and }f_2=\sum_{(i,j)}\zeta_{ij}z^{ir}u^{j-i+m_2}.\] Then $f_1$ and $f_2$ can be viewed as
	$\Z/r\Z$-invariant functions on $V$ such that $\phi\st f_i=u^{m_i}f'_i$ for $i=1$, $2$. This means that $f=u^{m-m_1-m_2}f_1f_2$ is not
	irreducible. Nevertheless, the chart $U_{1,u}\subset Z_1$ is defined by $x_1y+f(z,u)=0$ and has $\Q$-factorial singularities.
	This leads to a contradiction. Thus $U'_u$ is $\Q$-factorial.\par
	The chart $U'_z$ is defined by $xy+uf''(z,u)=0$ and one has that $f''$. When $m=k$, we know that $f''$ is a unit along $u=0$,
	hence $U'_z$ is smooth. When $m<k$ one can see that the origin of $U'_z$ is of the form
	$xy+uf'(z,u)$. This is a non-$\Q$-factorial $cA$ singularity.
\end{proof}

From the construction in \cite{me2}, for any strict $w$-morphism $V'\rightarrow V$ we have the following diagram
\[\vc{\xymatrix{ Z'_1\ar[d] & V''_1 \ar@{-->}[l]\ar[rd] & & V''_2\ar@{-->}[r] \ar[ld] & Z'_2\ar[d] \\
	Z_1\ar[rrd] & & V'\ar[d] & & Z_2\ar[lld] \\ & & V & & }}.\]
where $V''_i\rightarrow V'$ is a $\Q$-factorization and $V''_i\dashrightarrow Z'_i$ is a composition of flips for $i=1$, $2$. By Lemma \ref{v1},
we know that if $m=k$, then $V''_1=V'=V''_2$. Otherwise $V''_1\dashrightarrow V''_2$ is a flop over the singularity $xy+uf''(z,u)=0$.\par
First we discuss the factorization of $V''_1\dashrightarrow Z'_1$. If $m=k$, then $V''_1=V'$ is covered by four affine charts
$U'_x$, $U'_y$, $U'_z$ and $U'_u$. The origin of $U'_x$ and $U'_y$ are cyclic quotient points and all other singular points are contained 
in $U'_u$. When $m<k$ the chart $U'_z$ has a non-$\Q$-factorial point and there are two chart $U''_{1,x}$ and $U''_{1,u}$ over this point.
We fix the following notation for the latter discussion.\\
\begin{tabular}{rl}
$\bullet$ & $\bar{U}''_1=U'_x=(y''_1+u''_1f''_1(x''_1,z''_1,u''_1)=0)\subset\A^4_{(x''_1,y''_1,z''_1,u''_1)}/\frac{1}{b}(b-r,c,1,r)$ \\
	& with $x={x''_1}^{\frac{b}{r}}$, $y=y''_1{x''_1}^{\frac{c}{r}}$, $z=z''_1{x''_1}^{\frac{1}{r}}$, $u=u''_1x''_1$
	and $f''_1=f(z''_1{x''_1}^{\frac{1}{r}},u''_1x''_1)/{x''_1}^m$.\\
$\bullet$ & $\bar{U}''_2=U'_y=(x''_2+u''_2f''_2(y''_2,z''_2,u''_2)=0)\subset\A^4_{(x''_2,y''_2,z''_2,u''_2)}/\frac{1}{c}(b,c-r,1,r)$ \\
	& with $x=x''_2{y''_2}^{\frac{b}{r}}$, $y={y''_2}^{\frac{c}{r}}$, $z=z''_2{y''_2}^{\frac{1}{r}}$, $u=u''_2y''_2$
	and $f''_2=f(z''_2{y''_2}^{\frac{1}{r}},u''_2y''_2)/{y''_2}^m$.\\
$\bullet$ & $\bar{U}''_3=U'_u=(x''_3y''_3+f''_3(z''_3,u''_3)=0)\subset\A^4_{(x''_3,y''_3,z''_3,u''_3)}/\frac{1}{r}(b,c,1,r)$ \\
	& with $x={x''_3}{u''_3}^{\frac{b}{r}}$, $y=y''_3{u''_3}^{\frac{c}{r}}$, $z=z''_3{u''_3}^{\frac{1}{r}}$, $u=u''_3$
	and $f''_3=f(z''_3{u''_3}^{\frac{1}{r}},u''_3)/{u''_3}^m$.\\
\end{tabular}\\
If $m=k$, define $\bar{U}''_4=U'_z$. In this case this chart is smooth. When $m<k$ we define\\
\begin{tabular}{rl}
$\bullet$ & $\bar{U}''_4=U''_{1,x}=(y''_4+u''_4f''_4(x''_4,z''_4,u''_4)=0)\subset\A^4_{(x''_4,y''_4,z''_4,u''_4)}$ \\
	& with $x={x''_4}{z''_4}^{\frac{b}{r}}$, $y=y''_4{z''_4}^{\frac{c}{r}}$, $z={z''_4}^{\frac{1}{r}}$, $u=x''_4u''_4z''_4$
	and $f''_4=f({z''_4}^{\frac{1}{r}},x''_4u''_4z''_4)/{z''_4}^m$.\\
$\bullet$ & $\bar{U}''_5=U''_{1,u}=(x''_5y''_5+f''_5(z''_5,u''_5)=0)\subset\A^4_{(x''_5,y''_5,z''_5,u''_5)}$ \\
	& with $x={x''_5}u''_5{z''_5}^{\frac{b}{r}}$, $y=y''_5{z''_5}^{\frac{c}{r}}$, $z={z''_5}^{\frac{1}{r}}$, $u=u''_5z''_5$
	and $f''_5=f({z''_5}^{\frac{1}{r}},u''_5z''_5)/{z''_5}^m$.\\
\end{tabular}
\par
On the other hand, we will see later that it is enough to assume that $Z'_1\rightarrow Z_1$ is a divisorial contraction over the origin
of $U_{1,u}\subset Z_1$. This means that $Z'_1$ is covered by five affine charts $U_{1,x}$, $U'_{1,x}$, $U'_{1,y}$, $U'_{1,z}$ and $U'_{1,u}$
where the latter four charts correspond to the weighted blow-up $Z'_1\rightarrow Z_1$. Notice that $exc(Z'_1\rightarrow Z_1)$ and
$exc(V'\rightarrow V)$ are the same divisor since $V''_1$ and $Z'_1$ are isomorphic in codimension one. One can see that $Z'_1\rightarrow Z_1$
is a weighted blow-up with the weight $w'(x_1,y,z,u)=\frac{1}{r}(b-r,c,1,r)$. Again we use the following notation:\\
\begin{tabular}{rl}
$\bullet$ & $\bar{U}'_1=U'_{1,x}=(y'_1+f'_1(x'_1,z'_1,u'_1)=0)\subset\A^4_{(x'_1,y'_1,z'_1,u'_1)}/\frac{1}{b-r}(b-2r,c,1,r)$ \\
	& with $x={x'_1}^{\frac{b}{r}}u'_1$, $y=y'_1{x'_1}^{\frac{c}{r}}$, $z=z'_1{x'_1}^{\frac{1}{r}}$, $u=u'_1x'_1$
	and $f'_1=f(z'_1{x'_1}^{\frac{1}{r}},u'_1x'_1)/{x'_1}^m$.\\
$\bullet$ & $\bar{U}'_2=U'_{1,y}=(x'_2+f'_2(y'_2,z'_2,u'_2)=0)\subset\A^4_{(x'_2,y'_2,z'_2,u'_2)}/\frac{1}{c}(b-r,c-r,1,r)$ \\
	& with $x=x'_2{y'_2}^{\frac{b}{r}}u'_2$, $y={y'_2}^{\frac{c}{r}}$, $z=z'_2{y'_2}^{\frac{1}{r}}$, $u=u'_2y'_2$
	and $f'_2=f(z'_2{y'_2}^{\frac{1}{r}},u'_2y'_2)/{y'_2}^m$.\\
$\bullet$ & $\bar{U}'_3=U'_{1,u}=(x'_3y'_3+f'_3(z'_3,u'_3)=0)\subset\A^4_{(x'_3,y'_3,z'_3,u'_3)}/\frac{1}{r}(b-r,c,1,r)$ \\
	& with $x={x'_3}{u'_3}^{\frac{b}{r}}$, $y=y'_3{u'_3}^{\frac{c}{r}}$, $z=z'_3{u'_3}^{\frac{1}{r}}$, $u=u'_3$
	and $f'_3=f(z'_3{u'_3}^{\frac{1}{r}},u'_3)/{u'_3}^m$.\\
$\bullet$ & $\bar{U}'_4=U_{1,x}=(y'_4+u'_4f'_4(z'_4,u'_4)=0)\subset\A^4_{(x'_4,y'_4,z'_4,u'_4)}/\frac{1}{r}(\beta,-\beta,1,-\beta)$ \\
	& with $x=x'_4$, $y=y'_4$, $z=z'_4$, $u=u'_4x'_4$
	and $f'_4=f(z'_4,u'_4x'_4)$.\\
$\bullet$ & $\bar{U}'_5=U'_{1,z}=(x'_5y'_5+f'_5(z'_5,u'_5)=0)\subset\A^4_{(x'_5,y'_5,z'_5,u'_5)}$ \\
	& with $x={x'_5}{z'_5}^{\frac{b}{r}}u'_5$, $y=y'_5{z'_5}^{\frac{c}{r}}$, $z={z'_5}^{\frac{1}{r}}$, $u=u'_5z'_5$
	and $f'_5=f({z'_5}^{\frac{1}{r}},u'_5z'_5)/{z'_5}^m$.\\
\end{tabular}\\

\begin{lem}\label{flip}
	Assume that $Z'_1\rightarrow Z_1$ is a divisorial contraction over the origin of $U_{1,u}$, then
	\begin{enumerate}[(1)]
	\item $gdep(Z'_1)=gdep(V''_1)-1$.
	\item All the singular points on the non-isomorphic loci of $V''_1\dashrightarrow Z'_1$
		on both $V''_1$ and $Z'_1$ are cyclic quotient points.
	\item The flip $V''_1\dashrightarrow Z'_1$ is of type IA in the convention of \cite[Theorem 2.2]{km}. 
	\end{enumerate}  
\end{lem}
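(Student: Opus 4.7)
The plan is to extract all three conclusions from a careful chart-by-chart comparison of $V''_1$ and $Z'_1$, whose local structure is laid out explicitly in the charts $\bar{U}''_1,\dots,\bar{U}''_5$ and $\bar{U}'_1,\dots,\bar{U}'_5$ above. Since both threefolds sit over $V$ via explicit (weighted) blow-ups, the birational map $V''_1\dashrightarrow Z'_1$ can be matched on corresponding affine charts by expressing the two local coordinate systems through the common ambient variables $x,y,z,u$ on $V$.

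First I would identify the non-isomorphic locus. Comparing the two tables of coordinate changes, the chart $\bar{U}'_4 = U_{1,x}$ on $Z'_1$ covers the preimage in $V''_1$ of a neighborhood of the exceptional divisor where $\bar{U}''_3$ meets $u''_3=0$ in a controlled way, while the other matching pairs $(\bar{U}''_i,\bar{U}'_i)$ for $i=1,2,3$ (and $i=5$ when $m<k$) glue compatibly on common open sets. A direct check then shows the isomorphism fails exactly along a single curve: on $V''_1$ this is a curve joining the origins of $\bar{U}''_1$ and $\bar{U}''_2$, while on $Z'_1$ the corresponding curve joins the origins of $\bar{U}'_1$ and $\bar{U}'_2$. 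These are the flipping and flipped curves.

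For part~(2), the singular points of $V''_1$ (resp.\ $Z'_1$) lying on this curve are precisely the origins of $\bar{U}''_1, \bar{U}''_2$ (resp.\ $\bar{U}'_1, \bar{U}'_2$), and from the embeddings listed above these are cyclic quotient points of indices $b, c$ (resp.\ $b-r, c$). For part~(3), matching the two cyclic quotient endpoints of each flipping curve together with their weights read off from $(b,c,r,\beta)$ against the list in \cite[Theorem 2.2]{km} identifies the flip as type IA; the compatibility of the weights is automatic because all the numerical data descends from the single $cA/r$ ambient $V$. For part~(1), Proposition \ref{ied}~(2-1) gives $gdep(Z'_1)\leq gdep(V''_1)-1$; the reverse inequality follows because the non-isomorphic locus only contains the cyclic quotient points just enumerated, whose local invariants are fully determined by $(b,c,r)$, so the loss of $gdep$ across the flip is exactly one.

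The main obstacle is to execute the chart-matching cleanly in both cases $m=k$ and $m<k$: the latter introduces the extra non-$\Q$-factorial $cA$ point identified in Lemma \ref{v1} and the auxiliary chart $\bar{U}''_5$, which must be tracked through the flip. A secondary issue is ruling out hidden singular points of $V''_1$ or $Z'_1$ lying on the flipping locus outside the four origins already identified; this should follow from a direct Jacobian-type computation on the explicit defining equations of each $\bar{U}''_i$ and $\bar{U}'_i$.
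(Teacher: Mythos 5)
Your overall strategy --- read off the non-isomorphic locus and the singular points on it from the explicit charts $\bar{U}''_i$, $\bar{U}'_i$, then deduce (1)--(3) --- is the same as the paper's, but you have misidentified the non-isomorphic locus, and this error breaks all three conclusions. The paper observes that $\bar{U}'_i\cong\bar{U}''_i$ for $i=2,3$ (and $i=5$ when $m<k$), so the origin of $\bar{U}''_2$ (index $c$) lies in the \emph{isomorphic} locus and cannot sit on the flipping curve. The correct picture is: on $V''_1$ the only singular point of the non-isomorphic locus is the origin of $\bar{U}''_1$, a cyclic quotient point of index $b$; on $Z'_1$ the singular points are the origins of $\bar{U}'_1$ \emph{and} $\bar{U}'_4$, cyclic quotient points of indices $b-r$ and $r$. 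The chart $\bar{U}'_4=U_{1,x}$ is the chart of $Z_1$ away from the center of $Z'_1\rightarrow Z_1$, and the flipped curve (the proper transform of the exceptional curve of $Z_1\rightarrow V$) passes through its origin; you omitted this point entirely and replaced it with the origin of $\bar{U}'_2$, which is not on the flipped curve.

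The consequences are concrete. For (1), the generalized depth drop is computed from the indices of these points: $gdep(V''_1)-gdep(Z'_1)=(b-1)-\bigl((b-r-1)+(r-1)\bigr)=1$. With your enumeration the same computation gives $\bigl((b-1)+(c-1)\bigr)-\bigl((b-r-1)+(c-1)\bigr)=r$, which is wrong unless $r=1$; your appeal to ``the local invariants are fully determined by $(b,c,r)$'' does not repair this, since no actual computation is performed. For (3), the identification as type IA in \cite[Theorem 2.2]{km} rests precisely on the fact that the flipping curve on $V''_1$ contains exactly \emph{one} non-Gorenstein point (a cyclic quotient point) together with the $A$-type general elephant inherited from the $cA/r$ point; a flipping curve through two quotient singularities would land in a different class. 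Only (2) survives your misidentification, and then only by accident, since the points you name happen also to be cyclic quotient points. To fix the proof you need to verify the chart isomorphisms $\bar{U}'_i\cong\bar{U}''_i$ for $i=2,3,5$, note that $\bar{U}''_4$ is smooth (resp. handled by $\bar{U}''_5$), and locate the flipped curve on $Z'_1$ through the origins of $\bar{U}'_1$ and $\bar{U}'_4$.
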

\begin{proof}
	It is easy to see that $\bar{U}'_i\cong\bar{U}''_i$ for $i=2$, $3$ and $i=5$ if $m<k$. Since $\bar{U}'_4$ is smooth,
	the only singular point which contained in the non-isomorphic locus of $V''_1\dashrightarrow Z'_1$ on $V''_1$ is the
	origin of $\bar{U}''_1$. This point is a cyclic quotient point of index $b$, so it has generalized depth $b-1$.\par
	On the other hand, singular points on the non-isomorphic locus of $V''_1\rightarrow Z'_1$ on $Z'_1$ are origins of $\bar{U}'_1$
	and $\bar{U}'_4$. They are cyclic quotient points of indices $b-r$ and $r$ respectively, One can see that
	\[ gdep(V''_1)-gdep(Z'_1)=b-1-(b-r-1+r-1)=1.\]\par
	Now we know that that the flipping curve contains only one singular point which is a cyclic quotient point. Also the general elephant
	of the flip is of $A$-type since it coming from the factorization of a flop over a $cA/r$ point. Thus the flip is of type IA by
	the classification \cite[Theorem 2.2]{km}.
\end{proof}

\begin{lem}\label{iafp}
	Assume that $\vc{\xymatrix@R=0.1cm{T\ar[rd] & & \ar[ld] T'\\ & V & }}$ is a flip of type IA. Assume that
	\[\vc{\xymatrix{ S_1\ar@{-->}[r]\ar[d] & ...\ar@{-->}[r] & S_k\ar[d] \\ T\ar@{-->}[rr] & & T'}}\]
	is the factorization in Theorem \ref{chf}. Then 
	\begin{enumerate}[(1)]
	\item If $S_1\dashrightarrow S_2$ is a flop, then it is a Gorenstein flop.
	\item If $S_1\dashrightarrow S_2$ is a flip, or $S_1\dashrightarrow S_2$ is a flop and $S_2\dashrightarrow S_3$ is a flip, then
		the flip is of type IA.
	\end{enumerate}
\end{lem}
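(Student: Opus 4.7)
The plan is to combine the explicit local structure of type IA flips from \cite[Theorem 2.2]{km} with the classification of $w$-morphisms over the unique non-Gorenstein point on the flipping curve. For a type IA flip $T\dashrightarrow T'$, the flipping curve $C\subset T$ passes through exactly one non-Gorenstein point $P$, which is a cyclic quotient singularity of type $\frac{1}{r}(a,-a,1)$, and the general elephant of $T$ near $P$ has a Du Val singularity of type $A$. By Remark \ref{chfr}(2) and Corollary \ref{cawmo}, the morphism $S_1\to T$ in the Chen--Hacon diagram can be chosen to be any $w$-morphism over $P$; I would select a weighted blow-up compatible with the local type IA model. A local coordinate analysis via Lemma \ref{wbup} and Corollary \ref{wbcc} then shows that the non-Gorenstein points of $S_1$ lying over $P$ are at most two cyclic quotient points of indices strictly smaller than $r$ (located at the origins of $U_x$ and $U_y$), while the strict transform $C_{S_1}$ of $C$ meets the exceptional divisor $E=exc(S_1\to T)$ at a single Gorenstein point in the chart $U_u$.

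For part (1), if $S_1\dashrightarrow S_2$ is a flop, Remark \ref{chfr}(1) forces the flopping curve to project onto $C$; hence it coincides with $C_{S_1}$, which lies entirely in the Gorenstein locus of $S_1$ by the local analysis. So the flop is at a Gorenstein $cDV$ point, as desired. For part (2), I look at the relevant flip step: either it is $S_1\dashrightarrow S_2$, or (after a Gorenstein flop, which by \cite[Theorem 2.4]{ko2} preserves the singularity type analytically) it is $S_2\dashrightarrow S_3$. In either case the flipping curve maps onto $C$, and the local analysis shows that it passes through at most one non-Gorenstein point on the source, namely one of the smaller-index cyclic quotients described above. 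By Remark \ref{rge}(1) and \cite[Lemma 2.7]{ch}, the general elephant of $T$ pulls back to a partial resolution of the $A$-type Du Val elephant near $P$; since partial resolutions of $A$-type Du Val singularities are again $A$-type, the restricted elephant on the source is still of type $A$. Having a single cyclic quotient point on the flipping curve together with an $A$-type general elephant allows us to conclude from \cite[Theorem 2.2]{km} that the flip is again of type IA.

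The main technical obstacle will be the explicit verification that $C_{S_1}$ avoids the two non-Gorenstein cyclic quotients on $E$ and meets the Gorenstein locus of $E$ transversally. This amounts to a direct calculation in the local coordinates of the charts $U_x$, $U_y$, $U_u$ of the weighted blow-up, using the explicit form of the type IA flipping curve from \cite{km} together with the chosen $w$-morphism weight $\frac{1}{r}(a,r-a,1,r)$. The flexibility in the choice of $w$-morphism afforded by Remark \ref{chfr}(2) is essential: it lets me align the weighted blow-up with the direction of the flipping curve dictated by the local type IA model, ensuring that $C_{S_1}$ lands in the Gorenstein part of $E$ rather than passing through the residual cyclic quotients.
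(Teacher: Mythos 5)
Your plan rests on an unconditional local claim — that the strict transform $C_{S_1}$ of the flipping curve always meets $E=exc(S_1\rightarrow T)$ at a Gorenstein point — and that claim is false. By Benveniste's bound $0>K_{S_1}.C_{S_1}>-1$, the first step $S_1\dashrightarrow S_2$ is a flip precisely when $C_{S_1}$ passes through a non-Gorenstein point, and part (2) of the lemma explicitly allows this to happen; so no chart computation can show that $C_{S_1}$ avoids the residual quotient points in general. What must be proved is the conditional statement, and the paper does this with no coordinates at all: every non-Gorenstein point of $S_1$ lies on the Du Val section $H_{S_1}\in|-K_{S_1}|$, and $C_{S_1}\not\subset H_{S_1}$, so if $C_{S_1}$ met a non-Gorenstein point then $H_{S_1}.C_{S_1}>0$, i.e.\ $K_{S_1}.C_{S_1}<0$, forcing a flip rather than a flop. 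Note also that Remark \ref{chfr}(2), which you invoke to ``align'' the $w$-morphism with the direction of $C$, explicitly excludes $cA/r$ points, and the non-Gorenstein point on the flipping curve of a type IA flip is a $cA/r$ point (not necessarily a cyclic quotient $\frac{1}{r}(a,-a,1)$ — that is a further over-restriction in your setup); the factorization of Theorem \ref{chf} is moreover a fixed construction, not one you may redesign to make the conclusion hold.

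The more serious gap is in part (2) when $S_1\dashrightarrow S_2$ is a flop and $S_2\dashrightarrow S_3$ is a flip. The flipping curve $\Gamma$ of $S_2\dashrightarrow S_3$ is contained in $E_{S_2}$ (all $K_{S_2}$-negative curves over $V$ are), and it need not dominate $C$, so your reduction ``the flipping curve maps onto $C$'' does not apply to this step. More importantly, to invoke the classification of \cite[Theorem 2.2]{km} one must know that $\Gamma$ is not contained in the Du Val section $H_{S_2}$; this is the genuinely hard point and your proposal contains no argument for it. The paper's proof assumes $\Gamma\subset H_{S_2}\cap E_{S_2}$, observes that $\Gamma$ is then disjoint from the flopped curve $C'_{S_2}$, writes $\Gamma\equiv\lambda B+\mu C'_{S_2}$ for an auxiliary curve $B\subset E_{S_2}$ meeting $C'_{S_2}$, and uses the negativity lemma (Corollary \ref{ntl2}) applied to the $K_{S_1}+E$-anti-flip to get $E_{S_2}.B_{S_2}>E.B_{S_1}$ and hence $\mu>0$, contradicting the extremality of $\Gamma$ in $NE(S_2/V)$. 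Without a substitute for this cone/negativity argument, part (2) is not established.
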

\begin{proof}
	Let $C\subset T$ be the flipping curve. Since $T\dashrightarrow T'$ is a type IA flip, there is exactly one non-Gorenstein point which
	is contained in $C$, and this point is a $cA/r$ point. From the construction we know that $S_1\rightarrow T$ is a $w$-morphism
	over this $cA/r$ point. Also there exists a Du Val section $H\in|-K_T|$
	such that $C\not\subset H$. We know that $H_{S_1}\in|-K_{S_1}|$ by \cite[Lemma 2.7 (2)]{ch}. Hence all non-Gorenstein point of
	$S_1$ is contained in $H_{S_1}$. Now assume that $C_{S_1}$ contains a non-Gorenstein point, then $H_{S_1}$ intersects $C_{S_1}$
	non-trivially. Since $C_{S_1}\not\subset H_{S_1}$, we know that $H_{S_1}.C_{S_1}>0$, hence $C_{S_1}$ is a $K_{S_1}$-negative curve and
	so $S_1\dashrightarrow S_2$ is a flip. Thus if $S_1\dashrightarrow S_2$ is a flop, then it is a Gorenstein flop.\par
	If $S_1\dashrightarrow S_2$ is a flip, then $C_{S_1}$ passes through a non-Gorenstein point since $0>K_{S_1}.C_{S_1}>-1$ by
	\cite[Theorem 0]{b}. Since $C\subset T$ passes through exactly one non-Gorenstein point, $C_{S_1}$ passes through exactly one
	non-Gorenstein point and this point is contained in $E=exc(S_1\rightarrow T)$. Since $S_1\rightarrow T$ is a $w$-morphism over a $cA/r$
	point, an easy computation shows that $E$ contains only $cA/r$ singularities. Also we know that $H_{S_1}$ is a Du Val
	section which do not contain $C_{S_1}$. Thus $S_1\dashrightarrow S_2$ is also a flip of type IA by the classification
	\cite[Theorem 2.2]{km}.\par
	Now assume that $S_1\dashrightarrow S_2$ is a flop and $S_2\dashrightarrow S_3$ is a flip. Then the flipping curve $\Gamma$ of
	$S_2\dashrightarrow S_3$ is contained in $E_{S_2}$ since all $K_{S_2}$-negative curves over $V$ is contained in $E_{S_2}$. Since
	$S_1\dashrightarrow S_2$ is a flop, we know that $E_{S_2}$ contains only $cA/r$ singularities \cite[Theorem 2.4]{ko2} and
	$H_{S_2}\in|-K_{S_2}|$ is also a Du Val section. If $\Gamma\not\subset H_{S_2}$, then $S_2\dashrightarrow S_3$ is a flip of type IA by
	the classification \cite[Theorem 2.2]{km}.\par
	Thus we only need to prove that	$\Gamma\not\subset H_{S_2}$. Assume that $\Gamma\subset H_{S_2}\cap E_{S_2}$. Then since $H_{S_1}$ do not
	intersects $C_{S_1}$ (otherwise $C_{S_1}$ is a $K_{S_1}$-negative curve and then $S_1\dashrightarrow S_2$ is not a flop),
	we know that $\Gamma$ do not intersects the flopping curve $C'_{S_2}\subset S_2$. Let $B\subset E_{S_2}$ be a curve which intersects 
	$C'_{S_2}$ non-trivially. Then $\Gamma_{S_1}\equiv \lambda B_{S_1}$ for some $\lambda\in\Q$ since the both curves are contracted by
	$S_1\rightarrow T$. Hence for all divisor $D\subset S_2$ such that $D.C'_{S_2}=0$, we know that $D.\Gamma=\lambda D.B$.
	On the other hand, $S_1\dashrightarrow S_2$ is a $K_{S_1}+E$-anti-flip since $C_{S_1}$ is not contained in $E$.
	By Corollary \ref{ntl2} we know that $(K_{S_2}+E_{S_2}).B_{S_2}>(K_{S_1}+E).B_{S_1}$, hence $E_{S_2}.B_{S_2}>E.B_{S_1}$.
	Now we know that $\rho(S_2/V)=2$ and $B$ is not numerically equivalent to a multiple of $C'_{S_2}$, hence we may write
	$\Gamma\equiv \lambda B+\mu C'_{S_2}$ for some $\mu\in\Q$. Since
	\[ E_{S_2}.\Gamma=E_{S_1}.\Gamma_{S_1}=\lambda E.B_{S_1}<\lambda E_{S_2}.B_{S_2}\]
	and $E_{S_2}.C'_{S_2}<0$, we know that $\mu>0$. Hence $\Gamma$ is not contained in the boundary of the relative effective cone $NE(S_2/V)$.
	Thus $\Gamma$ can not be the flipping curve of $S_2\dashrightarrow S_3$. This leads to a contradiction.
\end{proof}

\begin{lem}\label{fld}
	Assume that $T\dashrightarrow T'$ is a three-dimensional terminal $\Q$-factorial flip satisfies conditions (1)-(3) of Lemma \ref{flip}.
	Then the factorization in Theorem \ref{chf} for $T\dashrightarrow T'$ is one of the following diagram: 
	\begin{enumerate}[(1)]
	\item \[\vc{\xymatrix{ S_1\ar@{-->}[r]\ar[d] & S_2\ar[d] \\ T\ar@{-->}[r] & T'}}\]
		where $S_1\dashrightarrow S_2$ is a flip which also satisfies conditions (1)-(3) of Lemma \ref{flip} and
		$S_2\rightarrow T'$ is a strict $w$-morphism.
	\item \[\vc{\xymatrix{ S_1\ar@{-->}[r]\ar[d] & S_2\ar@{-->}[r] & S_3\ar[d] \\ T\ar@{-->}[rr] & & T'}}\]
		where $S_1\dashrightarrow S_2$ is a smooth flop and $S_2\dashrightarrow S_3$ is a flip which also satisfies conditions (1)-(3)
		of Lemma \ref{flip} and $S_3\rightarrow T'$ is a strict $w$-morphism.
	\item \[\vc{\xymatrix{ S_1\ar@{-->}[r]\ar[d] & S_2\ar[d] \\ T\ar@{-->}[r] & T'}}\] where $S_1\dashrightarrow S_2$ is a smooth flop
		and $S_2=Bl_{C'}T'$ where $C'$ is a smooth curve contained in the smooth locus of $T'$.
	\end{enumerate}
\end{lem}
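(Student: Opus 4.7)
My plan is to apply Theorem \ref{chf} to the flip $T \dashrightarrow T'$ to produce a factorization
\[ T \leftarrow S_1 \dashrightarrow S_2 \dashrightarrow \cdots \dashrightarrow S_k \to T' \]
in which $S_1 \to T$ is a $w$-morphism, $S_1 \dashrightarrow S_2$ is a flip or flop, each subsequent $S_i \dashrightarrow S_{i+1}$ is a flip, and $S_k \to T'$ is a divisorial contraction. By condition (3) of Lemma \ref{flip}, the flipping curve carries a unique non-Gorenstein $cA/r$ point $P \in T$, and by condition (2) this is in fact a cyclic quotient point. Since Proposition \ref{ied} has been established, $\aast_{\Ss_n}$ holds for all $n$, and Corollary \ref{cawmo} lets me arrange $S_1 \to T$ to be a strict $w$-morphism over $P$. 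Lemma \ref{iafp} then tells me that the flop $S_1 \dashrightarrow S_2$, if present, is Gorenstein, and that every flip appearing further in the sequence is again of type IA.

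The next step is generalized-depth bookkeeping. Condition (1) of Lemma \ref{flip} gives $gdep(T) = gdep(T') + 1$, hence $gdep(S_1) = gdep(T')$. By Proposition \ref{ied}, flops preserve $gdep$, every flip strictly decreases it by at least one, and the final contraction satisfies $gdep(T') \leq gdep(S_k) + 1$ or $gdep(T') \leq gdep(S_k)$ according to whether it contracts to a point or to a curve. These inequalities force at most one flip in the sequence, and whenever a flip is present the final contraction must hit a point with $gdep(T') = gdep(S_k) + 1$, which forces $S_k \to T'$ to be a strict $w$-morphism. Separating by whether $S_1 \dashrightarrow S_2$ is a flip or a Gorenstein flop, and in the flop case whether it is followed by another step, yields exactly three shapes for the factorization, corresponding to cases (1), (2), and (3) of the statement.

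The remaining task is the identification in case (3): I need $S_2 \to T'$ to be the standard blow-up $Bl_{C'}T'$ with $C'$ a smooth curve in the smooth locus of $T'$. Here I would analyze the exceptional divisor $E_{S_2}$, namely the proper transform of $E := exc(S_1 \to T)$ through the Gorenstein flop. Using the explicit classification of strict $w$-morphisms over cyclic quotient $cA/r$ points from Table \ref{taA}, the divisor $E$ is described as a weighted projective plane quotient whose singular structure is dictated by the weights; since a Gorenstein flop does not affect non-Gorenstein points and the flopping curve lies in the Gorenstein locus of $S_1$, $E_{S_2}$ inherits the same singular structure. Combined with the preservation of $gdep$ by $S_2 \to T'$ and the classification of divisorial contractions to curves on terminal threefolds, this forces $C'$ to lie in the smooth locus of $T'$ and $S_2 \to T'$ to be the standard smooth blow-up. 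The main obstacle is precisely this last identification: one must verify that the flopping curve of $S_1 \dashrightarrow S_2$ is arranged so that $E_{S_2}$ is actually smooth along the curve to be contracted, which requires a careful local computation in the charts described in Section \ref{swu}, leveraging both the cyclic-quotient hypothesis on $P$ and the type IA structure.
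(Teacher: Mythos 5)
Your skeleton --- apply Theorem \ref{chf}, do generalized-depth bookkeeping via Proposition \ref{ied}, and use Lemma \ref{iafp} to control the type of the intermediate flips and flops --- is exactly the paper's, and that part is sound. However, your case analysis does not close. The depth inequalities show there is at most one flip and that, when a flip occurs, the last contraction must be a strict $w$-morphism to a point; but when no flip occurs they leave open the possibility that $S_1\dashrightarrow S_2$ is a flop and $S_2\rightarrow T'$ is a divisorial contraction to a \emph{point} with $gdep(S_2)=gdep(T')$ (hence not a strict $w$-morphism), which is none of your three shapes. The paper excludes this with a structural fact about the Chen--Hacon factorization, quoted in the proof of Proposition \ref{ied} from \cite[Remark 3.4]{ch}: for a flip $T\dashrightarrow T'$, either some intermediate step $S_i\dashrightarrow S_{i+1}$ is a flip, or $S_k\rightarrow T'$ is a divisorial contraction to a curve. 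You need this input; nothing in your second paragraph substitutes for it.

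Second, your treatment of case (3) replaces a soft argument by an explicit chart computation of $E_{S_2}$ that you yourself flag as the main obstacle and do not carry out, and which moreover presupposes that $S_2\rightarrow T'$ contracts to a curve. The paper's route: once $S_2\rightarrow T'$ is known to contract a divisor to a curve $C'$, Kawamata's theorem (\cite[Theorem 5]{ka}) says no such contraction passes through a cyclic quotient point, while condition (2) of Lemma \ref{flip} says every singular point of $T'$ on the non-isomorphism locus is a cyclic quotient point; hence $C'$ lies in the smooth locus, the equality $gdep(S_2)=gdep(T')$ forces $S_2$ to be smooth over $T'$, and Mori's classification gives $S_2=Bl_{C'}T'$ with $C'$ smooth. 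The same observation --- all singular points in play are cyclic quotient, hence non-Gorenstein --- is what upgrades the ``Gorenstein flop'' of Lemma \ref{iafp} to the \emph{smooth} flop required in cases (2) and (3), and is also what verifies condition (2) of Lemma \ref{flip} for the residual flip in cases (1) and (2); your proposal stops at ``Gorenstein'' and never checks condition (2), so the inductive hypothesis needed for Corollary \ref{fpdi} is not actually established.
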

\begin{proof}
	We have the factorization \[\vc{\xymatrix{ S_1\ar@{-->}[r]\ar[d] & ...\ar@{-->}[r] & S_k\ar[d] \\ T\ar@{-->}[rr] & & T'}}\]
	such that $S_1\dashrightarrow S_2$ is a flip or a flop and $S_i\dashrightarrow S_{i+1}$ is a flip for all $2\leq i\leq k-1$.
	One has that \[ gdep(S_k)\leq gdep(S_1)=gdep(T)-1=gdep(T')\leq gdep(S_k)+1.\]
	If $gdep(S_k)=gdep(S_1)$ then $k=2$ and $S_1\dashrightarrow S_2$ is a flop. By \cite[Remark 3.4]{ch} we know that $S_2\rightarrow T'$
	is a divisorial contraction to a curve. Since singular points on the non-isomorphic locus of $T\dashrightarrow T'$ are all
	cyclic quotient points and there is no divisorial contraction to a curve which passes through a cyclic quotient point \cite[Theorem 5]{ka},
	we know that $S_2\rightarrow T'$ is a divisorial contraction to a curve $C'$ which contained in the smooth locus. Now we also has that
	$gdep(S_2)=gdep(T')$, hence $S_2$ is smooth over $T'$ and so $C'$ is also a smooth curve.\par
	Now assume that $gdep(S_k)<gdep(S_1)$. Then $gdep(S_k)=gdep(S_1)-1=gdep(T')-1$, hence either $k=2$ or $k=3$ and $S_1\dashrightarrow S_2$
	is a flop. Also $S_k\rightarrow T'$ is a $w$-morphism. Since singular points on the non-isomorphic locus of $T\dashrightarrow T'$ are all
	cyclic quotient points, singular points on the exceptional divisor of $S_k\rightarrow T'$ are all cyclic quotient points. 	
	Since flops do not change singularities \cite[Theorem 2.4]{ko2}, we know that the singular points on the non-isomorphic locus of
	$S_i\dashrightarrow S_{i+1}$ are all cyclic quotient points for $i=1$, ..., $k-1$. If $S_1\dashrightarrow S_2$ is a flop, then
	by Lemma \ref{iafp} we know that it is a Gorenstein flop. Since cyclic quotient points are not Gorenstein, we know that
	the flop is in fact a smooth flop. Now assume that $S_i\dashrightarrow S_{i+1}$ is a flip for $i=1$ or $2$. Then again by Lemma \ref{iafp}
	we know that it is a flip of type IA. Hence the conditions (1)-(3) of Lemma \ref{flip} are satisfied for this flip.
\end{proof}
\begin{cor}\label{fpdi}
	Assume that $T\dashrightarrow T'$ is a three-dimensional terminal $\Q$-factorial flip satisfies conditions (1)-(3) of Lemma \ref{flip}.
	Then we have a factorization
	\[ \vc{\xymatrix{\tl{T}\ar[dd]\ar@{-->}[r] & \bar{T'} \ar[d] \\ & \tl{T'} \ar[d] \\ T\ar@{-->}[r] & T' }}\]
	where $\tl{T}\rightarrow T$ and $\tl{T'}\rightarrow T$ are feasible resolutions, $\bar{T'}=Bl_{C'}\tl{T'}$
	where $C'\subset \tl{T'}$ is a smooth curve, and $\tl{T}\dashrightarrow \bar{T'}$ is a sequence of smooth flops.
\end{cor}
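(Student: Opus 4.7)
The approach is to induct on $gdep(T)$ and apply Lemma \ref{fld} at each step, feeding the inner flip (when one exists) to the induction hypothesis. The three cases of Lemma \ref{fld} correspond exactly to the three possible shapes of the desired factorization: Cases (1) and (2) contain a strictly smaller flip $S_i \dashrightarrow S_{i+1}$ of type IA satisfying the hypotheses of the corollary, while Case (3) is the base step, exhibiting $T \dashrightarrow T'$ itself as a smooth flop followed by a blow-up of a smooth curve in the smooth locus.

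Concretely, apply Lemma \ref{fld} to $T \dashrightarrow T'$. In Case (3) we have a strict $w$-morphism $S_1 \to T$, a smooth flop $S_1 \dashrightarrow S_2$, and $S_2 = Bl_{C'} T'$ with $C' \subset T'$ a smooth curve in the smooth locus. Pick any feasible resolution $\tl{T'} \to T'$; since each $w$-morphism in this sequence occurs at a non-Gorenstein point and is an isomorphism near $C'$, the curve $C'$ lifts canonically to a smooth curve in the smooth locus of $\tl{T'}$. Setting $\bar{T'} = Bl_{C'} \tl{T'}$, base change identifies $\bar{T'} \to S_2$ with the same sequence of $w$-morphisms, so $\bar{T'}$ is a feasible resolution of $S_2$; lifting the smooth flop $S_1 \dashrightarrow S_2$ through this feasible resolution (see below) yields $\tl{T} \dashrightarrow \bar{T'}$ with $\tl{T}$ a feasible resolution of $S_1$, hence of $T$ since $S_1 \to T$ is a strict $w$-morphism. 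In Cases (1) and (2) the inner flip $S_i \dashrightarrow S_{i+1}$ is of type IA by Lemma \ref{fld} and satisfies $gdep(S_i) = gdep(T) - 1$, so the induction hypothesis produces feasible resolutions $\tl{S_i}$, $\tl{S_{i+1}}$ and a blow-up $\bar{S_{i+1}}$ fitting into the required diagram. Prepending the strict $w$-morphism $S_1 \to T$ and appending the strict $w$-morphism $S_2 \to T'$ or $S_3 \to T'$ (together with the lifted smooth flop $\tl{S_1} \dashrightarrow \tl{S_2}$ in Case (2)) then assembles the diagram for $T \dashrightarrow T'$.

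The main technical point is the flop-lifting assertion: if $\phi : X \dashrightarrow X'$ is a smooth flop and $Y \to X$ is a strict $w$-morphism at a non-Gorenstein point $P$, then there is a parallel strict $w$-morphism $Y' \to X'$ with $Y \dashrightarrow Y'$ still a smooth flop. The flopping curves of $\phi$ lie in the smooth locus, so $P$ is in the isomorphic locus of $\phi$ and corresponds to a non-Gorenstein point $P' \in X'$ with the same analytic germ; the matching $w$-morphism at $P'$ produces $Y'$, and the strict transforms of the flopping curves still lie in the respective smooth loci of $Y$ and $Y'$, so the induced birational map is again a smooth flop. Iterating along any feasible resolution of $X$ yields compatible feasible resolutions $\tl{X}$ and $\tl{X'}$ connected by a composition of smooth flops, which is exactly what is needed to glue the pieces in each case above and close the induction.
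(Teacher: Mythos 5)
Your proposal is correct and follows essentially the same route as the paper: induction on $gdep(T)$, reduction via Lemma \ref{fld} to the three cases, with case (3) as the base case and cases (1)--(2) handled by feeding the inner type-IA flip to the induction hypothesis and gluing along the strict $w$-morphisms (using that the relevant singular points are cyclic quotient points, so feasible resolutions are determined and smooth flops lift through them). The flop-lifting observation you spell out is exactly what the paper uses implicitly when it says the smooth flop $S_1\dashrightarrow S_2$ induces a smooth flop of the feasible resolutions.
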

\begin{proof}
	For convenience we denote the diagram
	$\vc{\xymatrix@R=0.3cm{\tl{T}\ar[dd]\ar@{-->}[r] & \bar{T'} \ar[d] \\ & \tl{T'} \ar[d] \\ T\ar@{-->}[r] & T' }}$
	by $(A)_{T\dashrightarrow T'}$.\par
	We know that the factorization of $T\dashrightarrow T'$ is of the form (1)-(3) in Lemma \ref{fld}. Notice that since the only singular point
	on the non-isomorphic locus of $T\dashrightarrow T'$ are cyclic quotient points, the feasible resolutions $\tl{T}$ and $\tl{T'}$
	are uniquely determined. Hence $\tl{T}$ is also a feasible resolution of $S_1$. If the factorization of
	$T\dashrightarrow T'$ is of type (3) in Lemma \ref{fld}, then the non-isomorphic locus of $S_1\dashrightarrow T'$ contains
	no singular points. This means that $S_2\rightarrow T'$ induces a smooth blow-up $\bar{T'}\rightarrow \tl{T'}$ on $\tl{T'}$,
	and $S_1\dashrightarrow S_2$ induces a smooth flop $\tl{T}\dashrightarrow\bar{T'}$. Thus $(A)_{T\dashrightarrow T'}$ exists.\par
	Now if the factorization of $T\dashrightarrow T'$ is of type (1), then $\tl{T'}$ is a feasible resolution of $S_2$.
	Since $gdep(S_1)=gdep(T)-1$, we may induction on $gdep(T)$ and assume that $(A)_{S_1\dashrightarrow S_2}$ exists, and then
	$(A)_{T\dashrightarrow T'}$ can be induced by $(A)_{S_1\dashrightarrow S_2}$. If the factorization of $T\dashrightarrow T'$ is of type (2).
	Then again by induction we may assume that
	\[(A)_{S_2\dashrightarrow S_3}=\vc{\xymatrix{\tl{S_2}\ar[dd]\ar@{-->}[r] & \bar{S_3} \ar[d] \\ & \tl{S_3} \ar[d] \\ S_2\ar@{-->}[r] & S_3}}\]
	exists. Since $S_3\rightarrow T'$ is a strict $w$-morphism, we know that $\tl{S_3}=\tl{T'}$. Also since $S_1\dashrightarrow S_2$
	is a smooth flop, it induces a smooth flop $\tl{T}=\tl{S_1}\dashrightarrow \tl{S_2}$. If we let $\bar{T'}=\bar{S_3}$
	and let $\tl{T}\dashrightarrow \bar{T'}$ be the composition $\tl{T}\dashrightarrow \tl{S_2}\dashrightarrow \bar{T'}$,
	then we get the diagram $(A)_{T\dashrightarrow T'}$.
\end{proof}

\begin{defn}
	Let $W_1\dashrightarrow W_2$ be a birational map between smooth threefolds. We say that $W_1\dashrightarrow W_2$ is of type
	$\Omega_0$ if it is a composition of smooth flops. We say that $W_1\dashrightarrow W_2$ of type $\Omega_n$ if there exists the
	following diagram
	\[ \vc{\xymatrix{ \bar{W}_1 \ar[d]& \bar{W'}_1 \ar@{-->}[l]\ar@{-->}[r] & \bar{W'}_2\ar@{-->}[r] & \bar{W}_2\ar[d]\\  W_1 & & & W_2}}.\]
	such that \begin{enumerate}[(1)]
	\item $\bar{W}_i=Bl_{C_i}W_i$ for some smooth curve $C_i\subset W_i$ for $i=1$, $2$.
	\item $\bar{W'}_i\dashrightarrow \bar{W}_i$ is a composition of smooth flops for $i=1$, $2$.
	\item $\bar{W'_1}\dashrightarrow \bar{W'_2}$ has the following factorization
		\[ \bar{W'_1}=\bar{W}'_{1,1}\dashrightarrow \bar{W}'_{1,2}\dashrightarrow...\dashrightarrow\bar{W}'_{1,m}=\bar{W'_2}\]
		such that $\bar{W}'_{1,j}\dashrightarrow\bar{W}'_{1,j+1}$ is a birational map of type $\Omega_{m_j}$ for some $m_j<n$.
	\end{enumerate}
\end{defn}
\begin{eg}
	In the following diagrams, dashmaps stand for smooth flops and all other maps are blowing-down smooth curves.
	\begin{enumerate}[(1)]
	\item \[ \vc{\xymatrix@C=2cm@R=0.5cm{ \bar{W}_1 \ar[d] \ar@{-->}[r] & \bar{W}_2\ar[d]\\  W_1 & W_2}}.\]
	\item \[\vc{\xymatrix@C=2cm@R=0.5cm{ & \bar{W'_1}\ar[d] \ar@{-->}[r] & \bar{W'_2}\ar[d] 
			\\ \bar{W}_1\ar[d] \ar@{-->}[r] & W'_1 & W'_2 &  \bar{W}_2\ar@{-->}[l]\ar[d] \\ W_1 & & & W_2}}.\]
	\item \[\vc{\xymatrix@C=1cm@R=0.5cm{  & \bar{W'_1}\ar[d] & \bar{W''_1}\ar[rd] \ar@{-->}[l] & & \bar{W''_2}\ar@{-->}[r]\ar[ld] &
		\bar{W'_2}\ar[d] \\ \bar{W}_1\ar[d] \ar@{-->}[r] & W'_1 & & W'' & &
		 W'_2 & \bar{W}_2\ar@{-->}[l]\ar[d] \\ W_1 & & & & & & W_2}}.\]
	\end{enumerate}
	In diagram (1) $W_1\dashrightarrow W_2$ is of type $\Omega_1$. In both diagram (2) and (3) $W_1\dashrightarrow W_2$ is of type
	$\Omega_2$.
\end{eg}
\begin{defn}
	Let $W\dashrightarrow W'$ be a birational map between smooth threefolds. We say that $W\dashrightarrow W'$ has
	an $\Omega$-type factorization if there exists birational maps between smooth threefolds
	\[ W=W_1\dashrightarrow W_2\dashrightarrow...\dashrightarrow W_k=W'\]
	such that $W_i\dashrightarrow{W_{i+1}}$ is of type $\Omega_{n_i}$ for some $n_i\in\Z_{\geq0}$.
\end{defn}

\begin{pro}\label{twof}
	Assume that $X$ is a $\Q$-factorial terminal threefold and $W\rightarrow X$, $W'\rightarrow X$ are two different feasible resolutions,
	then the birational map $W\dashrightarrow W'$ has an $\Omega$-type factorization.
\end{pro}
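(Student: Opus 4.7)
The plan is to induct on $gdep(X)$. The base case $gdep(X) = 0$ is trivial since $X$ is then smooth and $W = W' = X$. For the inductive step, I first observe that by Corollary~\ref{feam} every feasible resolution decomposes as a composition of strict $w$-morphisms, and strict $w$-morphisms over disjoint points commute. Consequently, fixing a single non-Gorenstein point $P \in X$, I may assume (after commuting if necessary) that there are strict $w$-morphisms $Y \to X$ and $Y' \to X$, both over $P$, through which $W$ and $W'$ respectively factor. Then $W,W'$ are feasible resolutions of $Y,Y'$, and $gdep(Y)=gdep(Y')=gdep(X)-1$.

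By Corollary~\ref{yua}, $Y$ and $Y'$ fit into a chain of strict $w$-morphisms
\[Y = Y_0 \underset{X}{\Leftrightarrow} Y_1 \underset{X}{\Leftrightarrow} \cdots \underset{X}{\Leftrightarrow} Y_k = Y',\]
all over $P$. Choose feasible resolutions $\tl{Y_i}$ of each $Y_i$. The induction hypothesis applied to feasible resolutions of $Y_0$ and of $Y_k$ (both of strictly smaller generalized depth) makes $W\dashrightarrow\tl{Y_0}$ and $\tl{Y_k}\dashrightarrow W'$ admit $\Omega$-type factorizations, so it suffices to produce an $\Omega$-type factorization of each $\tl{Y_i}\dashrightarrow\tl{Y_{i+1}}$.

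Each link $Y_i \ua{X} Y_{i+1}$ falls into one of the three cases of Lemma~\ref{lfp}. In the symmetric $k=1$ case, the intermediate $Z_1$ is a strict $w$-morphism of both $Y_i$ and $Y_{i+1}$, so $\tl{Z_1}$ is a feasible resolution of both and the induction hypothesis (applied twice, to $Y_i$ and to $Y_{i+1}$) links $\tl{Y_i}$ to $\tl{Y_{i+1}}$ through $\tl{Z_1}$. In the smooth flop case, $Z_1 \dashrightarrow Z_2$ has its flopping curve in the smooth locus, so by \cite[Theorem 2.4]{ko2} the singularities of $Z_1$ and $Z_2$ agree and the flop lifts to a composition of smooth flops between the feasible resolutions $\tl{Z_1}$ and $\tl{Z_2}$; combined with the induction hypothesis on $Z_i$, this yields an $\Omega_0$-type factorization.

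The main obstacle is the singular flop of case~(3), namely the specific $cA/r$ flop of Lemma~\ref{flp}. Here I would unfold the flop using the diagram surrounding Lemma~\ref{v1}: a strict $w$-morphism $V'\to V$, two $\Q$-factorizations $V''_i\to V'$, and two flip sequences $V''_i\dashrightarrow Z'_i$. Lemma~\ref{flip} identifies each flip in these sequences as of type IA, and Corollary~\ref{fpdi} converts each type IA flip into an $\Omega_1$-type diagram between feasible resolutions. The residual ``interior'' map $V''_1\dashrightarrow V''_2$ is, by Lemma~\ref{v1}(2), either trivial (when $m=k$) or a singular flop of the same local form but with strictly smaller parameter $w(f'')<m$, so an inner induction on $m$ reduces it to previously handled pieces. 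Composing all these $\Omega_1$-type pieces with the $w$-morphism resolutions of the $Z'_i$ and the $V''_i$ yields the desired $\Omega_n$-type factorization of $\tl{Y_i}\dashrightarrow\tl{Y_{i+1}}$, completing the induction.
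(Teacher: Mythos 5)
Your proposal is correct and follows essentially the same route as the paper's proof: induction on $gdep(X)$, reduction via Corollary \ref{yua} to a single link $Y\ua{X}Y'$ of strict $w$-morphisms, the trichotomy of Lemma \ref{lfp}, and for the singular $cA/r$ flop the unfolding through $V'$, $V''_i$, $Z'_i$ with Lemma \ref{flip} and Corollary \ref{fpdi}. The only cosmetic differences are that you make explicit the commutation of strict $w$-morphisms over disjoint points (which the paper leaves implicit) and run the inner induction for the residual flop $V''_1\dashrightarrow V''_2$ on the parameter $m$ rather than on $gdep(V)$ as the paper does; both are valid.
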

\begin{proof}
	First notice that if $gdep(X)=1$, then $X$ has either a cyclic quotient point of index 2, or a $cA_1$ point defined by $xy+z^2+u^n$ for
	$n=2$ or $3$ by \cite[Corollary 3.4]{mo}. In those cases there is exactly one feasible resolution (which is obtained by blowing-up the
	singular point). Hence one may assume that $gdep(X)>1$. Let $Y\rightarrow X$ (resp. $Y'\rightarrow X$) be the strict $w$-morphism
	which is the first factor of $W\rightarrow X$ (resp. $W'\rightarrow X$). If $Y=Y'$, then $W$ and $W'$ are two different feasible
	resolutions of $Y$. In this case the statement can be proved by induction on $gdep(X)$. Thus we may assume that $Y\neq Y'$.\par
	Since both $Y$ and $Y'$ are strict $w$-morphisms, by Corollary \ref{yua} there exists a sequence of
	strict $w$-morphisms $Y_1=Y\rightarrow X$, $Y_2\rightarrow X$, ..., $Y_k=Y'\rightarrow X$ such that $Y_i\ua{X}Y_{i+1}$ for $i=1$, ..., $k-1$.
	For each $2\leq i\leq k-1$ let $W_i\rightarrow Y_i$ be a feasible resolution, then $W_i$ is also a feasible resolution of $X$ and it is
	enough to prove that our statement holds for $W_i$ and $W_{i+1}$, for all $i=1$, ..., $k-1$. Thus we may assume that $Y\ua{X}Y'$.\par
	We have a diagram \[ \vc{\xymatrix{ Z_1\ar@{-->}[r]\ar[d] & ... \ar@{-->}[r] & Z_k \ar[d] \\ Y \ar[rd] & & Y'\ar[ld] \\ & X & }}\]
	Lemma \ref{lfp} says that there are three possibilities. If $k=1$, then $W$ and $W'$ are two different feasible resolutions of $Z_1$.
	Since $gdep(Z_1)=gdep(X)-2$, again by induction on $gdep(X)$ we know that $W\dashrightarrow W'$ has an $\Omega$-type factorization.
	Assume that $k=2$ and $Z_1\dashrightarrow Z_2$ is a smooth flop, then it induces a smooth flop $\tl{Z}_1\dashrightarrow \tl{Z}_2$
	where $\tl{Z}_i\rightarrow Z_i$ is a feasible resolution of $Z_i$ for $i=1$, $2$. Also we know that $W$ and $\tl{Z}_1$ are two
	feasible resolutions of $Y$, and $W'$ and $\tl{Z}_2$ are two feasible resolutions of $Y'$. Again by induction on $gdep(X)$ we know that
	both $W\dashrightarrow \tl{Z}_1$ and $\tl{Z}_2\dashrightarrow W'$ have $\Omega$-type factorizations, hence so does
	$W\dashrightarrow W'$.\par
	Finally assume that we are in the case Lemma \ref{lfp} (3), namely $X$ has a $cA/r$ singularity and $Z_1\dashrightarrow Z_2$ is a
	singular flop. By Lemma \ref{flp} we know that
	$Z_1\dashrightarrow Z_2$ is a flop over \[V=(xy+uf(z,u)=0)\subset\A^4_{(x,y,z,u)}/\frac{1}{r}(\beta,-\beta,1,0).\]
	We have the factorization of the flop $Z_1\dashrightarrow Z_2$
	\[\vc{\xymatrix{ Z'_1\ar[d] & V''_1 \ar@{-->}[l]\ar[rd] & & V''_2\ar@{-->}[r] \ar[ld] & Z'_2\ar[d] \\
		Z_1\ar[rrd] & & V'\ar[d] & & Z_2\ar[lld] \\ & & V & & }}.\]
	We use the notation at the beginning of this subsection. Assume that $m>1$, then we can choose $V'\rightarrow V$ to be the
	weighted blow-up with the weight $w'(x,y,z,u)=\frac{1}{r}(\beta+r,mr-\beta,1,r)$. In this case $Z'_i\rightarrow Z_i$ is a divisorial
	contraction over the origin of $U_{i,u}$ for $i=1$, $2$ since both $w'(x_1)$ and $w'(y_2)>0$. When $m=1$, let $V'\rightarrow V$ be the
	weighted blow-up with the weight $w'(x,y,z,u)=\frac{1}{r}(r+\beta,r-\beta,1,r)$. Then $Z'_1\rightarrow Z_1$ is a divisorial contraction
	over the origin of $U_{1,u}$ but $Z'_2\rightarrow Z_2$ is a divisorial contraction over the origin of $U_{2,y}$. Since $m=1$ and $k>1$
	by Remark \ref{kg1}, we know that $f(z,u)=\lambda u+z^{rk}$ for some unit $\lambda$. If we let $\bar{u}=\lambda u+z^{rk}$, then we can write
	the defining equation of $V$ as $xy+\bar{u}\bar{f}(z,\bar{u})$ where $\bar{f}=\frac{1}{\lambda}(\bar{u}-z^{rk})$. One has that
	$Z_2\cong Bl_{(x,\bar{u})}V$ and under this notation one also has that $Z'_2\rightarrow Z_2$ is a divisorial contraction over the 
	origin of $U_{2,\bar{u}}$. In conclusion, Corollary \ref{fpdi} holds for both $V''_1\dashrightarrow Z'_1$ and $V''_2\dashrightarrow Z'_2$.\par
	Let $\tl{Z'_i}\rightarrow Z'_i$ be a feasible resolution. If $V''_1=V''_2$ then one can see that $\tl{Z'_1}\dashrightarrow \tl{Z'_2}$
	is of type $\Omega_1$. Assume that $V''_1\dashrightarrow V''_2$ is a flop. Notice that $Z'_i\rightarrow Z_i$ is a
	$w$-morphism since $a(E,Z_i)=a(E,V)$ where $E=exc(Z'_i\rightarrow Z_i)=exc(V'\rightarrow V)$. One has that
	\[gdep(V''_i)=gdep(Z'_i)+1=gdep(Z_i)\] where the second equality follows from Corollary \ref{cawmo}. Now we know that
	$V''_1\dashrightarrow V''_2$ is a flop over $V'$ with $gdep(V')<gdep(V)$. By induction on $gdep(V)$ we may assume that
	$\tl{V''_1}\dashrightarrow \tl{V''_2}$ has an $\Omega$-type factorization,
	where $\tl{V''_i}\rightarrow V''_i$ is a feasible resolution corresponding to the diagram in Corollary \ref{fpdi}.
	One can see that $\tl{Z'_2}\dashrightarrow \tl{Z'_2}$ can be connected by a diagram of the form $\Omega_n$ for some $n\in\N$.
	Finally we know that $W$ and $\tl{Z'_1}$ (resp. $W'$ and $\tl{Z'_2}$) are feasible resolutions of $Y$ (resp. $Y'$).
	Again by induction on $gdep(X)$ we may assume that $W\dashrightarrow \tl{Z'_1}$ and $W'\dashrightarrow \tl{Z'_2}$ have $\Omega$-type
	factorizations. Hence $W\dashrightarrow W'$ had an $\Omega$-type factorization.
\end{proof}
\begin{rk}
	Assume that $X=(xy+z^m+u^k=0)\subset\A^4$ is a $cA$ singularity. Then there exists feasible resolutions $W$ and $W'$ such that
	the birational map $W\dashrightarrow W'$ is connected by $\Omega_n$ for $n=\ru{\frac{m}{k-m}}$.
\end{rk}
\section{Minimal resolutions of threefolds}\label{spmr}

In this section we prove our main theorems. First we recall some definitions which are defined in the introduction section.
\begin{defn}
	Let $X$ be a projective variety. We say that a resolution of singularities $W\rightarrow X$ is a P-minimal resolution
	if for any smooth model $W'\rightarrow X$ one has that $\rho(W)\leq \rho(W')$.
\end{defn}
\begin{defn}
	Let $W\dashrightarrow W'$ be a birational map between smooth varieties. We say that this birational map is a P-desingularization
	of a flop if there exists a flop $X\dashrightarrow X'$ such that $Y\rightarrow X$ and $Y'\rightarrow X'$ are P-minimal resolutions.
\end{defn}

\begin{pro}\label{mcf}
	Assume that $X$ is a threefold. Then $W\rightarrow X$ is a P-minimal resolution if and only if $W$ is a feasible resolution
	of a terminalization of $X$. In particular, if $X$ is a terminal and $\Q$-factorial threefold, then P-minimal resolutions
	of $X$ coincide with feasible resolutions. 
\end{pro}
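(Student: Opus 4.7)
The plan is to reduce to Corollary \ref{feam} by running the relative MMP. Given any resolution of singularities $W\to X$, I would first run the $K_W$-MMP over $X$, obtaining a sequence
\[ W = X_0 \dashrightarrow X_1 \dashrightarrow \cdots \dashrightarrow X_k = X^t \]
where $X^t$ is a $\Q$-factorial terminal threefold with $K_{X^t}$ nef over $X$, i.e., a terminalization of $X$. Each divisorial step drops $\rho(\cdot/X)$ by one and each flip preserves it, so if $d$ is the number of divisorial steps in the MMP then $\rho(W/X) = \rho(X^t/X) + d$, and in the convention used in Corollary \ref{feam} this $d$ is exactly $\rho(W/X^t)$.

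The key step is Corollary \ref{feam} applied to this MMP: since $gdep(W)=0$, the corollary gives
\[ \rho(W/X^t) \;\geq\; gdep(X^t), \]
with equality if and only if each $X_i\dashrightarrow X_{i+1}$ is a strict $w$-morphism. A strict $w$-morphism is by definition a divisorial morphism, so equality precludes any flip step and promotes the abstract sequence $W\dashrightarrow X^t$ into a genuine composition of morphisms; that is to say, $W$ is a feasible resolution of $X^t$. Combining with the Picard-number identity yields, for every resolution,
\[ \rho(W/X) \;\geq\; \rho(X^t/X) + gdep(X^t), \]
with equality exactly when $W$ is a feasible resolution of some terminalization of $X$.

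To conclude I would check that the right-hand side is an invariant of $X$ alone: any two terminalizations of $X$ are connected by a sequence of flops over $X$, and flops preserve both the singularities (hence $gdep$, by \cite[Theorem 2.4]{ko2}) and the relative Picard number over $X$. The equality characterization then gives both implications of the proposition. The ``in particular'' clause is immediate: when $X$ is already terminal and $\Q$-factorial, $X^t=X$ is the unique terminalization, and the minimum reduces to $gdep(X)$, matching Corollary \ref{feam} directly.

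The main obstacle is the equality-case analysis: one has to argue carefully that a relative MMP consisting entirely of strict $w$-morphisms can contain no flip step, so that the abstract birational identification $W\dashrightarrow X^t$ really is a composition of divisorial morphisms exhibiting the geometric feasible-resolution structure. A secondary subtlety is confirming the well-definedness of $\rho(X^t/X)+gdep(X^t)$ across different terminalizations, which reduces to the standard fact that terminalizations of a threefold are linked by flops over $X$.
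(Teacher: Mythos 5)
Your proposal is correct and follows essentially the same route as the paper: run the relative $K_W$-MMP to reach a terminalization, invoke Corollary \ref{feam} to characterize minimality of $\rho(W/X_W)$ as being a feasible resolution, and use that terminalizations are connected by flops (so the target quantity is an invariant of $X$). Your version merely spells out more explicitly the well-definedness of $\rho(X^t/X)+gdep(X^t)$ and the equality-case bookkeeping, both of which the paper leaves implicit.
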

\begin{proof}
	Let $W\rightarrow X$ be a resolution of singularities. We can run $K_W$-MMP over $X$. The minimal model $X_W\rightarrow X$ is a
	terminalization of $X$. Since terminalizations are connected by flops \cite[Theorem 1]{ka3}, we know that $\rho(X_W/X)$ is independent
	of the choice of $W$. Thus $W$ is a P-minimal resolution if and only if $\rho(W/X_W)$ is minimal. It is equivalent to that
	$W$ is a feasible resolution of $X_W$ by Corollary \ref{feam}.
\end{proof}
\begin{proof}[Proof of Theorem \ref{thm1}]
	Let $X$ be a threefold and $W\rightarrow X$, $W'\rightarrow X$ be two P-minimal resolutions. By Proposition \ref{mcf}
	we know that $W$ (resp. $W'$) is a feasible resolution of a terminalization $X_W\rightarrow X$ (resp. $X_{W'}\rightarrow X$).
	If $X_W\not\cong X_{W'}$, then $X_W$ and $X_{W'}$ are connected by flops \cite[Theorem 1]{ka3}, hence $W\dashrightarrow W'$
	is connected by P-desingularizations of terminal $\Q$-factorial flops.\par
	Now assume that $X_W=X_{W'}$, then $W$ and $W'$ are two different feasible resolutions of $X_W$. The first two paragraphs in the proof
	of Proposition \ref{twof} and Lemma \ref{fld} implies that $W\dashrightarrow W'$ can be also connected by
	P-desingularizations of terminal $\Q$-factorial flops. Moreover, Proposition \ref{twof} says that those P-desingularizations of flops
	can be factorize into compositions of diagrams of the form $\Omega_i$. This finishes the proof. 
\end{proof}

\begin{rk}\label{fgdp}
	Assume that $X$ is a terminal $\Q$-factorial threefold and $W\rightarrow X$, $W'\rightarrow X$ are two different P-minimal resolutions.
	We know that $W$ and $W'$ can be connected by P-desingularizations of flops. Let $W_i\dashrightarrow W_{i+1}$ be a P-desingularization
	of a flop $X_i\dashrightarrow X_{i+1}$ which appears in the factorization of $W\dashrightarrow W'$. Then from the construction
	we know that $gdep(X_i)<gdep(X)$.
\end{rk}
Now we compare an arbitrary resolution of singularities to a P-minimal resolution.

\begin{defn}
	Let $W\dashrightarrow X$ be a birational map where $W$ is a smooth threefold and $X$ is a terminal threefold. We say that the
	birational map has a \emph{bfw-factorization} if $W\dashrightarrow X$ can be factorized into a composition of smooth blow-downs,
	P-desingularizations of flops and strict $w$-morphisms.
\end{defn}
\begin{rk}\label{fea}
	If $X_2\rightarrow X_1$ is a strict $w$-morphism and $X_1\dashrightarrow X$ is a smooth blow-down or a P-desingularization of a flop,
	then on $X_1$ the indeterminacy locus of $X_1\dashrightarrow X$ is disjoint to the indeterminacy locus of $X_1\dashrightarrow X_2$
	since the former one lying on the smooth locus of $X_1$ and the latter one is a singular point. Hence there exists
	$X_2\dashrightarrow X'_1\rightarrow X$ where $X_2\dashrightarrow X'_1$ is a smooth blow-down or a P-desingularization of a flop, and
	$X'_1\rightarrow X$ is a strict $w$-morphism. In other word, $W\dashrightarrow X$ has a bfw-factorization if and only if
	there exists a birational map $W\dashrightarrow \bar{X}$ which is a composition of smooth blow-downs and P-desingularization of flops,
	where $\bar{X}$ is a feasible resolution of $X$. 
\end{rk}

\begin{pro}
	Assume that a birational map $W\dashrightarrow X$ has a bfw-factorization where $W$ is a smooth threefold and $X$ is a terminal threefold.
	\begin{enumerate}[(1)]
	\item If $X\dashrightarrow X'$ is a flop, then there is a birational map $W\dashrightarrow X'$ which has a bfw-factorization.
	\item If $Y\rightarrow X$ is a strict $w$-morphism, then there exists a birational map $W\dashrightarrow Y$ which also has
		a bfw-factorization.
	\item If $X\dashrightarrow X'$ is a flip or a divisorial contraction, then the induces birational map $W\dashrightarrow X'$
		has a bfw-factorization.
	\end{enumerate}
\end{pro}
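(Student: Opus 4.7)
The plan is to prove all three parts jointly, using Theorem~\ref{thm1} (via Proposition~\ref{mcf}) as the main tool: any two feasible resolutions of a terminal $\Q$-factorial threefold are connected by a composition of P-desingularizations of flops. By Remark~\ref{fea}, every bfw-factorization may be realized as $W \dashrightarrow \bar X \rightarrow X$ where $\bar X$ is a feasible resolution of $X$ and $W \dashrightarrow \bar X$ is a composition of smooth blow-downs and P-desingularizations of flops. This is the normal form I will preserve at each step.

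Parts~(1) and~(2) are direct. For~(1), the flop $X \dashrightarrow X'$ lives over a common base $V$ of which $X, X'$ are both terminalizations, so any feasible resolution $\bar{X'}$ of $X'$ is, like $\bar X$, a P-minimal resolution of $V$; Theorem~\ref{thm1} then yields $\bar X \dashrightarrow \bar{X'}$ as a composition of P-desingularizations of flops, and concatenation gives $W \dashrightarrow \bar X \dashrightarrow \bar{X'} \rightarrow X'$. For~(2), if $\tilde Y \rightarrow Y$ is a feasible resolution then $\rho(\tilde Y/X) = \rho(\tilde Y/Y) + 1 = gdep(Y) + 1 = gdep(X)$, so $\tilde Y \rightarrow X$ is itself a feasible resolution of $X$; Theorem~\ref{thm1} then produces $\bar X \dashrightarrow \tilde Y$ as a composition of P-desingularizations of flops and hence a bfw-factorization $W \dashrightarrow \bar X \dashrightarrow \tilde Y \rightarrow Y$.

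Part~(3) is the substantive step, proved by induction on $gdep(X)$; the base $gdep(X)=0$ (so $X$ smooth) is straightforward since the relevant maps are smooth blow-downs. For the inductive step I split by type. When $X \dashrightarrow X'$ is a flip or a non-Gorenstein divisorial contraction to a curve (the Gorenstein case of a curve contraction reduces to a smooth blow-down in dimension three), apply the Chen-Hacon factorization (Theorem~\ref{chf}), choosing $Y_1 \rightarrow X$ to be a strict $w$-morphism by Remark~\ref{chfr}(2) combined with Corollary~\ref{cawmo}. Part~(2) then gives a bfw-factorization of $W \dashrightarrow Y_1$, Lemma~\ref{laast} yields $gdep(Y_i) \leq gdep(Y_1) = gdep(X) - 1$ for all $i$, and the inductive hypothesis propagates the bfw-factorization through each flip or flop $Y_i \dashrightarrow Y_{i+1}$ (via parts~(1) or~(3)) and through the final divisorial contraction $Y_k \rightarrow X'$ (via part~(3)). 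When $X \rightarrow X'$ is a divisorial contraction to a point that is itself a strict $w$-morphism, the bfw-factorization extends trivially by appending $X \rightarrow X'$. Otherwise, pick any strict $w$-morphism $\tilde Y \rightarrow X'$ over the contracted point and apply Corollary~\ref{yua} to produce a chain $X = X_0 \ua{X'} X_1 \ua{X'} \cdots \ua{X'} X_k = \tilde Y$. Each link expands into a diagram with a strict $w$-morphism $Z_{i,1} \rightarrow X_i$, flips and flops among the $Z_{i,j}$, and a divisorial contraction $Z_{i,l_i} \rightarrow X_{i+1}$, where every $Z_{i,j}$ satisfies $gdep(Z_{i,j}) \leq gdep(X_i) - 1 \leq gdep(X) - 1$. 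The inductive hypothesis then propagates the bfw-factorization across each link piece by piece, and after $k$ links the terminal strict $w$-morphism $\tilde Y \rightarrow X'$ completes the argument.

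The principal obstacle is making the induction in the point-contraction subcase close. The outer chain $X_0 \ua{X'} \cdots \ua{X'} X_k$ does not strictly decrease $gdep$, so one cannot directly recurse on this outer chain; what rescues the argument is that every $\ua{X'}$-link expands into intermediate maps whose sources $Z_{i,j}$ have $gdep$ strictly smaller than $gdep(X)$, owing to the strict $w$-morphism $Z_{i,1} \rightarrow X_i$ at the top. This ensures the inductive hypothesis applies to each inner flip, flop, and divisorial contraction, and the outer chain is then traversed by stitching together the bfw-factorizations produced at each inner step.
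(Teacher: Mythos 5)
Your overall architecture matches the paper's: normalize via Remark \ref{fea}, induct on $gdep(X)$, use Theorem \ref{chf} for flips and non-Gorenstein curve contractions, and use the link machinery for point contractions. Two of your choices genuinely diverge. First, your proof of (2) short-circuits the paper's induction: you observe that $\tilde Y\rightarrow Y\rightarrow X$ is itself a feasible (hence P-minimal) resolution of $X$, so Theorem \ref{thm1} connects it to $\bar X$ by P-desingularizations of flops. This is correct and cleaner than the paper, which instead walks through Corollary \ref{yua} and recurses link by link. Second, for a divisorial contraction to a point you traverse the entire chain of Corollary \ref{yua} to a strict $w$-morphism anchor in a single pass, whereas the paper performs a double induction on the pair (generalized depth, discrepancy of $X$ over $X'$), peeling off one link at a time. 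Your single-pass traversal is sound where it applies, because every interior variety $Z_{i,j}$ of every link satisfies $gdep(Z_{i,j})\leq gdep(X_i)-1\leq gdep(X)-1$ and the arrows of Corollary \ref{yua} point in the direction compatible with propagating the factorization.

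There is, however, one concrete gap in the point-contraction case: you ``pick any strict $w$-morphism $\tilde Y\rightarrow X'$ over the contracted point,'' but such a morphism need not exist. If $P\in X'$ is a smooth point, every divisorial contraction to $P$ has discrepancy at least $2$ (Kawakita's $(1,a,b)$ weighted blow-ups), so there is no $w$-morphism over $P$ at all and Corollary \ref{yua} cannot be invoked with your chosen anchor. The chain of links still exists (Proposition \ref{dpf} together with Proposition \ref{ca}(1), which strictly decreases the discrepancy), but it terminates at the ordinary blow-up of $P$, i.e.\ at a smooth blow-down rather than at a strict $w$-morphism. The paper's formulation absorbs this automatically because its terminal cases are ``smooth blow-down or strict $w$-morphism''; your argument needs the same amendment, anchoring the chain at the ordinary blow-up when $P$ is smooth and finishing with a smooth blow-down. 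With that (easy) repair, and modulo the compressed treatment of the tail of the Chen--Hacon diagram in the curve case (where one still needs $gdep(Z)\leq gdep(Y_k)<gdep(X)$ to apply the inductive hypothesis to the residual divisorial contraction to a point), your proof is correct.
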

\begin{proof}
	Assume first that $X\dashrightarrow X'$ is a flop. By Remark \ref{fea} we know that there exists a bfw-map $W\dashrightarrow \bar{X}$,
	where $\bar{X}$ is a feasible resolution of $X$. Let $\bar{X}'\rightarrow X'$ be a feasible resolution of $X'$, then
	$\bar{X}'\rightarrow X'$ is a composition of strict $w$-morphisms and the induced birational map $\bar{X}\dashrightarrow \bar{X}'$
	is a P-desingularization of the flop $X\dashrightarrow X'$. It follows that the composition
	\[W\dashrightarrow \bar{X}\dashrightarrow \bar{X}'\rightarrow X'\] is a bfw-map. This proves (1).\par
	We will prove (2) and (3) by induction $gdep(X)$. If $gdep(X)=0$, then $X$ is smooth. In this case there is no strict $w$-morphism
	$Y\rightarrow X$ or flip $X\dashrightarrow X'$. Assume that $X\rightarrow X'$ is a divisorial contraction. If $X'$ is smooth then
	it is a smooth blow-down by \cite[Theorem 3.3, Corollary 3.4]{mo} and if $X'$ is singular then $X\rightarrow X'$ should be a strict
	$w$-morphism since in this case $X'$ is terminal $\Q$-factorial and $X$ is a P-minimal resolution of $X'$.
	Now we may assume that $gdep(X)>0$ and the statements (2) and (3) hold for threefolds with generalized depth less then $gdep(X)$.\par
	Let \[ W=X_k\dashrightarrow X_{k-1}\dashrightarrow ...\dashrightarrow X_1\dashrightarrow X_0=X\] be a sequence of birational maps
	so that $X_{i+1}\dashrightarrow X_i$ is a smooth blow-down, a P-desingularization of a flop, or a strict $w$-morphism for all
	$1\leq i\leq k-1$. By Remark \ref{fea} we can assume that $X_1\rightarrow X$ is a strict $w$-morphism. Now given a strict $w$-morphism
	$Y\rightarrow X$. If $Y\cong X_1$ then there is nothing to prove. Otherwise by Corollary \ref{yua} there exists a sequence of strict
	$w$-morphisms $Y_2\rightarrow X$, ..., $Y_{m-1}\rightarrow X$ such that \[X_1=Y_1\ua{X}Y_2\ua{X}...\ua{X}Y_{m-1}\ua{X}Y_m=Y.\]
	For each $1\leq i\leq m-1$ one has the factorization
	\[ \vc{\xymatrix{ Z_{i,1}\ar@{-->}[r] \ar[d] & ... \ar@{-->}[r] & Z_{i,k_i}\ar[d] \\ Y_i \ar[rd] & & Y_{i+1}\ar[ld] \\ & X & }}\]
	such that $Z_{i,1}\dashrightarrow Z_{i,k_i}$ is a composition of flops, $Z_{i,1}\rightarrow Y_i$ is a strict $w$-morphism and
	\[ gdep(Z_{i,k_i})=gdep(Z_{i,1})<gdep(Y_i)<gdep(X).\]
	By the induction hypothesis we know that there if there exists a bfw-map $W\dashrightarrow Y_i$, then there exists a sfw-map
	$W\dashrightarrow Y_{i+1}$. Now one can prove the statement (2) by induction on $m$.\par
	Assume that $X\dashrightarrow X'$ is a flip. Then we have a factorization
	\[ \vc{\xymatrix{ Y_1\ar@{-->}[r] \ar[d] & ... \ar@{-->}[r] & Y_k\ar[d] \\ X & & X'}}\] as in Theorem \ref{chf}.
	Since $Y_1\rightarrow X$ is a strict $w$-morphism by Corollary \ref{cawmo}, there exists a bfw-map $W\dashrightarrow Y_1$. Since
	\[ gdep(Y_k)\leq ...\leq gdep(Y_1)<gdep(X),\] the induction hypothesis implies that
	there exists a bfw-map $W\dashrightarrow X'$.\par
	Finally assume that $X\rightarrow X'$ is a divisorial contraction. If it is a smooth blow-down or a strict $w$-morphism then there is
	nothing to prove. Otherwise there exists a diagram
	\[ \vc{\xymatrix{ Y_1\ar@{-->}[r] \ar[d] & ... \ar@{-->}[r] & Y_k\ar[d] \\ X \ar[rd] & & Z\ar[ld] \\ & X' &}}\] 
	such that $Y_1\rightarrow X$ is a strict $w$-morphism and $Y_i\dashrightarrow Y_{i+1}$ is a flip or a flop for all $1\leq i\leq k-1$.
	One has that \[ gdep(Y_k)\leq ...\leq gdep(Y_1)<gdep(X),\] hence there exists a bfw-map $W\dashrightarrow Z$. If $X\rightarrow X'$ is
	a divisorial contraction to a curve then $Y_k\rightarrow Z$ is a divisorial contraction to a curve as in Theorem \ref{chf}. In this case
	we also have $gdep(Z)\leq gdep(Y_k)<gdep(X)$, so there exists a bfw-map $W\dashrightarrow X'$. If $X\rightarrow X'$ is a
	divisorial contraction to a point, then the discrepancy of $Z\rightarrow X'$ is less than the discrepancy of $X\rightarrow X'$ unless
	$X\rightarrow X'$ is a $w$-morphism. Also when $X\rightarrow X'$ is a $w$-morphism we know that $gdep(Z)<gdep(X)$ by Lemma \ref{pgd}.
	Thus we can prove statement (3) by induction on the generalized depth and the discrepancy of $X$ over $X'$.
\end{proof}

One can easily see the following corollary:

\begin{cor}\label{bfwm}
	Assume that $W$ is a smooth threefold and $W\dashrightarrow X$ is a birational map which is a composition of steps of MMP. Then
	this birational map be factorized into a composition of smooth blow-downs, P-desingularizations of flops and strict $w$-morphisms.
\end{cor}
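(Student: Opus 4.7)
The plan is to derive Corollary \ref{bfwm} as an immediate consequence of the preceding proposition by induction on the length of the MMP sequence.

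I would write the given composition of MMP steps as $W = X_0 \dashrightarrow X_1 \dashrightarrow \cdots \dashrightarrow X_n = X$, where each step $X_i \dashrightarrow X_{i+1}$ is a flip, a divisorial contraction, or (if one permits them) a flop. The base case is trivial: since $W$ is smooth and hence terminal, the identity map $W \to W$ provides a bfw-factorization of $W \dashrightarrow X_0$ as an empty composition of the three allowed elementary moves.

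For the inductive step, I would assume that $W \dashrightarrow X_i$ already admits a bfw-factorization and then invoke the preceding proposition to extend it across one further MMP step. If $X_i \dashrightarrow X_{i+1}$ is a flip or a divisorial contraction, statement (3) of that proposition produces a bfw-factorization of the induced map $W \dashrightarrow X_{i+1}$. If the step is a flop, statement (1) does the same. After $n$ applications we obtain the desired bfw-factorization of $W \dashrightarrow X_n = X$.

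There is no substantive obstacle here: all the work has been absorbed into the preceding proposition, whose parts (1) and (3) were stated precisely so as to cover every type of MMP move. The only mild subtlety is to confirm that the identity map on a smooth threefold qualifies as a bfw-factorization, which is immediate from the definition (take the empty composition of smooth blow-downs, P-desingularizations of flops, and strict $w$-morphisms).
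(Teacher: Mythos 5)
Your proof is correct and matches the paper's intent: the paper states this corollary with no written proof ("one can easily see..."), precisely because it follows by the induction on the length of the MMP sequence that you describe, applying parts (1) and (3) of the preceding proposition at each step with the trivial base case of the identity on the smooth threefold $W$.
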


\begin{proof}[Proof of Theorem \ref{thm2}]
	By Corollary \ref{bfwm} and Remark \ref{fea} we know that there is exists a feasible resolution $\tl{X}_W\rightarrow X_W$ such that
	$W\dashrightarrow \tl{X}_W$ is a composition of smooth blow-downs and P-desingularizations of flops, where $X_W$ is a minimal model
	of $W$ over $X$. By Proposition \ref{mcf} we know that $\tl{X}_W$ is also a P-minimal resolution of $X$, hence the birational map
	$\tl{X}_W\dashrightarrow \tl{X}$ is connected by P-desingularizations of flops. Thus the composition
	$W\dashrightarrow \tl{X}_W\dashrightarrow \tl{X}$ is connected by smooth blow-downs and P-desingularizations of flops.
\end{proof}

\begin{proof}[Proof of Corollary \ref{cor}]
	Let \[W=\tl{X}_k\dashrightarrow ... \dashrightarrow \tl{X}_1\dashrightarrow \tl{X}_0=\tl{X}\] be a sequence of smooth blow-downs
	and P-desingularization of flops as in Theorem \ref{thm2}. We only need to show that if $\tl{X}_{i+1}\dashrightarrow \tl{X}_i$
	is a P-desingularization of a flop $X_{i+1}\dashrightarrow X_i$, then $b_j(\tl{X}_{i+1})=b_j(\tl{X}_i)$ for all $j=0$, ..., $6$.\par
	By \cite[Lemma 2.12]{me3} we know that $b_j(X_{i+1})=b_j(X_j)$ for all $j$. Since $X_i$ and $X_{i+1}$ have the same analytic
	singularities \cite[Theorem 2.4]{ko2}, there exists a feasible resolution $\tl{X}'_{i+1}\rightarrow X_{i+1}$ such that
	$b_j(\tl{X}_i)=b_j(\tl{X}'_{i+1})$ for all $j$. Now $\tl{X}'_{i+1}$ and $\tl{X}_{i+1}$ are two different P-minimal resolutions of
	$X_{i+1}$, so they can be connected by P-desingularizations of flops with smaller generalized depth by Remark \ref{fgdp}. 
	By induction on the generalized depth one can see that $b_j(\tl{X}'_{i+1})=b_j(\tl{X}_{i+1})$. Hence
	$b_j(\tl{X}_{i+1})=b_j(\tl{X}_i)$ for all $j=0$, ..., $6$.
\end{proof}

\section{Further discussion}\label{sfd}

This section is dedicated to exploring minimal resolutions for singularities in higher dimensions and the potential applications of our main theorems.

\subsection{Higher dimensional minimal resolutions}
In three dimensions, $P$-minimal resolutions appear to be a viable generalization of minimal resolutions for surfaces. However, in higher dimensions, $P$-minimal resolutions are not good enough. For example, let $X\dashrightarrow X'$ be a smooth flip
(eg. a standard flip \cite[Section 11.3]{hu}). Then $X$ and $X'$ are both $P$-minimal resolutions of the underlying space, but $X'$ is
better than $X$. It is reasonable to assume that $X'$ is a minimal resolution, while $X$ is not.
Inspired by Corollary \ref{cor}, we define a new kind of minimal resolution:
\begin{defn}
	Let $X$ be a projective variety over complex numbers. We say that a resolution of singularities $W\rightarrow X$ is a $B$-minimal resolution
	if for any smooth model $W'\rightarrow X$ one has that $b_i(W)\leq b_i(W')$ for all $0\leq i\leq 2\dim X$.
\end{defn}
As stated in Corollary \ref{cor}, B-minimal resolutions coincide with P-minimal resolutions in dimension three. Our main theorems says that
B-minimal resolutions of threefolds satisfy certain nice properties.  It is logical to anticipate that B-minimal resolutions of higher dimensional varieties share similar properties. 
\begin{conj}
	For any projective variety $X$ over the complex number, one has that
	\begin{enumerate}[(1)]
	\item B-minimal resolutions of $X$ exists.
	\item Two different B-minimal resolutions are connected by desingularizations of $\Q$-factorial terminal flops
	\item If $\tl{X}\rightarrow X$ is a B-minimal resolution and $W\rightarrow X$ is an arbitrary resolution of singularities, then
		$W\dashrightarrow \tl{X}$ can be connected by smooth blow-downs, smooth flips and desingularizations of $\Q$-factorial terminal flops. 
	\end{enumerate}
\end{conj}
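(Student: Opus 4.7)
The plan is to imitate the three-dimensional strategy of the paper by reducing the conjecture to two steps: first relating an arbitrary resolution to a terminalization of $X$ via MMP, and then resolving a fixed terminalization in a ``minimal'' way. Given any resolution $W\to X$, one would run $K_W$-MMP over $X$; assuming termination, the output is a $\Q$-factorial terminal variety $X_{\min}\to X$. One then defines a \emph{feasible resolution of $X_{\min}$} as a smooth model obtained by a chain of strict $w$-morphisms (divisorial contractions extracting a divisor of minimal discrepancy over each non-smooth point), and proposes that the B-minimal resolutions of $X$ are precisely the compositions $\tl{X}\to X_{\min}\to X$ with $\tl{X}\to X_{\min}$ feasible. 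Part (1) would then follow from existence of terminalizations and existence of feasible resolutions; the latter requires a higher-dimensional analog of J.~A.~Chen's result that $w$-morphisms always exist at non-smooth terminal points.

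For parts (2) and (3), the key would be to generalize Proposition \ref{ied} and its corollaries. First I would define a ``generalized depth'' in higher dimensions via the minimal length of a chain of $w$-morphisms ending at a smooth variety, and show $\mathrm{gdep}$ decreases strictly under flips and increases by at most one under divisorial contractions to smaller strata. Then Corollary \ref{feam} would generalize: a resolution is feasible exactly when $\rho(W/X_{\min})=\mathrm{gdep}(X_{\min})$. For two B-minimal resolutions $\tl{X}_1, \tl{X}_2$, their terminalizations $X_{\min,i}$ are connected by a sequence of flops (Kawamata's result, known in dimension three and conjectured in general), and each such flop can be ``desingularized'' between the corresponding feasible resolutions, giving part (2). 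For part (3), one would run $K_W$-MMP over $X$ and decompose each step into smooth blow-downs, smooth flips and desingularizations of flops, using a higher-dimensional Chen–Hacon type factorization (Theorem \ref{chf}) to turn each step into simpler ones in the smooth category.

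The hard part — and indeed the reason this remains only a conjecture — is twofold. First, one must establish a classification-free proof that strict $w$-morphisms link any two divisorial contractions over a given terminal point, the role played by Proposition \ref{dpf}; in dimension three this was done by an exhaustive case analysis using Hayakawa–Kawakita–Yamamoto, but the corresponding classification is completely out of reach in higher dimension. Second, one must show that Betti numbers are preserved under terminal $\Q$-factorial flops and that smooth flips strictly decrease some Betti number; the latter is known only for very special flips such as standard flips, and a general statement would require a significant advance in the topology of flips. Without these two inputs the argument sketched above produces only a candidate resolution whose Betti-minimality among all resolutions cannot be verified.
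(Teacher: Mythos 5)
This statement is not proved in the paper: it appears only as a conjecture in the final discussion section, so there is no proof of record to compare yours against. Your sketch is a faithful transcription of the paper's three-dimensional strategy (terminalize by running MMP over $X$, take feasible resolutions of the terminalization, and link different choices through chains of $w$-morphisms and desingularized flops, as in Proposition \ref{dpf}, Proposition \ref{ied} and Corollary \ref{feam}), and you correctly identify the two obstructions that make the higher-dimensional statement inaccessible. But what you have written is a research program, not a proof, and you should be explicit that it has more gaps than the two you name. It presupposes the full higher-dimensional MMP: termination of flips over $X$, existence of $\Q$-factorial terminalizations, and the analogue of Kawamata's theorem \cite{ka3} that any two minimal models over $X$ are connected by flops — all of which are open beyond dimension three (or beyond the settings where special termination is known). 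It also presupposes a higher-dimensional Chen--Hacon factorization (Theorem \ref{chf}), which is again a three-dimensional result resting on the classification of terminal singularities.

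There is also a structural issue specific to part (1) that your proposal does not address. B-minimality requires a single resolution $W$ with $b_i(W)\leq b_i(W')$ for \emph{every} $i$ and every resolution $W'$; this is a partial order, and it is not clear a priori that it has a least element rather than several incomparable minimal elements (one resolution could have smaller $b_2$ and larger $b_4$ than another). In dimension three this is resolved by Corollary \ref{cor}, which shows that the P-minimal resolutions simultaneously minimize all Betti numbers, and that proof rests on the topological symmetry of three-dimensional terminal flops together with the factorization of Theorem \ref{thm2}. Your candidate (a feasible resolution of a terminalization) would need the analogous simultaneous-minimization statement in higher dimension, which is exactly the second gap you mention but applied to \emph{all} $b_i$ at once, not just to some. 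So the proposal correctly reconstructs the intended strategy and correctly diagnoses why it cannot currently be carried out; it does not, and could not, constitute a proof of the conjecture.
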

\subsection{The strong factorization theorem}
Let \[\X_3=\{\mbox{ smooth threefolds }\}/\sim,\] where $W_1\sim W_2$ if $W_1\dashrightarrow W_2$ is connected by P-desingularization
of $Q$-factorial terminal flops. For $\eta_1$, $\eta_2\in\X_3$ we say that $\eta_1>\eta_2$ if there exists $W_1$ and $W_2$ so that
$\eta_i=[W_i]$ and $W_1\rightarrow W_2$ is a smooth blow-down. Then Theorem \ref{thm1} and Theorem \ref{thm2} implies that
\begin{cor}
	Given a threefold $X$. Let \[\X_{3,X}=\se{\mbox{ }[W]\in\X_3}{\mbox{ There exists a birational morphism }W\rightarrow X}.\]
	Then $\X_{3,X}$ has a unique minimal element.
\end{cor}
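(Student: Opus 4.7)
The natural candidate for the minimum is $[\tl X]$, where $\tl X \to X$ is any P-minimal resolution. The plan is to show (a) this class is a well-defined element of $\X_{3,X}$ independent of the chosen P-minimal resolution, and (b) every other element of $\X_{3,X}$ dominates it under the partial order generated by smooth blow-downs. Since a minimum element of a poset is automatically the unique minimal element, this suffices.

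For (a), existence of a P-minimal resolution of $X$ follows from a standard construction: resolve $X$, run the relative MMP to get a terminalization $X'\to X$, then take a feasible resolution of $X'$; this is P-minimal by Proposition \ref{mcf}. For well-definedness of the class $[\tl X]\in \X_3$, I invoke Theorem \ref{thm1}: any two P-minimal resolutions $\tl X_1$, $\tl X_2$ are connected by P-desingularizations of terminal $\Q$-factorial flops, which is exactly the equivalence relation defining $\X_3$. Hence $[\tl X_1]=[\tl X_2]$, so the class $[\tl X]\in \X_{3,X}$ depends only on $X$.

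For (b), fix $[W]\in \X_{3,X}$, so there is a birational morphism $W\to X$ from a smooth threefold. By Theorem \ref{thm2}, $W\to X$ factors as
\[ W=\tl X_k\dashrightarrow \tl X_{k-1}\dashrightarrow \cdots \dashrightarrow \tl X_0 = \tl X, \]
where each step is either a smooth blow-down or a P-desingularization of a terminal $\Q$-factorial flop. The flop steps are, by definition, identifications inside $\X_3$, so in $\X_{3}$ the chain collapses to a sequence of smooth blow-down steps (possibly interspersed with equalities) from $[W]$ down to $[\tl X]$. By the definition of the order on $\X_3$, this gives $[W]\geq [\tl X]$. Thus $[\tl X]$ lies below every element of $\X_{3,X}$, so it is the unique minimal (in fact, the minimum) element. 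No step requires new work beyond invoking Theorems \ref{thm1} and \ref{thm2}; the only subtle point is verifying that "P-desingularization of a flop" is precisely the relation generating $\sim$, which is built into the definition of $\X_3$.
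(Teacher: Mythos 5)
Your proof is correct and is exactly the argument the paper intends: the corollary is stated there as an immediate consequence of Theorems \ref{thm1} and \ref{thm2} with no further detail, and your write-up simply makes that deduction explicit (well-definedness of $[\tl{X}]$ from Theorem \ref{thm1}, domination of every $[W]$ from Theorem \ref{thm2}).
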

In other words, if we consider the resolution of singularities inside $\X_3$, then there is a unique minimal resolution, which behaves
like the minimal resolution of a surface.\par
As a consequence, inside the space $\X_3$ the strong factorization theorem holds:
\begin{thm}[Strong factorization theorem for $\X_3$]
	Assume that $W_1$ and $W_2$ are smooth threefolds which are birational to each other. Then there exists a smooth threefold $\bar{W}$
	such that inside $\X_3$ one has $[\bar{W}]\geq[W_i]$ for $i=1$, $2$.
\end{thm}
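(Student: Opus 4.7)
The plan is to show that any common smooth resolution of $W_1$ and $W_2$ already dominates both in $\X_3$, by applying Theorem \ref{thm2} twice.

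First I would choose a smooth threefold $V$ together with birational morphisms $\pi_i\colon V\rightarrow W_i$ for $i=1,2$; this is standard (take the closure of the graph of the birational map $W_1\dashrightarrow W_2$ in $W_1\times W_2$ and apply Hironaka to obtain a smooth model mapping to both factors). Next, I would observe that since $W_i$ is smooth (hence terminal and $\Q$-factorial) the identity $W_i\rightarrow W_i$ is itself a feasible resolution of $W_i$, and so by Proposition \ref{mcf} the variety $W_i$ is a P-minimal resolution of itself. Now apply Theorem \ref{thm2} to the morphism $\pi_i\colon V\rightarrow W_i$, with the target P-minimal resolution taken to be $\tl{X}=W_i$: this yields a factorization
\[ V=\tl{X}_{k_i}\dashrightarrow\cdots\dashrightarrow\tl{X}_1\dashrightarrow\tl{X}_0=W_i,\]
in which every arrow is either a smooth blow-down or a P-desingularization of a terminal $\Q$-factorial flop.

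To conclude, I would translate this factorization into the partial order on $\X_3$. By the definition of the equivalence relation generating $\X_3$, a P-desingularization of a terminal $\Q$-factorial flop identifies its source and target in $\X_3$; by the definition of the relation ``$>$'' on $\X_3$, each smooth blow-down step contributes one strict inequality in the order. Walking through the factorization $V\dashrightarrow\cdots\dashrightarrow W_i$ therefore shows $[V]\geq[W_i]$ in $\X_3$ for each $i=1,2$. Taking $\bar{W}=V$ gives the desired common upper bound.

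There is no serious obstacle: the geometric content is entirely carried by Theorem \ref{thm2}, and the rest is bookkeeping about how smooth blow-downs and P-desingularizations of flops behave under the equivalence relation defining $\X_3$. The only small point to verify carefully is that the smooth threefold $W_i$ qualifies as a P-minimal resolution of itself so that Theorem \ref{thm2} is applicable with $\tl{X}=W_i$; this is immediate from Proposition \ref{mcf} and the fact that the identity morphism is (trivially) a feasible resolution.
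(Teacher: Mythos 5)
Your proposal is correct and is essentially the paper's argument: the paper also takes a common smooth resolution $\bar{W}$ and concludes $[\bar{W}]\geq[W_i]$, merely routing the application of Theorem \ref{thm2} through the preceding corollary that $[W_i]$ is the unique minimal element of $\X_{3,W_i}$, which is exactly the unpacking you perform directly. The only content you add is the explicit (and correct) verification that a smooth $W_i$ is a P-minimal resolution of itself via Proposition \ref{mcf}.
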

\begin{proof}
	Let $W_1\leftarrow \bar{W}\rightarrow W_2$ be a common resolution. Then $[\bar{W}]\in\X_{3,W_i}$ for $i=1$, $2$.
	Since the minimal element of $\X_{3,W_i}$ is $[W_i]$ itself, one has that $[\bar{W}]\geq[W_i]$ for $i=1$, $2$.
\end{proof}

\subsection{Essential valuations}
One can characterize a surface singularity by the information of exceptional curves on the minimal resolution. One may ask, does similar
phenomenon happen for higher dimensional singularities? Since for higher dimensional singularities there is no unique minimal resolution,
what we really want to study is the following object.
\begin{defn}
	Let $X$ be a projective threefold over the complex number. We say that a divisorial valuation $v_E$ over $X$ is an
	almost essential valuation if for any P-minimal resolution $\tl{X}\rightarrow X$ one has that $\cen_{\tl{X}}E$ is an irreducible
	component of the exceptional locus of $\tl{X}\rightarrow X$.
\end{defn}
This name coming from ``essential valuation'' in the theories of arc spaces.
\begin{defn}
	Let $X$ be a variety. We say that a divisorial valuation $v_E$ over $X$ is an essential valuation if for any resolution of singularities
	$W\rightarrow X$ one has that $\cen_{W}E$ is an irreducible component of the exceptional locus of $W\rightarrow X$.
\end{defn}
From the definition one can see that essential valuations are almost essential, but almost essential valuation may not be essential.
\begin{eg}
	Let $X=(xy+z^2+u^{2n+1})\subset\A^4$ for some $n>2$. There is a exactly one $w$-morphism $X_1\rightarrow X$ over the singular point,
	which is obtained by blowing-up the origin. There is only one singular point on $X_1$ which is defined by $xy+z^2+u^{2(n-1)+1}$.
	Keep blowing-up the singular point $n-1$ more times we get a resolution of singularities $\tl{X}\rightarrow X$. From the construction
	we know that $\tl{X}$ is a unique feasible resolution of $X$. Since $X$ is terminal and $\Q$-factorial, $\tl{X}$ is the unique P-minimal
	resolution of $X$. Hence almost essential valuations of $X$ are those divisorial valuations appear on $\tl{X}$.
	One can compute that $exc(\tl{X}\rightarrow X)=E_1\cup...\cup E_n$, such that $v_{E_i}(x,y,z,u)=(i,i,i,1)$.
	On the other hand, by \cite[Lemma 15]{jk} we know that essential valuations of $X$ are $v_{E_1}$ and $v_{E_2}$. Hence $v_{E_3}$, ...,
	$v_{E_n}$ are almost essential valuations which are not essential.\par
	Notice that the set of essential valuations does not really characterize the singularity since it is independent of $n$. The set of
	almost essential valuations carries more information of the singularity.
\end{eg}

\subsection{Derived categories}
Let $X$ be a smooth variety. The bounded derived category of coherent sheaves of $X$, denoted by $D^b(X)$, is an interesting subject of investigation. One possible method to study $D^b(X)$ is to construct a semi-orthogonal decomposition of $D^b(X)$ (refer to \cite{ku} for more information). Orlov \cite{o} proved that a smooth blow-down yields a semi-orthogonal decomposition. In particular, if $X$ is a smooth surface, then the $K_X$-MMP is a series of smooth blow-downs, thereby resulting in a semi-orthogonal decomposition of $D^b(X)$.\par
Now assume that $X$ is a smooth threefold and let
\[ X=X_0\dashrightarrow X_1\dashrightarrow ...\dashrightarrow X_k\] be the process of $K_X$-minimal model program.
According to Corollary \ref{bfwm} and Remark \ref{fea}, $\tl{X}_i\dashrightarrow \tl{X}_{i+1}$ can be factored into a composition of smooth blow-downs and P-desingularization of flops, where $\tl{X}_i\rightarrow X_i$ is a P-minimal resolution of $X_i$. If every P-desingularization of flops that appears in the factorization is a smooth flop, then the sequence induces a semi-orthogonal decomposition of $D^b(X)$ since smooth flops are derived equivalent \cite{br}.
\begin{eg}
	Let $X_1\dashrightarrow X_2$ be the flip which is a quotient of an Atiyah flop by an $\Z/2\Z$-action \cite[Example 2.7]{km2}.
	Then $X_2$ is smooth and $X_1$ has a $\frac{1}{2}(1,1,1)$ singular point. Let $X\rightarrow X_1$ be the smooth resolution obtained by
	blowing-up the singular point. Then $X\rightarrow X_1\dashrightarrow X_2$ is a sequence of MMP.\par
	The factorization of the flip is exactly the diagram (3) in Lemma \ref{fld}, namely the following diagram
	\[\vc{\xymatrix{ X\ar[d]\ar@{-->}[r] & X'\ar[d] \\ X_1\ar@{-->}[r] & X_2 }}\] where $X\dashrightarrow X'$ is a smooth flop
	and $X'\rightarrow X_2$ is a blow-down of a smooth curve. We know that there exists an equivalence of category
	$\Phi: D^b(X')\rightarrow D^b(X)$ and a semi-orthogonal decomposition $D^b(X')=\ang{D_{-1},D^b(X_2)}$,
	Hence $D^b(X)=\ang{\Phi(D_{-1}),\Phi(D^b(X_2))}$ is a semi-orthogonal decomposition.
\end{eg}

In general, a P-desingularization of a flop $\tl{X}_i\dashrightarrow \tl{X}_{i+1}$ may not be derived equivalent since $\tl{X}_i$ and
$\tl{X}_{i+1}$ may not be isomorphic in codimension one. Nevertheless, due to the symmetry between $\tl{X}_i$ and $\tl{X}_{i+1}$,
one might expect that a semi-orthogonal decomposition on $D^b(\tl{X}_i)$ will result in a semi-orthogonal decomposition on $D^b(\tl{X}_{i+1})$. We still have faith that our approach will prove effective for all smooth threefolds.


\begin{thebibliography}{aaa}
\bibitem{b} X. Benveniste, \emph{Sur le cone des 1-cycles effectifs en dimension 3}, Math. Ann. \textbf{272} (1985), 257-265.
\bibitem{br} T. Bridgeland, \emph{Flops and derived categories}, Invent. Math. \textbf{147} (2002), 613-632.
\bibitem{me3} H-K Chen, \emph{Betti numbers in the three-dimensional minimal model program}, Bull. London Math. Soc. \textbf{51} (2019), 563-576.
\bibitem{me} H-K Chen, \emph{On the Nash problem for terminal threefolds of type $cA/r$}, preprint, arXiv:1907.06326.
\bibitem{me2} H-K Chen, \emph{On the factorization of three-dimensional terminal flops}, preprint, arXiv:2004.12711.
\bibitem{c} J. A. Chen, \emph{Explicit resolution of three dimensional terminal singularities}, Minimal Models and Extremal Rays (Kyoto, 2011),
	Adv. Stud. Pure Math. \textbf{70} (2016), 323-360.
\bibitem{c2} J. A. Chen, \emph{Birational maps of 3-folds}, Taiwanese J. Math. \textbf{19} (2015), 1619-1642.
\bibitem{ch} J. A. Chen and C. D. Hacon, \emph{Factoring 3-Fold Flips and Divisorial Contractions to Curves}, J. Reine Angew. Math., \text{657}
	(2011), 173-197.
\bibitem{h1} T. Hayakawa, \emph{Blowing ups of 3-dimensional terminal singularities}, Publ. Res. Inst. Math. Sci. \textbf{35} (1999), 515-570.
\bibitem{h2} T. Hayakawa, \emph{Blowing ups of 3-dimensional terminal singularities, II}, Publ. Res. Inst. Math. Sci. \textbf{36} (2000), 423-456.
\bibitem{h3} T. Hayakawa, \emph{Divisorial contractions to 3-dimensional terminal singularities with discrepancy one},
	J. Math. Soc. Japan \textbf{57} (2005), 651-668.
\bibitem{h4} T. Hayakawa, \emph{Divisorial contractions to cD points}, preprint.
\bibitem{h5} T. Hayakawa, \emph{Divisorial contractions to cE points}, preprint.
\bibitem{hu} D. Huybrechts, \emph{Fourier-Mukai transforms in algebraic geometry}, Oxford. University Press, USA (2006). 
\bibitem{jk} J. M. Johnson, J. Koll\'{a}r, \emph{Arc spaces of $cA$-type singularities}, J. Sing. \textbf{7} (2013), 238-252.
\bibitem{k1} M. Kawakita, \emph{Divisorial contractions in dimension three which contract divisors to smooth points},
	Invent. Math. \textbf{145} (2001), 105-119.
\bibitem{k2} M. Kawakita,  \emph{Three-fold divisorial contractions to singularities of higher indices}, Duke Math. J. \textbf{130} (2005),
	57-126.
\bibitem{k3} M. Kawakita, \emph{Supplement to classification of three-fold divisorial contractions}, Nagoya Math. J. \textbf{208} (2012), 67-73.
\bibitem{ka2} Y. Kawamata, \emph{Crepant blowing-up of 3-dimensional canonical singularities and its application to degenerations of surfaces},
	Ann. Math. \textbf{127} (1988), 93-163.
\bibitem{ka} Y. Kawamata, \emph{Divisorial contractions to 3-dimensional terminal quotient singularities}, Higher-dimensional complex varieties
	(Trento, 1994) de Gruyter, Berlin, 1996, 241-246. 
\bibitem{ka3} Y. Kawamata, \emph{Flops connect minimal models}, Publ. Res. Inst. Math. Sci. \textbf{44} (2008), 419-423.
\bibitem{ko2} J. Koll\'{a}r, \emph{Flops}, Nagoya Math. J. \textbf{113} (1989), 15-36.
\bibitem{ko} J. Koll\'{a}r, \emph{Flips, flops, minimal models, etc}, Surveys in Diff. Geo. \textbf{1} (1991), 113-199.
\bibitem{k} \emph{Flips and abundance for algebraic threefolds - A summer seminar at the University of Utah (Salt Lake City, 1991)},
	J. Koll\'{a}r (ed.), Ast\'{e}risque 211 (1992).
\bibitem{km} J. Koll\'{a}r, S. Mori, \emph{Classification of three-dimensional terminal flips}, J. Amer. Math. Soc. \textbf{5} (1992), 533-703.
\bibitem{km2} J. Koll\'{a}r, S. Mori, \emph{Birational geometry of algebraic varieties},
	Cambridge Tracts in Mathematics 134, Cambridge Univ. Press, 1998.
\bibitem{ku} A. Kuznetsov, \emph{Semiorthogonal decompositions in algebraic geometry}, in Proceedings of the International Congress of
	 Mathematicians-Seoul 2014, 635-660.
\bibitem{mo} S. Mori, \emph{Threefolds whose canonical bundles are not numerically effective}, Ann. Math. \textbf{116} (1982), 113-176.
\bibitem{mo3} S. Mori, \emph{On 3-dimensional terminal singularities}, Nagoya Math. J. \textbf{98} (1985), 43-66.
\bibitem{mo2} S. Mori, \emph{Flip theorem and the existence of minimal models for 3-folds}, J. Amer. Math. Soc. \textbf{1} (1988), 117-253.
\bibitem{o} D. Orlov, \emph{Projective bundles, monoidal transformations, and derived categories of coherent sheaves}, Russian Acad. Sci.
	Izv. Math.\textbf{41} (1993), 133-141.
852–862; English transl., Russian Acad. Sci. Izv. Math., 41 (1993): 133–141.
\bibitem{re2} M. Reid, \emph{Minimal model of canonical 3-folds}, Adv. Stud. pure Math. \textbf{1} (1983), 131-180.
\bibitem{re} M. Reid, \emph{Young person’s guide to canonical singularities}, Proc. Symp. pure Math. \textbf{46} (1987), 345-414.
\bibitem{y} Y. Yamamoto, \emph{Divisorial contractions to cDV points with discrepancy $>1$}, Kyoto J. Math. \textbf{58} (2018), 529-567.
\end{thebibliography}
\end{document}